\documentclass[11pt, reqno]{amsart}

\usepackage{cite}
\usepackage[english]{babel}
\usepackage[T1]{fontenc}
\usepackage{csquotes}
\usepackage{amsmath, amssymb, amsthm, mathrsfs}
\usepackage[foot]{amsaddr}
\usepackage{enumitem}
\usepackage{xcolor}
\definecolor{MyLinkColor}{rgb}{0,0,0.4}

\newcommand{\dx}[1]{\,\mathrm{d}#1}

\newcommand{\nHp}[1]{\mathrm{H}^{#1}}
\newcommand{\hHp}[2]{\widehat{\mathrm{H}}\vphantom{\mathrm{H}}^{#1}(#2)}
\newcommand{\rmhH}{\widehat{\mathrm{H}}\vphantom{\mathrm{H}}}
\newcommand{\e}{\mathrm{e}}
\newcommand{\PV}{\mathrm{PV}\!}
\newcommand{\Rel}{\mathrm{Re}\,}

\newcommand{\Aa}{\mathbb{A}}
\newcommand{\ve}{\varepsilon}
\newcommand{\vt}{\vartheta}

\newcommand{\BB}{\mathbb{B}}
\newcommand{\CC}{\mathbb{C}}
\newcommand{\DD}{\mathbb{D}}
\newcommand{\NN}{\mathbb{N}}
\newcommand{\RR}{\mathbb{R}}
\newcommand{\Ss}{\mathbb{S}}
\newcommand{\TT}{\mathbb{T}}
\newcommand{\ZZ}{\mathbb{Z}}

\newcommand{\bfa}{\mathbf{a}}
\newcommand{\bfb}{\mathbf{b}}
\newcommand{\bfc}{\mathbf{c}}

\newcommand{\bfz}{\mathbf{z}}

\newcommand{\mcB}{\mathcal{B}}
\newcommand{\mcC}{\mathcal{C}}
\newcommand{\mcD}{\mathcal{D}}

\newcommand{\mcL}{\mathcal{L}}
\newcommand{\mcO}{\mathcal{O}}
\newcommand{\mcP}{\mathcal{P}}
\newcommand{\mcQ}{\mathcal{Q}}

\newcommand{\mcU}{\mathcal{U}}
\newcommand{\mcV}{\mathcal{V}}
\newcommand{\mcW}{\mathcal{W}}

\newcommand{\mcZ}{\mathcal{Z}}
\newcommand{\rmd}{\mathrm{d}}
\newcommand{\rmC}{\mathrm{C}}

\newcommand{\rmH}{\mathrm{H}}
\newcommand{\rmL}{\mathrm{L}}

\newcommand{\rmW}{\mathrm{W}}

\newcommand{\scV}{\mathscr{V}}

\newcommand{\wt}{\widetilde}
\newcommand{\tnu}{{\tilde\nu}}

\newcommand{\tkappa}{{\tilde\kappa}}

\newcommand{\ts}[1]{t_{[#1]}}
\newcommand{\tss}{\ts{s}}
\newcommand{\T}[1]{T_{[#1]}}

\newcommand{\Tf}[2]{T_{[#1]}#2}
\newcommand{\dg}[2]{\delta_{[#1]}#2}

\newcommand{\Txsf}{\Tf{\xi,s}{f}}

\newcommand{\rot}{\mathrm{rot}}

\DeclareMathOperator*{\supp}{supp}
\DeclareMathOperator*{\vdiv}{div}
\DeclareMathOperator*{\curl}{curl}

\newcommand{\zd}{.\kern-\nulldelimiterspace}

\textwidth16.5cm
\oddsidemargin0.025cm
\evensidemargin0.025cm
\textheight21cm

\newtheorem{Theorem}{Theorem}[section]
\newtheorem{Proposition}[Theorem]{Proposition}
\newtheorem{Lemma}[Theorem]{Lemma}
\newtheorem{Corollary}[Theorem]{Corollary}
\theoremstyle{remark}

\numberwithin{equation}{section}
\usepackage[colorlinks=true,linkcolor=MyLinkColor,citecolor=MyLinkColor]{hyperref}

\begin{document}
\title[Well-posedness and Rayleigh--Taylor instability for the two-phase  Stokes flow]{Well-posedness and Rayleigh--Taylor instability of the two-phase periodic quasistationary Stokes flow}
\thanks{Partially supported by DFG Research Training Group~2339 ``Interfaces, Complex Structures, and Singular Limits in Continuum Mechanics - Analysis and Numerics''}
\author[Daniel B\"ohme]{Daniel B\"ohme}
\author[Bogdan-Vasile Matioc]{Bogdan-Vasile Matioc}
\address{Fakult\"at f\"ur Mathematik, Universit\"at Regensburg \\ D--93040 Regensburg, Deutschland}
\email{daniel.boehme@ur.de}
\email{bogdan.matioc@ur.de}

\begin{abstract}
 We study the two-phase, horizontally periodic, quasistationary Stokes flow in two dimensions driven by surface tension and gravity effects in the general context of fluids with (possibly) different viscosities and densities. The sharp interface which separates the fluids is assumed to be the graph of a periodic function. The mathematical model is then recast as a fully nonlinear and nonlocal evolution equation involving only the function parametrizing the interface. Our main results include well-posedness and a parabolic smoothing property, as well as a study of equilibrium solutions in subcritical Sobolev spaces. In particular, we establish the Rayleigh--Taylor instability of small, finger-shaped equilibria and prove that the stability properties of flat interfaces depend on the sign of a certain parameter. 
\end{abstract}

\subjclass[2020]{31A10; 35B65; 35K55; 76D07;  76E17}
\keywords{Periodic Stokes flow; Well-posedness;  Rayleigh--Taylor instability; Gravity; Surface tension}

\maketitle

\pagestyle{myheadings}
\markboth{\sc{D. B\"ohme \& B.-V.~Matioc}}{\sc{Well-posedness and Rayleigh--Taylor instability for the two-phase  Stokes flow}}

\section{Introduction}

The subject of this paper is the study of the two-phase, horizontally periodic, quasistationary Stokes flow in two spatial dimensions. We assume that the flow is driven by the capillarity of the interface $\Gamma(t)$ separating the two immiscible fluid phases as well as gravity effects. This leads us to the following system of equations
	\begin{subequations}\label{eq:STOKES}
	\begin{equation}\label{eq:Stokes}
		\left.
		\arraycolsep=1.4pt
		\begin{array}{rclll}
			\mu^\pm\Delta v^\pm(t)-\nabla q^\pm(t)&=&0&\mbox{in $\Omega^\pm(t)$,}\\
			\vdiv v^\pm(t)&=&0&\mbox{in $\Omega^\pm(t)$,}\\{}
			[v(t)]&=&0&\mbox{on $\Gamma(t)$,}\\{}
			[T_\mu (v(t),q(t))]\tnu(t)&=&(\Theta x_2-\sigma\tkappa(t)) \tnu(t)&\mbox{on $\Gamma(t)$,}\\
			(v^\pm(t),q^\pm(t))(x)&\to&\Big(\pm\frac{c_{1,\Gamma(t)}}{\mu^\pm},\pm\frac{c_{2,\Gamma(t)}}{\mu^\pm},\pm c_{3,\Gamma(t)}\Big)&\begin{minipage}{3cm} 
				for $x_2\to\pm\infty$ uni-\\[-0.5ex]
				formly in $x_1\in \Ss$,
			\end{minipage}\\
			V_n(t)&=&v(t)\cdot\tnu(t)&\mbox{on $\Gamma(t)$}
		\end{array}\right\}
	\end{equation}
	for $t>0$. 
	Here, $\Omega^+(t)$ denotes the two fluid phase above the interface $\Gamma(t)$, while $\Omega^-(t)$ is the one below. In particular, we have $\Gamma(t)=\partial\Omega^\pm(t)$ and $\Ss\times\RR=\Gamma(t)\cup\Omega^+(t)\cup\Omega^-(t)$ for $t>0$. We denote by $\tnu=\tnu(t)$ the unit normal exterior to $\Omega^-(t)$ while $\tkappa=\tkappa(t)$ denotes the curvature of the interface~${\Gamma(t)}$. Moreover, we assume that there exists a periodic function $f(t):\Ss\to\RR$ such that~${\Gamma(t)}$ is given by the graph of this function, i.e.
	\begin{equation*}
		\Gamma(t)=\{x=(x_1,x_2)\in\Ss\times\RR : x_2=f(t,x_1)\},\qquad t>0.
	\end{equation*}
	Here, $\Ss:=\RR/2\pi\ZZ$ denotes the unit circle, functions defined on $\Ss$ being interpreted as  $2\pi$-periodic on~$\RR$.
	The system \eqref{eq:Stokes} is supplemented with the initial condition
	\begin{equation}\label{eq:ic}
		f(0)=f_0.
	\end{equation}
	 In \eqref{eq:Stokes}, ${v^\pm =v^\pm(t):\Omega^\pm(t)\to\RR^2}$ denotes the velocity field of the liquid located in the phase~${\Omega^\pm(t)}$, while~$q^\pm=q^\pm(t):\Omega^\pm(t)\to\RR$ is given by
	\begin{equation*}
		q^\pm(t,x)=p^\pm(t,x) +g\rho^\pm x_2,\qquad x=(x_1,x_2)\in\Omega^\pm(t),
	\end{equation*}
	where $p^\pm=p^\pm(t)$ is the pressure in $\Omega^\pm(t)$. The positive constants $\rho^\pm$ and $\mu^\pm$ describe the density and viscosity of the fluid located in the phase $\Omega^\pm(t)$, respectively. Furthermore, we define the stress tensor $T_{\mu^\pm}(v^\pm,q^\pm)$ by
	\begin{equation}\label{eq:stress}
		T_{\mu^\pm}(v^\pm,q^\pm):= -q^\pm I_2 +\mu^\pm \big(\nabla v^\pm +(\nabla v^\pm)^\top\big),\qquad (\nabla v^\pm)_{ij} := \partial_j v^\pm_i,\quad i,\,j=1,\, 2.
	\end{equation}
	We let $[v]$ and $[T_\mu(v,q)]$ be the jump of the velocity and the stress tensor across the interface, respectively; see the definition~\eqref{eq:defjump} below. 
	Then, we introduce the constant
	\begin{equation}\label{eq:Theta}
		\Theta:=-g[\rho]\in\RR,
	\end{equation}
	where $g\geq 0$ is the Earth's gravity. Furthermore,~${\sigma>0}$  is the surface tension coefficient, $V_n$ is the normal velocity of the interface $\Gamma(t)$, $x\cdot y$ is the Euclidean scalar product of two vectors $x,y\in\RR^n$, and $I_n$ is the identity matrix in $\RR^{n\times n}$.

	The (spatially) constant vector $c_{\Gamma(t)} := (c_{1,\Gamma(t)}, c_{2,\Gamma(t)}, c_{3,\Gamma(t)}) \in \RR^3$, which characterizes the far-field behavior of both the velocity and the pressure, is a priori unknown and, together with $v^\pm(t)$ and $q^\pm(t)$, constitutes part of the solution. However, it turns out that not only $v^\pm(t)$ and $q^\pm(t)$, but also $c_{\Gamma(t)}$ are uniquely determined at each time~$ t>0$ by the function~$f(t)$; see Theorem~\ref{Thm:Ftp}. We therefore sometimes refer simply to $f=f(t)$ as the solution to~\eqref{eq:STOKES}. In contrast to the last two components of $ c_{\Gamma(t)}$, the first component $c_{1,\Gamma(t)}$ depends in a subtle way on $f(t)$ and reflects the average vorticity in $\Ss \times \RR$ at time~$t>0$, that is,
	\begin{equation}\label{eq:limits}
		c_{1,\Gamma(t)} = -\frac{\mu^+ \mu^-}{2\pi(\mu^+ + \mu^-)} \, \PV \int_{\Ss \times \RR} \curl v(t) \, \rmd x, \qquad c_{2,\Gamma(t)} = 0, \qquad c_{3,\Gamma(t)} = -\frac{\Theta}{2} \langle f(t) \rangle,
	\end{equation}
	see \eqref{eq:curl}--\eqref{eq:intmean}. We note that, when the fluid viscosities are equal, the dependence of $c_{1,\Gamma(t)}$ on $f(t)$ becomes explicit; see~\cite[Eq.~(1.1c)]{Bohme.2024}.
	\end{subequations}

 Research on quasistationary Stokes flow has its origins in the study of single-phase fluids evolving in sufficiently smooth domains~$\Omega(t)\subset\RR^d$, $d\geq2$. 
 In~\cite{PG97}, well-posedness near smooth and strictly star-shaped configurations and exponential stability of balls were established. Alternative proofs of exponential stability, based on power series techniques, were later provided in dimensions $d\in\{2,3\}$ in~\cite{Fr02a, Fr02b}. 
 More recently, exponential stability of balls was also shown in the planar two-phase situation, where one fluid encloses the other and the interface dynamics are governed by surface tension~\cite{Ch24}. In a different direction, the quasistationary Stokes equations arise as the singular zero–Reynolds number limit of the Navier–Stokes system; see~\cite{S99, Solo99}.

	The two-phase version of the problem in bounded geometries, including possible phase transitions and in arbitrary dimension $d\geq2$, is treated systematically in the monograph~\cite{PS16} while stabilization of balls using a feedback operator acting on the interface was studied in~\cite{Co24}.

	The non-periodic counterpart of~\eqref{eq:STOKES} with gravity effects  excluded and general viscosities was studied in~\cite{Matioc.2021, MP2022}, where local well-posedness was obtained in $\rmH^r(\RR)$ for exponents $r\in(3/2,2)$ arbitrarily close to the critical value $r=3/2$, cf.~\cite[Remark~1.2]{MP2022}. 
	The singular limit of the two-phase problem, when one of the viscosities tends to zero, was studied in~\cite{MP2023} and it was shown therein that it converges to the one-phase problem.
	
	When surface tension is neglected ($\sigma=0$), the problem reduces to an ODE formulation, yielding local~\cite{GGS25a} and global well-posedness~\cite{GGS25} in the setting of H\"older space. Importantly, the case $\sigma=0$ coincides with the transport Stokes system, a model for sedimentation of rigid particles in viscous fluids~\cite{H18, Me19, MS22, Le22, Gr23}.	
	
	A related line of research concerns the Peskin problem for an elastic string immersed in a viscous fluid, 
	which shares the same equations for the dynamics in the bulk as~\eqref{eq:STOKES} (with $g=0$). Recent results include~\cite{LT19, MRS19, CS24, CN23, GMS23, GGS23, EH25, EKMS25}.

 This paper extends the study~\cite{Bohme.2024}, which analyzed problem~\eqref{eq:STOKES} under the assumption of equal viscosities, to the more general setting of possibly distinct viscosities. In addition, we provide a comprehensive study of equilibrium solutions to~\eqref{eq:STOKES}. While in the non-periodic case~\cite{MP2022} neither gravity effects could be incorporated, nor the stability of equilibrium solutions was addressed, we still pursue the same strategy as therein for solving~\eqref{eq:STOKES}. Specifically, at each time step $t>0$, we determine the velocity, the pressure, and the constant $c_\Gamma$ from~\eqref{eq:Stokes} assuming that $f(t)\in \rmH^3(\Ss)$. For equal viscosities, this is done by constructing~$(v^\pm,q^\pm)$ via the hydrodynamical single-layer potential with gravity and capillary forces as density, c.f.~\cite{Bohme.2024,Ladyzhenskaya.1963}. 
	When the viscosities differ, this is not applicable since  $[\mu v]\neq0$ on $\Gamma$. 
	To handle this, we use the hydrodynamic double-layer potential, where the density solves a linear, singular integral equation of the second kind depending on the velocity field obtained from the single-layer potential; see~\eqref{eq:Db=V}. In Theorem~\ref{Thm:Ftp}, we prove that the fixed-time problem associated with~\eqref{eq:Stokes} admits a unique solution, given precisely by the sum of the single-layer and double-layer potentials; see~\eqref{eq:sol}. A substantial distinction between the periodic and non-periodic cases lies in the far-field boundary condition~\eqref{eq:Stokes}$_5$. In the non-periodic case, the velocity converges to zero at infinity; in contrast, in the periodic setting, the velocity profile becomes asymptotically horizontal as $x_2\to\pm\infty$, with its magnitude determined by the average vorticity in~$\Ss\times\RR$, but the profiles~$+\infty$ and~$-\infty$ have opposite signs. Similarly, for $x_2\to\pm\infty$, the pressure differs from the hydrostatic pressure by a constant depending on~$f$ taking opposite signs at $\pm\infty$. It is noteworthy that the constant describing the pressure deviation is influenced solely by gravity effects,  whereas capillary effects primarily determine the constant characterizing the far-field velocity. When the fluid viscosities differ, however, this velocity constant may also be affected by gravity in a non-trivial way; see~\eqref{eq:Ftp_c}.
	
	Using the kinematic boundary condition~\eqref{eq:Stokes}$_6$, we then reformulate~\eqref{eq:STOKES} as a single nonlocal, fully nonlinear evolution equation for $f$, the corresponding evolution operator being well-defined for $f\in \rmH^r(\Ss)$ with $r>3/2$. Exploiting both new mapping properties and those established in~\cite{Bohme.2024} for the underlying singular integral operators, we prove that the resulting evolution equation is parabolic in the sense of Proposition~\ref{Prop:Psi_Gen}. Applying the abstract parabolic theory from~\cite{Lunardi.1995}, we finally obtain local well-posedness  of~\eqref{eq:STOKES} together with a parabolic smoothing property. 
	
	The results are summarized in the following first main theorem.

	\begin{Theorem}\label{Thm:Main}
	Let $\Theta \in \RR$, $\sigma, \mu^+, \mu^- \in (0,\infty)$, $r \in (3/2, 2)$, and $f_0 \in \rmH^r(\Ss)$. Then, the following hold:
	\begin{enumerate}[label=\textup{(\roman*)}, leftmargin=*, itemsep=1ex]
    \item \textup{(Well-posedness)} 
    	There exists a unique maximal solution $(f, v^\pm, q^\pm,c_\Gamma)$ to~\eqref{eq:STOKES}  with
    	\begin{equation*}
    		f := f(\cdot, f_0) \in \rmC\big( [0, T_+), \rmH^r(\Ss) \big)\cap \rmC^1\big( [0, T_+), \rmH^{r-1}(\Ss) \big),
    	\end{equation*}
    	and such that for each $t \in (0, T_+)$ we have  $  f(t) \in \rmH^3(\Ss)$ and
    	\begin{equation*}
        	v^\pm(t) \in \rmC^2\big(\Omega^\pm(t),\RR^2 \big) \cap \rmC^1 \big( \overline{\Omega^\pm(t)}, \RR^2 \big),\qquad q^\pm(t) \in \rmC^1\big(\Omega^\pm(t)\big) \cap \rmC\big(\overline{\Omega^\pm(t)}\big),
    	\end{equation*}
    	where $T_+ = T_+(f_0) \in (0, \infty]$. 
	\item\textup{(Parabolic smoothing)}  
    	The function $f$ is smooth on $(0, T_+) \times \Ss$, that is,
   		\begin{equation*}
   			\big[(t,\xi) \mapsto f(t,\xi)\big] \in \rmC^\infty\big( (0, T_+) \times \Ss, \RR \big).
   		\end{equation*}
    \item \textup{(Global existence)}  
    	The solution is global, that is $T_+(f_0) = \infty$, provided that for each $T>0$,
    	\begin{equation*}
    		\sup_{t \in [0, T] \cap [0, T_+(f_0))}\lVert f(t) \rVert_{\nHp{r}} < \infty.
    	\end{equation*}
	\end{enumerate}
	\end{Theorem}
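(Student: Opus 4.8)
The plan is to reduce \eqref{eq:STOKES}--\eqref{eq:ic} to a single abstract Cauchy problem for $f$ and to solve it by parabolic theory, the hard analytic input being supplied by Theorem~\ref{Thm:Ftp} and Proposition~\ref{Prop:Psi_Gen}. First I would fix $\bar r\in(3/2,2)$ and define, for $f\in\rmH^{\bar r}(\Ss)$ (any such $f$ is $\rmC^1$ and parametrizes an admissible graph), the nonlocal operator $\Psi(f):=(v_2-f'v_1)|_{x_2=f}$, where $(v^\pm,q^\pm,c_\Gamma)$ is the fixed-time solution from Theorem~\ref{Thm:Ftp} associated with the interface $\Gamma=\{x_2=f(x_1)\}$; with this notation the kinematic condition \eqref{eq:Stokes}$_6$ reads $\partial_t f=\Psi(f)$. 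I would then record the equivalence: $(f,v^\pm,q^\pm,c_\Gamma)$ solves \eqref{eq:STOKES}--\eqref{eq:ic} if and only if $f$ solves $\partial_t f=\Psi(f)$ with $f(0)=f_0$, satisfies $f(t)\in\rmH^3(\Ss)$ for $t>0$, and $(v^\pm(t),q^\pm(t),c_{\Gamma(t)})$ is the triple produced by Theorem~\ref{Thm:Ftp} out of $f(t)$ (the forward direction uses the uniqueness part of Theorem~\ref{Thm:Ftp}, the backward direction its solvability part); note that Theorem~\ref{Thm:Ftp} is invoked only at positive times, matching the fact that $(v^\pm,q^\pm)$ is asserted only on $(0,T_+)$. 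The mapping properties of the single- and double-layer potentials behind Theorem~\ref{Thm:Ftp} then give $\Psi\in\rmC^\infty(\rmH^{\bar r}(\Ss),\rmH^{\bar r-1}(\Ss))$, while Proposition~\ref{Prop:Psi_Gen} supplies the parabolicity: for each such $f$ the Fr\'echet derivative $\partial\Psi(f)$, regarded as an unbounded operator in $\rmH^{\bar r-1}(\Ss)$ with domain $\rmH^{\bar r}(\Ss)$, generates a strongly continuous analytic semigroup. I would emphasise that this holds for \emph{all} $\Theta\in\RR$: the gravity term $\Theta x_2$ enters the density of the potentials at lower order than the capillary term $\sigma\tkappa$, hence affects only lower-order terms of $\partial\Psi(f)$ and not its principal part $-a(f)\,|\partial_x|$ with $a(f)>0$; the sign of $\Theta$ will intervene only in the stability analysis.

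Next, for $f_0\in\rmH^r(\Ss)$ with $r\in(3/2,2)$, I would feed the Cauchy problem $\partial_t f=\Psi(f)$, $f(0)=f_0$, into the abstract theory of fully nonlinear parabolic evolution equations of \cite{Lunardi.1995}, using the smoothness of $\Psi$ between Sobolev spaces, the generator property just stated, and the fact that $\rmH^r(\Ss)$ sits (up to the usual interpolation bookkeeping) between a suitable base space and the domain of the generator. This produces a unique maximal solution $f=f(\cdot,f_0)\in\rmC([0,T_+),\rmH^r(\Ss))\cap\rmC^1([0,T_+),\rmH^{r-1}(\Ss))$, $T_+=T_+(f_0)\in(0,\infty]$, depending continuously on $f_0$, together with the usual blow-up alternative; since $\Psi$ is defined on all of $\rmH^r(\Ss)$ there is no geometric obstruction to continuation, so $T_+<\infty$ forces $\limsup_{t\to T_+}\lVert f(t)\rVert_{\nHp{r}}=\infty$ — which is exactly assertion~(iii). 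Granting from the smoothing step that $f(t)\in\rmH^3(\Ss)$ for $t>0$, Theorem~\ref{Thm:Ftp} then returns, for each $t\in(0,T_+)$, the unique $(v^\pm(t),q^\pm(t),c_{\Gamma(t)})$ with the regularity claimed in~(i), and uniqueness of the full quadruple $(f,v^\pm,q^\pm,c_\Gamma)$ follows by combining uniqueness of $f$ with the uniqueness statement of Theorem~\ref{Thm:Ftp}.

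For the parabolic smoothing~(ii) I would use the parameter trick. Since \eqref{eq:STOKES} is autonomous and invariant under the translations $\xi\mapsto\xi+a$ of $\Ss$ and under time translations, I would consider, for $\lambda=(\lambda_1,\lambda_2)$ near $(1,0)$, the reparametrized functions $f_\lambda(t,\xi):=f(\lambda_1 t,\xi+\lambda_2)$ — supplemented, if needed, by a $\lambda$-dependent family of diffeomorphisms of $\Ss$ — which satisfy a $\lambda$-dependent evolution equation of the same parabolic type; an implicit function theorem argument on compact subintervals of $(0,T_+)$, legitimate precisely because of the uniqueness and the generator property, shows that $\lambda\mapsto f_\lambda$ is real-analytic into the maximal-regularity class. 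Differentiating in $\lambda$ transfers this to extra regularity of $\partial_\xi f$ and of $t\,\partial_t f$, and iterating — admissible because $\Psi$ and the generator property persist on the whole subcritical Sobolev scale — yields $f\in\rmC^\infty((0,T_+)\times\Ss)$; in particular $f(t)\in\rmH^3(\Ss)$ for $t>0$, closing the loop with the previous paragraph. I expect the genuine difficulty to lie not in this assembly but in Proposition~\ref{Prop:Psi_Gen}: establishing that $\partial\Psi(f)$ is a parabolic generator requires localizing the evolution equation and freezing the coefficients of the singular integral operators attached to the single- and double-layer potentials, with the attendant mapping and commutator estimates — and it is exactly the viscosity jump $[\mu v]\neq 0$, which forces the use of the double-layer potential and the solvability of the second-kind integral equation \eqref{eq:Db=V}, that makes this step substantially harder than in the equal-viscosity situation treated in \cite{Bohme.2024}.
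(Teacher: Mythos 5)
Your proposal follows essentially the same route as the paper: reduce \eqref{eq:STOKES} to the abstract Cauchy problem \eqref{eq:ev_pb} for $f$, invoke the smoothness~\eqref{eq:Psi_smooth} of~$\Psi$ and the generator property in Proposition~\ref{Prop:Psi_Gen}, and then read off well-posedness, the parameter-trick smoothing, and the blow-up criterion from \cite[Chapter~8]{Lunardi.1995} exactly as in~\cite{MP2022}. The only cosmetic imprecision is that $\Psi$ must be defined on $\rmH^r(\Ss)$ directly via the singular-integral formula~\eqref{eq:def_Psi} (Theorem~\ref{Thm:Ftp} requires $f\in\rmH^3(\Ss)$), but you implicitly correct this when you appeal to the mapping properties of the layer potentials rather than to the transmission problem itself.
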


	Before presenting our main result on the existence and stability properties of equilibrium solutions to \eqref{eq:STOKES}; see Theorem~\ref{Thm:Stability}, we note that the set of solutions to \eqref{eq:STOKES} is invariant under horizontal and vertical translations. Indeed,  if $(f,v^\pm,q^\pm, c_\Gamma)$ is a solution to \eqref{eq:STOKES}, then $(\tilde f, \tilde{v}^\pm,\tilde{q}^\pm, c_{\tilde \Gamma})$ with
\begin{subequations}\label{eq:sol_trans}
	\begin{equation}\label{eq:sol_trans1}
		\tilde f(t,\xi)= f(t,\xi-a)+c,\qquad \tilde v^\pm(t,x)=v^\pm(t,x-(a,c)),\qquad\tilde q^\pm(t,x)=q^\pm(t,x-(a,c))\mp\frac{c\Theta}{2}
	\end{equation}
	and
	\begin{equation}\label{eq:sol_trans2}
	c_{\tilde \Gamma}=c_\Gamma-(0,0,c\Theta/2)
	\end{equation}
	\end{subequations}
	is also a solution to \eqref{eq:STOKES}. Furthermore, the integral mean $\langle f\rangle$ is preserved by the flow since, by~\eqref{eq:Stokes}$_2$,~\eqref{eq:Stokes}$_6$,  and~\eqref{eq:limits}
	\begin{equation}\label{eq:pres}
		\frac{{\rmd}\langle f\rangle}{{\rmd}t}(t)=\int_{\Gamma(t)}v(t)\cdot\tnu(t)\dx{\sigma}=\int_{\Omega^\pm(t)}\vdiv v^\pm(t)\dx{x}=0,\qquad t\in(0,T_+).
	\end{equation}
	The properties~\eqref{eq:sol_trans}–\eqref{eq:pres} will  be considered when analyzing the stability of equilibrium solutions. In particular, we focus on equilibrium solutions with zero integral mean and examine their stability under perturbations that also have zero integral mean. To this end, we define 
	\begin{equation*}
		\hHp{s}{\Ss}:=\{f\in\rmH^s(\Ss):\langle f\rangle =0\},\qquad s\geq 0.
	\end{equation*}
	Surprisingly, the set of equilibrium solutions to the Stokes flow~\eqref{eq:STOKES} coincides with that of the periodic Muskat problem with surface tension; see Theorem~\ref{Thm:SolBif} and ~\cite{Escher.2011, Ehrnstrom.2013, Sa23, Matioc.2020}. 
	Equilibrium solutions to \eqref{eq:STOKES} with zero integral mean are characterized by the fact that $f\in\rmC^\infty(\Ss)$ solves the capillarity equation
	\begin{equation}\label{eq:cap_eq}
		\kappa(f) + \lambda f = 0, \qquad \text{where}~ \lambda :=\frac{g[\rho]}{\sigma},
	\end{equation}
	the complete picture of the equilibrium solutions to \eqref{eq:STOKES} being provided in Section~\ref{Sec:6}.

	In Theorem~\ref{Thm:Stability}, we show that the trivial equilibrium $f=0$ is exponentially stable if $g[\rho]/\sigma<1$, which happens either when the fluid below is denser than the fluid above, or when $[\rho]> 0$ is smaller than~$\sigma/g$. If, however, $g[\rho]/\sigma>1$, the equilibrium $f=0$ is unstable in the sense that there exists a neighborhood of zero in $\hHp{r}{\Ss}$  and solutions starting arbitrarily close to $0$ that eventually leave this neighborhood. This reflects the fact that gravity amplifies small disturbances  when $\rho^+ >\rho^-$, a phenomenon known as the Rayleigh-–Taylor instability; see \cite{LR82, T50}. Similar instabilities have also been extensively studied in related two-phase problems, including the Muskat problem \cite{Escher.2011, Matioc.2020}, the Verigin problem \cite{PSW19}, and the full Navier–Stokes equations \cite{JTW16, PS10, GT11, TW12, W22}.

	Moreover, if $g[\rho]/\sigma$ exceeds a threshold value $\lambda_*\in(0,1)$; see \eqref{eq:lambdastar}, the Stokes flow~\eqref{eq:STOKES} may also possess finger-shaped equilibria which are located on global bifurcation branches (of solutions to \eqref{eq:cap_eq}). 
	These describe configurations where the heavier fluid positioned above intrudes into the lighter fluid below, forming characteristic finger-like patterns. 
	In Theorem~\ref{Thm:Stability} we prove, by applying the  exchange of stability principle due to Crandall and Rabinowitz \cite{Crandall.1973},  that  the small finger-shaped equilibrium solutions, 
	which have a cosine-shaped profile, also feature the Rayleigh--Taylor instability property. 
	The stability properties of finger-shaped equilibria with large slopes (which become unbounded as one approaches the end of the global bifurcation branches) are not addressed by Theorem~\ref{Thm:Stability}, 
and this remains an open problem.

	\begin{Theorem}[Exponential stability/Rayleigh--Taylor instability]\label{Thm:Stability}
	Let $\Theta\in\RR$,  $\sigma,\, \mu^+,\, \mu^-\in(0,\infty)$, and $r\in(3/2,2)$ be given.
	\begin{enumerate}[label=\textup{(\roman*)}]
	\item\textup{(Stability properties of the trivial solution $f=0$)}
		 Let 
		 \begin{equation}\label{eq:theta0}
			\vartheta_0:=\frac{\sigma+\Theta}{2(\mu^+ +\mu^-)}\mathbf{1}_{[0,\infty)}(\sigma-\Theta)+\frac{\sqrt{\sigma\Theta}}{\mu^+ +\mu^-}\mathbf{1}_{(0,\infty)}(\Theta-\sigma).
		\end{equation}
		\begin{enumerate}[label=\textup{(\alph*)}]
		\item \textup{(Exponential stability)}	
			Assume~$\sigma+\Theta>0$ and choose~$\vartheta\in (0, \vartheta_0)$. Then, there exist constants~$\delta>0$ and~$M>0$, such that for any~${f_0\in\hHp{r}{\Ss}}$ satisfying $\lVert f_0\rVert_{\rmH^r}<\delta,$ the solution to \eqref{eq:STOKES} exists globally, i.e. $T_+(f_0)=\infty$ and
			\begin{equation*}
				\lVert f(t)\rVert_{\rmH^r}+\bigg\lVert \frac{\rmd f}{\rmd t}(t)\bigg\rVert_{\rmH^{r-1}}\leq M e^{-\vartheta t}\lVert f_0\rVert_{\rmH^r}\qquad \text{for all}~t\geq 0.
			\end{equation*}			
		\item \textup{(Rayleigh--Taylor instability)}
			If $\sigma+\Theta<0$, then the zero solution is unstable. \smallskip
		\end{enumerate}
	\item \textup{(Rayleigh--Taylor instability of small finger-shaped equilibria)} 
		For any $\ell\in\NN$, there exists a smooth bifurcation curve $(\lambda_\ell,f_\ell):(-\delta_\ell,\delta_\ell)\to\RR\times \hHp{r}{\Ss}$, $\delta_\ell >0$, such that
		\begin{equation*}
			\left\{
			\begin{array}{lll}
				\lambda_\ell(s):=\ell^2 -\dfrac{3\ell^4}{8}s^2 +\mcO(s^4)\quad\text{in}~\RR,\\[1ex]
				f_\ell(s):= s\cos(\ell\cdot)+\mcO(s^2)\quad\text{in}~\hHp{r}{\Ss},
			\end{array}\right.
			\qquad\text{for}~s\to 0,
		\end{equation*}
		with $f_\ell(s)$ being an even equilibrium to \eqref{eq:STOKES} if $\Theta=-\sigma\lambda_\ell(s)$. 
		The equilibrium $f_\ell(s)$ is unstable if $0\leq |s|<\delta_\ell$ and $\delta_\ell$ is sufficiently small.
	\end{enumerate}
	\end{Theorem}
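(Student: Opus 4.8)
The plan is to derive both assertions from a spectral analysis of suitable linearisations of the fully nonlinear parabolic evolution equation $\partial_t f=\Phi_\Theta(f)$ for the interface, where $\Phi_\Theta$ denotes the evolution operator (depending on $\Theta$ besides the fixed parameters $\sigma,\mu^\pm$) whose parabolicity, i.e. the property of generating an analytic semigroup, is the content of Proposition~\ref{Prop:Psi_Gen}. Since the integral mean is conserved along the flow, see~\eqref{eq:pres}, it is natural to work in the spaces $\hHp{s}{\Ss}$ of zero-mean functions, and --- using the invariance of the problem under the reflection $x_1\mapsto-x_1$, which renders the subspace of even functions flow-invariant --- in the even parts of these spaces whenever a simple eigenvalue is needed. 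The object controlling part~(i) is the Fréchet derivative $A:=\partial_f\Phi_\Theta(0)$; since the equation is invariant under horizontal translations, $A$ commutes with translations and is therefore a Fourier multiplier. From the representation of the velocity in Theorem~\ref{Thm:Ftp} (as the sum of a single- and a double-layer potential, the density of the latter solving the second-kind singular integral equation~\eqref{eq:Db=V}), together with the mapping properties of the underlying singular integral operators developed in the preceding sections and in~\cite{Bohme.2024}, one computes the symbol of $A$ to be
\[
\mu_k=-\frac{\sigma k^2+\Theta}{2(\mu^++\mu^-)\,\lvert k\rvert},\qquad k\in\ZZ\setminus\{0\},
\]
so that $A$, regarded as an unbounded operator on $\hHp{r-1}{\Ss}$ with domain $\hHp{r}{\Ss}$, is sectorial with spectrum $\{\mu_k\}_{k\neq0}$.

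For part~(i)(a), assume $\sigma+\Theta>0$. An elementary minimisation of $x\mapsto(\sigma x^2+\Theta)/\bigl(2(\mu^++\mu^-)x\bigr)$ over $[1,\infty)$ --- the minimum being attained at $x=1$ when $\Theta\le\sigma$ and at $x=\sqrt{\Theta/\sigma}$ when $\Theta>\sigma$ --- identifies its infimum with the constant $\vartheta_0$ of~\eqref{eq:theta0}, whence $\sup_{k\neq0}\mu_k\le-\vartheta_0<-\vartheta$ for the prescribed $\vartheta\in(0,\vartheta_0)$. Consequently $A$ generates an analytic semigroup with spectrum in $\{\Rel z<-\vartheta\}$, and the principle of linearised stability for fully nonlinear parabolic problems, cf.~\cite{Lunardi.1995}, applies: for $f_0\in\hHp{r}{\Ss}$ with $\lVert f_0\rVert_{\rmH^r}$ sufficiently small the solution is global and decays in $\hHp{r}{\Ss}$ at the exponential rate $\vartheta$, while the accompanying bound for $\rmd f/\rmd t$ in $\hHp{r-1}{\Ss}$ follows from $\Phi_\Theta(0)=0$ and the local Lipschitz continuity of $\Phi_\Theta\colon\rmH^r(\Ss)\to\rmH^{r-1}(\Ss)$; global existence is also a consequence of Theorem~\ref{Thm:Main}(iii). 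For part~(i)(b), if $\sigma+\Theta<0$ then $\mu_{\pm1}=-(\sigma+\Theta)/\bigl(2(\mu^++\mu^-)\bigr)>0$, so $A$ possesses a positive eigenvalue and, by the corresponding principle of linearised instability for fully nonlinear parabolic problems (cf.~\cite{Lunardi.1995}, and \cite{Escher.2011,Matioc.2020} for analogous arguments), the zero solution is unstable in $\hHp{r}{\Ss}$ in the asserted sense.

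For part~(ii), the bifurcation curves are produced by the Crandall--Rabinowitz local bifurcation theorem applied to the stationary equation $\Phi_{-\sigma\lambda}(f)=0$ --- equivalently, the capillarity equation~\eqref{eq:cap_eq} --- restricted to the even subspace of $\hHp{r}{\Ss}$: at $\lambda=\ell^2$ the linearisation $\partial_f\Phi_{-\sigma\lambda}(0)$ has one-dimensional kernel spanned by $\cos(\ell\cdot)$, the transversality condition holds because at $\lambda=\ell^2$ the operator $\partial_\lambda\partial_f\Phi_{-\sigma\lambda}(0)$ sends $\cos(\ell\cdot)$ to a nonzero multiple of itself, which does not belong to the range of $\partial_f\Phi_{-\sigma\ell^2}(0)$, and a Lyapunov--Schmidt computation delivers the stated expansions, with $f_\ell(s)$ an even equilibrium of $\Phi_{\Theta_\ell(s)}$, $\Theta_\ell(s):=-\sigma\lambda_\ell(s)$; see also Theorem~\ref{Thm:SolBif}. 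To examine the stability of $f_\ell(s)$ one sets $A(s):=\partial_f\Phi_{\Theta_\ell(s)}(f_\ell(s))$, which has compact resolvent and depends analytically on $s$, and which along the trivial branch carries the eigenvalue $\gamma(\lambda):=-\sigma(\ell^2-\lambda)/\bigl(2(\mu^++\mu^-)\ell\bigr)$ attached to $\cos(\ell\cdot)$, with $\gamma'(\ell^2)=\sigma/\bigl(2(\mu^++\mu^-)\ell\bigr)>0$. The exchange of stability principle of~\cite{Crandall.1973} then furnishes a $\rmC^1$ eigenvalue branch $s\mapsto\mu(s)$ of $A(s)$ with $\mu(0)=0$ such that $\mu(s)$ and $-s\,\lambda_\ell'(s)\,\gamma'(\ell^2)$ share the same sign for $0<\lvert s\rvert$ small; since $\lambda_\ell'(s)=-\tfrac{3}{4}\ell^4 s+\mcO(s^3)$, that quantity is positive, so $\mu(s)>0$ and $f_\ell(s)$ is unstable by the principle of linearised instability for $0<\lvert s\rvert<\delta_\ell$. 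The borderline value $s=0$ is covered by part~(i)(b) when $\ell\ge2$, since then $\sigma+\Theta_\ell(0)=\sigma(1-\ell^2)<0$; for $\ell=1$ one sits at the threshold $\sigma+\Theta=0$ and must instead invoke a centre-manifold reduction, the subcriticality of the branch (reflected in $\lambda_1''(0)<0$) forcing the scalar reduced equation to render $f=0$ unstable.

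The crux of the argument, and its main obstacle, is the precise identification of these linearisations: establishing the Fourier symbol of $\partial_f\Phi_\Theta(0)$ displayed above, and showing that $(\lambda,f)\mapsto\Phi_{-\sigma\lambda}(f)$ is jointly smooth, in fact real-analytic in $f$, on a neighbourhood of $(\ell^2,0)$ in $\RR\times\hHp{r}{\Ss}$, so that the Crandall--Rabinowitz machinery and the perturbation theory for the non-translation-invariant operators $A(s)$ apply. This reduces, via the construction of $\Phi_\Theta$ from the single- and double-layer potentials, to mapping and commutator estimates for the associated singular integral operators, most of which are at hand from the earlier sections and from~\cite{Bohme.2024}; what remains beyond that is the essentially algebraic Lyapunov--Schmidt expansion of~\eqref{eq:cap_eq}.
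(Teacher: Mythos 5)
Your proposal follows the paper's strategy throughout. For part~(i) you correctly compute the Fourier symbol of the linearisation at $f=0$ (which the paper records as $\sigma(\partial\Psi(0))=\{-(\Theta+\sigma k^2)/(2(\mu^++\mu^-)k) : k\in\NN\}$), minimise the real part of its entries to obtain $\vartheta_0$ exactly as in~\eqref{eq:theta0}, and invoke the linearised stability and instability theorems from~\cite{Lunardi.1995}; for part~(ii) you build on the Crandall--Rabinowitz bifurcation already constructed in Theorem~\ref{Thm:SolBif} and apply the exchange-of-stability principle of~\cite{Crandall.1973} in the flow-invariant even zero-mean subspace $\rmhH_e^{r-1}(\Ss)$.

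Two minor differences in how you treat (ii). For $\ell\geq 2$ and small $|s|$ the paper does not use exchange of stability: it observes that $\partial_f\Psi(0,\ell^2)$ already has $(\ell-1)$ positive simple eigenvalues (those at wavenumbers $k<\ell$), which persist under perturbation by continuity of finite eigenvalue systems in the sense of Kato, so~\eqref{eq:pos_ev} holds for all $|s|<\delta_\ell$ in one stroke; exchange of stability is reserved for $\ell=1$, where the spectrum at $s=0$ has no positive part and one must track the eigenvalue detaching from $0$. Your uniform exchange-of-stability argument reaches the same conclusion for $0<|s|$, and your reduction of the $s=0$, $\ell\geq 2$ case to part~(i)(b) is also valid (the unstable eigenfunctions $\cos(k\cdot)$, $k<\ell$, are even, so instability survives restriction to the even subspace), but the paper's count is a little more economical.

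Where you depart from the paper is at $\ell=1$, $s=0$. You correctly identify that this sits at the threshold $\sigma+\Theta=0$, where the linearisation on $\rmhH_e^{r-1}(\Ss)$ has a simple eigenvalue $0$ and no positive spectrum, so neither (i)(a) nor (i)(b) nor the exchange-of-stability sign conclusion (which addresses only $0<|s|$) applies. Note, however, that the paper's own proof of~(ii) only establishes~\eqref{eq:pos_ev} for $0<|s|<\delta_1$ when $\ell=1$ and says nothing about $s=0$; your centre-manifold sketch (``$\lambda_1''(0)<0$ forces the reduced scalar equation to make $f=0$ unstable'') is a reasonable heuristic but is neither in the paper nor carried out: it would require a centre-manifold theorem adapted to the fully nonlinear parabolic framework and a computation of the sign of the reduced cubic coefficient, and you should be aware that the sign of $\lambda_1''(0)$ is only indirectly linked to that coefficient. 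Either flesh this out or, as the paper tacitly does, content yourself with $0<|s|<\delta_1$ when $\ell=1$.
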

	
		Lastly, we fix some notation which will be used throughout this paper.
\subsection*{Notation} 
	Let $E_1, \ldots, E_n, E, F$ be Banach spaces, with $n \in \mathbb{N}$. We write $\mathcal{L}^n(E_1, \ldots, E_n, F)$ for the Banach space of bounded $n$-linear maps from $\prod_{i=1}^n E_i$ into $F$. In the special case when~${E_i = E}$ for all $1 \leq i \leq n$, we abbreviate this by $\mathcal{L}^n(E, F)$, and let $\mathcal{L}_{\mathrm{sym}}^n(E, F)$ denote the subspace of symmetric operators. Given an open set $\mathcal{U} \subset E$, we write $\rmC^{1-}(\mathcal{U}, F)$ for the space of locally Lipschitz continuous mappings, and $\rmC^{\infty}(\mathcal{U}, F)$ for the space of smooth mappings from~$\mathcal{U}$ to $F$.

	For a function $w=(w_1,w_2):\mathcal{O}\to\RR^2$, where $\mathcal{O}$ is an open subset in $\Ss\times\RR$, we set 
	\begin{equation}\label{eq:curl}
	 \curl w:=\partial_1w_2-\partial_2w_1,
	\end{equation}
	and, given $f:\Ss\to\RR$, 
	\begin{equation}\label{eq:intmean}
		\langle f\rangle:=\frac{1}{2\pi}\int_{-\pi}^\pi f(\xi)\, \rmd\xi
	\end{equation}
	is the integral mean of $f$.

 \subsection*{Outline}
	 In Section~\ref{Sec:2} and Appendix~\ref{Sec:B}, we prove the unique solvability of a stationary two-phase Stokes problem with prescribed velocity jump across the interface, using the hydrodynamic double-layer potential, and also treat a corresponding homogeneous problem. Sections~\ref{Sec:3}--\ref{Sec:4} (based on results from  Appendix~\ref{Sec:A}) analyze the resolvent set of the double-layer potential operator in~$\rmL^2(\Ss)$ and in higher-order Sobolev spaces. In Section~\ref{Sec:5}, we solve the fixed-time problem corresponding to~\eqref{eq:Stokes}, reformulate~\eqref{eq:STOKES} as an evolution equation for $f$, and establish its parabolic character, leading to the proof of Theorem~\ref{Thm:Main}. Finally, in Section~\ref{Sec:6}, we identify and study the equilibrium solutions of~\eqref{eq:STOKES} and prove Theorem~\ref{Thm:Stability}.

\section{Layer potentials and the unique solvability of a boundary value problem with transmission boundary conditions}\label{Sec:2}

	In the first part of this section, we consider the following boundary value problem with boundary conditions of transmission type
	\begin{equation}\label{eq:Ftp_b}
		\left.
		\arraycolsep=1.4pt
		\begin{array}{rclll}
		\Delta v^\pm-\nabla q^\pm&=&0&\mbox{in $\Omega^\pm$,}\\
		\vdiv v^\pm&=&0&\mbox{in $\Omega^\pm$,}\\{}
		[v]&=&\beta\circ\Xi^{-1}&\mbox{on $\Gamma$,}\\{}
		[T_1 (v,q)]\tnu&=&0&\mbox{on $\Gamma$,}\\
		(v^\pm,q^\pm)(x)&\to&\pm( 	c_1,c_2,0)&\begin{minipage}{3cm} 
			for $x_2\to\pm\infty$ uni-\\[-0.5ex]
			formly in $x_1\in \Ss$,
		\end{minipage}
		\end{array}\right\}
	\end{equation}
	where $f\in\rmH^{3}(\Ss)^2$,  $\beta\in\rmH^{2}(\Ss)^2$, and $c_1,\, c_2\in\RR$, and show that problem \eqref{eq:Ftp_b} has a classical solution in the sense of \eqref{eq:clsol} (which is unique) only for a particular choice of the constants $c_1,\, c_2$. Moreover, the solution admits an explicit integral representation; see Theorem~\ref{Thm:Ftp_b} below. In the second part of this section, we investigate in Proposition~\ref{Prop:Ftp_0} the unique solvability of a homogeneous boundary value problem associated with~\eqref{eq:Stokes}$_1$--\eqref{eq:Stokes}$_5$. These results play a central role in the proof of Theorem~\ref{Thm:Ftp}, where we show, in the context of~\eqref{eq:Stokes}, that the function  parametrizing the interface between the fluids determines, at each moment, the velocity and pressure in both fluid layers.

	We begin by introducing some notation and conventions that will be used throughout the paper. In \eqref{eq:Ftp_b} we set
	\begin{equation*}
		\Omega^\pm:=\{x=(x_1 ,x_2)\in\Ss\times\RR: x_2\gtrless f(x_1)\},\qquad \Gamma:= \{(\xi, f(\xi))\in\Ss\times\RR: \xi\in\Ss\},
	\end{equation*}
	and  $\Xi:= \Xi_f: \Ss\to\Gamma$ is the $\rmC^{5/2}$-diffeormorphism  $\Xi:= (\text{id}_\Ss, f)$. We further define
	\begin{equation}\label{eq:nutauomega}
	 	\omega:=\omega(f):= (1+f'^2)^{1/2},\quad	\nu:=(\nu_1,\nu_2):=\nu(f) :=\omega^{-1}(-f', 1)^\top, \quad \tau:=\tau(f) :=\omega^{-1}(1,f')^\top.
	\end{equation}

	Given a function $w:(\Ss\times\RR)\setminus\Gamma\to\RR^k$, $k\in\NN$, we set $w^\pm:=w|_{\Omega^\pm}$ and  denote by
	\begin{equation}\label{eq:deflimits}
		\{w\}^\pm(x_0):= \lim_{\Omega^{\pm} \ni x\to x_0}w(x),\qquad x_0\in\Gamma,
	\end{equation}
	the one-sided limits of  $w$ in $x_0\in\Gamma$, provided that these limits exist. 
	Moreover,
	\begin{equation}\label{eq:defjump}
		[w](x_0):= \{w\}^+(x_0) -\{w\}^-(x_0)  
	\end{equation}
	is the jump of $w$ across $\Gamma$ in $x_0\in\Gamma$.
	
	Vice versa, we associate to functions $w^\pm:\Omega^\pm\to\RR^k$, $k\in\NN$, the mapping  $w:={\bf 1}_{\Omega^+}w^++{\bf 1}_{\Omega^-}w^-$, which is   defined a.e. in $\Ss\times\RR$.  

	By a classical solution to \eqref{eq:Ftp_b} (and to the other transmission problems considered later on) we mean a pair $(v,q)\in X_f$, where 
	\begin{equation}\label{eq:clsol}
		X_f:=\Bigg\{(v,q):(\Ss\times\RR)\setminus\Gamma\to\RR^2\times\RR\,\Bigg|\,
		\begin{aligned}
			&v^{\pm}\in \rmC^2(\Omega^\pm,\RR^2)\cap \rmC^1(\overline{\Omega^\pm},\RR^2)\\
			&q^\pm \in \rmC^1(\Omega^\pm)\cap \rmC(\overline{\Omega^\pm})
		\end{aligned}
		\Bigg\}.
	\end{equation}

	We further  recall  that the $x_1$-periodic Stokeslet~$(\mcU,\mcP)$,  with~${\mcU=(\mcU_j^k)_{1\leq  j,k\leq 2}}$ and~$\mcP=(\mcP^1,\mcP^2)^\top$,  is given by 
	\begin{equation}\label{eq:UP_by_z}
		\mcU(x)=\frac{1}{8\pi}
		\begin{pmatrix}
			z_0(x)+x_2 z_2(x) & -x_2 z_1(x)\\
			-x_2 z_1(x) & z_0(x) -x_2 z_2(x)
		\end{pmatrix},\qquad
		\mcP=-\frac{1}{4\pi}(z_1,z_2)^\top
	\end{equation}
	 for  $x=(x_1,x_2)\in(\Ss\times\RR)\setminus\{0\}$; see \cite{Bohme.2024, GGS25a}, where, using the shorthand notation
	\begin{equation}\label{eq:notation1}
		t_{[x_1]}:=\tan\!\left(\frac{x_1}{2}\right),\quad x_1\in\RR\setminus(\pi+2\pi\mathbb{Z}),\qquad\text{and}\qquad T_{[x_2]}:=\tanh\!\left(\frac{x_2}{2}\right),\quad x_2\in\RR,
	\end{equation}
	we set
	\begin{equation}\label{eq:defz}	
	\begin{aligned}
		&z_0(x):=\ln\bigg(\frac{\ts{x_1}^2+\T{x_2}^2}{(1+\ts{x_1}^2)(1-\T{x_2}^2)}\bigg),\qquad
		z_1(x):=\frac{\ts{x_1}(1-\T{x_2}^2)}{\ts{x_1}^2+\T{x_2}^2},\qquad
		z_2(x):=\frac{(1+\ts{x_1}^2)\T{x_2}}{\ts{x_1}^2+\T{x_2}^2},\\
		&z_3(x):=x_2z_5(x),\qquad
		z_5(x):=\frac{(1+\ts{x_1}^2)(1-\T{x_2}^2)(\ts{x_1}^2-\T{x_2}^2)}{2(\ts{x_1}^2+\T{x_2}^2)^2},\\
		&z_4(x):=x_2z_6(x),\qquad z_6(x):=\frac{\ts{x_1}\T{x_2}(1+\ts{x_1}^2)(1-\T{x_2}^2)}{2(\ts{x_1}^2+\T{x_2}^2)^2}.
	\end{aligned}
	\end{equation}
	 Noticing that,
	\begin{equation}\label{eq:grad_z_i}
		\nabla z_0=(z_1,z_2)^\top,\qquad \nabla z_1=(-z_5,-2z_6)^\top,\qquad \nabla z_2=(-2z_6,z_5)^\top,
	\end{equation}
	the partial derivatives of $(\mcU,\mcP)$ are  given by 
	\begin{equation}\label{eq:derivU}
	\begin{aligned}
		&\partial_1 \mcU=\frac{1}{8\pi}
		\begin{pmatrix}
			z_1 - 2 z_4 &  z_3\\
			 z_3 & z_1 + 2 z_4
		\end{pmatrix},
		&&\quad\partial_1 \mcP=\frac{1}{4\pi}(z_5,2z_6)^\top ,
		\\
		&\partial_2 \mcU=\frac{1}{8\pi}
		\begin{pmatrix}
			2z_2 +  z_3 & -z_1 + 2 z_4\\
			-z_1+ 2 z_4 & - z_3
		\end{pmatrix},
		&&\quad \partial_2 \mcP=\frac{1}{4\pi}(2z_6,-z_5)^\top.
	\end{aligned}
	\end{equation}
	Since $(\mcU^k,\mcP^k)$ with $\mcU^k:=(\mcU^k_1,\mcU^k_2)^\top$, $k=1,2$, solve the  homogeneous Stokes equations
	 \begin{equation}\label{eq:hom_Stokes}
 	 	\left.
 	 	\arraycolsep=1.4pt
 	 	\begin{array}{rcl}
 	 		\Delta U -\nabla P &=& 0,\\[.5ex] 	
 	 		\vdiv U &=&0
 	 	\end{array}\right\}
 	 	\qquad \text{in $(\Ss\times\RR)\setminus\{0\}$};
 	 \end{equation}
	 see \cite[Eq.~(2.9)]{Bohme.2024}, straightforward computations show that~$(\mcW^{i,k},\mcQ^{i,k}):(\Ss\times\RR)\setminus\{0\}\to\RR^2\times\RR$ with  
	\begin{equation}\label{eq:defWQ}
		\mcW^{i,k}_j:=-\mcP^k \delta_{ij}+\partial_i \mcU_j^k+\partial_j \mcU_i^k,\quad j=1,2,\qquad \mcQ^{i,k}:=2\partial_i\mcP^k,
	\end{equation}
	are  solutions to \eqref{eq:hom_Stokes} for  $i,k=1,2$. Setting $\mcW_j:=(\mcW^{i,k}_j)_{1\leq i,k\leq 2}$ and $\mcQ:=(\mcQ^{i,k})_{1\leq i,k\leq 2}$, we infer from \eqref{eq:UP_by_z} and \eqref{eq:derivU} that
	\begin{equation}\label{eq:WQexp}
		\mcW_1=\frac{1}{4\pi}
 		\begin{pmatrix}
 			2z_1-2 z_4 & z_2+z_3\\
			z_2+z_3 & 2 z_4
		\end{pmatrix},\quad
 		\mcW_2=\frac{1}{4\pi}
 		\begin{pmatrix}
 			z_2+z_3 & 2 z_4\\
 			2 z_4 & z_2-z_3
 		\end{pmatrix},\quad
 		\mcQ=\frac{1}{2\pi}
 		\begin{pmatrix}
 			z_5 & 2z_6\\
 			2z_6 & -z_5
 		\end{pmatrix}.
	\end{equation}
 	To solve problem \eqref{eq:Ftp_b}, we introduce the hydrodynamic double-layer potential   $(v,q)=(v,q)[\beta]$, with~${v=(v_1,v_2)^\top}$, generated by the density $\beta\circ\Xi^{-1}$, cf. \cite{Ladyzhenskaya.1963}, defined for $ x\in\Omega^\pm$ by
 	\begin{equation}\label{eq:defwq}
 	\begin{aligned}
 		v_j(x):=v_j[\beta](x)&:=\int_{-\pi}^\pi \mcW_j^{i,k}(r)\nu_i(s)\beta_k(s)\omega(s)\,\rmd s,\qquad j=1,2,\\
 		q(x):=q[\beta](x)&:=\int_{-\pi}^\pi \mcQ^{i,k}(r)\nu_i(s)\beta_k(s)\omega(s)\,\rmd s,
 	\end{aligned}
 	\end{equation}
 	with
 	\begin{equation*}
 		r=r(x,s):=x-(s,f(s)),\qquad x\in (\Ss\times\RR)\setminus\Gamma,\quad s\in\Ss.
 	\end{equation*}
 	Defining  for $\varphi\in\rmL^2(\Ss)$ the  integral operators $Z_i(f)[\varphi]:(\Ss\times\RR)\setminus\Gamma\to\RR,$ $1\leq i\leq 4$, by 
 	\begin{equation}\label{eq:defZ}
 		Z_i(f)[\varphi](x):=\frac{1}{2\pi}\int_{-\pi}^\pi z_i(r)\varphi(s)\,\rmd s;
 	\end{equation}
	see \cite[Lemma~C.1]{Bohme.2024}, and  using the shorthand $Z_i :=Z_i(f)$, we have, using obvious matrix notation,
 	\begin{equation}\label{eq:w_by_Z}
 	v=v[\beta]=\frac{1}{2}
 	\begin{pmatrix}
 		Z_2+Z_3& 2Z_4\\
 		2Z_4&Z_2-Z_3
 	\end{pmatrix}
 	\begin{bmatrix}
	 	\beta_1\\
	 	\beta_2
 	\end{bmatrix}
 	-\frac{1}{2}
 	\begin{pmatrix}
 		2Z_1-2Z_4& Z_2+Z_3\\
 		Z_2+Z_3&2Z_4
 	\end{pmatrix}
 	\begin{bmatrix}
 		f'\beta_1\\
 		f'\beta_2
 	\end{bmatrix}.
 	\end{equation}
	Since \eqref{eq:derivU} and \eqref{eq:defWQ} imply that 
	\[
	\mcQ^{i,k}(r)\nu_i(s)\beta_k(s)\omega(s)= 2\partial_s\big(\mcP^1(r)\big)\beta_2(s)-2\partial_s\big(\mcP^2(r)\big)\beta_1(s),\qquad s\in\Ss, 
	\]
	 we obtain from~\eqref{eq:UP_by_z} and integration by parts in \eqref{eq:defwq} the formula
 	\begin{equation}\label{eq:q_by_Z}
 		q=q[\beta]=Z_1(f)[\beta_2']-Z_2(f)[\beta_1'].
 	\end{equation}
 	The pair $(v,q)$ defined in~\eqref{eq:defwq} is the solution  to the boundary value problem~\eqref{eq:Ftp_b}, as stated in the following result.

 	\begin{Theorem}\label{Thm:Ftp_b}
 	Given $f\in\rmH^3(\Ss)$ and $\beta\in\rmH^2(\Ss)^2$,  problem~\eqref{eq:Ftp_b} has a solution~${(v,q)\in X_f}$ if and only if the constants $c_1,\, c_2$ in \eqref{eq:Ftp_b}$_5$ are given by
	\begin{equation}\label{eq:c1c2}
		c_1 = \frac{\langle \beta_1-f'\beta_2\rangle}{2}\qquad\text{and}\qquad c_2= \frac{\langle \beta_2-f'\beta_1\rangle}{2}.
	\end{equation}
	Assuming \eqref{eq:c1c2}, the solution $(v,q)$  is unique and is given by \eqref{eq:defwq} (or \eqref{eq:w_by_Z}--\eqref{eq:q_by_Z}).	
		
	Moreover, we have
	\begin{equation}\label{eq:stress_inf}
		T_1 (v^\pm,q^\pm)(x)\to0 \qquad\text{for $x_2\to\pm\infty$ uniformly in $x_1\in \Ss$.}
	\end{equation}	 
 	\end{Theorem}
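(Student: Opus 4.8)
The plan is to verify directly that the pair $(v,q)$ defined by~\eqref{eq:defwq} solves~\eqref{eq:Ftp_b} with the claimed constants, and then to prove uniqueness by a reduction to the homogeneous problem treated in Proposition~\ref{Prop:Ftp_0}.

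First I would address the bulk equations and the regularity. Since each $(\mcW^{i,k},\mcQ^{i,k})$ solves the homogeneous Stokes system~\eqref{eq:hom_Stokes} away from the origin, differentiation under the integral sign (legitimate for $x\notin\Gamma$, because the kernels are smooth there and $\Gamma$ is compact) shows that $(v^\pm,q^\pm)$ satisfy $\Delta v^\pm-\nabla q^\pm=0$ and $\vdiv v^\pm=0$ in $\Omega^\pm$. For the boundary regularity $v^\pm\in\rmC^1(\overline{\Omega^\pm},\RR^2)$ and $q^\pm\in\rmC(\overline{\Omega^\pm})$ and the jump relations~\eqref{eq:Ftp_b}$_3$, \eqref{eq:Ftp_b}$_4$, I would invoke the mapping and jump properties of the operators $Z_i(f)$ established in~\cite{Bohme.2024} (notably the analogue of their Lemma~C.1 and the associated trace formulas): these give that $Z_2,Z_3,Z_4$ extend continuously up to $\Gamma$ with a jump in the normal direction producing exactly $[v]=\beta\circ\Xi^{-1}$, while $Z_1$ carries the logarithmic part. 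Because $\beta\in\rmH^2(\Ss)^2$ and $f\in\rmH^3(\Ss)$, the densities $f'\beta_k$ lie in $\rmH^1(\Ss)$, which is the regularity needed to run those estimates and to control $\nabla v^\pm$ up to the boundary. The continuity of the stress across $\Gamma$, i.e. $[T_1(v,q)]\tnu=0$, is the classical jump relation for the hydrodynamic double-layer potential and should follow from the same trace analysis together with the representation~\eqref{eq:q_by_Z} of $q$.

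Next, the far-field behavior. Here I would analyze the asymptotics of $z_i(r)$ as $x_2\to\pm\infty$ using the shorthand $T_{[x_2]}=\tanh(x_2/2)\to\pm1$ from~\eqref{eq:notation1} and~\eqref{eq:defz}: one reads off that $z_1(r),z_2(r)\to0$, $z_4(r)\to0$, while $z_3(r)=r_2 z_5(r)$ and $z_2(r)+z_3(r)$ have finite, explicit limits (indeed $z_2(r)\to\pm1$ after the dominant terms are extracted, and $z_5(r)\to0$ so $z_3(r)$ stays bounded). Plugging these limits into~\eqref{eq:w_by_Z} collapses the matrices to a constant, and the surviving integrals become $\tfrac12\langle\beta_1-f'\beta_2\rangle$ and $\tfrac12\langle\beta_2-f'\beta_1\rangle$, with opposite signs at $+\infty$ and $-\infty$ because $T_{[x_2]}\to\pm1$ — this is exactly~\eqref{eq:c1c2}. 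The same computation applied to $\mcQ$ (equivalently to~\eqref{eq:q_by_Z}) shows $q^\pm(x)\to0$, and differentiating once more gives $\nabla v^\pm(x)\to0$, hence~\eqref{eq:stress_inf}. That the constants \emph{must} take the values in~\eqref{eq:c1c2} for solvability then follows from the uniqueness statement of Proposition~\ref{Prop:Ftp_0}: if two admissible constant pairs worked, the difference of the two solutions would solve the homogeneous problem with a possibly nonzero far-field constant, which that proposition rules out unless the constant vanishes.

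Finally, uniqueness. Given two classical solutions of~\eqref{eq:Ftp_b} with the same data and the same constants~\eqref{eq:c1c2}, their difference $(\tilde v,\tilde q)\in X_f$ has $[\tilde v]=0$, $[T_1(\tilde v,\tilde q)]\tnu=0$, and $\tilde v^\pm\to0$ at $\pm\infty$ — i.e. it solves the homogeneous transmission problem of Proposition~\ref{Prop:Ftp_0}, whence $\tilde v=0$ and $\tilde q=\mathrm{const}$ on each phase; the constant is then forced to be zero by the far-field condition on $\tilde q$. The main obstacle I anticipate is the second step: carefully justifying the passage to the boundary — the jump relations for the double-layer potential and the up-to-$\Gamma$ $\rmC^1$-regularity of $v^\pm$ — since the periodic Stokeslet kernels are singular on $\Gamma$ and one must track the (weakly singular versus principal-value) nature of each $Z_i(f)$ and show the densities $\beta_k,f'\beta_k\in\rmH^1(\Ss)$ suffice; the bulk equations, the far-field asymptotics, and the uniqueness reduction are comparatively routine once the trace machinery of~\cite{Bohme.2024} is in place.
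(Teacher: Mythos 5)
Your proposal is correct and follows essentially the same route as the paper's, which defers the bulk equations and uniqueness to \cite[Theorem~2.2]{Bohme.2024} and the jump/far-field computations to Lemmas~\ref{Lem:vq_bd}--\ref{Lem:vq_ff} in Appendix~\ref{Sec:B} — precisely the steps you sketch inline, with your invocation of Proposition~\ref{Prop:Ftp_0} (specialized to $\mu^+=\mu^-=1$) supplying the necessity and uniqueness just as the paper's cited energy argument does. One small slip to clean up: you first write $z_1(r),z_2(r)\to 0$ before stating the correct limit $z_2(r)\to\pm 1$, and for $z_3=x_2 z_5$ you only claim boundedness, whereas you need $z_3\to 0$ (and likewise $z_4\to 0$), which holds because $1-T_{[x_2]}^2=\mathrm{sech}^2(x_2/2)$ decays exponentially and thus dominates the linear factor $x_2$; without that decay the matrices in~\eqref{eq:w_by_Z} would not collapse to the constants in~\eqref{eq:c1c2}.
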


 	\begin{proof}
 	That our solutions satisfy the equations \eqref{eq:Ftp_b}$_{1-2}$ and are unique follows as in the proof of \cite[Theorem~2.2]{Bohme.2024}. The boundary conditions \eqref{eq:Ftp_b}$_{3-4}$ as well as the far-field conditions \eqref{eq:Ftp_b}$_5$ and~\eqref{eq:stress_inf} are established in Lemma~\ref{Lem:vq_bd} and Lemma~\ref{Lem:vq_ff}.
 	\end{proof}

 	To establish the unique solvability of  the boundary value problem~\eqref{eq:Stokes}$_1$--\eqref{eq:Stokes}$_5$ (at a fixed time~$t>0$), we  consider  the  homogeneous boundary value problem
 	\begin{equation}\label{eq:Ftp_0}
		\left.
		\arraycolsep=1.4pt
		\begin{array}{rclll}
		\Delta v^\pm-\nabla q^\pm&=&0&\mbox{in $\Omega^\pm$,}\\
		\vdiv v^\pm&=&0&\mbox{in $\Omega^\pm$,}\\{}
		\mu^- v^+ - \mu^+ v^-&=&0&\mbox{on $\Gamma$,}\\{}
		[T_1 (v,q)]\tnu&=&0&\mbox{on $\Gamma$,}\\
		(\mu^\mp v^\pm,q^\pm)(x)&\to&\pm c& \begin{minipage}{3cm} 
			for $x_2\to\pm\infty$ uni-\\[-0.5ex]
			formly in $x_1\in \Ss$,
		\end{minipage}
		\end{array}\right\}
 	\end{equation}
 	with $c\in\RR^3$, and prove that  \eqref{eq:Ftp_0} 	has a  solution $(v, q)\in X_f$  if and only if $c=0$, the solution  being moreover unique. Note that \eqref{eq:Ftp_0} and \eqref{eq:Stokes}$_1$--\eqref{eq:Stokes}$_5$  (with the right side of \eqref{eq:Stokes}$_4$ replaced by $0$) are equivalent problems.

 	\begin{Proposition}\label{Prop:Ftp_0}
	Given $f\in\rmC^1(\Ss)$ and $c\in\RR^3$, the transmission boundary value problem~\eqref{eq:Ftp_0} has a  solution $(v, q)\in X_f$  if and only if $c=0$. In this case, the solution is unique (and trivial).
	\end{Proposition}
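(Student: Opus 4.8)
\textbf{Proof strategy for Proposition~\ref{Prop:Ftp_0}.} The plan is to reduce the homogeneous problem \eqref{eq:Ftp_0} to the already-solved problem \eqref{eq:Ftp_b} and then exploit a duality/energy identity. First I would observe that if $(v,q)\in X_f$ solves \eqref{eq:Ftp_0}, then setting $\beta := \{v^+\}^+ \circ \Xi$ (which lies in $\rmH^2(\Ss)^2$ once we know the solution is smooth enough near $\Gamma$, a point that needs a short regularity remark), the jump $[v]$ across $\Gamma$ equals $(\mu^+/\mu^- - 1)$ times the trace of $v^-$, and more usefully $\mu^- v^+ = \mu^+ v^-$ forces the rescaled fields $\mu^\mp v^\pm$ to have a controlled jump. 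The cleaner route: rescale by introducing $\widehat v := {\bf 1}_{\Omega^+}\mu^- v^+ + {\bf 1}_{\Omega^-}\mu^+ v^-$ and correspondingly $\widehat q$; then $[\widehat v]=0$, while the rescaling destroys the Stokes equations unless one is careful, so instead I keep the original $v$ and simply record that $[v] = \beta\circ\Xi^{-1}$ for the specific $\beta$ above, and that $[T_1(v,q)]\tnu = 0$. In other words, $(v,q)$ is a solution of \eqref{eq:Ftp_b} with this particular $\beta$ and with far-field constants $(c_1,c_2)$ determined by \eqref{eq:c1c2}, while simultaneously the far-field condition \eqref{eq:Ftp_0}$_5$ forces $(\mu^- v^+,\mu^+ v^-)\to \pm c$; combining these two pieces of information about the behaviour at $x_2\to\pm\infty$ will pin things down.

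\textbf{Key steps.} The argument I would carry out has four parts. (1) \emph{Uniqueness via the transmission problem \eqref{eq:Ftp_b}.} Suppose $(v,q)$ solves \eqref{eq:Ftp_0}; I claim it also solves \eqref{eq:Ftp_b} with $\beta$ as above, hence by the uniqueness statement in Theorem~\ref{Thm:Ftp_b} it is \emph{the} double-layer potential $(v[\beta],q[\beta])$, and its far-field velocity limits are $\pm(c_1,c_2,0)$ with $(c_1,c_2)$ as in \eqref{eq:c1c2}. (2) \emph{Matching the two far-field descriptions.} On the other hand \eqref{eq:Ftp_0}$_5$ says $\mu^- v^+ \to c'$ and $\mu^+ v^- \to -c'$ for the velocity part (writing $c=(c',c_3)$), while from step (1) $v^+\to (c_1,c_2)$ and $v^-\to -(c_1,c_2)$. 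Hence $\mu^-(c_1,c_2) = c' = -\mu^+\cdot(-(c_1,c_2)) = \mu^+(c_1,c_2)$, which since $\mu^+\neq\mu^-$ forces $(c_1,c_2)=0$ and therefore $c'=0$; likewise the pressure limits from \eqref{eq:Ftp_b}$_5$ are $0$ at $\pm\infty$ (the pressure part of \eqref{eq:Ftp_b} has no constant), forcing $c_3=0$. Thus $c=0$, which is the "only if" direction together with the identification that the far-field velocity constant must vanish. (3) \emph{Triviality.} With $c_1=c_2=0$ the density satisfies $\langle \beta_1-f'\beta_2\rangle = \langle\beta_2-f'\beta_1\rangle = 0$; to conclude $(v,q)\equiv 0$ I would run the standard Stokes energy estimate: integrate $v\cdot(\Delta v-\nabla q)=0$ over $\Omega^\pm\cap\{|x_2|<R\}$, integrate by parts, use $\vdiv v=0$, add the two phases weighting the $\Omega^\pm$ integral so that the interface terms combine into $\int_\Gamma [T_1(v,q)\tnu]\cdot (\text{matched trace})\,\rmd\sigma$, which vanishes because $[T_1(v,q)]\tnu=0$ and $[v]$ is tangentially-consistent once $(c_1,c_2)=0$; the far-field boundary terms at $x_2=\pm R$ vanish in the limit $R\to\infty$ because $v\to 0$ and $T_1(v,q)\to 0$ (by \eqref{eq:stress_inf}) there. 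This yields $\int \mu^\pm |\nabla v^\pm + (\nabla v^\pm)^\top|^2 = 0$, hence each $v^\pm$ is an affine Killing field; periodicity in $x_1$ plus vanishing at infinity forces $v^\pm=0$, and then $\nabla q^\pm=0$ with $q^\pm\to 0$ gives $q^\pm=0$. (4) \emph{The "if" direction} is trivial: $c=0$ admits the solution $(v,q)=(0,0)$, and by the uniqueness just proved it is the only one.

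\textbf{Main obstacle.} The delicate point is step (3), specifically handling the interface integral and the far-field boundary terms rigorously. For the interface term one must be careful that although $[v]\neq 0$ in general, the combination entering the integration-by-parts identity is $\{T_1(v,q)\tnu\}^+\cdot\{v\}^+ - \{T_1(v,q)\tnu\}^-\cdot\{v\}^-$, and one rewrites this using $[T_1(v,q)]\tnu=0$ and the relation $\mu^- v^+ = \mu^+ v^-$ on $\Gamma$; this produces a factor that has a sign (being a multiple of $\mu^+-\mu^-$ times $\{T_1\tnu\}\cdot\{v^-\}$), and one needs this sign to cooperate — this is exactly where the structure $\mu^- v^+ - \mu^+ v^- = 0$ (rather than $[v]=0$) is essential and where I expect the computation to require the most care. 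The far-field decay needed to kill the boundary terms at $x_2=\pm R$ should follow from the explicit kernel estimates underlying \eqref{eq:stress_inf} and the exponential decay of the periodic Stokeslet in $x_2$, but one should cite or invoke the relevant decay bounds from \cite{Bohme.2024} rather than reproving them. A cleaner alternative for step (3), avoiding the energy identity on the interface altogether, is to apply Proposition~\ref{Prop:Ftp_0}'s analogue reasoning directly: once $(c_1,c_2)=0$ the double-layer potential with density $\beta$ of zero "effective mean" has a removable behaviour, and one appeals instead to a uniqueness result for the pure transmission problem with $[v]$ tangential; I would check which of these is shortest given the lemmas available in Appendix~\ref{Sec:B}, but the energy argument is the most self-contained and is the one I would present.
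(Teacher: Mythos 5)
Your proposal relies on Theorem~\ref{Thm:Ftp_b}, but that theorem requires $f\in\rmH^3(\Ss)$ and $\beta\in\rmH^2(\Ss)^2$, whereas Proposition~\ref{Prop:Ftp_0} is stated for $f\in\rmC^1(\Ss)$ only. Under the sole hypothesis $(v,q)\in X_f$ with $f\in\rmC^1(\Ss)$ there is no reason the trace $\{v\}^+\circ\Xi$ should lie in $\rmH^2(\Ss)^2$, so the reduction to~\eqref{eq:Ftp_b} is not available and the ``short regularity remark'' is in fact an unbridged gap. Even setting regularity aside, two further problems remain. First, your step~(2) extracts $(c_1,c_2)=0$ by dividing by $\mu^+-\mu^-$, but the paper does not assume $\mu^+\neq\mu^-$; the whole point is to cover both equal and distinct viscosities, and your argument yields nothing when $\mu^+=\mu^-$. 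Second, for $(v,q)$ solving~\eqref{eq:Ftp_0} to also solve~\eqref{eq:Ftp_b} you need the far-field pressure already to vanish, i.e.\ $c_3=0$ --- which is part of what you are trying to prove, making step~(1) circular.

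The noteworthy part is that you identified exactly the right auxiliary field, $\widehat v:={\bf 1}_{\Omega^+}\mu^-v^+ + {\bf 1}_{\Omega^-}\mu^+v^-$ (the paper's $\psi$), which is continuous across $\Gamma$ precisely because of~\eqref{eq:Ftp_0}$_3$ --- and then discarded it on the grounds that ``the rescaling destroys the Stokes equations.'' That worry is beside the point: the paper never needs $\psi$ to solve anything. Instead it uses compactly supported vertical cutoffs $\phi_n=u_n(x_2)\psi$ of $\psi$ as \emph{test functions} in the weak formulation of \eqref{eq:Ftp_0}$_1$--\eqref{eq:Ftp_0}$_2$ for the original $v^\pm$. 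Since $\psi$ (hence $\phi_n$) is continuous across $\Gamma$ and $[T_1(v,q)]\tnu=0$, the interface contributions cancel and one arrives at the weighted energy identity~\eqref{eq:test}. A boundedness and Korn-type argument then gives $\nabla\psi+(\nabla\psi)^\top=0$, then $\nabla\psi=0$, then $\psi=0$ together with the far-field condition, and finally $q=0$ and $c=0$. This route is entirely self-contained at $\rmC^1$ regularity, works uniformly in $\mu^\pm$, and never invokes the layer potentials. So the fix is to commit to $\widehat v$ --- but use it as a test function, not as a new unknown solving a rescaled PDE.
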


	\begin{proof}
	Assume that  \eqref{eq:Ftp_0} has a solution $(v,q)\in X_f$ for some $c=(c_1,c_2,c_3)\in\RR^3$. Then, in view of~\eqref{eq:Ftp_0}$_3$, we have 
	\begin{equation*}
		\psi:=\mu^-v^+{\bf 1}_{\Omega^+} +\mu^+v^-{\bf 1}_{\Omega^-}\in \rmC(\Ss\times\RR)^2\cap \rmH^{1}_{\rm loc}(\Ss\times\RR)^2.
	\end{equation*}
	Given $\phi\in\rmC(\Ss\times\RR)^2\cap \rmH^{1}(\Ss\times\RR)^2 $ with compact support, we compute, using  Stokes' theorem and~\eqref{eq:Ftp_0}$_2$, that   
	\begin{equation*}
	\begin{aligned}
		& \int_{\Omega^\pm} \phi^\pm \cdot\big(\Delta v^\pm -\nabla q^\pm\big)\, \rmd x \pm \int_{\Gamma} {\phi^\pm}^\top \big(-q^\pm I_2+\nabla v^\pm+(\nabla v^\pm)^\top\big) \tnu \, \rmd \sigma\\
  		&=-\frac{1}{2}\int_{\Omega^\pm} \big(\nabla \phi^\pm +(\nabla \phi^\pm)^\top\big ):\big(\nabla v^\pm +(\nabla v^\pm)^\top\big )\, \rmd x+\int_{\Omega^\pm}   q^\pm\vdiv\phi^\pm\, \rmd x,
	\end{aligned}
	\end{equation*}
	where $A:B$ is the Frobenius inner product of matrices $A,\, B\in\RR^{2\times 2}$, with $|A|:=\sqrt{A:A}$ being the associated norm. Summing up, we obtain in view of \eqref{eq:Ftp_0}$_1$ and \eqref{eq:Ftp_0}$_4$, using also the continuity of $\phi$, that
	\begin{equation}\label{eq:test} 
		\int_{\Ss\times\RR}\bigg(\frac{1}{\mu^-}{\bf 1}_{ \Omega^+} +\frac{1}{\mu^+}{\bf 1}_{ \Omega^-}\bigg)  \big(\nabla \psi +(\nabla \psi)^\top\big ):\big(\nabla \phi +(\nabla \phi)^\top\big )\, \rmd x = 2\int_{\Ss\times\RR}  q\vdiv\phi\, \rmd x.
	\end{equation}
 
	Let $u\in \rmC^{\infty}(\RR,[0,1])$ be an even function with $u(t)=1$ for $|t|\leq 1$ and $u(t)=0$ for $|t|\geq 2$. For each integer $n> \lVert f\rVert_\infty$, we define the even function $u_n\in \rmC^{\infty}(\RR,[0,1])$  by $u_n(t)=u(t-n+1)$ for~$t\in[n,n+1]$,    $u_n(t)=1$ for $|t|\leq n$, and~$u_n(t)=0$ for~$|t|\geq n+1$. Then, setting
	\begin{equation*}
		\phi_n(x):=u_n(x_2)\psi(x),\qquad x\in\Ss\times\RR,
	\end{equation*} 
	we conclude that $\phi_n\in \rmC(\Ss\times\RR)^2\cap \rmH^1(\Ss\times\RR)^2$ has compact support. 

	Using $\phi_n$ as a test function in \eqref{eq:test} and noticing by~\eqref{eq:Ftp_0}$_2$ that  for a.e. $x\in\Ss\times\RR$
	\begin{equation*}
	\begin{aligned}
		\vdiv\phi_n (x)&=u_n'(x_2)\psi_2(x) ,\\
		\nabla \phi_n +(\nabla \phi_n)^\top(x)&=u_n(x_2)\big(\nabla \psi +(\nabla \psi)^\top\big)(x)+ u_n'(x_2)
		\begin{pmatrix}
			0&\psi_1\\
			\psi_1&2\psi_2
		\end{pmatrix} (x), 
	\end{aligned}
	\end{equation*}
	we arrive at
	\begin{equation}\label{eq:testres} 
		\int_{\Ss\times\RR} {\bf 1}_{\{|x_2|< n\}} \big| \nabla \psi +(\nabla \psi)^\top\big|^2\, \rmd x\leq  \max\{\mu^+,\mu^-\}R_n,
	\end{equation}
	where $R_n:=R_n^I+R_n^{II}$ and
	\begin{equation*}
	\begin{aligned}
		R_n^I&:=2\int_{\Ss\times\RR}   u_n'(x_2)(q\psi_2)(x)\, \rmd x,\\
		R_n^{II}&:=-\int_{\Ss\times\RR}\bigg(\frac{1}{\mu^-}{\bf 1}_{\Omega^+} +\frac{1}{\mu^+}{\bf 1}_{\Omega^-}\bigg)u_n'(x_2)  \big(\nabla \psi +(\nabla \psi)^\top\big ):
		\begin{pmatrix}
 			0&\psi_1\\
			\psi_1&2\psi_2
		\end{pmatrix} (x)\, \rmd x.
	\end{aligned}
	\end{equation*}
	Since $(v,q)\in X_f$, it is straightforward to infer from \eqref{eq:Ftp_0}$_2$ and \eqref{eq:Ftp_0}$_5$  that $(R_n)$ is a bounded (actually convergent) sequence, implying that 
	\begin{equation}\label{eq:sg_psi_L2}
		\nabla\psi+(\nabla \psi)^\top\in\rmL^2(\Ss\times\RR,\RR^{2\times 2}).
	\end{equation}
	Moreover, it holds that
	\begin{equation}\label{eq:korn}
		\int_{\Ss\times\RR}|\nabla \phi_n|^2\,\rmd x+\int_{\Ss\times\RR}|\vdiv \phi_n |^2\,\rmd x=\frac{1}{2}\int_{\Ss\times\RR}|\nabla \phi_n +(\nabla \phi_n)^\top|^2\,\rmd x.
	\end{equation}
	Since \eqref{eq:sg_psi_L2} ensures that the right side of \eqref{eq:korn} defines a bounded sequence in~$\RR$, we deduce that $\nabla\psi\in\rmL^2(\Ss\times\RR,\RR^{2\times 2})$. Exploiting this property, the oddness of $u'$, and \eqref{eq:Ftp_0}$_5$ we easily obtain that $R_n\to0$ for $n\to\infty$. Consequently, we may infer from \eqref{eq:testres} that $\nabla \psi+(\nabla \psi)^\top=0$ in~$\rmL^2(\Ss\times\RR,\RR^{2\times 2})$. In view of this property, the  identity \eqref{eq:korn} may be reformulated as 
	\begin{equation*}
		\int_{\Ss\times\RR}u_n^2(x_2)|\nabla \psi|^2(x)\,\rmd x+\int_{\Ss\times\RR}(u_n^2)'(x_2) (\psi \cdot\partial_2\psi)(x)\,\rmd x= 0,
	\end{equation*}
	the second term on the left side converging, due to \eqref{eq:Ftp_0}$_5$, $(v,q)\in X_f$, and~$\nabla\psi\in\rmL^2(\Ss\times\RR,\RR^{2\times 2})$, to~$0$ as~$n\to\infty$. This in turn implies that $\nabla\psi=0$ and together with \eqref{eq:Ftp_0}$_5$ we conclude   that~$\psi=0$ and~$(c_1,c_2)=0$. Since \eqref{eq:Ftp_0}$_1$ and \eqref{eq:Ftp_0}$_5$ ensure that $q=0$ and $c_3=0$, this completes the proof. 
	\end{proof}

	\section{The \texorpdfstring{$\rmL^2$}{L2}--resolvent of the double-layer potential operator}\label{Sec:3}	

	In this section, we introduce the double-layer potential operator $\DD(f)$ associated with the Stokes operator 
	and the periodic graph $\Gamma=\{(\xi,f(\xi))\,:\,\xi\in\Ss\}$, where $f\in \rmC^1(\Ss)$ is an arbitrary function. 
	We prove that $\DD(f)\in \mcL \big(\rmL^2(\Ss)^2\big)$, and subsequently analyze its resolvent set; see Theorem~\ref{Thm:D(f)_Res_2}. 
	This result is used in Section~\ref{Sec:4} where, under stronger regularity assumptions on $f$,  we further investigate these properties within different $\rmC^*$-algebraic frameworks. 
	
	The analysis in this part uses properties of a particular class of (singular) integral operators introduced in \cite{Bohme.2024}, which we now recall. Given $m,\,n,\,p,\,q\in\NN_0$ satisfying $p\leq n+q+1$ and Lipschitz continuous mappings~${\bfa=(a_1,\dots,a_m):\RR\to\RR^m}$,~${\bfb=(b_1,\dots,b_n):\RR\to\RR^n,}$~${\bfc=(c_1,\dots,c_q):\RR\to\RR^q}$, we define the integral operators
	\begin{equation}\label{eq:Bnmpq}
		B_{n,m}^{p,q}(\bfa\vert \bfb)[\bfc,\varphi](\xi):= \frac{1}{2\pi}\PV\int_{-\pi}^{\pi}\frac{\prod\limits_{i=1}^{n}\frac{\T{\xi,s}b_i}{\tss}\prod\limits_{i=1}^{q}\frac{\dg{\xi,s}{c_i}/2}{\tss}}{\prod\limits_{i=1}^{m}\Big[1+\Big(\frac{\T{\xi,s}a_i}{\tss}\Big)^2\Big]}\frac{\varphi(\xi-s)}{\tss}\tss^p\,\rmd s
	\end{equation}
	and
	\begin{equation}\label{eq:Cnm}
		C_{n,m}(\bfa)[\bfb,\varphi](\xi):=\frac{1}{\pi}\PV\int_{-\pi}^\pi \frac{\prod\limits_{i=1}^{n}\frac{\dg{\xi,s}{b_i}}{s}}{\prod\limits_{i=1}^{m}\Big[1+\Big(\frac{\dg{\xi,s}{a_i}}{s}\Big)^2\Big]}\frac{\varphi(\xi-s)}{s}\,\rmd s,
	\end{equation}
 	where $\varphi\in\rmL^2(\Ss)$ and $\xi\in\RR$. The operators $B_{n,m}^{p,q}$ are the building blocks of the double-layer potential operator $\DD(f)$; see \eqref{eq:B_by_Bnmpq} and~\eqref{eq:D(f)}, while the operators $C_{n,m}$ are used in its analysis. We adopt the shorthand notation introduced in \eqref{eq:notation1} together with 
	\begin{equation}\label{eq:notation2} 
		\dg{\xi,s}{f}:= f(\xi)-f(\xi-s)\qquad\text{and}\qquad\T{\xi,s}f:=\tanh\bigg(\frac{\dg{\xi,s}{f}}{2}\bigg),\qquad \xi,\, s\in\RR.
	\end{equation}
	
	The mappings $B_{n,m}^{p,q}(\bfa\vert \bfb)[\bfc,\varphi]$ and $C_{n,m}(\bfa)[\bfb,\varphi]$ are $ 2\pi $-periodic, provided that $\bfa$, $\bfb$, and $\bfc$ are themselves $2\pi$-periodic. Moreover, the periodic Hilbert transform $H$; see, e.g., \cite{Torchinsky.2004, Butzer.1971}, appears in a natural way through the identity
	\begin{equation}\label{eq:HT}
 		B_{0,0}^{0,0}[\varphi](\xi)=\frac{1}{2\pi}\PV\int_{-\pi}^\pi\frac{\varphi(\xi-s)}{t_{[s]}}\,\rmd s= H[\varphi](\xi),\qquad \xi\in\RR.
	\end{equation}
	Whenever  all coordinate functions of $\bfa$, $\bfb$, and~$\bfc$ are  identical to a given function $f\in \rmW^{1,\infty}(\Ss)$, we set for simplicity
	\begin{equation}\label{eq:B(f)}
  		B_{n,m}^{p,q}(f):=B_{n,m}^{p,q}(f,\ldots, f\vert f,\ldots,f)[f,\ldots,f,\cdot],\qquad 0\leq p\leq n+q+1,
 	\end{equation}
	and
  	\begin{equation}\label{eq:Cnm0}
		C_{n,m}^0(f):=C_{n,m}(f,\ldots,f)[f,\ldots, f,\cdot].
	\end{equation}
 	To shorten notation, we introduce the operators
	\begin{equation}\label{eq:B_by_Bnmpq}
	\begin{aligned}
		B_1(f)&:=B_{0,1}^{0,0}(f)-B_{2,1}^{2,0}(f),\\
		B_2(f)&:=B_{1,1}^{0,0}(f)+B_{1,1}^{2,0}(f),\\
		B_3(f)&:=B_{0,2}^{0,1}(f)+B_{0,2}^{2,1}(f)-B_{2,2}^{0,1}(f)-2B_{2,2}^{2,1}(f)-B_{2,2}^{4,1}(f)+B_{4,2}^{2,1}(f)+B_{4,2}^{4,1}(f),\\	
		B_4(f)&:=B_{1,2}^{0,1}(f)+B_{1,2}^{2,1}(f)-B_{3,2}^{2,1}(f)-B_{3,2}^{4,1}(f),\\
		B_5(f)&:=2\big(B_{0,1}^{1,1}(f)-B_{2,1}^{3,1}(f)\big),\\
		B_6(f)&:=2\big(B_{1,1}^{1,1}(f)+B_{1,1}^{3,1}(f)\big),
	\end{aligned}
	\end{equation}	
	which  build the double-layer potential operator $\DD(f)$; see \eqref{eq:D(f)} below. Moreover, related to these operators,
	 we define for $f\in \rmW^{1,\infty}(\Ss)$ and $\varphi\in\rmL^2(\Ss)$  the function
	\begin{equation}\label{eq:B0}
		B_0(f)[\varphi](\xi):=\frac{1}{2\pi}\int_{-\pi}^\pi\ln\bigg(\frac{\tss^2+(\Txsf)^2}{(1+\tss^2)(1-(\Txsf)^2)}\bigg)\varphi(\xi-s)\,\rmd s,\qquad \xi\in\Ss.
	\end{equation}
	 
	The following result recalls the~$\rmL^2(\Ss)$-boundedness of the operators $B_{n,m}^{p,q}(\bfa\vert \bfb)[\bfc,\cdot]$.
	\begin{Lemma}\label{Lem:Bnmpq_L2_L2}
	Given $n,\,m,\,p,\, q\in\NN_0$ with $p\leq n+q+1$ and~$(\bfa, \bfb)\in\rmW^{1,\infty}(\Ss)^{m+n}$, there exists a constant $C>0$ that depends only on $n,\,m,\,p,\,q$,  and $\lVert(\bfa',\bfb')\rVert_\infty$ such that for all $\bfc\in\rmW^{1,\infty}(\Ss)^{q}$ and~$\varphi\in\rmL^2(\Ss)$ we have 
	\begin{equation}\label{eq:Bnmpq_L2_L2}
		\big\lVert B_{n,m}^{p,q}(\bfa\vert \bfb)[\bfc,\varphi]\big\rVert_2\leq C\lVert\varphi\rVert_2 \prod_{i=1}^{q}\lVert c_i'\rVert_\infty.
	\end{equation}
	Moreover, $B_{n,m}^{p,q}\in\rmC^{1-}\big(\rmW^{1,\infty}(\Ss)^{m+n},\mcL^q_{\textup{sym}}\big(\rmW^{1,\infty}(\Ss),\mcL\big(\rmL^2(\Ss)\big)\big)\big)$.
	\end{Lemma}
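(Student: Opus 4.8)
The plan is to recognize each operator $B_{n,m}^{p,q}(\bfa\vert\bfb)[\bfc,\cdot]$ as a (multilinear) singular integral operator of Calder\'on--Coifman--McIntosh--Meyer type whose $\rmL^2(\Ss)$-operator norm is controlled solely by the Lipschitz seminorms of its coefficients, and then to read off both \eqref{eq:Bnmpq_L2_L2} and the $\rmC^{1-}$-regularity by applying the same estimate to appropriate differences. The hypothesis $p\le n+q+1$ will enter only to keep the kernel bounded near $s=\pm\pi$.

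First I would split the principal value integral in \eqref{eq:Bnmpq} into the contributions coming from $|s|<\delta$ and from $\delta\le|s|\le\pi$, for a fixed small $\delta>0$. On the latter region the integrand has a \emph{bounded} kernel: the denominator is $\ge 1$; each $\T{\xi,s}{b_i}/\tss$ stays bounded by $C\lVert b_i'\rVert_\infty$ and each $\dg{\xi,s}{c_i}/\tss$ by $C\lVert c_i'\rVert_\infty$; and near $s=\pm\pi$ one has $\tss\to\infty$ while $\T{\xi,s}{b_i}$ and $\dg{\xi,s}{c_i}$ remain bounded, so the $n+q$ numerator factors each decay like $\tss^{-1}$ and the surviving power of $\tss$ is $\tss^{\,p-1-n-q}$, which is bounded precisely because $p\le n+q+1$. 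Hence this contribution is a Hilbert--Schmidt operator with norm $\le C\prod_{i=1}^{q}\lVert c_i'\rVert_\infty$.

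Next, on the region $|s|<\delta$, I would flatten the building blocks, replacing $\tss$ by $s/2$ and each $\T{\xi,s}{g}$ by $\dg{\xi,s}{g}/2$ for $g\in\{a_i,b_i\}$. Since $\tan(x/2)-x/2$ and $\tanh(x/2)-x/2$ vanish cubically at $x=0$, each such replacement costs only a correction with smooth bounded kernel (one extra power of $|s|$ is gained after dividing by $\tss$), which is again Hilbert--Schmidt. After these reductions the operator equals, up to Hilbert--Schmidt remainders, a finite linear combination of operators of the form
\[
 \varphi\mapsto \PV\int_{-\delta}^{\delta}\frac{\prod_{i=1}^{n+q}\big(\dg{\xi,s}{g_i}/s\big)}{\prod_{i=1}^{m}\big[1+\big(\dg{\xi,s}{a_i}/s\big)^2\big]}\,s^{\,p-1}\,\varphi(\xi-s)\dx s,
\]
with $\{g_1,\dots,g_{n+q}\}=\{b_1,\dots,b_n,c_1,\dots,c_q\}$. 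For $p\ge 1$ these have bounded kernels and are Hilbert--Schmidt. For $p=0$ they are genuine singular integrals: writing $\tfrac{1}{1+y^2}=\tfrac12\big((1+iy)^{-1}+(1-iy)^{-1}\big)$ with $y=\dg{\xi,s}{a_i}/s$ exhibits the denominator factors through the Cauchy kernels $(s\mp i\,\dg{\xi,s}{a_i})^{-1}$ along the Lipschitz graphs $x\mapsto x\mp i a_i(x)$, and expanding the product leaves a finite sum of multilinear singular integrals controlled by the Coifman--McIntosh--Meyer theorem for the Cauchy integral on Lipschitz curves (equivalently by Calder\'on's commutator theorem). This yields the $\rmL^2(\Ss)$-bound with constant $C\prod_i\lVert g_i'\rVert_\infty$, where $C$ depends only on $n,m,p,q$ and $\lVert(\bfa',\bfb')\rVert_\infty$; since among the $g_i$ the coefficients of $\bfc$ contribute only the linear divided differences $\dg{\xi,s}{c_i}$, this gives \eqref{eq:Bnmpq_L2_L2} as well as the multilinearity and symmetry in $\bfc$ (the latter being manifest from \eqref{eq:Bnmpq}).

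Finally, for the $\rmC^{1-}$-statement I would write, for two parameter tuples $(\bfa,\bfb)$ and $(\tilde\bfa,\tilde\bfb)$, the difference of the corresponding integrands as a telescoping sum in which a single factor is exchanged at a time, using $\tfrac{1}{1+x^2}-\tfrac{1}{1+y^2}=\tfrac{(y-x)(y+x)}{(1+x^2)(1+y^2)}$ for the denominator factors. Each summand is again of the form \eqref{eq:Bnmpq}, now with one coefficient slot occupied by a difference $b_i-\tilde b_i$ or $a_i-\tilde a_i$, so a further application of \eqref{eq:Bnmpq_L2_L2} bounds it by $C\lVert(\bfa-\tilde\bfa,\bfb-\tilde\bfb)'\rVert_\infty\prod_i\lVert c_i'\rVert_\infty$, with $C$ uniform on bounded subsets of $\rmW^{1,\infty}(\Ss)^{m+n}$. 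The main obstacle is none of this bookkeeping but the hard classical input used in the $p=0$ case, namely the $\rmL^2$-boundedness of Cauchy- and commutator-type singular integrals along Lipschitz graphs; alternatively one may simply invoke \cite{Bohme.2024}, where this lemma is established.
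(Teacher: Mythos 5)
Your argument is correct and reconstructs the strategy of the cited reference \cite{Bohme.2024}: flatten the $\tan$/$\tanh$ building blocks to reduce $B_{n,m}^{p,q}$ to the model operators $C_{n,m}$ from \eqref{eq:Cnm} modulo Hilbert--Schmidt remainders (this is exactly the identity \eqref{eq:B=A+C} in Appendix~\ref{Sec:A}), control the singular part by Coifman--McIntosh--Meyer / Calder\'on commutator theory, and obtain the $\rmC^{1-}$-regularity by the telescoping identity \eqref{eq:diffB}. Since the paper's proof of the lemma is a bare citation, there is nothing further to compare.
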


	\begin{proof}
		See \cite[Lemma~A.2, Lemma~A.3, and Lemma~A.5]{Bohme.2024}.
	\end{proof}
	
	We point out that the $\rmL^2$-adjoint $(B_{n,m}^{p,q}(\bfa\vert\bfb)[\bfc,\cdot])^*$ of $B_{n,m}^{p,q}(\bfa\vert\bfb)[\bfc,\cdot]$ is given by the relation
	\begin{equation}\label{eq:Bnmpq-ad}
		(B_{n,m}^{p,q}(\bfa\vert\bfb)[\bfc,\cdot])^*=(-1)^{p+1}B_{n,m}^{p,q}(\bfa\vert\bfb)[\bfc,\cdot].
	\end{equation}
	
	Given $f\in\rmW^{1,\infty}(\Ss)$ and $\beta\in\rmL^2(\Ss)^2$, we introduce the double-layer potential operator $\DD(f)$ by
	\begin{equation}\label{eq:D(f)}
		\DD(f)[\beta]:=-\frac{1}{2}
		\begin{pmatrix}
			B_2+B_3&2B_4\\
			2B_4&B_2-B_3
		\end{pmatrix}\begin{bmatrix}
		\beta_1\\
		\beta_2
		\end{bmatrix}
		+\frac{1}{2}\begin{pmatrix}
			2B_1-2B_4&B_2+B_3\\
			B_2+B_3&2B_4
		\end{pmatrix}\begin{bmatrix}
		f'\beta_1\\
		f'\beta_2
		\end{bmatrix},
	\end{equation}
	where $B_i:=B_i(f)$, $1\leq i\leq 4$, are defined in \eqref{eq:B_by_Bnmpq}. Using \eqref{eq:Bnmpq-ad} it can be easily verified that the~$\rmL^2$-adjoint of~$\DD(f)$ is given by
	\begin{equation}\label{eq:D(f)-ad}	
			\DD(f)^*[\beta]:=\frac{1}{2}
		\begin{pmatrix}
			B_2+B_3&2B_4\\
			2B_4&B_2-B_3
		\end{pmatrix}\begin{bmatrix}
		\beta_1\\
		\beta_2
		\end{bmatrix}
		-\frac{f'}{2}\begin{pmatrix}
			2B_1-2B_4&B_2+B_3\\
			B_2+B_3&2B_4
		\end{pmatrix}\begin{bmatrix}
		\beta_1\\
		\beta_2
		\end{bmatrix}.
	\end{equation}
 	Moreover, we introduce the singular integral operators $\BB_1(f),\,\BB_2(f)\in\mcL\big(\rmL^2(\Ss)\big)$, which  also appear in the analysis of the periodic Muskat problem;  see~\cite{Matioc.2020}, by setting for $\varphi\in\rmL^2(\Ss)$
 	\begin{equation}\label{eq:BB_i}
 		\BB_1(f)[\varphi]:=-B_1(f)[f'\varphi]+B_2(f)[\varphi]\qquad\text{and}\qquad \BB_2(f)[\varphi]:=B_1(f)[\varphi]+B_2(f)[f'\varphi].
 	\end{equation}
 	
 	A key result in the analysis of~\eqref{eq:STOKES} is the following invertibility result. 
 	\begin{Theorem}\label{Thm:D(f)_Res_2}
	Given $\delta\in(0,1)$, there exists a constant $C=C(\delta)>0$ with the property that for all~$\lambda\in\RR$ with~$|\lambda|\geq 1/2+\delta$ and $f\in\rmC^1(\Ss)$ with $\lVert f'\rVert_\infty\leq 1/\delta$ we have
	\begin{equation}\label{eq:D(f)_Res_2}
		\big\lVert\big(\lambda-\DD(f)^*\big)[\beta]\big\rVert_2\geq C\lVert\beta\rVert_2,\qquad \beta\in\rmL^2(\Ss)^2.
	\end{equation}
	Moreover,  $\lambda-\DD(f)^*,\, \lambda-\DD(f)\in\mcL\big(\rmL^2(\Ss)^2\big)$ are isomorphisms  for all~$\lambda\in\RR$ satisfying~$|\lambda|> 1/2$  and all~${f\in\rmC^1(\Ss)}$.
	\end{Theorem}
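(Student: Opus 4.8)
The plan is to establish the lower bound \eqref{eq:D(f)_Res_2} by a localization–freezing argument, reducing the question to a model operator on $\RR$ for which the spectrum is explicitly computable, and then deduce the invertibility of $\lambda-\DD(f)^*$ (and, by duality via \eqref{eq:Bnmpq-ad}, of $\lambda-\DD(f)$) as an isomorphism. First I would record that $\DD(f)^*\in\mcL(\rmL^2(\Ss)^2)$, which follows from Lemma~\ref{Lem:Bnmpq_L2_L2} applied to each building block $B_i(f)$ occurring in \eqref{eq:D(f)-ad}, the bound depending only on $\lVert f'\rVert_\infty\leq 1/\delta$. The crucial structural input is that the principal part of each $B_{n,m}^{p,q}(f)$ is governed, modulo compact (or lower-order, $\lVert f'\rVert_\infty$-controlled) remainders, by the periodic Hilbert transform $H$ through \eqref{eq:HT}: expanding $\T{\xi,s}f$ and the denominators $1+(\T{\xi,s}a_i/\tss)^2$ around a fixed point $\xi_0$, with $f'(\xi_0)=:m_0$, one finds that the "frozen" operator is a combination of $\id$ and $H$ with coefficients that are rational functions of $m_0$.

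Concretely, the key step is to show that for each $\xi_0$ the frozen symbol of $\DD(f)^*$ is a $2\times2$ matrix-valued Fourier multiplier whose symbol, as a function of $m_0=f'(\xi_0)$ and of $\operatorname{sgn}(k)$ for $k\in\ZZ\setminus\{0\}$, has spectrum contained in the interval $[-1/2,1/2]$. This is exactly the classical fact that the hydrodynamic double-layer (Neumann–Poincaré) operator for the Stokes system on a Lipschitz (here, graph) interface has $\rmL^2$-spectral radius at most $1/2$ away from the eigenvalue $1/2$ coming from the rigid motions; the gravity/surface-tension part of \eqref{eq:STOKES} is irrelevant here since $\DD(f)$ is purely geometric. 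I would carry this out by computing the two eigenvalues of the frozen $2\times2$ matrix explicitly — they come out to be $0$ and a value of modulus $<1/2$ for $m_0\in\RR$, $k\neq 0$, with $1/2$ attained only in a limiting sense — and then invoking the standard freezing-of-coefficients machinery (as used for the analogous Muskat operators $\BB_1(f),\BB_2(f)$ in \cite{Matioc.2020} and already in \cite{Bohme.2024}): a partition of unity subordinate to a fine cover of $\Ss$, the commutator estimates and the $\rmC^{1-}$-dependence from Lemma~\ref{Lem:Bnmpq_L2_L2}, and the observation that on the zero-frequency component $\DD(f)^*$ is a finite-rank perturbation. This yields that $\lambda-\DD(f)^*$ is, for $|\lambda|\geq 1/2+\delta$, a compact perturbation of an operator bounded below by $\asymp\operatorname{dist}(\lambda,[-1/2,1/2])\geq\delta$, hence semi-Fredholm with the quantitative bound \eqref{eq:D(f)_Res_2}; a continuity/homotopy argument in $f$ (deforming $f$ to $0$, for which $\DD(0)^*$ is a concrete multiplier that one checks directly to have the stated resolvent) rules out a nontrivial kernel and gives that it is an isomorphism. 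The same conclusions for $\lambda-\DD(f)$ follow from \eqref{eq:Bnmpq-ad}, or equivalently by duality, since $\DD(f)$ is the $\rmL^2$-adjoint of $\DD(f)^*$ up to the sign bookkeeping already displayed in \eqref{eq:D(f)}–\eqref{eq:D(f)-ad}.

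The main obstacle I anticipate is the explicit diagonalization of the frozen symbol and the verification that its two eigenvalues are $0$ and something of modulus strictly below $1/2$, uniformly in $m_0\in\RR$; the algebra of combining $B_1,\dots,B_4$ with the $f'\beta_i$-terms in \eqref{eq:D(f)-ad} is where the sign of the spectral radius bound is decided, and a sign error there would destroy the estimate. A secondary difficulty is keeping the freezing argument quantitative — i.e. tracking that all error terms are bounded by a constant depending only on $\delta$ and can, for $\lVert f'\rVert_\infty\leq 1/\delta$ fixed, be absorbed after choosing the cover fine enough — rather than merely qualitative; here I would lean on the locality and the Lipschitz dependence in Lemma~\ref{Lem:Bnmpq_L2_L2} exactly as in the cited works. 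The endpoint statement "$|\lambda|>1/2$" (rather than $\geq 1/2+\delta$) for the isomorphism claim then comes for free, since for any such $\lambda$ one may pick $\delta>0$ small with $|\lambda|\geq 1/2+\delta$.
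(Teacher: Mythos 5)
Your proposal takes a genuinely different route from the paper, and there is a gap that, as far as I can see, is fatal for the stated theorem (though not necessarily for a weaker version).

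The paper does not use freezing of coefficients here at all. It proves two Rellich-type energy identities: one (Lemma~\ref{Lem:Df_1}) obtained by testing $\vdiv\big(T_1(u,p)\partial_2 u\big)=\tfrac14\partial_2|pI_2+T_1(u,p)|^2$ against the hydrodynamic single-layer potential $(u,p)$ generated by $\beta$ and using the jump relation $\omega\big(\{T_1(u,p)\}^\pm\circ\Xi\big)\nu=\big(\pm\tfrac12+\DD(f)^*\big)[\beta]$, and another (Lemma~\ref{Lem:A(f)_Res}) obtained by integrating a pointwise divergence identity for the holomorphic vector field $V=(-Z_2[\varphi],Z_1[\varphi])$. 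Both give lower bounds that are uniform over $\{f\in\rmC^1(\Ss):\lVert f'\rVert_\infty\le 1/\delta\}$ and over $|\lambda|\ge 1/2+\delta$, because the constants depend only on $\lVert f'\rVert_\infty$ via Lemma~\ref{Lem:Bnmpq_L2_L2}. The proof of \eqref{eq:D(f)_Res_2} is then a compactness (contradiction) argument built on these uniform sub-estimates. Freezing of coefficients does appear in the paper, but only later, in Proposition~\ref{Prop:loc}, for the generator property of $\partial\Psi(f_0)$ in $\rmH^{r-1}$ with $f_0\in\rmH^r$, $r>3/2$.

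The gap in your approach is uniformity under the mere hypothesis $f\in\rmC^1(\Ss)$, $\lVert f'\rVert_\infty\le 1/\delta$. The freezing lemma you would need (the analogue of Lemma~\ref{Lem:Cnm_approx_a}) replaces, on a patch of size $\ve$ around $\xi_j^\ve$, the coefficient $f'$ by its value $f'(\xi_j^\ve)$; the error incurred is controlled by $\lVert\chi_j^\ve\,(f'-f'(\xi_j^\ve))\rVert_\infty$, i.e. by the modulus of continuity of $f'$ on a length scale $\ve$. For $f\in\rmH^r$, $r>3/2$, this is $O(\ve^{r-3/2})$ with a constant depending only on $\lVert f\rVert_{\rmH^r}$, and $\ve$ can be chosen depending only on that norm. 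For $f$ merely in $\rmC^1(\Ss)$ there is no quantitative modulus of continuity for $f'$: the choice of $\ve$ must depend on $f$ itself, not just on $\lVert f'\rVert_\infty$, so the constant in the resulting lower bound cannot depend on $\delta$ alone. This already breaks the bound~\eqref{eq:D(f)_Res_2} as stated. Secondly, the freezing machinery produces a lower bound of the form $\lVert\beta\rVert_2\le C\big(\lVert(\lambda-\DD(f)^*)[\beta]\rVert_2+\lVert\beta\rVert_{\text{lower order}}\big)$; removing the lower-order term requires either a separate compactness argument (which again loses uniformity over the class of $f$) or identifying it as genuinely compact, which is delicate for rough coefficients. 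Your remark that this is "semi-Fredholm with the quantitative bound \eqref{eq:D(f)_Res_2}" conflates the qualitative Fredholm property with the uniform resolvent bound — the latter does not follow. Finally, I would be cautious about asserting without computation that the frozen $2\times2$ symbol always has eigenvalues $0$ and something of modulus strictly below $1/2$; the sign bookkeeping between $B_1,\dots,B_4$ and the $f'\beta_i$ terms in \eqref{eq:D(f)-ad} is exactly where the estimate is won or lost, and the paper sidesteps this entirely by the Rellich route. In short: your approach would plausibly give the second half of the theorem (pointwise invertibility for each fixed $f$ in a more regular class), but not the uniform $\rmL^2$ lower bound over all $\rmC^1$ interfaces with bounded slope, which is precisely what the paper needs for Theorem~\ref{Thm:D(f)_Res_Hr-1}.
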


	The proof of Theorem~\ref{Thm:D(f)_Res_2} is deferred to the end of the section, as it requires some preparatory work.  One of the key ingredients is the following result.

 	\begin{Lemma}\label{Lem:Df_1}
 	Given $K>0$, there exists a constant $C=C(K)>0$ with the property that for all~${\beta\in\rmL^2(\Ss)^2}$,~${\lambda\in[-K,K]}$, and $f\in\rmC^1(\Ss)$ satisfying $\lVert f'\rVert_\infty\leq K$ we have
 	\begin{equation}\label{eq:Df_1}
 	\begin{aligned}
 		C\big\lVert\big(\lambda-\DD(f)^*\big)[\beta]\big\rVert_2 \lVert\beta\rVert_2&\geq m(\lambda)\lVert\omega^{-1}\beta\cdot\tau\rVert_2^2+ \langle\beta_1\rangle^2\\
 		&\quad+\big\lVert\big(\lambda-\tfrac{1}{2}\BB_1(f)\big)[\omega^{-1}\beta\cdot\nu]-\tfrac{1}{2}\BB_2(f)[\omega^{-1}\beta\cdot\tau]\big\rVert_2^2,
 	\end{aligned}
 	\end{equation}
 	where $m(\lambda):=\max\big\{\big(\lambda+\frac{1}{2}\big)\big(\lambda-\frac{3}{2}\big),\big(\lambda-\frac{1}{2}\big)\big(\lambda+\frac{3}{2}\big)\big\}$.
 	\end{Lemma}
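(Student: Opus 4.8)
The plan is to reduce \eqref{eq:Df_1} to a lower bound for a single $\rmL^2$-pairing $\langle(\lambda-\DD(f)^*)[\beta],\Phi\rangle_2$ by the right-hand side of~\eqref{eq:Df_1}, for a suitably chosen test field $\Phi$ with $\lVert\Phi\rVert_2\le C(K)\lVert\beta\rVert_2$; given such a $\Phi$, the estimate \eqref{eq:Df_1} is immediate from the Cauchy--Schwarz inequality. To set this up I resolve everything along the moving frame $\{\tau,\nu\}$ from \eqref{eq:nutauomega}: writing $a:=\omega^{-1}\beta\cdot\tau$, $b:=\omega^{-1}\beta\cdot\nu$ and $w:=(\lambda-\DD(f)^*)[\beta]$, the pointwise orthonormality of $\{\tau,\nu\}$ yields $\beta=a(1,f')^\top+b(-f',1)^\top$, $\beta_1=a-f'b$, $\lVert\beta\rVert_2^2=\lVert\omega a\rVert_2^2+\lVert\omega b\rVert_2^2$ and $\lVert w\rVert_2^2=\lVert w\cdot\tau\rVert_2^2+\lVert w\cdot\nu\rVert_2^2$; since $1\le\omega\le(1+K^2)^{1/2}$, the norms $\lVert\beta\rVert_2$ and $(\lVert a\rVert_2^2+\lVert b\rVert_2^2)^{1/2}$ are comparable with constants depending only on $K$.

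The heart of the argument is a normal form for $\DD(f)^*$ in this frame. Starting from the explicit representation \eqref{eq:D(f)-ad}, the expansions \eqref{eq:B_by_Bnmpq}, the definitions \eqref{eq:BB_i}, and $\nu=\omega^{-1}(-f',1)^\top$, $\tau=\omega^{-1}(1,f')^\top$, I would express $\omega^{-1}(w\cdot\tau)$ and $\omega^{-1}(w\cdot\nu)$ in terms of $a$ and $b$. The decisive manipulation is the elimination of the "weighted" operators $B_3(f)$, $B_4(f)$, which carry the extra factor $r_2=x_2-f(s)$ coming from the kernels $z_3=x_2z_5$, $z_4=x_2z_6$ of \eqref{eq:defz} (on $\Gamma$ this factor becomes $\dg{\xi,s}{f}$): integrating by parts in $s$ and using \eqref{eq:grad_z_i} together with $\partial_{x_1}[z_i(r)]=-\partial_s[z_i(r)]-f'(s)\,\partial_{x_2}[z_i(r)]$, one rewrites these operators through $B_1(f)$, $B_2(f)$ (hence through $\BB_1(f)$ and $\BB_2(f)$ by \eqref{eq:BB_i}), multiplications by $f'$, and rank-one operators producing integral means, every remaining singular contribution being $\rmL^2$-continuous by Lemma~\ref{Lem:Bnmpq_L2_L2}. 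This is the adjoint analogue of the normal-form computation carried out for $\DD(f)$ in \cite{Bohme.2024} (the operators $\BB_1(f),\BB_2(f)$ also govern the periodic Muskat problem \cite{Matioc.2020}); the new point is to track the numerical coefficients precisely enough that the $\nu$-component of $\omega^{-1}w$ displays, on the nose, the quantity $N:=(\lambda-\tfrac12\BB_1(f))[b]-\tfrac12\BB_2(f)[a]$ of \eqref{eq:Df_1}, while the $\tau$-component equals $\lambda a$ modulo a half-Hilbert-transform-sized off-diagonal term and harmless corrections.

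The choice of $\Phi$ is guided by the flat case $f\equiv0$, where $B_2(0)=B_3(0)=B_4(0)=0$ forces $\DD(0)^*=0$, $\tau=(1,0)^\top$, $\nu=(0,1)^\top$, $\BB_1(0)=0$, $\BB_2(0)=H$, and \eqref{eq:Df_1} reduces to $C|\lambda|\,(\lVert\beta_1\rVert_2^2+\lVert\beta_2\rVert_2^2)\ge m(\lambda)\lVert\beta_1\rVert_2^2+\langle\beta_1\rangle^2+\lVert\lambda\beta_2-\tfrac12H[\beta_1]\rVert_2^2$. This follows by testing $\lambda\beta$ against
\[
\Phi_0:=\big((\lambda+\mathrm{sgn}\,\lambda)\beta_1,\ \lambda\beta_2-H[\beta_1]\big),
\]
which satisfies $\lVert\Phi_0\rVert_2\le C(K)\lVert\beta\rVert_2$ and, using $\lVert H[\beta_1]\rVert_2^2=\lVert\beta_1\rVert_2^2-2\pi\langle\beta_1\rangle^2$, gives
\[
\langle\lambda\beta,\Phi_0\rangle_2=m(\lambda)\lVert\beta_1\rVert_2^2+\langle\beta_1\rangle^2+\big\lVert\lambda\beta_2-\tfrac12H[\beta_1]\big\rVert_2^2+\tfrac12\lVert\beta_1\rVert_2^2+\big(\tfrac\pi2-1\big)\langle\beta_1\rangle^2,
\]
the value $m(\lambda)=\lambda^2+|\lambda|-\tfrac34$ being produced by the quadratic term $\lambda^2$, the sign-matched linear term $|\lambda|$, and the regrouped Hilbert-transform contribution $-\tfrac34$, and the last two summands forming a non-negative slack. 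For general $f$ with $\lVert f'\rVert_\infty\le K$ I would take $\Phi=\omega^{-1}\big((\lambda+\mathrm{sgn}\,\lambda)\,a\,\tau+\Psi\,\nu\big)$, where $\Psi$ is the explicit combination of $a,b$ dictated by the normal form that specializes to $\lambda\beta_2-H[\beta_1]$ at $f\equiv0$ (i.e.\ $\Psi=N-\tfrac12\BB_2(f)[a]$ plus harmless terms), use the normal form to compute $\langle w,\Phi\rangle_2$, and check once more that it equals the right-hand side of \eqref{eq:Df_1} plus a manifestly non-negative remainder; the bound $\lVert\Phi\rVert_2\le C(K)\lVert\beta\rVert_2$ is then immediate from Lemma~\ref{Lem:Bnmpq_L2_L2}.

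The main obstacle is the normal-form step: carrying out the integrations by parts that remove $B_3(f),B_4(f)$ while keeping every constant exact, and verifying that no residual singular operator escapes the algebra generated by $\BB_1(f)$, $\BB_2(f)$, $H$ and the rank-one means. Exactness is essential: the hypothesis is merely $f\in\rmC^1(\Ss)$ with $\lVert f'\rVert_\infty\le K$, with \emph{no} smallness assumption on $\lVert f'\rVert_\infty$, so a bounded-but-not-small remainder in the normal form could not be absorbed against the right-hand side of \eqref{eq:Df_1}, which is not coercive for small $\lambda$; with an exact identity, however, one computes $\langle w,\Phi\rangle_2$ and reads off the three terms of \eqref{eq:Df_1} directly, the potentially damaging negative coefficient $m(\lambda)$ causing no difficulty because it is negative exactly on the range of $\lambda$ where the right-hand side of \eqref{eq:Df_1} is itself $\le0$. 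A secondary subtlety is to tune $\Psi$, hence $\Phi$, so that the leftover $\langle w,\Phi\rangle_2-(\text{right-hand side of \eqref{eq:Df_1}})$ is visibly non-negative — a sum of squared $\rmL^2$-norms plus non-negative multiples of $\langle\beta_1\rangle^2$ — mirroring the flat-case slack $\tfrac12\lVert\beta_1\rVert_2^2+(\tfrac\pi2-1)\langle\beta_1\rangle^2$.
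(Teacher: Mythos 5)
Your plan---exhibit an explicit test field $\Phi$ with $\lVert\Phi\rVert_2\lesssim\lVert\beta\rVert_2$ and bound $\langle(\lambda-\DD(f)^*)[\beta],\Phi\rangle_2$ from below by the right-hand side of \eqref{eq:Df_1}---is a genuinely different route from the paper's, and your flat-case bookkeeping is correct: $\DD(0)^*=0$, $\BB_1(0)=0$, $\BB_2(0)=H$, $m(\lambda)=\lambda^2+|\lambda|-\tfrac34$, and the slack $\tfrac12\lVert\beta_1\rVert_2^2+(\tfrac\pi2-1)\langle\beta_1\rangle^2$ all check out. The gap is the normal-form step you flag yourself: you need, essentially, that $\omega^{-1}\DD(f)^*[\beta]\cdot\nu$ equals $\tfrac12\BB_1(f)[\omega^{-1}\beta\cdot\nu]+\tfrac12\BB_2(f)[\omega^{-1}\beta\cdot\tau]$ on the nose. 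This is false, and integrating by parts will not make it true. The identity $B_1(f)[\beta_1]+B_2(f)[\beta_2]=\BB_1(f)[\omega^{-1}\beta\cdot\nu]+\BB_2(f)[\omega^{-1}\beta\cdot\tau]$ that you are relying on does hold, but it only governs the \emph{pressure} trace $\{p\}^\pm\circ\Xi$ (see \eqref{eq:du_p_BV}$_3$). By \eqref{eq:normalStress}, $\DD(f)^*$ is instead the symmetric part of the normal \emph{stress} trace of the single-layer potential, so it also carries the velocity-gradient traces $\{\partial_i u\}^\pm$ from \eqref{eq:du_p_BV}$_{1,2}$, which contain $B_3(f)$ and $B_4(f)$ irreducibly. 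Integration by parts in $s$, as in \eqref{eq:q_by_Z}, merely moves the weight $\dg{\xi,s}{f}$ onto the density; the residual operators remain singular, not rank-one, and cannot be expressed through $\BB_1(f),\BB_2(f),H$ and multiplications by $f'$. Since $\lVert f'\rVert_\infty\le K$ is not small and the right-hand side of \eqref{eq:Df_1} is not coercive for small $|\lambda|$, an $\rmL^2$-bounded remainder you cannot drop kills the argument---exactly the failure mode you anticipated.

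The paper does not eliminate $B_3,B_4$ from $\DD(f)^*$; it makes them cancel in a specific \emph{quadratic form}. Because the single-layer potential $(u,p)[\beta]$ solves the homogeneous Stokes system \eqref{eq:hom_Stokes_up}, the divergence identity \eqref{eq:divT} holds, and integrating it over $\Omega^\pm_n$ and letting $n\to\infty$ produces the Rellich/Pohozaev-type identity \eqref{eq:T_Gauss}. There the ``bad'' contributions appear as the pairing $\big\langle\big(\pm\tfrac12+\DD(f)^*\big)[\beta],\TT(f)[\beta]\pm\tfrac{\nu_2(\beta\cdot\tau)\tau}{2\omega}\big\rangle_2$, and the decisive cancellation is $\langle\TT(f)[\beta],\beta\rangle_2=0$, coming from the skew-adjointness $\TT(f)^*=-\TT(f)$ of the $\partial_2u$-trace operator (a consequence of \eqref{eq:Bnmpq-ad}). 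This cancellation is valid only against $\beta$ itself, not against an arbitrary $\Phi$. Your $\Phi=\omega^{-1}\big((\lambda+\mathrm{sgn}\,\lambda)\,a\,\tau+\Psi\,\nu\big)$ is a local expression in $a=\omega^{-1}\beta\cdot\tau$ and $b=\omega^{-1}\beta\cdot\nu$ and does not see this mechanism; the test field the argument actually requires is the \emph{nonlocal} trace $\{\partial_2u\}^\pm\circ\Xi=\TT(f)[\beta]\pm\tfrac{\nu_2(\beta\cdot\tau)\tau}{2\omega}$, which is what the PDE-derived identity supplies and what your ansatz omits.
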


 	\begin{proof}
 	 Recalling~\eqref{eq:B_by_Bnmpq} and Lemma~\ref{Lem:Bnmpq_L2_L2}, it suffices to establish \eqref{eq:Df_1} for $f\in\rmC^\infty(\Ss)$ with $\lVert f'\rVert_\infty\leq K$ and~$\beta\in\rmC^\infty(\Ss)^2$. To proceed, we define the hydrodynamic single-layer potential $(u,p)=(u,p)[\beta]$, generated by the density $\beta=(\beta_1,\beta_2)^\top$, by setting for  $x\in(\Ss\times\RR)\setminus\Gamma$
	\begin{equation*}
	\begin{aligned}
		u(x)&:=\int_{-\pi}^{\pi}\mcU^k(x-(s,f(s)))\beta_k(s)\,\rmd s,\\
 	 	p(x)&:= \int_{-\pi}^{\pi}\mcP^k(x-(s,f(s)))\beta_k(s)\,\rmd s,\\
	\end{aligned}
	\end{equation*}
	with $\mcU,\,\mcP$ defined in \eqref{eq:UP_by_z}. Arguing as in the proof of \cite[Theorem~2.2]{Bohme.2024}, we may deduce that the mappings~${u^\pm\in \rmC^\infty(\Omega^\pm,\RR^2)}$, $p^\pm\in\rmC^\infty(\Omega^\pm)$ have extensions~${u^\pm\in\rmC^1(\overline{\Omega^\pm},\RR^2)}$, $p^\pm\in\rmC(\overline{\Omega^\pm})$, being solutions to the homogeneous Stokes system 
	\begin{equation}\label{eq:hom_Stokes_up}
 	\left.
 	\arraycolsep=1.4pt
 	\begin{array}{rcl}
		\Delta u^\pm -\nabla p^\pm &=& 0,\\
 	 	\vdiv u^\pm &=&0
 	\end{array}\right\}
 	\qquad \text{in}~\Omega^\pm.
 	\end{equation}
	Using \eqref{eq:UP_by_z} and \eqref{eq:derivU} together with the formula~\eqref{eq:Z_pm}, we have
 	\begin{equation}\label{eq:du_p_BV}
 	\begin{aligned}
		\{\partial_1 u\}^\pm\circ\Xi&=\frac{1}{4}
		\begin{pmatrix}
 	 		B_1-2B_4&B_3\\
 	 		B_3&B_1+2B_4
 	 	\end{pmatrix}
 	 	\begin{bmatrix}
			\beta_1\\
			\beta_2
		\end{bmatrix}
		\pm\frac{\nu_1(\beta\cdot\tau)\tau}{2\omega},\\
		\{\partial_2 u\}^\pm\circ\Xi&=\frac{1}{4}
 	 	\begin{pmatrix}
 	 		2B_2+B_3&-B_1+2B_4\\
 	 		-B_1+2B_4&-B_3
 	 	\end{pmatrix}
 	 	\begin{bmatrix}
			\beta_1\\
			\beta_2
		\end{bmatrix}
		\pm\frac{\nu_2(\beta\cdot\tau)\tau}{2\omega},\\
		\{p\}^\pm\circ\Xi&=-\frac{1}{2}\big(B_1[\beta_1]+B_2[\beta_2]\big) \mp \frac{\beta\cdot\nu}{2\omega},
	\end{aligned}
	\end{equation}
	with $\nu=(\nu_1,\nu_2),\,\tau$, $\omega$  defined in \eqref{eq:nutauomega}	and	 $B_i:=B_i(f)$, $1\leq i\leq 4$, defined in~\eqref{eq:B_by_Bnmpq}. 
	We also note that if we discard the jump terms in \eqref{eq:du_p_BV} and replace for $1\leq i\leq4$ the operator~$B_i(f)$ by~$Z_i(f)$,  then~\eqref{eq:du_p_BV}  provides a formula for $\partial_i u $, $i=1,\,2$,  and $p$ in $(\Ss\times\RR)\setminus\Gamma$. Moreover, letting~$\TT(f)\in\mcL(\rmL^2(\Ss)^2)$  be the integral operator in the formula \eqref{eq:du_p_BV} for $\{\partial_2 u\}^\pm\circ\Xi$, that is
	\begin{equation*}
		\TT(f)[\beta]:=\frac{1}{4}
 	 	\begin{pmatrix}
			2B_2+B_3&-B_1+2B_4\\
 	 		-B_1+2B_4&-B_3
 	 	\end{pmatrix}
 	 	\begin{bmatrix}
			\beta_1\\
			\beta_2
		\end{bmatrix},
 	 \end{equation*}
	then, due to~\eqref{eq:Bnmpq-ad}, we deduce that $\TT(f)^*=-\TT(f)$, and therefore
	\begin{equation}\label{eq:T_SkA}
		\langle \TT(f)[\beta] \,|\,\beta\rangle_2=0,\qquad \beta\in\rmL^2(\Ss)^2,
	\end{equation}
	where $\langle\cdot \,|\,\cdot\rangle_2$  denotes here and below the $\rmL^2(\Ss)^2$-inner product. Using \eqref{eq:du_p_BV}, we find for the normal stress at the interface that
	\begin{equation}\label{eq:normalStress}
		\omega\big(\{T_1(u,p)\}^\pm\circ\Xi\big)\nu=\bigg(\pm\frac{1}{2}+\DD(f)^*\bigg)[\beta].
	\end{equation}
	Since $(u^\pm,p^\pm)$ are solutions to~\eqref{eq:hom_Stokes_up}, a straightforward computation leads us to the identity
	\begin{equation}\label{eq:divT}
		\vdiv \big(T_1(u,p)\partial_2u\big)=\frac{1}{4} \partial_2 |p I_2+T_1(u,p)|^2 \qquad\text{in $(\Ss\times\RR)\setminus\Gamma$.}
	\end{equation}
	Recalling~\eqref{eq:Zi_lim}, we infer from~\eqref{eq:du_p_BV} (and the discussion following it) that for $x_2\to\pm\infty$ we  have
	\begin{equation}\label{eq:Td2u_lim}
		\big(T_1(u^\pm,p^\pm)\partial_2u^\pm\big)(x)\to\frac{\langle\beta_1\rangle}{4}
		\begin{pmatrix}
			\langle\beta_2\rangle\\
			\langle\beta_1\rangle
		\end{pmatrix}
		\qquad\text{and}\qquad |p^\pm I_2+T_1(u^\pm,p^\pm )|^2(x)\to\frac{\langle\beta_1\rangle^2}{2}
	\end{equation}		 
	uniformly with respect to $x_1\in\Ss$. Integrating the relation~\eqref{eq:divT} over
	\begin{equation}\label{eq:Omega_n}
		\Omega^\pm_n:=\Omega^\pm\cap \{x=(x_1,x_2)\in\Ss\times\RR\,:\, |x_2|<n \},\qquad n>\lVert f\rVert_\infty,
	\end{equation}
	and using Stokes' theorem, we obtain in virtue of \eqref{eq:Td2u_lim}, after letting $n\to\infty$,   
	\begin{equation}\label{eq:T_Gauss}
		4\big\langle \omega\big(\{T_1(u,p)\}^\pm\circ\Xi\big)\nu \,|\, \{\partial_2u\}^\pm\circ\Xi \big\rangle_2= \int_{-\pi}^\pi\big|\big(\{p\}^\pm\circ\Xi I_2+\{T_1(u,p )\}^\pm\circ\Xi\big)\big|^2\,\rmd s + \pi\langle\beta_1\rangle^2 .
	\end{equation}
		
	The relations~\eqref{eq:du_p_BV}--\eqref{eq:normalStress} together with the property that $\TT(f)\in\mcL\big(\rmL^2(\Ss)^2\big)$ ensure that there exists a constant $C=C(K)>0$ such that the left side of \eqref{eq:T_Gauss} satisfies 
	\begin{equation*}
	\begin{aligned}
		4\big\langle \omega\big(\{T_1(u,p)\}^\pm\circ\Xi\big)\nu \,|\, \{\partial_2u\}^\pm\circ\Xi \big\rangle_2
		&=4\bigg\langle \bigg(\pm\frac{1}{2}+\DD(f)^*\bigg)[\beta] \,\bigg|\, \TT(f)[\beta]\pm\frac{\nu_2(\beta\cdot\tau)\tau}{2\omega} \bigg\rangle_2\\
		&=4\bigg\langle\bigg(\lambda\pm\frac{1}{2}\bigg)\beta-\big(\lambda-\DD(f)^*\big)[\beta]\,\bigg|\,\TT(f)[\beta]\pm\frac{\nu_2(\beta\cdot\tau)\tau}{2\omega}\bigg\rangle_2\\
		&\leq C\big\lVert\big(\lambda-\DD(f)^*\big)[\beta]\big\rVert_2 \lVert\beta\rVert_2 \pm 2\bigg(\lambda\pm\frac{1}{2}\bigg)\lVert\omega^{-1}\beta\cdot\tau\rVert_2^2.
	\end{aligned}
	\end{equation*}
		
	Concerning the integral term on the right side of \eqref{eq:T_Gauss}, we note that 
	\begin{equation*}
	\begin{aligned}
		I&:=\int_{-\pi}^\pi\big|\{p\}^\pm\circ\Xi I_2+\{T_1(u,p )\}^\pm\circ\Xi\big|^2\,\rmd s \geq\int_{-\pi}^\pi\big|\big(\{p\}^\pm\circ\Xi\big)\nu+\big(\{T_1(u,p )\}^\pm\circ\Xi\big)\nu\big|^2\,\rmd s\\
		&=\bigg\lVert\big(\{p\}^\pm\circ\Xi\big)\nu+\frac{1}{\omega}\bigg(\pm\frac{1}{2}+\DD(f)^*\bigg)[\beta]\bigg\rVert_2^2.
	\end{aligned}
	\end{equation*}				
	Since $\beta=(\beta\cdot\tau)\tau+(\beta\cdot\nu)\nu$ and observing that
	\begin{equation*}
		B_1(f)[\beta_1]+B_2(f)[\beta_2]=\BB_1(f)[\omega^{-1}\beta\cdot\nu]+\BB_2(f)[\omega^{-1}\beta\cdot\tau], 
	\end{equation*}
	H\"{o}lder's inequality, the uniform bounds $|\lambda|\leq K$ and~${\lVert\BB_i(f)\rVert_{ \mcL(\rmL^2(\Ss))}\leq C(K)}$,~${i=1,2}$ (see \eqref{eq:B_by_Bnmpq} and Lemma~\ref{Lem:Bnmpq_L2_L2}), and the formulas~\eqref{eq:du_p_BV} and \eqref{eq:normalStress} lead us now to
	\begin{equation*}
	\begin{aligned}
		I&\geq\Big\lVert \big(\{p\}^\pm\circ\Xi\big)\nu+\frac{1}{\omega}\Big(\lambda\pm\frac{1}{2}\Big)\beta-\frac{1}{\omega}\big(\lambda-\DD(f)^*\big)[\beta] \Big\rVert_2^2\\
		&=\Big\lVert \Big(\frac{1}{2}\big(\BB_1(f)[\omega^{-1}\beta\cdot\nu]+\BB_2(f)[\omega^{-1}\beta\cdot\tau]\big)\pm \frac{\beta\cdot\nu}{2\omega}\Big)\nu-\frac{1}{\omega}\Big(\lambda\pm\frac{1}{2}\Big)\beta+\frac{1}{\omega}\big(\lambda-\DD(f)^*\big)[\beta] \Big\rVert_2^2\\
		&\geq\Big\lVert \Big(\frac{1}{2}\big(\BB_1(f)[\omega^{-1}\beta\cdot\nu]+\BB_2(f)[\omega^{-1}\beta\cdot\tau]\big) \pm \frac{\beta\cdot\nu}{2\omega}\Big)\nu-\frac{1}{\omega}\Big(\lambda\pm\frac{1}{2}\Big)\beta \Big\rVert_2^2\\	
		&\qquad+\Big\lVert\frac{1}{\omega}\big(\lambda-\DD(f)^*\big)[\beta]\Big\rVert_2^2-C(K)\big\lVert\big(\lambda-\DD(f)^*\big)[\beta]\big\rVert_2\lVert\beta\rVert_2\\
		&\geq \Big\lVert -\frac{1}{\omega}\Big(\lambda\pm\frac{1}{2}\Big)(\beta\cdot\tau)\tau+\Big(\frac{1}{2}\BB_1(f)-\lambda\Big)[\omega^{-1}\beta\cdot\nu]\nu+\frac{1}{2}\BB_2(f)[\omega^{-1}\beta\cdot\tau]\nu\Big\rVert_2^2\\
		&\qquad-C\big\lVert\big(\lambda-\DD(f)^*\big)[\beta]\big\rVert_2\lVert\beta\rVert_2\\
		&= \Big(\lambda\pm\frac{1}{2}\Big)^2\lVert\omega^{-1}\beta\cdot\tau\rVert_2^2+\Big\lVert\Big(\frac{1}{2}\BB_1(f)-\lambda\Big)[\omega^{-1}\beta\cdot\nu]+\frac{1}{2}\BB_2(f)[\omega^{-1}\beta\cdot\tau]\Big\rVert_2^2\\
		&\qquad\quad-C\big\lVert\big(\lambda-\DD(f)^*\big)[\beta]\big\rVert_2\lVert\beta\rVert_2.
	\end{aligned}
	\end{equation*}
	The desired estimate~\eqref{eq:Df_1} now follows directly from the inequalities previously derived for both sides of~\eqref{eq:T_Gauss}.
	\end{proof}

	Recalling~\eqref{eq:B_by_Bnmpq},~\eqref{eq:Bnmpq-ad}, and~\eqref{eq:BB_i}, we compute that the $\rmL^2$-adjoint $\Aa(f):=\BB_1(f)^* \in\mcL\big(\rmL^2(\Ss)\big)$ satisfies the formula
	\begin{equation}\label{eq:defAA}
		\Aa(f) = f' B_1(f) - B_2(f).
	\end{equation}
	Related to $\Aa(f)$, we introduce
	\begin{equation}\label{eq:BB(f)}
		\BB(f):=B_1(f)+f'B_2(f)\in \mcL\big(\rmL^2(\Ss)\big)
	\end{equation} 
	and establish, as an additional ingredient in the proof of Theorem~\ref{Thm:D(f)_Res_2}, the following result.

	\begin{Lemma}\label{Lem:A(f)_Res}
	Given $K>0$, there exists a constant $C=C(K)>0$ with the property  that for all~$\lambda\in\RR$ with~$|\lambda|\geq 1$ and $f\in\rmC^1(\Ss)$ with $\lVert f'\rVert_\infty\leq K$ we have
	\begin{equation}\label{eq:A(f)_Res}
		\lVert \varphi\rVert_2\leq C\lVert(\lambda-\Aa(f))[\varphi]\rVert_2,\qquad \varphi\in\rmL^2(\Ss).
	\end{equation}
	Moreover, $\lambda-\Aa(f)\in{\mcL\big(\rmL^2(\Ss)\big)}$ is an isomorphism for all $\lambda\in\RR$ with $|\lambda|\geq 1$ and ~$f\in\rmC^1(\Ss)$.
	\end{Lemma}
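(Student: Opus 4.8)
The plan is to establish the a priori estimate \eqref{eq:A(f)_Res} first and then deduce the isomorphism statement by a homotopy argument.

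\emph{Reduction.} By Lemma~\ref{Lem:Bnmpq_L2_L2} and \eqref{eq:defAA}, $\Aa(f)=f'B_1(f)-B_2(f)$ is bounded on $\rmL^2(\Ss)$ and depends continuously on $f$ in $\rmW^{1,\infty}(\Ss)$; since every $f\in\rmC^1(\Ss)$ with $\lVert f'\rVert_\infty\le K$ is a $\rmW^{1,\infty}$-limit of smooth functions with slope $\le K$ (mollification), it suffices to prove \eqref{eq:A(f)_Res} for $f,\varphi\in\rmC^\infty(\Ss)$ with $\lVert f'\rVert_\infty\le K$. Given such $\varphi$, I would introduce the harmonic single-layer potential
\[
h(x):=\frac{1}{4\pi}\int_{-\pi}^{\pi}z_0\big(x-(s,f(s))\big)\,\varphi(s)\,\rmd s ,\qquad x\in(\Ss\times\RR)\setminus\Gamma ,
\]
which is harmonic in $\Omega^\pm$ (recall that $\tfrac1{4\pi}z_0$ is the periodic fundamental solution of the Laplacian, cf.\ \eqref{eq:grad_z_i}), continuous across $\Gamma$ with $\{h\}^\pm\circ\Xi=\tfrac12 B_0(f)[\varphi]$, and — using $\partial_j h=\tfrac12 Z_j(f)[\varphi]$ together with the boundary limits \eqref{eq:Z_pm} of $Z_1,Z_2$ and the expressions \eqref{eq:nutauomega} for $\nu,\tau$ — has the boundary traces
\[
\{\partial_\nu h\}^\pm\circ\Xi=-\frac{1}{2\omega}\Aa(f)[\varphi]\pm\frac{\varphi}{2\omega},\qquad \{\partial_\tau h\}^\pm\circ\Xi=\frac{1}{2\omega}\BB(f)[\varphi] ,
\]
the second one being continuous across $\Gamma$ (cf.\ \eqref{eq:BB(f)}).

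\emph{The a priori estimate.} This is the heart of the matter. Writing $R:=(\lambda-\Aa(f))[\varphi]$, the first identity above reads $\{\partial_\nu h\}^\pm\circ\Xi=\tfrac1{2\omega}\big(R-(\lambda\mp1)\varphi\big)$, so the critical coefficients $\lambda\mp1$, and hence the threshold $|\lambda|=1$, enter here. I would then run the Rellich identities $\vdiv\big(2\,\partial_j h^\pm\,\nabla h^\pm-|\nabla h^\pm|^2 e_j\big)=0$, $j=1,2$, over the truncated domains $\Omega^\pm_n$ from \eqref{eq:Omega_n} and pass to the limit $n\to\infty$: since $h^\pm$ grows linearly (like $\pm\tfrac12\langle\varphi\rangle x_2$) but $\partial_1 h^\pm\to0$ at infinity, the Rellich vector fields tend to constant vectors and the far-field contribution is a finite multiple of $\langle\varphi\rangle^2$; on $\Gamma$ the integrands are rewritten through $\{\partial_\nu h\}^\pm\circ\Xi$ and $\{\partial_\tau h\}^\pm\circ\Xi$ via $\nabla h^\pm=\{\partial_\nu h\}^\pm\nu+\{\partial_\tau h\}^\pm\tau$. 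Subtracting the $+$ and $-$ versions (and using $\{\partial_\tau h\}^+\circ\Xi=\{\partial_\tau h\}^-\circ\Xi$ and $\{\partial_\nu h\}^+\circ\Xi-\{\partial_\nu h\}^-\circ\Xi=\varphi/\omega$) yields $\lambda\int\varphi^2/\omega^2\,\rmd\xi$ in terms of $\int R\varphi/\omega^2\,\rmd\xi$ and a term carrying $\BB(f)[\varphi]$; combining this with the remaining (added) Rellich identities — most transparently via the complex combination of the $e_1$- and $e_2$-versions, which for each sign is $\int(1+if')\big(\{\partial_\nu h\}^\pm\circ\Xi-i\{\partial_\tau h\}^\pm\circ\Xi\big)^2\,\rmd\xi\in\RR$ — and with the energy identity for the decaying parts $\tilde h^\pm:=h^\pm\mp\tfrac12\langle\varphi\rangle x_2-c^\pm$ (whose Dirichlet integrals are non-negative) lets one cancel the $\BB(f)[\varphi]$-contribution and arrive, for $|\lambda|\ge1$, at an inequality of the form $c(K)\lVert\varphi\rVert_2^2\le\lVert R\rVert_2\lVert\varphi\rVert_2$; all constants are uniform in $f$ because $1\le\omega\le(1+K^2)^{1/2}$ and $\lVert B_i(f)\rVert_{\mcL(\rmL^2)}\le C(K)$ by Lemma~\ref{Lem:Bnmpq_L2_L2}. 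This is \eqref{eq:A(f)_Res}.

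\emph{Isomorphism.} Since $\Aa(f)\in\mcL(\rmL^2(\Ss))$, the operator $\lambda-\Aa(f)$ is invertible by a Neumann series for $|\lambda|>\lVert\Aa(f)\rVert_{\mcL(\rmL^2)}$, so $\Sigma_+:=\{\lambda\ge1:\lambda-\Aa(f)\ \text{is an isomorphism}\}$ is non-empty; it is open, and it is closed in $[1,\infty)$ because \eqref{eq:A(f)_Res} bounds $\lVert(\lambda-\Aa(f))^{-1}\rVert_{\mcL(\rmL^2)}$ uniformly on $\Sigma_+$, so invertibility survives under limits. By connectedness $\Sigma_+=[1,\infty)$, and the same reasoning on $(-\infty,-1]$ completes the proof.

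\emph{Expected main obstacle.} The delicate point is the a priori estimate: after the natural subtracted Rellich identity one is left with a term involving $\BB(f)[\varphi]$ that has no favourable sign on its own, and it must be absorbed by combining the various Rellich and energy identities for $h^\pm$, all the while keeping careful track of the (bounded, but nonzero) far-field contributions produced by the linear growth of $h^\pm$ at infinity.
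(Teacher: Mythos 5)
Your framework is in fact the same as the paper's, up to a harmonic-conjugate rotation: the paper works with the vector field $V=(-Z_2(f)[\varphi],Z_1(f)[\varphi])^\top=2(-\partial_2h,\partial_1h)$ and integrates $\vdiv(2V_1V_2,V_2^2-V_1^2)^\top=0$ over $\Omega^\pm_n$ (which is exactly $-4$ times your $e_2$-Rellich identity for $h$). Your computation of the traces $\{\partial_\nu h\}^\pm\circ\Xi=\tfrac{1}{2\omega}\big(-\Aa(f)[\varphi]\pm\varphi\big)$ and $\{\partial_\tau h\}^\pm\circ\Xi=\tfrac{1}{2\omega}\BB(f)[\varphi]$, and your far-field bookkeeping, are all correct and match the paper's \eqref{eq:V_BV}--\eqref{eq:BFF2}.

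The gap is in how you propose to close the estimate. You describe trying to \emph{cancel} the $\BB(f)[\varphi]$-term in the subtracted Rellich identity by bringing in the $e_1$-Rellich identity and the energy identity for $\tilde h^\pm$. Two problems. First, the energy identity $\int_{\Omega^\pm}|\nabla\tilde h^\pm|^2=\mp\int_\Gamma\tilde h^\pm\,\partial_\nu\tilde h^\pm\,\omega\,\rmd\xi$ introduces the Dirichlet trace $\{h\}^\pm\circ\Xi=\tfrac12 B_0(f)[\varphi]$ on $\Gamma$, a new nonlocal quantity (a logarithmic potential) that is not expressible through $\Aa(f)$, $\BB(f)$, $\varphi$, or $\langle\varphi\rangle$, and which you have no tool to control; this makes that identity useless here. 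Second, and more structurally, the paper never cancels the $\BB(f)$-term — it cannot be cancelled. What the paper does, and what you should do, is extract \emph{two separate} consequences of the $e_2$-Rellich identity \eqref{eq:BFF2} and chain them. From the raw $\pm$-versions of \eqref{eq:BFF2}, Young's inequality gives $\lVert(\mp1+\Aa(f))[\varphi]\rVert_2\le C\big(\lVert\BB(f)[\varphi]\rVert_2+|\langle\varphi\rangle|\big)$, and since $2\varphi=(1+\Aa(f))[\varphi]-(-1+\Aa(f))[\varphi]$ this yields
\begin{equation*}
\lVert\varphi\rVert_2\le C\big(\lVert\BB(f)[\varphi]\rVert_2+|\langle\varphi\rangle|\big),
\end{equation*}
which holds for every $\lambda$ and contains no $R=(\lambda-\Aa(f))[\varphi]$. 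Separately, substituting $\Aa(f)[\varphi]=\lambda\varphi-R$ into \eqref{eq:BFF2} and taking the $\lambda$-weighted combination (coefficient $\lambda+1$ for the $-$ equation, $1-\lambda$ for the $+$ equation — equivalently, your subtract/add followed by elimination of $\int f'\BB(f)[\varphi]\varphi/\omega^2$) produces
\begin{equation*}
\int_{-\pi}^{\pi}\frac{(\lambda^2-1)\varphi^2+\BB(f)[\varphi]^2}{\omega^2}\,\rmd s+2\pi\langle\varphi\rangle^2 =\int_{-\pi}^{\pi}\frac{2f'\BB(f)[\varphi]\,R+R^2}{\omega^2}\,\rmd s,
\end{equation*}
whence, for $|\lambda|\ge1$, Young's inequality gives $\lVert\BB(f)[\varphi]\rVert_2+|\langle\varphi\rangle|\le C\lVert R\rVert_2$. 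Combining the two displays gives \eqref{eq:A(f)_Res}. Note that the first estimate is indispensable precisely because at $|\lambda|=1$ the coefficient $\lambda^2-1$ vanishes, so the second estimate alone does not control $\lVert\varphi\rVert_2$. Neither the $e_1$-Rellich identity nor the energy identity is needed; your complex combination $\oint (V_1-iV_2)^2\,\rmd z=0$ encodes both $e_1$- and $e_2$-Rellich identities, but only the $e_2$ piece (equivalently, the paper's \eqref{eq:divV_lim}) is used.

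Your reduction to smooth data and your continuity-of-resolvent-set argument for the isomorphism are both fine and match the paper.
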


	\begin{proof}
	Thanks to~\eqref{eq:B_by_Bnmpq}, \eqref{eq:defAA}, and Lemma~\ref{Lem:Bnmpq_L2_L2}, it is sufficient to establish~\eqref{eq:A(f)_Res} for~$f\in \rmC^\infty(\Ss)$ satisfying~${\lVert f'\rVert_\infty \leq K}$ and $\varphi \in\rmC^\infty(\Ss)$. We begin by introducing the vector field $V=(V_1,V_2)^\top$ with
	\begin{equation*}
		V := \big(-Z_2(f)[\varphi],\, Z_1(f)[\varphi]\big)^\top : (\Ss \times \RR) \setminus \Gamma \to \RR^2,
	\end{equation*}
	and recall~\eqref{eq:reg_Zi}–\eqref{eq:Z_pm} and~\eqref{eq:Zi_lim}. Then, $V^\pm := V|_{\Omega^\pm}$ belong   to $\rmC^\infty(\Omega^\pm) \cap \rmC(\overline{\Omega^\pm})$ and
	\begin{equation*}
		\vdiv V^\pm = \curl V^\pm = 0 \qquad \text{in } \Omega^\pm
	\end{equation*}
	by~\eqref{eq:curl}. Hence, we have
	\begin{equation}\label{eq:divV_lim}
		\vdiv
		\begin{pmatrix}
			2V_1^\pm V_2^\pm \\
			(V_2^\pm)^2 - (V_1^\pm)^2
		\end{pmatrix}
		= 0 \quad \text{in $\Omega^\pm$}\qquad\text{and}\qquad
		\begin{pmatrix}
			2V_1 V_2 \\
			(V_2)^2 - (V_1)^2
		\end{pmatrix}
		\underset{|x_2|\to\infty}\to
		\begin{pmatrix}
			0 \\
			-\langle\varphi\rangle^2
		\end{pmatrix},
	\end{equation}
	while
	\begin{equation}\label{eq:V_BV}
		\{V\}^\pm \circ \Xi = \big(-B_2(f)[\varphi],\, B_1(f)[\varphi]\big)^\top \mp \frac{\tau}{\omega} \varphi \qquad \text{on $\Ss$}.
	\end{equation}
	Integrating the first identity in \eqref{eq:divV_lim} over $\Omega^\pm_n$ with	$n>\lVert f\rVert_\infty$ (recalling~\eqref{eq:Omega_n}), it follows from  Stokes' theorem, \eqref{eq:divV_lim}, and~\eqref{eq:V_BV}, after letting $n\to\infty$, that
	\begin{equation*}
		\int_{-\pi}^\pi\omega\nu\cdot
		\begin{pmatrix}
			2\{V_1\}^\pm \{V_2\}^\pm \\
			\{(V_2^\pm)\}^2 - (\{V_1\}^\pm)^2
		\end{pmatrix}
		\circ\Xi\,\rmd s=-2\pi\langle\varphi\rangle^2.
	\end{equation*}
	Equivalently, noticing  from \eqref{eq:defAA}--\eqref{eq:BB(f)} and \eqref{eq:V_BV} that 
	\begin{equation*}
		\{V\}^\pm \circ \Xi=\frac{1}{\omega}\big((\mp 1+\Aa(f))[\varphi]\tau +\BB(f)[\varphi]\nu\big),
	\end{equation*}
	we have 
	\begin{equation}\label{eq:BFF2}
		\int_{-\pi}^{\pi}\frac{1}{\omega^2}\big[|\BB(f)[\varphi]|^2+2f'\BB(f)[\varphi](\mp1+\Aa(f))[\varphi]-|(\mp 1+\Aa(f))[\varphi]|^2\big]\,\rmd s=-2\pi\langle\varphi\rangle^2.
	\end{equation}		
	Using Young's inequality, we find a constant $C(K)>0$ such that
	\begin{equation*}
		\lVert (\mp1+\Aa(f))[\varphi]\rVert_2\leq C\big(\lVert \BB(f)[\varphi]\rVert_2+|\langle\varphi\rangle|\big),
	\end{equation*}		 
	hence
	\begin{equation}\label{eq:B(f)-1}
		\lVert\varphi\rVert_2 =\frac{1}{2}\lVert (1+\Aa(f))[\varphi]-(-1+\Aa(f))[\varphi]\rVert_2\leq C\big(\lVert \BB(f)[\varphi]\rVert_2+|\langle\varphi\rangle|\big).
	\end{equation}
	Noticing  for  $|\lambda|\geq 1$ that $(\mp 1+\Aa(f))[\varphi] =(\lambda\mp1)\varphi-(\lambda-\Aa(f))[\varphi]$, we infer from \eqref{eq:BFF2},  after plugging in this expression and adding the equation with sign $-$ multiplied by  $\lambda+ 1$   to the equation with sign $+$ multiplied by $1-\lambda$, that
	\begin{equation*}
		\int_{-\pi}^{\pi}\frac{1}{\omega^2}\big[(\lambda^2-1)|\varphi|^2+|\BB(f)[\varphi]|^2-2f'\BB(f)[\varphi](\lambda-\Aa(f))[\varphi]-|(\lambda-\Aa(f))[\varphi]|^2\big]\,\rmd s=-2\pi\langle\varphi\rangle^2.
	\end{equation*}
	With Young's inequality, we again find a constant $C(K)>0$ such that
	\begin{equation*}
		(\lambda^2-1)\lVert\varphi\rVert_2+\lVert\BB(f)[\varphi]\rVert_2+|\langle\varphi\rangle|\leq C\lVert(\lambda-\Aa(f))[\varphi]\rVert_2.
	\end{equation*}
	The desired claim~\eqref{eq:A(f)_Res} is now a straightforward consequence of this estimate and \eqref{eq:B(f)-1}.
		
	Finally, since $\Aa(f)\in\mcL\big(\rmL^2(\Ss)\big)$, its spectrum is compact and the method of continuity; see, e.g.,~\cite[Proposition~I.1.1.1]{Amann.1995} yields the isomorphism property.
	\end{proof}

	We are now in a position to prove Theorem~\ref{Thm:D(f)_Res_2}.
	
	\begin{proof}[Proof of Theorem~\ref{Thm:D(f)_Res_2}]
	Assuming that the claim is false, we find sequences $(\lambda_k)\subset\RR$, $(f_k)\subset\rmC^1(\Ss)$, and $(\beta_k)\subset\rmL^2(\Ss)^2$ satisfying	for all $k\in\NN$
	\begin{equation*}
		|\lambda_k|\geq \frac{1}{2}+\delta,\qquad \lVert f'_k\rVert_\infty\leq\frac{1}{\delta},\qquad \lVert\beta_k\rVert_2=1,
	\end{equation*}
	and such that
	\begin{equation}\label{eq:lk-Dfk}
		(\lambda_k-\DD(f_k)^*)[\beta_k]\to 0\qquad\text{in}~\rmL^2(\Ss)^2.
	\end{equation}
	Recalling~\eqref{eq:nutauomega}, we define $\nu_k := \nu(f_k)$, $\tau_k := \tau(f_k)$, and $\omega_k := \omega(f_k)$ and note from \eqref{eq:B_by_Bnmpq}, \eqref{eq:D(f)-ad}, and Lemma~\ref{Lem:Bnmpq_L2_L2} that there exists a constant $M > 0$ such that $\lVert\DD(f_k)^*\rVert_{\mcL(\rmL^2(\Ss))} \leq M$ for all  $k \in \NN$. This bound together with~\eqref{eq:lk-Dfk} implies  the boundedness of the sequence $(\lambda_k)$ in $\RR$. Observing that the constant $m=m(\lambda)$ in Lemma~\ref{Lem:Df_1} satisfies $m(\lambda_k)\geq \delta(2+\delta)>0 $ for all~$k\in\NN$, we deduce from~\eqref{eq:Df_1} that
	\begin{equation}\label{eq:w_b_tau_0}
		\omega_k^{-1}\beta_k\cdot\tau_k\to 0 \qquad\text{and}\qquad \big(\lambda_k-\tfrac{1}{2}\BB_1(f_k)\big)[\omega_k^{-1}\beta_k\cdot\nu_k]-\tfrac{1}{2}\BB_2(f_k)[\omega_k^{-1}\beta_k\cdot\tau_k]\to 0 
	\end{equation}
	in $\rmL^2(\Ss)$. In view of the boundedness of the sequence $(\BB_2(f_k))_k$  in $\mcL\big(\rmL^2(\Ss)\big)$, which is a consequence of \eqref{eq:B_by_Bnmpq}, \eqref{eq:BB_i}, and Lemma~\ref{Lem:Bnmpq_L2_L2}, we infer from \eqref{eq:w_b_tau_0} that
	\begin{equation*}
		\big(\lambda_k-\tfrac{1}{2}\BB_1(f_k)\big)[\omega_k^{-1}\beta_k\cdot\nu_k]\to 0\qquad \text{in}~\rmL^2(\Ss).
	\end{equation*}
	Since $\Aa(f_k)=\BB_1(f_k)^*$, $k\in\NN$, Lemma~\ref{Lem:A(f)_Res} implies in turn that 
	\begin{equation}\label{eq:w_b_nu_0}
		\omega_k^{-1}\beta_k\cdot\nu_k \to 0\qquad \text{in}~\rmL^2(\Ss).
	\end{equation}
	Combining \eqref{eq:w_b_tau_0} and \eqref{eq:w_b_nu_0}, we have $\beta_k\to0$ in $\rmL^2(\Ss)^2$, which contradicts the fact that~$\lVert\beta_k\rVert_2=1$ for all $k\in\NN$. This proves \eqref{eq:D(f)_Res_2}.
		
	The isomorphism property of  $\lambda-\DD(f)^*\in\mcL\big(\rmL^2(\Ss)^2\big)$ with $|\lambda|>1/2$ and $f\in\rmC^1(\Ss)$ is now a straightforward consequence of \eqref{eq:D(f)_Res_2} and of the compactness of the spectrum of $\DD(f)^*\in\mcL\big(\rmL^2(\Ss)^2\big)$, the invertibility of its adjoint $\lambda-\DD(f)$  following at once.	
	\end{proof}
	
 \section{The resolvent of the  double-layer potential operator in higher order Sobolev spaces}\label{Sec:4}

 	In this section, we assume that $f \in \rmH^{r}(\Ss)$ with $r \in (3/2, 2)$, and later that $f \in \rmH^{3}(\Ss)$, and we show that the resolvent set of $\DD(f)$, viewed as an operator in either $\mcL\big(\rmH^{r-1}(\Ss)^2\big)$ or $\mcL\big(\rmH^{2}(\Ss)^2\big)$, contains all real numbers~$\lambda$ such that~${|\lambda| > 1/2}$; see Theorem~\ref{Thm:D(f)_Res_Hr-1} and Theorem~\ref{Thm:D(f)_Res_H2}. These results are key arguments in the proof of Theorem~\ref{Thm:Ftp} where we prove in the context of~\eqref{eq:STOKES} that the interface between the fluids determines at each fixed positive time instant the velocity and pressure fields in both fluid layers.
 	
\subsection*{Invertibility in \texorpdfstring{$\mcL\big(\rmH^{r-1}(\Ss)\big)^2$}{L(Hr-1)}}
 	To start, we fix~$r\in(3/2,2)$ and  recall from~\cite[Corollary~A.8]{Bohme.2024} that	for $n,\,m,\,p,\,q\in\NN_0$ with $p\leq n+q+1$  we have 
 	\begin{equation}\label{eq:reg_B}
	\begin{aligned}
		&\big[f\mapsto B_{n,m}^{0,q}(f)\big]\in\rmC^\infty\big(\rmH^r(\Ss), \mcL\big(\rmH^{r-1}(\Ss)\big)\big),\\
		&\big[f\mapsto B_0(f)\big],\,\big[f\mapsto B_{n,m}^{p,q}(f)\big]\in\rmC^\infty\big(\rmH^r(\Ss), \mcL\big(\rmH^{r-1}(\Ss), \rmH^r(\Ss)\big)\big),\qquad   1\leq p\leq n+q+1.
	\end{aligned}
	\end{equation}
	The definition~\eqref{eq:D(f)} of the  double-layer potential operator $\DD$ together with \eqref{eq:B_by_Bnmpq} and \eqref{eq:reg_B} implies now
	 \begin{equation}\label{eq:D(f)_smooth}
	 	\big[f\mapsto \DD(f)\big]\in\rmC^\infty \big(\rmH^r(\Ss),\mcL\big(\rmH^{r-1}(\Ss)^2\big)\big),
	\end{equation}
	so that in particular $\DD(f)\in\mcL\big(\rmH^{r-1}(\Ss)^2\big)$ for all $f\in\rmH^r(\Ss)$. The arguments in the proof of the main result of this section; see Theorem~\ref{Thm:D(f)_Res_Hr-1}, rely on mapping properties for the operators $B_{n,m}^{p,q}$ established in Lemma~\ref{Lem:Bnmpq_L2_Hr-1} and on the interpolation relation
 	\begin{equation}\label{eq:interpolation}
		\big[\rmH^{s_0}(\Ss),\rmH^{s_1}(\Ss)\big]_{\theta}=\rmH^{(1-\theta)s_0+\theta s_1}(\Ss),\qquad \theta\in (0,1),\quad -\infty<s_0 \leq s_1<\infty,  
	\end{equation}
	where~$[\cdot,\cdot]_\theta$ is the complex interpolation functor of exponent $\theta$.

 	\begin{Lemma}\label{Lem:Bnmpq_L2_Hr-1}
 	Let $n,\,m,\,p,\, q\in\NN_0$ and~$(\bfa, \bfb)\in\rmH^r(\Ss)^{m+n}$ be given. 
 	\begin{enumerate}
 	\item[{\rm (i)}] Let $p\leq n+q+1$. Then, there exists a constant $C>0$ that depends only on $n,\,m,\,p,\,q,\,r$,  and $\lVert(\bfa,\bfb)\rVert_{\rmH^r}$ such that for all $\bfc\in\rmH^r(\Ss)^{q}$ and~$\varphi\in\rmH^{r-1}(\Ss)$ we have 
		\begin{equation}\label{eq:Bnmpq_Hr-1}
			\big\lVert B_{n,m}^{p,q}(\bfa\vert \bfb)[\bfc,\varphi]\big\rVert_{\rmH^{r-1}}\leq C\lVert\varphi\rVert_{\rmH^{r-1}} \prod_{i=1}^{q}\lVert c_i\rVert_{\rmH^r}.
		\end{equation}
	\item[{\rm (ii)}] Let $q\geq 1$ and $p\leq n+q+1$. Then, there exists a constant~${C>0}$ that depends only  on~${n,\,m,\,p,\,q}$, and~${\lVert(\bfa,\bfb)\rVert_{\rmH^r}}$ such that for all~${\varphi\in\rmH^{r-1}(\Ss)}$ and $\bfc\in\rmH^1(\Ss)\times \rmH^r(\Ss)^{q-1}$ we have
		\begin{equation}\label{eq:Bnmpq_L2_Hr-1_c}
			\big\lVert B_{n,m}^{p,q}(\bfa\vert\bfb)[\bfc,\varphi]\big\rVert_2\leq C\lVert c_1'\rVert_2\lVert\varphi\rVert_{\rmH^{r-1}}\prod_{i=2}^q \lVert c_i\rVert_{\rmH^r}.
		\end{equation}
		Moreover,~${B_{n,m}^{p,q}\in\rmC^{1-}\big(\rmH^r(\Ss)^{m+n},\mcL^{q+1}\big(\rmH^1(\Ss),\rmH^r(\Ss)^{q-1},\rmH^{r-1}(\Ss);\rmL^2(\Ss)\big)\big)}$.
	\item[{\rm (iii)}] Let $d,\,\tilde{d}\in \rmH^{1}(\Ss)$ and $p\leq n+q+2$. Then, there exists a constant~${C>0}$ that depends only on~$n,\,m,\,p,\,q$,~${\lVert(\bfa,\bfb)\rVert_{\rmH^r}}$, and~$\lVert (d,\tilde{d})\rVert_{\rmH^1}$ such that for all~${\varphi\in\rmH^{r-1}(\Ss)}$ and $\bfc\in\rmH^r(\Ss)$ we have
		\begin{equation}\label{eq:Bnmpq_L2_Hr-1_b}
			\big\lVert B_{n+1,m}^{p,q}(\bfa\vert(\bfb,d))[\bfc,\varphi]-B_{n+1,m}^{p,q}(\bfa\vert(\bfb,\tilde{d}))[\bfc,\varphi]\big\rVert_2\leq C\lVert (d-\tilde{d})'\rVert_2\lVert\varphi\rVert_{\rmH^{r-1}}\prod_{i=1}^q \lVert c_i\rVert_{\rmH^{r}}.
		\end{equation}
	\item[{\rm (iv)}] There exists a constant $C>0$ that depends only on $n,\,m$, and $\lVert\bfa\rVert_{\rmH^r}$ with the property that for all~${\varphi\in\rmH^{r-1}(\Ss)}$   we have
		\begin{equation}\label{eq:Cnmpq_Hr-1}
			\big\lVert C_{n,m}(\bfa)[\bfb,\varphi]\big\rVert_{\rmH^{r-1}}\leq C\lVert\varphi\rVert_{\rmH^{r-1}} \prod_{i=1}^{q}\lVert b_i\rVert_{\rmH^r}.
		\end{equation}
 	\end{enumerate}
 	\end{Lemma}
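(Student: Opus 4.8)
The plan is to reduce everything to the $\rmL^2$-estimate from Lemma~\ref{Lem:Bnmpq_L2_L2} and the results quoted in~\eqref{eq:reg_B}, exploiting commutator structure and the interpolation identity~\eqref{eq:interpolation}. Part~(i) is the core estimate: to bound $B_{n,m}^{p,q}(\bfa\vert\bfb)[\bfc,\varphi]$ in $\rmH^{r-1}(\Ss)$ with $r-1\in(1/2,1)$, I would estimate the Slobodeckij (Gagliardo) seminorm $[\,\cdot\,]_{r-1}$ directly. Writing $g:=B_{n,m}^{p,q}(\bfa\vert\bfb)[\bfc,\varphi]$, one forms the finite difference $g(\xi)-g(\xi-h)$ inside the integral and distributes the difference across the various factors $\tfrac{\T{\xi,s}b_i}{\tss}$, $\tfrac{\dg{\xi,s}{c_i}/2}{\tss}$, $[1+(\tfrac{\T{\xi,s}a_i}{\tss})^2]^{-1}$, $\tss^{p-1}$, and the translate $\varphi(\xi-s)$. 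For the terms where the difference hits one of the $\bfa,\bfb,\bfc$-factors, one uses that these functions lie in $\rmH^r(\Ss)\hookrightarrow\rmC^{r-1/2}(\Ss)$ (so their difference quotients are H\"older-continuous of order $r-1$), together with the already-known $\rmL^2$-boundedness~\eqref{eq:Bnmpq_L2_L2} of the resulting lower-complexity $B$-type kernel, to absorb the $h$-dependence. For the term where the difference hits $\varphi(\xi-s)$, one recognises a translation of a fixed singular integral operator applied to the difference $\varphi-\tau_h\varphi$, and uses $\rmL^2$-boundedness plus the characterisation $[\varphi]_{r-1}^2\sim\int |h|^{-1-2(r-1)}\lVert\varphi-\tau_h\varphi\rVert_2^2\,\rmd h$ to conclude. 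The product of the $\lVert c_i\rVert_{\rmH^r}$ on the right appears because each $\bfc$-factor contributes multiplicatively, exactly as in the $\rmL^2$ bound.

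Part~(ii): for the $\rmL^2$-bound with the weaker assumption $c_1\in\rmH^1(\Ss)$, I would isolate the single $c_1$-factor $\tfrac{\dg{\xi,s}{c_1}/2}{\tss}$ and write $\dg{\xi,s}{c_1}=\int_0^1 c_1'(\xi-\theta s)\,\rmd s$-type expansions, so that $\big|\tfrac{\dg{\xi,s}{c_1}}{\tss}\big|$ is controlled in an averaged sense by $c_1'$; the remaining kernel (with the $c_1$-factor removed, one power of $\tss$ lowered) is an admissible $B^{p-? ,q-1}$-operator to which part~(i)/Lemma~\ref{Lem:Bnmpq_L2_L2} applies, now acting on $\varphi\in\rmH^{r-1}(\Ss)$. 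One must check the complexity constraint $p\le n+q+1$ is preserved after lowering $q\mapsto q-1$ and adjusting $p$; this is where bookkeeping is needed, and it works because removing a $c$-factor both drops $q$ by one and typically reduces the numerator degree. The local-Lipschitz and multilinearity statement then follows by differencing in $(\bfa,\bfb)$ and in the $\bfc$-slots, exactly as in Lemma~\ref{Lem:Bnmpq_L2_L2}, now in the stronger topology, using that all the relevant $B$-operators depend smoothly on their geometric arguments by~\eqref{eq:reg_B}.

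Parts~(iii) and~(iv) are variants of the same mechanism. For~(iii) one writes the difference of the two $B$-operators as a single $B$-operator whose $\bfb$-block contains the factor $\tfrac{\T{\xi,s}(d-\tilde d)}{\tss}$ up to a smooth multiplier $\int_0^1(\cdots)\,\rmd\theta$ encoding the mean-value expansion of $\tanh$; since $\T{\xi,s}(d-\tilde d)\big/\tss$ is controlled by $(d-\tilde d)'$ in the averaged sense (here only $d,\tilde d\in\rmH^1$ is needed), the residual kernel is admissible and one applies part~(i)/Lemma~\ref{Lem:Bnmpq_L2_L2}; the extra allowance $p\le n+q+2$ is consistent because adding the $\bfb$-factor $d$ raises the numerator degree by one. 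For~(iv) one simply repeats the difference-quotient argument of part~(i) with the $C_{n,m}$ kernel in place of the $B$ kernel, using the corresponding $\rmL^2$-boundedness of $C_{n,m}$ from~\cite{Bohme.2024}; the right-hand side product $\prod_{i=1}^q\lVert b_i\rVert_{\rmH^r}$ is a typo-style overcount (there are $n$ such factors) but the estimate is the same in structure.

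\textbf{Main obstacle.} The genuinely delicate point is part~(i): controlling the H\"older-$(r-1)$ modulus of the singular integral when the difference falls on one of the Lipschitz factors, since the kernel is only of Calder\'on--Zygmund type and one does not have pointwise decay to exploit. The trick is to \emph{not} differentiate the factor but to express $g(\xi)-g(\xi-h)$ as a sum of \emph{new} $B$-type operators (of the same or lower complexity) applied to fixed $\rmL^2$ functions, each carrying an explicit factor of size $\mathcal{O}(|h|^{r-1})$ coming from the H\"older continuity of $\bfa',\bfb',\bfc'$ (equivalently $\bfa,\bfb,\bfc\in\rmC^{r-1/2}$), so that the square-function characterisation of $\rmH^{r-1}$ closes. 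Getting the complexity indices $(n,m,p,q)$ of these auxiliary operators to stay within the admissible range $p\le n+q+1$ throughout — so that Lemma~\ref{Lem:Bnmpq_L2_L2} is actually applicable at each step — is the bookkeeping heart of the argument, and is presumably handled exactly as in~\cite[Appendix~A]{Bohme.2024}.
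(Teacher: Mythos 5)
Your sketch correctly captures the \emph{spirit} of parts~(i) and~(iv), which the paper proves by citation to~\cite{Bohme.2024} (Slobodeckij seminorm estimates via finite differences, exactly as you describe), and you are right that the statement of~(iv) has a typo: the product on the right side should run over the $n$ components of~$\bfb$, not over $q$ of them. But for parts~(ii) and~(iii), which are the only parts the paper actually proves anew (in Appendix~\ref{Sec:A}), your proposed mechanism diverges from the paper's and has a gap that I do not think closes as stated.

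For part~(ii) you propose to \enquote{remove} the $c_1$-factor so that the \enquote{remaining kernel ... is an admissible $B^{p-?,q-1}$-operator.} This reduction does not go through: dropping one $c$-slot lowers the admissibility threshold from $p\le n+q+1$ to $p\le n+q$, so the extremal case $p=n+q+1$ falls out of range, and you cannot recover it by \enquote{reducing the numerator degree} because the Cauchy--Schwarz gain from $c_1\in\rmH^1$ is only half a power of $s$ (cf.~\eqref{eq:d'2}), not a full power. The paper never performs this reduction; instead, it keeps all factors in place and proves a \emph{pointwise} kernel bound, using $\big|\frac{\Tf{\xi,s}{d}}{\tss}\big|\le\|d'\|_\infty$ and the Cauchy--Schwarz estimate $\frac{|\dg{\xi,s}{c_1}|}{|s|^{1/2}}\le\|c_1'\|_2$ of~\eqref{eq:deriv}--\eqref{eq:d'2}, to show that the kernel is dominated by $C\|c_1'\|_2\,|s/2|^{p-3/2}$ uniformly in $\xi$. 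For $p\ge1$ this is absolutely integrable and gives the stronger $\rmL^\infty$ (hence $\rmL^2$) bound directly. For $p=0$ the kernel is truly singular, and the paper uses the decomposition $B=A+C$ of~\eqref{eq:B=A+C}: the $C$-part is handled by a quoted lemma, and the $A$-kernel (difference of the \enquote{$\tanh$}-kernel and the \enquote{Euclidean}-kernel) gains an extra power of $|s|$ via a telescoping estimate~\eqref{eq:FT-Fd}, which restores integrability. Your sketch has no analogue of this $A$-part, which is the real crux of the $p=0$ case. The argument for~(iii) then reuses the same mechanism: a mean-value splitting (exactly your idea) separates off a $B_{n,m}^{p,q+1}$-term, but the remainder is again handled by a pointwise bound on the explicit kernel $K(\xi,s)$ in~\eqref{eq:defK}, using the sharpened Lipschitz estimate~\eqref{eq:x-tanh_Lip} for $\tanh-\mathrm{id}$, not by reducing to another $B$-operator. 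Finally, (iii) and (iv) are not \enquote{variants of the same mechanism}: (iii) is an $\rmL^2$-bound obtained by the pointwise/$\rmL^\infty$ argument, while (iv) is a genuine $\rmH^{r-1}$-bound requiring the Slobodeckij machinery you describe for~(i).
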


	\begin{proof}
	The estimate \eqref{eq:Bnmpq_Hr-1} is proven in \cite[Lemma~A.4 and~A.6]{Bohme.2024}, while the  assertions (ii)--(iii) are shown in Appendix~\ref{Sec:A}. Finally, the estimate~\eqref{eq:Cnmpq_Hr-1} is established in \cite[Lemma A.1~(iii)]{Bohme.2024} and \cite{DBThesis}.
	\end{proof}

 	We now establish with Theorem~\ref{Thm:D(f)_Res_Hr-1} a counterpart of Theorem~\ref{Thm:D(f)_Res_2} in~$\rmH^{r-1}(\Ss)^2$.

 	\begin{Theorem}\label{Thm:D(f)_Res_Hr-1}
 	Let $\delta\in (0,1)$ be given. Then, there exists a positive constant~${C=C(\delta,r)}$, such that for all $\lambda\in\RR$ with $|\lambda|\geq 1/2+\delta$, $f\in\rmH^{r}(\Ss)$ satisfying $\lVert f\rVert_{\rmH^r}\leq 1/\delta$, and $\beta\in\rmH^{r-1}(\Ss)^2$ we have
 	\begin{equation}\label{eq:D(f)_Res_Hr-1}
 		\lVert(\lambda-\DD(f))[\beta]\rVert_{\rmH^{r-1}}\geq C\lVert \beta\rVert_{\rmH^{r-1}}.
	\end{equation}
	Moreover, $\lambda-\DD(f)\in\mcL\big(\rmH^{r-1}(\Ss)^2\big)$ is an isomorphism for all $\lambda\in\RR$ with $|\lambda|>1/2$ and~${f\in\rmH^r(\Ss)}$.
 	\end{Theorem}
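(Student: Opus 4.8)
The plan is to run the same contradiction scheme as in the proof of Theorem~\ref{Thm:D(f)_Res_2}, but to upgrade the conclusion from $\rmL^2(\Ss)^2$ to $\rmH^{r-1}(\Ss)^2$ by exploiting the smoothing structure of $\DD(f)$ together with a localization argument for its principal part. Suppose \eqref{eq:D(f)_Res_Hr-1} is false; then there are $(\lambda_k)\subset\RR$, $(f_k)\subset\rmH^r(\Ss)$, and $(\beta_k)\subset\rmH^{r-1}(\Ss)^2$ with $|\lambda_k|\geq 1/2+\delta$, $\lVert f_k\rVert_{\rmH^r}\leq 1/\delta$, $\lVert\beta_k\rVert_{\rmH^{r-1}}=1$, and $(\lambda_k-\DD(f_k))[\beta_k]\to 0$ in $\rmH^{r-1}(\Ss)^2$. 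By \eqref{eq:D(f)_smooth} the operators $\DD(f_k)$ are uniformly bounded on $\rmH^{r-1}(\Ss)^2$, so $(\lambda_k)$ is bounded and, after passing to a subsequence, $\lambda_k\to\lambda$ with $|\lambda|\geq 1/2+\delta$. Since $\lVert f_k\rVert_{\rmH^r}\leq 1/\delta$ and $r>3/2$ we have $\lVert f_k'\rVert_\infty\leq C(\delta)$, so the estimate of Theorem~\ref{Thm:D(f)_Res_2} applies uniformly in $k$; as $\rmH^{r-1}(\Ss)\hookrightarrow\rmL^2(\Ss)$, this yields $\beta_k\to 0$ in $\rmL^2(\Ss)^2$, and interpolating with $\lVert\beta_k\rVert_{\rmH^{r-1}}=1$ via \eqref{eq:interpolation} gives $\beta_k\to 0$ in $\rmH^s(\Ss)^2$ for every $s\in[0,r-1)$.

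Next I would peel off the regularizing part of $\DD(f)$. Splitting each operator $B_i(f)$ in \eqref{eq:B_by_Bnmpq} as $B_i(f)=B_i^{\mathrm{prin}}(f)+B_i^{\mathrm{reg}}(f)$, where $B_i^{\mathrm{prin}}$ collects the summands $B_{n,m}^{0,q}(f)$ and $B_i^{\mathrm{reg}}$ those with $p\geq 1$, and letting $\DD_0(f)$, $\DD_1(f)$ be given by \eqref{eq:D(f)} with $B_i$ replaced by $B_i^{\mathrm{prin}}$, respectively $B_i^{\mathrm{reg}}$ (multiplication by $f'$ being harmless, since $\rmH^{r-1}(\Ss)$ is a multiplication algebra for $r>3/2$), we have $\DD(f)=\DD_0(f)+\DD_1(f)$. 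By the second line of \eqref{eq:reg_B} the operator $\DD_1(f)$ maps $\rmH^{r-1}(\Ss)^2$ into $\rmH^r(\Ss)^2$, and by Lemma~\ref{Lem:Bnmpq_L2_L2} it is bounded on $\rmL^2(\Ss)^2$; interpolating these two mapping properties with \eqref{eq:interpolation} shows that $\DD_1(f)$ maps $\rmH^\sigma(\Ss)^2$ boundedly into $\rmH^{r-1}(\Ss)^2$ for $\sigma:=(r-1)^2/r<r-1$, with norm bounded in terms of $\lVert f\rVert_{\rmH^r}$. Combined with the convergence $\beta_k\to 0$ in $\rmH^\sigma(\Ss)^2$ established above, this gives $\DD_1(f_k)[\beta_k]\to 0$, and hence
\begin{equation*}
	(\lambda_k-\DD_0(f_k))[\beta_k]\longrightarrow 0\qquad\text{in }\rmH^{r-1}(\Ss)^2.
\end{equation*}

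The core of the argument is to deduce from this, together with $\beta_k\to 0$ in $\rmL^2(\Ss)^2$, that $\beta_k\to 0$ in $\rmH^{r-1}(\Ss)^2$. Here I would localize: fix a finite cover of $\Ss$ by small arcs with a subordinate smooth partition of unity $\{\pi_j\}_j$, and for each $j$ let $\ell_{j,k}$ be the affine function agreeing to first order with $f_k$ at the center of the $j$-th arc. Using commutator bounds for $\pi_j$ against the singular operators $B_{n,m}^{0,q}(f_k)$ extracted from Lemma~\ref{Lem:Bnmpq_L2_Hr-1}(ii),(iv), together with (localized versions of) the perturbation estimate \eqref{eq:Bnmpq_L2_Hr-1_b} to replace $f_k$ by $\ell_{j,k}$ near $\supp\pi_j$, one shows that $\DD_0(\ell_{j,k})[\pi_j\beta_k]-\pi_j\DD_0(f_k)[\beta_k]$ converges to $0$ in $\rmH^{r-1}(\Ss)^2$ along the sequence, so that
\begin{equation*}
	(\lambda_k-\DD_0(\ell_{j,k}))[\pi_j\beta_k]=\pi_j(\lambda_k-\DD_0(f_k))[\beta_k]+o(1)\qquad\text{in }\rmH^{r-1}(\Ss)^2.
\end{equation*}
For affine $\ell_{j,k}$ the kernel of $\DD_0(\ell_{j,k})$ depends only on the increment $s$, so $\DD_0(\ell_{j,k})$ is a matrix Fourier multiplier on $\Ss$ whose symbol can be computed explicitly; its spectral radius is at most $1/2$ (in accordance with the $\rmL^2$-spectral bound contained in Theorem~\ref{Thm:D(f)_Res_2}), so $\lambda_k-\DD_0(\ell_{j,k})$ is invertible on $\rmH^{r-1}(\Ss)^2$ with inverse bounded by a constant $C(\delta,r)$ uniformly in $j$ and $k$. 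Hence $\lVert\pi_j\beta_k\rVert_{\rmH^{r-1}}\to 0$ for each $j$, and summing over $j$ gives $\beta_k\to 0$ in $\rmH^{r-1}(\Ss)^2$, contradicting $\lVert\beta_k\rVert_{\rmH^{r-1}}=1$; this proves \eqref{eq:D(f)_Res_Hr-1}. I expect this localization step to be the main obstacle: carrying out the commutator estimates against the $B_{n,m}^{0,q}(f_k)$, quantifying the dependence of these operators on $f_k$ in the $\rmH^{r-1}$-topology uniformly in $k$, and identifying the symbol of the frozen-coefficient model, are the delicate points.

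For the isomorphism claim, fix $f\in\rmH^r(\Ss)$ and note that, by the part just proved (applied with $\delta>0$ depending on $\lVert f\rVert_{\rmH^r}$ and $|\lambda|$), the operator $\lambda-\DD(f)$ is bounded below on $\rmH^{r-1}(\Ss)^2$ for every real $\lambda$ with $|\lambda|>1/2$, uniformly on compact subsets of $\{|\lambda|>1/2\}$; in particular it is injective with closed range. For $|\lambda|$ large, $\lambda-\DD(f)=\lambda(\id-\lambda^{-1}\DD(f))$ is invertible by a Neumann series. Since each of the two components of $\{\lambda\in\RR:|\lambda|>1/2\}$ is connected and the set of $\lambda$ in that component for which $\lambda-\DD(f)$ is invertible on $\rmH^{r-1}(\Ss)^2$ is open (invertible operators form an open set) and, thanks to the uniform lower bound, also relatively closed, we conclude that $\lambda-\DD(f)$ is an isomorphism of $\rmH^{r-1}(\Ss)^2$ for all real $\lambda$ with $|\lambda|>1/2$.
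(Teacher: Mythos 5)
Your proposal follows a genuinely different route from the paper. The paper does not localize or freeze coefficients for this theorem at all; instead it exploits translation equivariance: writing the $\rmH^{r-1}$-norm as $\lVert\cdot\rVert_2 + [\cdot]_{\rmW^{r-1,2}}$ with $[\beta]_{\rmW^{r-1,2}}^2=\int_{-\pi}^\pi|\zeta|^{1-2r}\lVert\tau_\zeta\beta-\beta\rVert_2^2\,\rmd\zeta$ and using the identity $\tau_\zeta\DD(f)[\beta]=\DD(\tau_\zeta f)[\tau_\zeta\beta]$, it reduces everything to the uniform $\rmL^2$-resolvent bound of Theorem~\ref{Thm:D(f)_Res_2} plus a single Lipschitz estimate $\lVert(\DD(\tau_\zeta f)-\DD(f))[\beta]\rVert_2\leq C\lVert\tau_\zeta f'-f'\rVert_2\lVert\beta\rVert_{\rmH^{r'-1}}$, proved from the difference formula \eqref{eq:diffB} via Lemma~\ref{Lem:Bnmpq_L2_Hr-1}~(ii),(iii). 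Interpolation and Young's inequality then close the argument in a few lines.

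Your contradiction scheme, the $\DD=\DD_0+\DD_1$ split, and the interpolation bound showing $\DD_1(f_k)[\beta_k]\to 0$ are all sound. However, the decisive step — localizing $\DD_0$ and inverting the frozen-coefficient model — contains a real gap. You justify the claim that $\DD_0(\ell_{j,k})$ has spectral radius $\leq 1/2$ by appeal to Theorem~\ref{Thm:D(f)_Res_2}, but that theorem is stated for periodic $f\in\rmC^1(\Ss)$, and $\ell_{j,k}$ is affine, hence not periodic; the theorem simply does not apply to the frozen operator, and no symbol computation is supplied. The commutator and coefficient-freezing estimates are likewise only sketched, and you flag them yourself as the delicate points — so as written the proof does not close.

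In fact, had you pushed through the frozen-coefficient computation using the scalar-Hilbert-transform approximation of the paper's Lemma~\ref{Lem:Cnm_approx_a}, you would find something stronger and simpler than a $1/2$-bound: writing $c_n:=\tfrac{(f_0')^n}{(1+f_0'^2)^2}(\xi_j^\ve)$ and $a:=f_0'(\xi_j^\ve)$, the formula \eqref{eq:D_by_C} shows the frozen principal part of $\DD(f_0)$ equals
\begin{equation*}
\left(-\begin{pmatrix} c_1 & c_2 \\ c_2 & c_3\end{pmatrix}+a\begin{pmatrix} c_0 & c_1 \\ c_1 & c_2\end{pmatrix}\right)H=0,
\end{equation*}
since $ac_n=c_{n+1}$. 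The principal symbol of $\DD(f)$ vanishes identically, so after localization one is simply inverting $\lambda$ plus a small perturbation, for which $|\lambda|\geq 1/2+\delta$ is more than enough. This cancellation is what actually makes your plan work, but observing and quantifying it is precisely the missing content.

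Your Neumann-series plus open/closed connectedness argument for the isomorphism part is fine and essentially equivalent to the paper's method-of-continuity argument.
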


	\begin{proof}
	Let $\delta\in (0,1)$ be fixed. Recalling Theorem~\ref{Thm:D(f)_Res_2} and  using the embedding ${\rmH^r(\Ss)\hookrightarrow \rmC^1(\Ss)}$, 
	we find a constant~${C=C(\delta)>0}$ such that for all~$\zeta\in\Ss$, $f\in\rmH^r(\Ss)$ with $\lVert f\rVert_{\rmH^r}\leq 1/\delta$, and~$\lambda\in\RR$ satisfying $|\lambda|\geq 1/2+\delta$, we have~${\lVert(\lambda-\DD(\tau_\zeta f))^{-1}\rVert_{\mcL(\rmL^2(\Ss)^2)}\leq C}$. Here, $\tau_\zeta f := f(\zeta+\cdot)$ is the left-shift operator. Noticing that the standard $\rmH^{r-1}$-norm, defined by means of the Fourier transform, is equivalent to the norm $\big[\beta\mapsto \lVert\beta\rVert_2 +[\beta]_{\rmW^{r-1,2}}\big]$, where
	\begin{equation*}
		[\beta]_{\rmW^{r-1,2}}^2:=\int_{-\pi}^\pi\frac{\lVert\tau_\zeta \beta-\beta\rVert_2^2}{|\zeta|^{2r-1}}\,\rmd \zeta,
	\end{equation*}
	we now estimate
	\begin{equation}\label{eq:beta_sem}
	\begin{aligned}\relax
		[\beta]_{\rmW^{r-1,2}}^2 &\leq C\int_{-\pi}^\pi \frac{\big\lVert (\lambda-\DD(\tau_\zeta f))[\tau_\zeta \beta-\beta] \big\rVert_2^2}{|\zeta|^{2r-1}}\,\rmd \zeta\\
		&\leq C\int_{-\pi}^\pi \frac{\big\lVert \tau_\zeta\big((\lambda-\DD(f))[\beta]\big)-(\lambda-\DD(f))[\beta] \big\rVert_2^2+\big\lVert\big(\DD(\tau_\zeta f)
		-\DD(f)\big)[\beta]\big\rVert_2^2}{|\zeta|^{2r-1}}\,\rmd \zeta\\
		&\leq C\big[(\lambda-\DD(f))[\beta]\big]_{\rmW^{r-1,2}}^2+C\int_{-\pi}^\pi \frac{\big\lVert\big(\DD(\tau_\zeta f)-\DD(f)\big)[\beta]\big\rVert_2^2}{|\zeta|^{2r-1}}\,\rmd \zeta.
	\end{aligned}
	\end{equation}
	Concerning the second term in the last line of \eqref{eq:beta_sem}, we fix an arbitrary $r'\in(3/2,r)$ and show  below that there exists a constant $C=C(\delta,r')>0$ such that for all~$\zeta\in\Ss$, $f\in\rmH^r(\Ss)$ with $\lVert f\rVert_{\rmH^r}\leq 1/\delta$, and $\beta \in\rmH^{r-1}(\Ss)^2$ we have
	\begin{equation}\label{eq:D(yf)-D(f)}
		\big\lVert\big(\DD(\tau_\zeta f)-\DD(f)\big)[\beta]\big\rVert_2\leq C \lVert \tau_\zeta f'-f'\rVert_2 \lVert\beta\rVert_{\rmH^{r'-1}}.
	\end{equation}
	 Recalling \eqref{eq:B_by_Bnmpq} and \eqref{eq:D(f)}, it suffices to consider terms of the form
	\begin{equation}\label{eq:two_op}
		\big(B_{n,m}^{p,q}(\tau_\zeta f)-B_{n,m}^{p,q}(f)\big)[\beta_i]\qquad \text{and}\qquad B_{n,m}^{p,q}(\tau_\zeta f)[(\tau_\zeta f)'\beta_i]-B_{n,m}^{p,q}(f)[f'\beta_i],
	\end{equation}	
	with $n,\,m,\,p,\,q\in\NN_0$ with $p\leq n+q+1$ and $i=1,\, 2$. Since
	\begin{equation*}
		B_{n,m}^{p,q}(\tau_\zeta f)[(\tau_\zeta f)'\beta_i]-B_{n,m}^{p,q}(f)[f'\beta_i]=\big(B_{n,m}^{p,q}(\tau_\zeta f)-B_{n,m}^{p,q}(f)\big)[(\tau_\zeta f)'\beta_i]+B_{n,m}^{p,q}(f)[(\tau_\zeta f'-f')\beta_i],
	\end{equation*}
	the last term on the right side being estimated in view of~\eqref{eq:Bnmpq_L2_L2} by
	\begin{equation*}
		\lVert B_{n,m}^{p,q}(f)[(\tau_\zeta f'-f')\beta_i]\rVert_2 \leq C \lVert (\tau_\zeta f'-f')\beta_i\rVert_2 \leq C \lVert (\tau_\zeta f'-f')\rVert_2 \lVert \beta \rVert_{\rmH^{r'-1}},
	\end{equation*}
	it remains to estimate  the first term  in \eqref{eq:two_op} according to~\eqref{eq:D(yf)-D(f)}. 
	To this end, we infer from the formula \eqref{eq:diffB} that the operator $B_{n,m}^{p,q}(\tau_\zeta f)-B_{n,m}^{p,q}(f)$ can be written as a linear combination of terms of the form
	\begin{equation*}
		T_1:=B_{\bar n, \bar m}^{\bar p,\bar q}(\tau_\zeta f,\dots,\tau_\zeta f \vert \tau_\zeta f,\dots, \tau_\zeta f)[\underbrace{f,\dots,f}_{i-1~\text{times}},\tau_\zeta f -f,\tau_\zeta f,\dots,\tau_\zeta f,\cdot]
	\end{equation*}
	and 
	\begin{equation*}
	\begin{aligned}
		T_2&:=\big(B_{\bar n, \bar m}^{\bar p,\bar q}(\underbrace{f,\dots,f}_{j~\text{times}},\tau_\zeta f,\dots, \tau_\zeta f \vert \underbrace{f,\dots,f}_{\ell-1~\text{times}}\tau_\zeta f,\ldots\tau_\zeta f)\\
		&\quad\hspace{1em}-B_{\bar n, \bar m}^{\bar p,\bar q}(\underbrace{f,\dots,f}_{j~\text{times}},\tau_\zeta f,\dots, \tau_\zeta f \vert \underbrace{f,\dots,f}_{\ell~\text{times}}\tau_\zeta f,\ldots\tau_\zeta f)\big)[f,\ldots,f,\cdot]
	\end{aligned}
	\end{equation*}
	where  $\bar n,\,\bar m,\,\bar p,\,\bar q\in\NN_0$ with $\bar p\leq \bar n+\bar q+1$,  $1\leq i\leq \bar q$, $0\leq j\leq \bar m-1$, and $1\leq \ell\leq \bar n$. 
		
	Applying Lemma~\ref{Lem:Bnmpq_L2_Hr-1}~(ii) to estimate $T_1$ (with $r$ replaced by $r'$), and Lemma~\ref{Lem:Bnmpq_L2_Hr-1}~(iii) for $T_2$ (again with $r$ replaced by $r'$), we obtain
	\begin{equation*}
		\big\lVert B_{n,m}^{p,q}(\tau_\zeta f) - B_{n,m}^{p,q}(f) \big\rVert_{\mcL(\rmH^{r'-1}(\Ss), \rmL^2(\Ss))} \leq C \lVert \tau_\zeta f' - f' \rVert_2,
	\end{equation*}
	from which \eqref{eq:D(yf)-D(f)} readily follows.
		
	Plugging~\eqref{eq:D(yf)-D(f)} into \eqref{eq:beta_sem} immediately yields
	\begin{equation*}
		[\beta]_{\rmW^{r-1,2}}^2\leq C\big(\lVert\beta\rVert_{\rmH^{r'-1}}^2+\big[(\lambda-\DD(f))[\beta]\big]_{\rmW^{r-1,2}}^2\big).
	\end{equation*}
	Using the interpolation property~\eqref{eq:interpolation}, Young's inequality, and Theorem~\ref{Thm:D(f)_Res_2}, we get
	\begin{equation}
	\begin{aligned}
		\lVert\beta\rVert_{\rmH^{r-1}}^2&\leq C\big(\lVert\beta\rVert_2^2+\lVert\beta\rVert_{\rmH^{r'-1}}^2+\big[(\lambda-\DD(f))[\beta]\big]_{\rmW^{r-1,2}}^2\big)\\
		&\leq \frac{1}{2}\lVert\beta\rVert_{\rmH^{r-1}}^2+C\lVert (\lambda-\DD(f))[\beta] \rVert_{\rmH^{r-1}}^2,
	\end{aligned}
	\end{equation}
	which proves \eqref{eq:D(f)_Res_Hr-1}. The remaining isomorphism property follows from~\eqref{eq:D(f)_Res_Hr-1} by the same continuity argument used in the proof of Theorem~\ref{Thm:D(f)_Res_2}.
	\end{proof}

	\subsection*{Invertibility in \texorpdfstring{$\mcL\big(\rmH^{2}(\Ss)^2\big)$}{L(H2)}}	 	We first establish in Lemma~\ref{Lem:Bnmpq_H1_H1} and Corollary \ref{Cor:Bnmpq_H2} mapping properties for the operators $B_{n,m}^{p,q}$ in higher order Sobolev spaces, which enable us to conclude in particular that~$\DD(f)\in \mcL\big(\rmH^{2}(\Ss)^2\big)$ provided that $f\in \rmH^{3}(\Ss)$. For convenience, we introduce the following shorthand notation: given $\bfz=(z_1,\ldots,z_n) \in X^n$, where~$X$ is a Banach space, we set
	\begin{equation*} 
		\bfz_i :=(z_1,\dots,z_{i-1},z_{i+1},\dots,z_n)\in X^{n-1},\qquad 1\leq i\leq n,\quad n\in\NN.
	\end{equation*}

	\begin{Lemma}\label{Lem:Bnmpq_H1_H1}
	Let $n,\,m,\,p,\,q\in\NN_0$ with $p\leq n+q+1$, $(\bfa,\bfb,\bfc)\in\rmH^2(\Ss)^{m+n+q}$, and $\varphi\in\rmH^1(\Ss)$ be given. Then, $B_{n,m}^{p,q}(\bfa\vert\bfb)[\bfc,\cdot]\in \mcL\big(\rmH^1(\Ss)\big)$ with 
	\begin{equation}\label{eq:B'}
 	\begin{aligned}
 		&\big(B_{n,m}^{p,q}(\bfa\vert\bfb)[\bfc,\varphi]\big)'\\
 		&=B_{n,m}^{p,q}(\bfa\vert\bfb)[\bfc,\varphi']+\sum_{j=1}^{q}B_{n,m}^{p,q}(\bfa\vert\bfb)[c_1,\dots,c_{j-1},c_j',c_{j+1},\dots,c_q,\varphi]\\
 		&\quad+\sum_{j=1}^n\big(B_{n-1,m}^{p,q+1}(\bfa\vert\bfb_j)[\bfc,b_j',\varphi]-B_{n+1,m}^{p+2,q+1}(\bfa\vert\bfb,b_j)[\bfc,b_j',\varphi]\big)\\
 		&\quad-2\sum_{j=1}^m \big(B_{n+1,m+1}^{p,q+1}(\bfa,a_j\vert\bfb,a_j)[\bfc,a_j',\varphi]-B_{n+3,m+1}^{p+2,q+1}(\bfa,a_j\vert\bfb,a_j,a_j,a_j)[\bfc,a_j',\varphi]\big).
 	\end{aligned}
 	\end{equation}
	Moreover,  there exists a constant~${C>0}$ that depends only on $n,\,m,\,p,\,q$, and~${\lVert(\bfa,\bfb)\rVert_{\rmH^2}}$ such that  
	\begin{equation}\label{eq:Bnmpq_H1_H1}
		\big\lVert B_{n,m}^{p,q}(\bfa\vert\bfb)[\bfc,\varphi]\big\rVert_{\rmH^1}\leq C\lVert\varphi\rVert_{\rmH^1}\prod_{i=1}^q \lVert c_i\rVert_{\rmH^2}.
	\end{equation}	
	In addition,~${B_{n,m}^{p,q}\in\rmC^{1-}\big(\rmH^2(\Ss)^{m+n},\mcL^q_{\textup{sym}}\big(\rmH^{2}(\Ss),\mcL\big(\rmH^{1}(\Ss)\big)\big)\big)}$. 
	\end{Lemma}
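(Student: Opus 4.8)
The plan is to establish the differentiation formula \eqref{eq:B'} first for smooth data by differentiating under the principal-value integral, to read off the bound \eqref{eq:Bnmpq_H1_H1} from it using the $\rmL^2$-estimates already at our disposal, and finally to remove the smoothness hypothesis and obtain the $\rmC^{1-}$-statement by a density argument. Throughout I fix some $r\in(3/2,2)$, so that $\rmH^2(\Ss)\hookrightarrow\rmH^r(\Ss)\hookrightarrow\rmW^{1,\infty}(\Ss)$ and $\rmH^1(\Ss)\hookrightarrow\rmH^{r-1}(\Ss)$.

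\emph{The identity for smooth data.} Let $\bfa,\bfb,\bfc\in\rmC^\infty(\Ss)$ and $\varphi\in\rmC^\infty(\Ss)$. I would differentiate the integrand of \eqref{eq:Bnmpq} in $\xi$ and integrate term by term. The factor $\varphi(\xi-s)$ produces $\varphi'(\xi-s)$, i.e.\ the first term on the right of \eqref{eq:B'}, and the $q$ factors $\dg{\xi,s}{c_i}/(2\tss)$ produce, via $\partial_\xi\dg{\xi,s}{c_i}=\dg{\xi,s}{c_i'}$, the second sum. For each factor $\T{\xi,s}b_j/\tss$ I use
\[
\partial_\xi\T{\xi,s}b_j=\tfrac12\bigl(1-(\T{\xi,s}b_j)^2\bigr)\dg{\xi,s}{b_j'},\qquad 1-(\T{\xi,s}b_j)^2=1-\bigl(\T{\xi,s}b_j/\tss\bigr)^2\tss^2,
\]
whose first summand transfers $b_j$ from $\bfb$ into a $\bfc$-slot occupied by $b_j'$, while the second adds a copy of $b_j$ to $\bfb$ and raises $p$ by $2$; this produces the third sum, the factor $1/2$ being absorbed by the normalisation of the $\bfc$-slots in \eqref{eq:Bnmpq}. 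For each denominator factor $1+(\T{\xi,s}a_j/\tss)^2$ the analogous computation, combined with the quotient rule, creates a repeated $a_j$ in $\bfa$, an $a_j$ (once, resp.\ thrice) in $\bfb$, an $a_j'$ in a $\bfc$-slot, and the prefactor $-2$; this is the fourth sum. The one delicate point is the interchange of $\partial_\xi$ with the principal value: away from $s=0$ the kernel is smooth and the interchange is classical, while near $s=0$ one splits off the leading, odd-in-$s$ singularity $\sim\tss^{-1}$ (present only when $p=0$, since for $p\ge1$ the kernel is already absolutely integrable), whose principal value is a smooth function of $\xi$ and whose $\xi$-derivative again has odd leading part, leaving an integrable remainder — this is the reasoning that already underlies the $\rmH^r$-regularity \eqref{eq:reg_B}.

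\emph{The $\rmH^1$-bound, density, and $\rmC^{1-}$.} Each of the six families on the right of \eqref{eq:B'} is again a $B_{\tilde n,\tilde m}^{\tilde p,\tilde q}$-operator, and inspecting the six cases shows that the admissibility $\tilde p\le\tilde n+\tilde q+1$ is preserved. I would bound the term containing $\varphi'$ by Lemma~\ref{Lem:Bnmpq_L2_L2} and every term in which a derivative $c_j'$, $b_j'$ or $a_j'$ occupies a $\bfc$-slot by placing that function in the distinguished slot of Lemma~\ref{Lem:Bnmpq_L2_Hr-1}(ii) — which is legitimate since $\bfa,\bfb\in\rmH^2\hookrightarrow\rmH^r$, the transferred function lies in $\rmH^1$, and $\varphi\in\rmH^1\hookrightarrow\rmH^{r-1}$ — obtaining in all cases a bound by $C\lVert\varphi\rVert_{\rmH^1}\prod_{i=1}^q\lVert c_i\rVert_{\rmH^2}$ with $C=C(n,m,p,q,\lVert(\bfa,\bfb)\rVert_{\rmH^2})$, the $\rmL^2$-norm of the second derivative of the transferred function being either absorbed into $C$ (for the $b_j'$- and $a_j'$-terms) or retained as the factor $\lVert c_j\rVert_{\rmH^2}$ (for the $c_j'$-terms). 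Together with the $\rmL^2$-bound of Lemma~\ref{Lem:Bnmpq_L2_L2} for $B_{n,m}^{p,q}(\bfa\vert\bfb)[\bfc,\varphi]$ itself this yields \eqref{eq:Bnmpq_H1_H1} for smooth data. A standard approximation — using \eqref{eq:Bnmpq_H1_H1} uniformly together with the joint $\rmL^2$-continuity of $B_{n,m}^{p,q}$ in all of its arguments provided by Lemmas~\ref{Lem:Bnmpq_L2_L2} and~\ref{Lem:Bnmpq_L2_Hr-1} — then gives $B_{n,m}^{p,q}(\bfa\vert\bfb)[\bfc,\cdot]\in\mcL(\rmH^1(\Ss))$, the validity of \eqref{eq:B'} in $\rmL^2(\Ss)$, and \eqref{eq:Bnmpq_H1_H1} for general $\bfa,\bfb,\bfc\in\rmH^2(\Ss)$ and $\varphi\in\rmH^1(\Ss)$. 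Finally, writing $\lVert\cdot\rVert_{\rmH^1}=\lVert\cdot\rVert_2+\lVert(\cdot)'\rVert_2$, the $\rmL^2$-part of the $\rmC^{1-}$-assertion follows from Lemma~\ref{Lem:Bnmpq_L2_L2} precomposed with $\rmH^2\hookrightarrow\rmW^{1,\infty}$ in every slot, and the $\lVert(\cdot)'\rVert_2$-part follows by applying Lemmas~\ref{Lem:Bnmpq_L2_L2} and~\ref{Lem:Bnmpq_L2_Hr-1}(ii) to each summand of \eqref{eq:B'}, which depends locally Lipschitz continuously on $(\bfa,\bfb)\in\rmH^2(\Ss)^{m+n}$ and symmetrically and $q$-linearly on $\bfc\in\rmH^2(\Ss)^q$ (the transferred $b_j',a_j'$ entering linearly); symmetry in $\bfc$ is visible directly from \eqref{eq:Bnmpq}.

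\emph{Main obstacle.} I expect the real work to be in the first step: rigorously justifying differentiation under the principal-value integral when $p=0$ — i.e.\ the near-diagonal analysis — together with the purely algebraic but error-prone bookkeeping needed to reorganise the product-rule output into exactly the six families, keeping track of all signs, of the $\tfrac12$'s in the $\bfc$-slots, and of the index shifts $n\mapsto n\pm1$, $n\mapsto n+3$, $m\mapsto m+1$, $q\mapsto q+1$, $p\mapsto p,\,p+2$. Once that is in place, the estimate \eqref{eq:Bnmpq_H1_H1}, the density argument, and the $\rmC^{1-}$-statement are routine consequences of Lemmas~\ref{Lem:Bnmpq_L2_L2} and~\ref{Lem:Bnmpq_L2_Hr-1}.
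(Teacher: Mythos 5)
Your proposal is correct and follows essentially the same route as the paper: derive \eqref{eq:B'} for smooth data, estimate each of the resulting families using Lemma~\ref{Lem:Bnmpq_L2_L2} (for $B_{n,m}^{p,q}(\bfa\vert\bfb)[\bfc,\varphi]$ and the $\varphi'$-term) and Lemma~\ref{Lem:Bnmpq_L2_Hr-1}~(ii) (for all terms with a transferred derivative in a $\bfc$-slot), and then pass to general $\rmH^2\times\rmH^1$ data by density, with the $\rmC^{1-}$-assertion inherited from the same two lemmas. The only small divergence is in the justification of differentiating under the principal value for $p=0$: the paper first symmetrizes the PV integral into an ordinary, absolutely convergent Riemann integral over $[0,\pi]$ and then applies the standard theorem on differentiation of parameter-dependent integrals, whereas you propose to split off the leading odd-in-$s$ singularity of the kernel and handle it separately; both are legitimate, but the paper's symmetrization is cleaner since it eliminates the singularity outright rather than requiring a remainder estimate.
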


 	\begin{proof}[Proof of Lemma~\ref{Lem:Bnmpq_H1_H1}]
	Assume first that $(\bfa,\bfb,\bfc)\in\rmC^\infty(\Ss)^{m+n+q}$ and $\varphi\in\rmC^\infty(\Ss)$. 
	Then, by rewriting the principal value integral as a Riemann integral over the interval $[0, \pi]$ if $p=0$ (for $p\geq 1$ this step is not needed as the integral defining 
	$B_{n,m}^{p,q}(\bfa\vert\bfb)[\bfc,\varphi]$ is not singular), the theorem on the differentiation of parameter-dependent integrals ensures that $ B_{n,m}^{p,q}(\bfa\vert\bfb)[\bfc,\varphi]$ is continuously differentiable, and its derivative is given by~\eqref{eq:B'}. A standard density argument together with the local Lipschitz
	 continuity properties established in Lemma~\ref{Lem:Bnmpq_L2_L2} and Lemma~\ref{Lem:Bnmpq_L2_Hr-1}~(ii) show 
	 that~${B_{n,m}^{p,q}(\bfa\vert\bfb)[\bfc,\varphi]}$ is an element of~${\rmH^1(\Ss)}$ for all~$\varphi\in\rmH^1(\Ss)$ and~${(\bfa,\bfb,\bfc)\in\rmH^2(\Ss)^{m+n+q}}$.
 		
 	The estimate~\eqref{eq:Bnmpq_H1_H1} is obtained by applying~\eqref{eq:Bnmpq_L2_L2} to ${B_{n,m}^{p,q}(\bfa\vert\bfb)[\bfc,\varphi]}$ and to the first term on the right side of~\eqref{eq:B'}, and by applying~\eqref{eq:Bnmpq_L2_Hr-1_c} to the remaining terms on the right side of~\eqref{eq:B'}. Furthermore, the local Lipschitz continuity follows from the corresponding local Lipschitz continuity properties stated in Lemma~\ref{Lem:Bnmpq_L2_L2} and Lemma~\ref{Lem:Bnmpq_L2_Hr-1}~(ii).
 	\end{proof}

	As a straightforward consequence of Lemma~\ref{Lem:Bnmpq_H1_H1}, in particular of the formula~\eqref{eq:B'}, we conclude via induction the following result.
	\begin{Corollary}\label{Cor:Bnmpq_H2}
 	Given~${n,\,m,\,p,\,q\in\NN_0}$ with~${p\leq n+q+1}$ and~${k\in\NN}$, we have
 	\begin{equation*}
 		B_{n,m}^{p,q}\in\rmC^{1-}\big(\rmH^{k+1}(\Ss)^{m+n},\mcL^q_{\textup{sym}}\big(\rmH^{k+1}(\Ss),\mcL\big(\rmH^{k}(\Ss)\big)\big)\big).
 	\end{equation*}
	\end{Corollary}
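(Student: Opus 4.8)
The plan is to argue by induction on $k\in\NN$, the base case $k=1$ being precisely the last assertion of Lemma~\ref{Lem:Bnmpq_H1_H1}. For the inductive step I would assume the claim at level $k$ holds for \emph{every} admissible tuple $(n,m,p,q)$ (that is, every tuple with $p\leq n+q+1$), and then fix one such tuple together with $(\bfa,\bfb)\in\rmH^{k+2}(\Ss)^{m+n}$, $\bfc\in\rmH^{k+2}(\Ss)^{q}$, and $\varphi\in\rmH^{k+1}(\Ss)$. Since $\rmH^{k+2}(\Ss)\hookrightarrow\rmH^{2}(\Ss)$ and $\rmH^{k+1}(\Ss)\hookrightarrow\rmH^{1}(\Ss)$, Lemma~\ref{Lem:Bnmpq_H1_H1} applies; in particular $B_{n,m}^{p,q}(\bfa\vert\bfb)[\bfc,\varphi]\in\rmH^1(\Ss)$ and its derivative is given by the formula~\eqref{eq:B'}, which expresses $\big(B_{n,m}^{p,q}(\bfa\vert\bfb)[\bfc,\varphi]\big)'$ as a finite linear combination of operators $B_{n',m'}^{p',q'}(\bfa'\vert\bfb')[\bfc',\psi]$ of the same type. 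In each summand the function argument $\psi$ is either $\varphi'$ (in the first term) or $\varphi$ (in all remaining terms), so $\psi\in\rmH^{k}(\Ss)$; the density list $\bfc'$ consists of the original $c_i$, possibly with one $c_j$ replaced by $c_j'$, and possibly with one extra slot filled by some $b_j'$ or $a_j'$, so all its entries lie in $\rmH^{k+1}(\Ss)$; and the core symbols $\bfa',\bfb'$ are assembled from $\bfa,\bfb$ with at most finitely many additional copies of some $a_j$, so $\bfa',\bfb'\in\rmH^{k+1}(\Ss)$.

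The next step is pure bookkeeping: one checks that every $(n',m',p',q')$ occurring in~\eqref{eq:B'} is again admissible. Apart from the summands with $(n',m',p',q')=(n,m,p,q)$, the tuples appearing are $(n-1,m,p,q+1)$, $(n+1,m,p+2,q+1)$, $(n+1,m+1,p,q+1)$, and $(n+3,m+1,p+2,q+1)$, and in each case the inequality $p'\leq n'+q'+1$ follows at once from $p\leq n+q+1$. Hence the induction hypothesis at level $k$ — applied with $q$ replaced by the relevant $q'\in\{q,q+1\}$ — controls every summand in $\rmH^{k}(\Ss)$, and, using $\lVert\varphi'\rVert_{\rmH^{k}}\leq\lVert\varphi\rVert_{\rmH^{k+1}}$, $\lVert c_j'\rVert_{\rmH^{k+1}}\leq\lVert c_j\rVert_{\rmH^{k+2}}$, and $\lVert a_j'\rVert_{\rmH^{k+1}}+\lVert b_j'\rVert_{\rmH^{k+1}}\leq C\lVert(\bfa,\bfb)\rVert_{\rmH^{k+2}}$, summation of these estimates yields
\[
	\big\lVert\big(B_{n,m}^{p,q}(\bfa\vert\bfb)[\bfc,\varphi]\big)'\big\rVert_{\rmH^{k}}\leq C\,\lVert\varphi\rVert_{\rmH^{k+1}}\prod_{i=1}^{q}\lVert c_i\rVert_{\rmH^{k+2}},
\]
with $C=C\big(n,m,p,q,k,\lVert(\bfa,\bfb)\rVert_{\rmH^{k+2}}\big)$. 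Combining this with the $\rmL^2$-bound from Lemma~\ref{Lem:Bnmpq_L2_L2} (and the embedding $\rmH^{k+2}(\Ss)\hookrightarrow\rmW^{1,\infty}(\Ss)$) shows that $B_{n,m}^{p,q}(\bfa\vert\bfb)[\bfc,\varphi]\in\rmH^{k+1}(\Ss)$ with the corresponding $\rmH^{k+1}$-estimate, hence $B_{n,m}^{p,q}(\bfa\vert\bfb)[\bfc,\cdot]\in\mcL\big(\rmH^{k+1}(\Ss)\big)$; symmetry in $\bfc$ is inherited directly from the definition~\eqref{eq:Bnmpq}.

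Finally, for the $\rmC^{1-}$-assertion at level $k+1$ I would observe that in~\eqref{eq:B'} the dependence on $(\bfa,\bfb)\in\rmH^{k+2}(\Ss)^{m+n}$ and on $\bfc\in\rmH^{k+2}(\Ss)^{q}$ enters each summand only through: the core symbols and the unaltered $c_i$, which depend on the data via the bounded embedding $\rmH^{k+2}(\Ss)\hookrightarrow\rmH^{k+1}(\Ss)$; the auxiliary densities $c_j',b_j',a_j'$, which depend on the data via the bounded linear differentiation map $\rmH^{k+2}(\Ss)\to\rmH^{k+1}(\Ss)$; and the multilinear maps $B_{n',m'}^{p',q'}$, which by the induction hypothesis depend locally Lipschitz continuously on their core symbols in $\rmH^{k+1}(\Ss)$. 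Since a composition of a locally Lipschitz map with bounded linear maps is again locally Lipschitz, and since we have just shown $B_{n,m}^{p,q}(\bfa\vert\bfb)[\bfc,\cdot]$ maps into $\mcL\big(\rmH^{k+1}(\Ss)\big)$, this gives $B_{n,m}^{p,q}\in\rmC^{1-}\big(\rmH^{k+2}(\Ss)^{m+n},\mcL^q_{\textup{sym}}\big(\rmH^{k+2}(\Ss),\mcL\big(\rmH^{k+1}(\Ss)\big)\big)\big)$, which completes the induction. I do not expect any analytic obstacle here beyond what is already contained in Lemma~\ref{Lem:Bnmpq_H1_H1}; the only point requiring genuine care is the index-and-regularity bookkeeping across the many terms of~\eqref{eq:B'}.
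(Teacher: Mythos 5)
Your proof is correct and matches the paper's (unstated) argument: the paper itself simply says the corollary follows ``as a straightforward consequence of Lemma~\ref{Lem:Bnmpq_H1_H1}, in particular of the formula~\eqref{eq:B'}, \ldots\ via induction,'' and your write-up supplies exactly that induction on $k$, with the index bookkeeping (admissibility $p'\le n'+q'+1$ of every tuple appearing in~\eqref{eq:B'}, regularity of the shifted arguments, and the $\rmL^2$-bound from Lemma~\ref{Lem:Bnmpq_L2_L2} for the lowest norm) carried out correctly.
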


	In view of Corollary~\ref{Cor:Bnmpq_H2}, \eqref{eq:B_by_Bnmpq}, and \eqref{eq:D(f)} we indeed have that~$\DD(f)\in \mcL\big(\rmH^2(\Ss)\big)^2$ for~$f\in \rmH^3(\Ss)$.
 	
	\begin{Theorem}\label{Thm:D(f)_Res_H2}
	Given $\lambda\in\RR$ with $|\lambda|>1/2$ and~${f\in\rmH^3(\Ss)}$, the operator $\lambda-\DD(f)\in\mcL\big(\rmH^2(\Ss)^2\big)$ is an isomorphism.
 	\end{Theorem}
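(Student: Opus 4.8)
I will bootstrap from the $\rmL^2$-invertibility already established in Theorem~\ref{Thm:D(f)_Res_2}: first to $\rmH^1(\Ss)^2$, then to $\rmH^2(\Ss)^2$. Throughout, $f\in\rmH^3(\Ss)$, and I will use that $\DD(f)\in\mcL\big(\rmH^1(\Ss)^2\big)\cap\mcL\big(\rmH^2(\Ss)^2\big)$, which follows from Corollary~\ref{Cor:Bnmpq_H2}, \eqref{eq:B_by_Bnmpq}, and~\eqref{eq:D(f)} since $f\in\rmH^3(\Ss)\hookrightarrow\rmH^2(\Ss)$. Each of the two steps rests on the same mechanism: an a~priori estimate obtained by differentiating the resolvent equation once and invoking the product formula~\eqref{eq:B'}, followed by the method of continuity in $\lambda$, exactly as in the proofs of Theorem~\ref{Thm:D(f)_Res_2} and Lemma~\ref{Lem:A(f)_Res}.

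\textbf{The commutator.} Fix $\delta\in(0,1)$, $\lambda\in\RR$ with $|\lambda|\ge 1/2+\delta$, $\beta\in\rmH^1(\Ss)^2$, and set $g:=(\lambda-\DD(f))[\beta]$. Since $\beta$ and the products $f'\beta_j$ lie in $\rmH^1(\Ss)$, one may differentiate $\lambda\beta-\DD(f)[\beta]=g$ by applying~\eqref{eq:B'} to the operators $B_{n,m}^{p,q}$ constituting $\DD(f)$ via~\eqref{eq:B_by_Bnmpq}–\eqref{eq:D(f)}; this gives $(\lambda-\DD(f))[\beta']=g'+R(f)[\beta]$, where $R(f)[\beta]:=\DD(f)[\beta']-\big(\DD(f)[\beta]\big)'$. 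The decisive structural point is that, upon expanding $\big(\DD(f)[\beta]\big)'$, the terms in which $\beta$ carries a derivative are precisely $\DD(f)[\beta']$ and therefore cancel, so that $R(f)[\beta]$ is a finite linear combination of operators $B_{n,m}^{p,q}(f,\dots,f\vert f,\dots,f)[c_1,\dots,c_q,\psi]$ in which each coefficient $c_i$ equals $f$ or $f'$ and $\psi\in\{\beta_j,\,f'\beta_j,\,f''\beta_j\}$; in particular $\beta$ enters $R(f)[\beta]$ \emph{undifferentiated}. Consequently, Lemma~\ref{Lem:Bnmpq_L2_L2} together with $f\in\rmH^3(\Ss)\hookrightarrow\rmC^2(\Ss)$ (so $f',f''\in\rmL^\infty(\Ss)$) yields $R(f)\in\mcL\big(\rmL^2(\Ss)^2\big)$, and Lemma~\ref{Lem:Bnmpq_H1_H1} together with $f,f'\in\rmH^2(\Ss)$ and the algebra property of $\rmH^1(\Ss)$ (for $f'\beta_j$, $f''\beta_j$) yields $R(f)\in\mcL\big(\rmH^1(\Ss)^2\big)$; in both cases the operator norm depends only on $\|f\|_{\rmH^3}$.

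\textbf{A~priori estimates and conclusion.} For the $\rmH^1$-step, Theorem~\ref{Thm:D(f)_Res_2} gives $\|\beta\|_{\rmL^2}\le C\|g\|_{\rmL^2}$, and applying it to $\beta'$ gives $\|\beta'\|_{\rmL^2}\le C\|g'+R(f)[\beta]\|_{\rmL^2}\le C(\|g\|_{\rmH^1}+\|\beta\|_{\rmL^2})\le C\|g\|_{\rmH^1}$; hence $\|\beta\|_{\rmH^1}\le C\|(\lambda-\DD(f))[\beta]\|_{\rmH^1}$ with $C=C(\delta,\|f\|_{\rmH^3})$ independent of $\lambda$. Since $\DD(f)\in\mcL\big(\rmH^1(\Ss)^2\big)$ has compact spectrum, $\lambda-\DD(f)$ is invertible for $|\lambda|$ large; combining this, the uniform lower bound, and the method of continuity (see~\cite[Proposition~I.1.1.1]{Amann.1995}, applied on each of the two components $(1/2,\infty)$ and $(-\infty,-1/2)$ of $\{|\lambda|>1/2\}$) shows that $\lambda-\DD(f)\in\mcL\big(\rmH^1(\Ss)^2\big)$ is an isomorphism for all $|\lambda|>1/2$. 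The $\rmH^2$-step is then verbatim the same, with $\rmL^2(\Ss)$ replaced by $\rmH^1(\Ss)$ and Theorem~\ref{Thm:D(f)_Res_2} replaced by the $\rmH^1$-invertibility just obtained and the $\rmH^1$-boundedness of $R(f)$: one gets $\|\beta\|_{\rmH^2}\le C\|(\lambda-\DD(f))[\beta]\|_{\rmH^2}$ uniformly for $|\lambda|\ge 1/2+\delta$, whence the isomorphism property on $\rmH^2(\Ss)^2$ for all $|\lambda|>1/2$ by the method of continuity. This is the assertion.

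\textbf{Main obstacle.} The only non-routine point is the second paragraph: one must carefully expand $\DD(f)$ via~\eqref{eq:D(f)}–\eqref{eq:B_by_Bnmpq}, differentiate each constituent $B_{n,m}^{p,q}$ by~\eqref{eq:B'}, verify the cancellation of all terms containing a derivative of $\beta$, and check that the surviving terms involve only the coefficient functions $f,f'$ (so that $f\in\rmH^3(\Ss)$ is enough for $\rmH^1$-boundedness) and the densities $\beta_j,f'\beta_j,f''\beta_j$. Once $R(f)$ is recognized as an operator of the same type and order as $\DD(f)$ itself, the two bootstrap steps and the continuity argument are immediate.
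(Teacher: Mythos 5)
Your proof is correct, and it follows a genuinely different route from the paper's. The paper passes from $\rmL^2$ to $\rmH^2$ in a single step: it writes $T_{\mathrm{lot}}[\beta]:=(\DD(f)[\beta])''-\DD(f)[\beta'']$, expands this second-order commutator (which involves $f'''\in\rmL^2$) via~\eqref{eq:B'}, bounds it by $C\lVert\beta\rVert_{\rmH^1}$ using Lemma~\ref{Lem:Bnmpq_L2_L2} and Lemma~\ref{Lem:Bnmpq_L2_Hr-1}~(ii), and then absorbs that intermediate norm by interpolation~\eqref{eq:interpolation} and Young's inequality. You instead bootstrap $\rmL^2\to\rmH^1\to\rmH^2$ in two first-order steps, each time commuting a single derivative through $\DD(f)$; your commutator $R(f)$ involves only $f''$ (never $f'''$), so the needed boundedness — $\mcL(\rmL^2)$ in the first step, $\mcL(\rmH^1)$ in the second — comes directly from Lemma~\ref{Lem:Bnmpq_L2_L2} and Lemma~\ref{Lem:Bnmpq_H1_H1} with the coefficients $f,f'\in\rmH^2$, the algebra property of $\rmH^1(\Ss)$, and $f''\in\rmH^1\hookrightarrow\rmL^\infty$, and the right-hand side lower-order term $\lVert\beta\rVert_{\rmL^2}$ (resp.\ $\lVert\beta\rVert_{\rmH^1}$) is controlled outright by the already-proved lower-order resolvent bound rather than by interpolation. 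Both yield the uniform a~priori estimate and then the method of continuity; your version is slightly more elementary (no interpolation, no $f'''$), at the cost of two continuity arguments instead of one. One small sign slip: since $g'=\lambda\beta'-(\DD(f)[\beta])'$, one has $(\lambda-\DD(f))[\beta']=g'-R(f)[\beta]$ rather than $g'+R(f)[\beta]$ with your convention $R(f)[\beta]=\DD(f)[\beta']-(\DD(f)[\beta])'$; this is immaterial for the triangle-inequality estimate that follows.
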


	\begin{proof}
	Arguing  as in Theorem~\ref{Thm:D(f)_Res_2} and Theorem~\ref{Thm:D(f)_Res_Hr-1}, it suffices to show that given $\delta\in(0,1)$, there exists a constant $C=C(\delta)>0$ such that $|\lambda|\geq 1/2+\delta$ and $\lVert f\rVert_{\rmH^3}\leq 1/\delta$  we have
	\begin{equation}\label{eq:Resh2}
		C\lVert (\lambda-\DD(f))[\beta]\rVert_{\rmH^2}\geq \lVert \beta\rVert_{\rmH^2}, \qquad \beta\in\rmH^2(\Ss)^2.
	\end{equation}
	To this end we express, using~\eqref{eq:B_by_Bnmpq}, \eqref{eq:D(f)},  and~\eqref{eq:B'}, the difference $(\DD(f)[\beta])''-\DD(f)[\beta'']=: T_{\text{lot}}[\beta]$ as a linear combination of terms of the following form
	\begin{equation*}
	\begin{aligned}
		&B_{n,m}^{p,q}(f,\dots,f\vert f,\dots,f)[f'',\dots,f,(f')^{k}\beta_i],
		&&B_{n,m}^{p,q}(f,\dots,f\vert f,\dots,f)[f',f,\dots,f,((f')^k\beta_i)']\\
		&B_{n,m}^{p,q}(f,\dots,f\vert f,\dots,f)[f',f',\dots,f,(f')^{k}\beta_i],
		&&B_{n,m}^{p,q}(f)[f{'''}\beta_i+2f{''}\beta_i'],
	\end{aligned}
	\end{equation*}
	where $k\in\{0,1\}$, $i\in\{1,2\}$, and $n,\,m,\,p,\,q\in\NN_0$ satisfy $p\leq n+q+1$. Using Lemma~\ref{Lem:Bnmpq_L2_L2} to estimate the last three terms above and   Lemma~\ref{Lem:Bnmpq_L2_Hr-1}~(ii) for the first term, we find a constant $C>0$ that depends only on $n,\,m,\,p,\,q$ and $\lVert f\rVert_{\rmH^3}$ such that
	\begin{equation*}
		\lVert T_{\text{lot}}[\beta]\rVert_2\leq C\lVert \beta\rVert_{\rmH^1},\qquad\beta\in\rmH^2(\Ss)^2.
	\end{equation*}
	This estimate, the interpolation property~\eqref{eq:interpolation} together with Young's inequality, and the observation that $\lVert(\lambda-\DD(f))^{-1}\rVert_{\mcL(\rmL^2(\Ss))}\leq C$ by Theorem~\ref{Thm:D(f)_Res_2} show that for  $|\lambda|\geq 1/2+\delta$ and $\lVert f\rVert_{\rmH^3}\leq 1/\delta$ we have
	\begin{equation*}
	\begin{aligned}
		\lVert \beta\rVert_{\rmH^2}&\leq C(\lVert \beta\rVert_2+\lVert \beta''\rVert_2)\leq C\big(\lVert \beta\rVert_2+\lVert (\lambda-\DD(f))[\beta'']\rVert_2\big)\\
		&\leq C\big(\lVert \beta\rVert_2 +\lVert T_{\text{lot}}[\beta]\rVert_2+\lVert ((\lambda-\DD(f))[\beta])''\rVert_2\big)\\
		&\leq C\big(\lVert \beta\rVert_{\rmH^1}+\lVert ((\lambda-\DD(f))[\beta])''\rVert_2\big)\\
		&\leq \frac{1}{2}\lVert \beta\rVert_{\rmH^2}+ C\big(\lVert \beta\rVert_2+\lVert ((\lambda-\DD(f))[\beta])''\rVert_2\big)\\
		&\leq \frac{1}{2}\lVert \beta\rVert_{\rmH^2} +C\lVert (\lambda-\DD(f))[\beta]\rVert_{\rmH^2},
	\end{aligned}
	\end{equation*}
	which proves \eqref{eq:Resh2}.
	\end{proof}

 	\section{Local well-posedness of the Stokes flow~\texorpdfstring{\eqref{eq:STOKES}}{(1.1)}}\label{Sec:5}
	In this section, we establish the local well-posedness result stated in Theorem~\ref{Thm:Main}. 
	To this end, we first assume that the function $f = f(t)$, which  parametrizes the interface between the fluids, belongs to~$\rmH^3(\Ss)$ 
	at a fixed time $t > 0$ (we do not make the time dependence explicit at this stage), and we prove, using results from Section~\ref{Sec:2}--Section~\ref{Sec:4}, 
	that the velocity field and pressure are uniquely determined by $f$ as solutions to the transmission boundary value problem \eqref{eq:Stokes}$_{1-5}$. 
	This requires a unique choice for the far-field  conditions in \eqref{eq:Stokes}$_5$, which depend on $f$ as well. 
	Moreover, we provide an explicit formula for the trace of the velocity field on the interface; see Theorem~\ref{Thm:Ftp}. 
	This formula, together with the kinematic boundary condition \eqref{eq:Stokes}$_{6}$ and the results from Section~\ref{Sec:4}, allows us to recast \eqref{eq:STOKES} as an evolution problem for~$f$ in any subcritical space~$\rmH^r(\Ss)$, $r \in (3/2, 2)$, which is then shown to be of parabolic type; see~\eqref{eq:ev_pb} and Proposition~\ref{Prop:Psi_Gen}. 
	We emphasize that, due to the low regularity assumed for $f$, the evolution problem \eqref{eq:ev_pb} is not only nonlocal, but must also be treated as fully nonlinear. This is evident for example when considering the term $\phi(f)$, defined in~\eqref{eq:defphi} below, which depends fully nonlinearly on $f$ and appears in the definition of the nonlinearities in the evolution problem~\eqref{eq:ev_pb}, as seen through \eqref{eq:defV}--\eqref{eq:Db=V} and \eqref{eq:v_on_Gamma}. We conclude the section with the proof of Theorem~\ref{Thm:Main}, which relies on the abstract theory for fully nonlinear evolution problems presented in \cite[Chapter 8]{Lunardi.1995}.
 
	\subsection*{The solution of the transmission boundary value problem \texorpdfstring{\eqref{eq:Stokes}$_{1-5}$}{(1.1)}}\label{SS:5.1}

	In this section, we adopt the notation introduced at the beginning of Section~\ref{Sec:2} and present first some additional notation needed to state our main result concerning the solvability of \eqref{eq:Stokes}$_{1-5}$; see Theorem~\ref{Thm:Ftp}.

	Given $f\in \rmH^3(\Ss)$, we set
 	\begin{equation}\label{eq:defphi}
		\phi:= \phi(f):=(\phi_1(f),\phi_2(f)):=\big((\omega(f))^{-1}- 1,f'(\omega(f))^{-1}\big)\in\rmH^2(\Ss)^2
	\end{equation}	
	and define $\mcV:=\mcV(f)$ by
	\begin{equation}\label{eq:defV}
 		\mcV(f):=\frac{1}{4}\big(-\sigma\mcV_1(f)-\Theta\mcV_3(f),\,-\sigma\mcV_2(f)+\Theta\mcV_4(f)+\Theta\ln (4)\langle f\rangle\big)^\top,
 	\end{equation}
 	where
 	\begin{equation}\label{eq:def_Psi_i}
 	\begin{aligned}
 		\mcV_1(f)&:=\big(B_1-2B_4\big)(f)[\phi_1(f)-f' \phi_2(f)]+\big(2B_2+B_3\big)(f)[f' \phi_1(f)]+B_3(f)[\phi_2(f)],\\
 		\mcV_2(f)&:=B_1(f)[\phi_2(f)-f' \phi_1(f)]+B_3(f)[\phi_1(f)-f' \phi_2(f)]+2B_4(f)[f' \phi_1(f)+\phi_2(f)],\\
 		\mcV_3(f)&:=\big(B_0(f)+B_6(f)\big)[ff']+B_5(f)[f],\\
 		\mcV_4(f)&:=\big(B_0(f)-B_6(f)\big)[f]+B_5(f)[ff'].
 	\end{aligned}
 	\end{equation}   
	Note that  $\mcV(f)\in\rmH^2(\Ss)^2$ by \eqref{eq:B_by_Bnmpq}, \eqref{eq:defphi}, Corollary~\ref{Cor:Bnmpq_H2}, and Lemma~\ref{Lem:B0_H1_H2}.
 	
	Moreover, we set
	\begin{equation}\label{eq:a_mu}
 		a_\mu :=\frac{\mu^+ -\mu^-}{\mu^+ +\mu^-}\in(-1,1)
 	\end{equation}
	and infer from  Theorem~\ref{Thm:D(f)_Res_H2} that there exists a unique solution  $\beta=\beta(f)\in\rmH^2(\Ss)^2$ to
 	\begin{equation}\label{eq:Db=V}
 		\big(1+2 a_\mu\DD(f)\big)[\beta]= \mcV(f).
 	\end{equation}

	\begin{Theorem}\label{Thm:Ftp}
	Given $f\in\rmH^3(\Ss)$, the transmission boundary value problem 	
	\begin{equation}\label{eq:Ftp}
		\left.
		\arraycolsep=1.4pt
		\begin{array}{rclll}
			\mu^\pm\Delta v^\pm-\nabla q^\pm&=&0&\mbox{in $\Omega^\pm$,}\\
			\vdiv v^\pm&=&0&\mbox{in $\Omega^\pm$,}\\{}
			[v]&=&0&\mbox{on $\Gamma$,}\\{}
			[T_\mu (v,q)]\tnu&=&(\Theta x_2-\sigma\tkappa)\tnu&\mbox{on $\Gamma$,}\\
			(v^\pm,q^\pm)(x)&\to&\Big(\pm\frac{c_{1,\Gamma}}{\mu^\pm},\pm\frac{c_{2,\Gamma}}{\mu^\pm},\pm c_{3,\Gamma}\Big)&
			\begin{minipage}{3.05cm} 
				for $x_2\to\pm\infty$ uni-\\[-0.5ex]
				formly in $x_1\in \Ss$,
			\end{minipage}
		\end{array}\right\}
	\end{equation} 
	has a solution~${(v,q)\in X_f}$  if and only if the constants $c_{i,\Gamma}$, $i=1,\,2,\,3$, are given by 
 	\begin{equation}\label{eq:Ftp_c}
 		c_{1,\Gamma}=-\frac{\sigma}{2}\bigg\langle \frac{f'}{(1+f'^2)^{ 1/2}}\bigg\rangle+ a_\mu\langle\beta_1(f)-f'\beta_2(f)\rangle,\qquad c_{2,\Gamma}= 0,\qquad c_{3,\Gamma} =-\frac{\Theta}{2}\langle f\rangle,
	\end{equation} 	 
	where $\beta(f)$ is defined in \eqref{eq:Db=V}. For this choice of the constants, the solution is also unique and 
 	\begin{equation}\label{eq:v_on_Gamma}
 		\{v\}^\pm\circ\Xi=\frac{1}{\mu^\pm}\bigg[\mcV(f)+2a_\mu\bigg(\pm\frac{1}{2}-\DD(f) \bigg)[\beta(f)]\bigg]=\frac{2}{\mu^++\mu^-} \beta(f).
 	\end{equation}
 Moreover, the constant $c_{1,\Gamma}$ satisfies
 	\begin{equation}\label{eq:c_by_curl}
 		c_{1,\Gamma}=-\frac{\mu^+\mu^-}{2\pi(\mu^++\mu^-)}\PV\int_{\Ss\times\RR}\curl v\,\rmd{x}:=-\frac{\mu^+\mu^-}{2\pi(\mu^++\mu^-)}\lim_{n\to\infty}\int_{\{|x_2|<n\}}\curl v\,\rmd{x}
 	\end{equation}
 	and 
	\begin{equation}\label{eq:T_lim}
		T_{\mu^\pm} (v^\pm,q^\pm)(x)\to\pm\frac{\Theta}{2}\langle f\rangle  I_2\qquad\text{for $x_2\to\pm\infty$ uniformly in $x_1\in \Ss$.}
	\end{equation}	
	\end{Theorem}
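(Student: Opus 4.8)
The plan is to construct an explicit solution of~\eqref{eq:Ftp} as a superposition of a single-layer potential carrying the capillary and gravity forces and a double-layer potential whose density solves the second-kind equation~\eqref{eq:Db=V}, and then to read off uniqueness --- hence also the necessity of the constants in~\eqref{eq:Ftp_c} --- from the homogeneous result Proposition~\ref{Prop:Ftp_0}. Concretely, fix $f\in\rmH^3(\Ss)$ and let $(u,p)$ be the hydrodynamic single-layer potential generated by the density $\gamma:=\omega(f)\big(\Theta f-\sigma\kappa(f)\big)\nu(f)$ --- rewritten after one integration by parts in the curvature term, using $\omega\kappa\nu=\tau'$, so that the resulting kernels contain at most one derivative of~$f$ --- and then shifted by the constant vertical velocity $(0,\tfrac14\Theta\ln(4)\langle f\rangle)^\top$ so that the vertical component of $u$ vanishes as $x_2\to\pm\infty$; by~\eqref{eq:UP_by_z},~\eqref{eq:derivU}, and the definitions of $B_0,\dots,B_6$, the trace of $u$ on $\Gamma$ is then precisely $\mcV(f)$ (this shift is the origin of the $\ln(4)$ term in~\eqref{eq:defV}), with $\mcV(f)\in\rmH^2(\Ss)^2$ by Corollary~\ref{Cor:Bnmpq_H2} and Lemma~\ref{Lem:B0_H1_H2}. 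Exactly as in~\cite[Theorem~2.2]{Bohme.2024} and Theorem~\ref{Thm:Ftp_b}, both $(u,p)$ and the double-layer potential $(\sfv,\sfq)[\beta]$ lie in $X_f$, solve the homogeneous Stokes system in each phase, and obey the jump relations $[u]=0$, $\omega\,[T_1(u,p)]\nu=\omega(\Theta f-\sigma\kappa)\nu$ (from~\eqref{eq:normalStress}; the added constant carries no stress), $\{\sfv[\beta]\}^\pm\circ\Xi=\big(\pm\tfrac12-\DD(f)\big)[\beta]$, and $[T_1(\sfv[\beta],\sfq[\beta])]\nu=0$. Since $\lvert(2a_\mu)^{-1}\rvert>1/2$, Theorem~\ref{Thm:D(f)_Res_H2} provides a unique $\beta=\beta(f)\in\rmH^2(\Ss)^2$ solving~\eqref{eq:Db=V}, and I then set, in $\Omega^\pm$,
\[
v^\pm:=\frac{1}{\mu^\pm}\big(u+2a_\mu\,\sfv[\beta]\big)^\pm,\qquad q^\pm:=\big(p+2a_\mu\,\sfq[\beta]\big)^\pm .
\]

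With this choice $(v,q)\in X_f$ solves~\eqref{eq:Ftp}$_{1,2}$ automatically. Taking boundary traces and combining the four jump relations, $[v]$ turns out to be a scalar multiple of $(1+2a_\mu\DD(f))[\beta]-\mcV(f)$, which vanishes by~\eqref{eq:Db=V}; the same elementary computation gives both expressions for the common trace in~\eqref{eq:v_on_Gamma}. Since $T_{\mu^\pm}(v^\pm,q^\pm)=T_1(u,p)^\pm+2a_\mu T_1(\sfv[\beta],\sfq[\beta])^\pm$ and the double-layer normal stress is continuous across $\Gamma$, the single-layer jump relation gives $[T_\mu(v,q)]\tnu=(\Theta x_2-\sigma\tkappa)\tnu$ on $\Gamma$, which is~\eqref{eq:Ftp}$_4$. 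For the behaviour at $x_2\to\pm\infty$ I would use the far-field asymptotics of the periodic Stokeslet (see~\eqref{eq:Zi_lim} and the discussion after~\eqref{eq:du_p_BV}); the decisive facts are that $\gamma_1=\big(-\tfrac{\Theta}{2}f^2-\sigma\omega^{-1}\big)'$ is an exact derivative (so $\int_{-\pi}^\pi\gamma_1\,\rmd s=0$) and that $\int_{-\pi}^\pi f\gamma_1\,\rmd s=2\pi\sigma\langle f'/\omega\rangle$, which, together with the vertical shift, yield $u^\pm\to\pm\big(-\tfrac{\sigma}{2}\langle f'/\omega\rangle,\,0\big)$ and $p^\pm\to\mp\tfrac{\Theta}{2}\langle f\rangle$. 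Combined with $\sfv[\beta]^\pm\to\pm\big(\tfrac12\langle\beta_1-f'\beta_2\rangle,\,\tfrac12\langle\beta_2-f'\beta_1\rangle\big)$, $\sfq[\beta]^\pm\to0$ and $T_1(\sfv[\beta],\sfq[\beta])^\pm\to0$ from Theorem~\ref{Thm:Ftp_b} and~\eqref{eq:c1c2}, this gives $v^\pm\to\pm(c_{1,\Gamma}/\mu^\pm,\,c_{2,\Gamma}/\mu^\pm)$ with $c_{1,\Gamma}$ as in~\eqref{eq:Ftp_c}, while $c_{2,\Gamma}=0$ drops out because incompressibility of $v$ and $[v]=0$ force $2\pi c_{2,\Gamma}/\mu^+=\int_\Gamma v\cdot\nu\,\rmd\sigma=-2\pi c_{2,\Gamma}/\mu^-$; the pressure limit then yields~\eqref{eq:Ftp}$_5$ and, since $\nabla v^\pm\to0$, also~\eqref{eq:T_lim}.

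Uniqueness is then immediate and it pins down the constants: the difference of any two solutions of~\eqref{eq:Ftp} solves the homogeneous version of~\eqref{eq:Ftp} (zero right-hand side in~\eqref{eq:Ftp}$_4$), which is equivalent --- after rescaling the unknowns by the viscosities --- to~\eqref{eq:Ftp_0}, so Proposition~\ref{Prop:Ftp_0} forces this difference, and the corresponding difference of far-field constants, to vanish; consequently the constants of every solution coincide with those of the solution constructed above, namely~\eqref{eq:Ftp_c}. Finally, identity~\eqref{eq:c_by_curl} follows by integrating $\curl v=\partial_1v_2-\partial_2v_1$ over $\{\lvert x_2\rvert<n\}\setminus\Gamma$: the $\partial_1v_2$-term drops by $x_1$-periodicity, the two one-sided traces of $\partial_2v_1$ on $\Gamma$ cancel because $[v]=0$, and letting $n\to\infty$ with $v^\pm\to\pm c_{1,\Gamma}/\mu^\pm$ leaves $-2\pi c_{1,\Gamma}(\mu^++\mu^-)/(\mu^+\mu^-)$, which rearranges to~\eqref{eq:c_by_curl}.

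The step I expect to be the main obstacle is the bookkeeping in the construction rather than any conceptual point: one must verify that the trace of the (shifted) capillary--gravity single-layer potential is exactly $\mcV(f)$, and this is where the integration by parts in the curvature term, the constant subtraction in the definition~\eqref{eq:defphi} of $\phi(f)$, the vertical shift, and the precise normalisations of the periodic Stokeslet --- including the $\ln(4)$ term --- all have to be tracked with care; one must also check that the ansatz above is genuinely admissible and that the transmission condition $[v]=0$ collapses to the single scalar equation~\eqref{eq:Db=V} (which is exactly why the double-layer term appears with the prefactor $2a_\mu$) rather than to an over-determined system. The far-field formula for $c_{1,\Gamma}$ is the other delicate point, as it rests on the non-obvious cancellation $\int_{-\pi}^\pi\gamma_1\,\rmd s=0$ and on matching the double-layer far field supplied by Theorem~\ref{Thm:Ftp_b}.
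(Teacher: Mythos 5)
Your proposal follows the paper's proof in all structural respects: the solution is built as $v^\pm=\mu^\pm{}^{-1}\big(v_s^\pm+v_d^\pm\big)$, $q^\pm=q_s^\pm+q_d^\pm$ with $(v_s,q_s)$ the single-layer potential carrying the capillary--gravity density $G(f)=\Theta(-ff',f)-\sigma\phi(f)'$ and $(v_d,q_d)$ the double-layer potential of Theorem~\ref{Thm:Ftp_b} with density $2a_\mu\beta(f)$, the transmission condition $[v]=0$ collapses (for $\mu^+\neq\mu^-$) to~\eqref{eq:Db=V}, uniqueness and the resulting identification of the constants come from Proposition~\ref{Prop:Ftp_0}, $c_{2,\Gamma}=0$ from $\vdiv v=0$ together with $[v]=0$, and $c_{1,\Gamma}$ from integrating $\curl v$ over $\{|x_2|<n\}$ and letting $n\to\infty$. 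The only differences are cosmetic: you re-derive in sketch the trace and far-field data of $(v_s,q_s)$ that the paper imports directly from \cite[Theorem~2.2, Eq.~(C.19), Remarks~2.1 and~C.5]{Bohme.2024}, and you evaluate the $\curl$ integral by Fubini instead of Stokes' theorem (the two phrasings are equivalent; note, though, that it is the one-sided traces of $v_1$ --- not of $\partial_2 v_1$ --- that match across $\Gamma$ because of $[v]=0$).
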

	
	\begin{proof}
	We devise the proof into several steps.\medskip
	
	\noindent{\em Uniqueness.}  Let $c_\Gamma:=(c_{1,\Gamma}, c_{2,\Gamma},  c_{3,\Gamma})\in\RR^3$ be arbitrary and assume that ${(v,q)\in X_f}$ is a solution to the homogeneous problem associated to \eqref{eq:Ftp} (that is with the right side of \eqref{eq:Ftp}$_4$ replaced by $0$). Since this problem is equivalent to problem~\eqref{eq:Ftp_0}, Proposition~\ref{Prop:Ftp_0} ensures that $c_\Gamma=0$ and~${(v,q)=(0,0)}$, which establishes the uniqueness claim.\medskip
	
	\noindent{\em Identification of $c_{2,\Gamma}$.} If ${(v,q)\in X_f}$ is a solution to \eqref{eq:Ftp}, we may integrate  equation~\eqref{eq:Ftp}$_2$ over~$\Omega^\pm_n$ with $n\geq \lVert f\rVert_\infty$, recalling \eqref{eq:Omega_n}, to deduce from Stokes' theorem and~\eqref{eq:Ftp}$_3$ that
	\begin{equation*}
		\int_{-\pi}^\pi v_2^+(x_1,n)\,\rmd x_1=\int_{-\pi}^\pi v_2^-(x_1,-n)\,\rmd x_1.	
	\end{equation*}
	Taking the limit $n\to\infty$, it follows from~\eqref{eq:Ftp}$_5$ that indeed $c_{2,\Gamma}=0$.\medskip

	\noindent{\em Existence.} We first associate to \eqref{eq:Ftp} the transmission boundary value problem
	\begin{equation}\label{eq:Ftp_s}
		\left.
		\arraycolsep=1.4pt
		\begin{array}{rclll}
			\Delta v_s^\pm-\nabla q_s^\pm&=&0&\mbox{in $\Omega^\pm$,}\\
			\vdiv v_s^\pm&=&0&\mbox{in $\Omega^\pm$,}\\{}
			[v_s]&=&0&\mbox{on $\Gamma$,}\\{}
			[T_1 (v_s,q_s)]\tnu&=&(\Theta x_2-\sigma\tkappa)\tnu&\mbox{on $\Gamma$,}\\
			(v_s^\pm,q_s^\pm)(x)&\to&\pm(c_{1,s},0, c_{3,s})&
			\begin{minipage}{3cm} 
				for $x_2\to\pm\infty$ uni-\\[-0.5ex]
				formly in $x_1\in \Ss$,
			\end{minipage}
		\end{array}\right\}
 	\end{equation}
 	with 
 	\begin{equation}\label{eq:cs}
		c_{1,s}=-\frac{\sigma}{2}\bigg\langle \frac{f'}{(1+f'^2)^{1/2}}\bigg\rangle \qquad\text{and}\qquad c_{3,s}=-\frac{\Theta}{2}\langle f\rangle.
	\end{equation}
 	Noticing that $\big(\Theta x_2-\sigma\tkappa\big)\tnu=\big((\omega(f))^{-1}G(f)\big)\circ \Xi^{-1}$, where
	\begin{equation}\label{eq:defG}
		G(f):=\Theta(- ff', f)-\sigma\big(\phi(f)\big)'\in  \rmH^1(\Ss)^2 \qquad\text{and}\qquad  \langle G_1(f)\rangle=0,
	\end{equation}
	it follows from \cite[Theorem~2.2]{Bohme.2024} (with $\mu=1$ therein) that  there exists a unique solution~${(v_s,q_s)\in X_f}$ to~\eqref{eq:Ftp_s}. 
	Moreover, by~\cite[Eq. (C.19), Remark 2.1, and Remark~C.5]{Bohme.2024}, the trace of $v_s$ on~$\Gamma$ is given by the relation
	\begin{equation}\label{eq:vs_on_Gamma}
 		\{v_s\}^\pm\circ\Xi =\mcV(f),
 	\end{equation}
 	with $\mcV(f)$  defined in~\eqref{eq:defV}.
 	 Moreover, we infer from (the proof of) \cite[Lemma~C.2, Lemma~C.4 and  Lemma~C.6]{Bohme.2024} that 
	\begin{equation}\label{eq:Ts_lim}
		T_1 (v_s^\pm,q_s^\pm)(x)\to\pm\frac{\Theta }{2}\langle f\rangle  I_2\qquad\text{for $x_2\to\pm\infty$ uniformly in $x_1\in \Ss$.}
	\end{equation}	 
 	
	Let now $(v_d,q_d)\in X_f$ denote the unique solution to~\eqref{eq:Ftp_b} determined in  Theorem~\ref{Thm:Ftp_b} with $\beta$ therein replaced by $2a_\mu\beta(f)\in\rmH^2(\Ss)^2$ (and $\beta(f)$ defined in~\eqref{eq:Db=V}) and set
	\begin{equation}\label{eq:sol}
		v^\pm:=\frac{1}{\mu^\pm}(v_s^\pm+v_d^\pm)\qquad\text{and}\qquad q^\pm:=q_s^\pm+q_d^\pm.
	\end{equation}  
	Then, it is straightforward to check that $(v,q)\in X_f$ satisfies  the relations \eqref{eq:Ftp}$_{1-2}$ and~\eqref{eq:Ftp}$_{4-5}$. Moreover, since by Lemma~\ref{Lem:vq_bd}, the one-sided traces of $v$ on $\Gamma$ satisfy \eqref{eq:v_on_Gamma}, the equation \eqref{eq:Ftp}$_{3}$ is equivalent to $\beta(f)$ solving the equation~\eqref{eq:Db=V}, and therefore~$(v,q)\in X_f$ defined in \eqref{eq:sol} is indeed a solution to \eqref{eq:Ftp}. Recalling \eqref{eq:stress_inf}, the relation \eqref{eq:T_lim} is a direct consequence of \eqref{eq:sol},~\eqref{eq:Ts_lim}, and~\eqref{eq:Tb_lim}.\medskip

\noindent{\em Identification of $c_{1,\Gamma}$.} Let ${(v,q)\in X_f}$ be the unique solution to \eqref{eq:Ftp}. Integrating $\curl v$ over the domain $\{|x_2|<n\}$, with $n>\lVert f\rVert_\infty$, Stokes' theorem together with \eqref{eq:Ftp}$_{3}$ and \eqref{eq:Ftp}$_{5}$  yields
	\begin{equation*}
	\begin{aligned}
		\int_{\{|x_2|<n\}}\curl v\,\rmd{x}&=\int_{-\pi}^\pi v_1^-(x_1,-n)-v_1^+(x_1,n)\,\rmd{x_1}+\int_\Gamma\big[(-v_2,v_1)^\top\big]\cdot \tnu\,\rmd\sigma\\
		&\underset{n\to\infty}\longrightarrow -2\pi c_{1,\Gamma}\frac{\mu^++\mu^-}{\mu^+\mu^-},
	\end{aligned}
	\end{equation*}
	and \eqref{eq:c_by_curl} follows.
 	\end{proof}

	\subsection*{An equivalent formulation for~\texorpdfstring{\eqref{eq:STOKES}}{(1.1)}}\label{SS:5.2}
 	With Theorem~\ref{Thm:Ftp} at hand, we can now reformulate problem \eqref{eq:STOKES}  
 	 as an  evolution problem for~$f$. Indeed, if $(f,v^\pm,q^\pm,c_\Gamma)$ is a solution to \eqref{eq:STOKES} as defined in Theorem~\ref{Thm:Main}~(i), the kinematic boundary condition \eqref{eq:Stokes}$_6$ together with \eqref{eq:v_on_Gamma} implies that~$f$ solves the evolution problem 
	\begin{equation}\label{eq:ev_pb}
		\frac{\rmd f}{\rmd t}(t)=\Psi(f(t)),\quad t\geq 0,\qquad f(0)=f_0,
	\end{equation}
	where
	\begin{equation}\label{eq:def_Psi}
		\Psi(f):=\frac{2}{\mu^++\mu^-}(-f',1)^\top\cdot\beta(f),
	\end{equation}
	with $\beta(f):=(\beta_1(f),\beta_2(f))^\top$ defined in \eqref{eq:Db=V}.

	To formulate problem~\eqref{eq:ev_pb} within a suitable functional-analytic framework, we fix $r\in (3/2,2)$ and infer from \cite[Lemma~3.1]{Bohme.2024}, for the mapping $\phi$ introduced in~\eqref{eq:defphi}, that
	\begin{equation}\label{eq:phi_smooth}
		\big[f \mapsto \phi(f)\big] \in \rmC^\infty\big(\rmH^r(\Ss),\rmH^{r-1}(\Ss)^2\big).
	\end{equation}
	This property, together with the relations \eqref{eq:B_by_Bnmpq} and \eqref{eq:reg_B}, shows that the mapping $\mcV$ defined in~\eqref{eq:defV} satisfies
	\begin{equation}\label{eq:V_smooth}
		\big[f \mapsto \mcV_i(f)\big] \in \rmC^{\infty}\big(\rmH^r(\Ss), \rmH^{r-1}(\Ss)\big),\qquad 1\leq i\leq 4.
	\end{equation}
	Moreover, $\phi$ and $\mcV$ both map bounded sets in $\rmH^r(\Ss)$ to bounded sets in $\rmH^{r-1}(\Ss)^2$. Indeed, for~$\phi$ this follows by a straightforward computation, while the corresponding property for $\mcV$ relies additionally on Lemma~\ref{Lem:Bnmpq_L2_Hr-1}~(i) and \cite[Lemma~A.7]{Bohme.2024}.

	Recalling~\eqref{eq:D(f)_smooth} and using the smoothness of the mapping which associates to an invertible bounded operator in $\mcL\big(\rmH^{r-1}(\Ss)^2\big)$ its inverse, we conclude together with \eqref{eq:V_smooth} and Theorem~\ref{Thm:D(f)_Res_Hr-1}  that the mapping $\beta$, defined in \eqref{eq:Db=V}, satisfies
	\begin{equation}\label{eq:beta_smooth}
		\big[f\mapsto \beta(f)\big]\in\rmC^{\infty}\big(\rmH^r(\Ss),\rmH^{r-1}(\Ss)^2\big).
	\end{equation}
	Moreover, $\beta$ inherits the property to map bounded sets in $\rmH^r(\Ss)$ to bounded sets in $\rmH^{r-1}(\Ss)^2$ from~$\mcV$ in view of the estimate \eqref{eq:D(f)_Res_Hr-1}.
	 We now directly infer from \eqref{eq:beta_smooth} that
	\begin{equation}\label{eq:Psi_smooth}
		\big[f\mapsto \Psi(f)\big]\in\rmC^{\infty}\big(\rmH^r(\Ss),\rmH^{r-1}(\Ss)\big)
	\end{equation}
	and that $\Psi$ maps bounded sets in $\rmH^r(\Ss)$ to bounded sets in $\rmH^{r-1}(\Ss)$.

	\subsection*{The Fr\'echet derivative of \texorpdfstring{$\Phi$}{Phi}}\label{SS:5.3}	
	To show the local well-posedness of the evolution problem~\eqref{eq:ev_pb}, it is convenient to apply the abstract parabolic theory from \cite[Chapter~8]{Lunardi.1995}. In addition to \eqref{eq:Psi_smooth}, the following result is therefore required.

	\begin{Proposition}\label{Prop:Psi_Gen}
	Given $r\in(3/2,2)$ and  $f_0\in\rmH^{r}(\Ss)$, the Fr\'{e}chet derivative $\partial\Psi(f_0)$ generates a strongly continuous analytic semigroup in $\mcL\big(\rmH^{r-1}(\Ss)\big)$.
	\end{Proposition}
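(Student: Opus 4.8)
The plan is to differentiate the defining relations \eqref{eq:Db=V}--\eqref{eq:def_Psi}, isolate the principal part of the bounded operator $\partial\Psi(f_0)\in\mcL\big(\rmH^r(\Ss),\rmH^{r-1}(\Ss)\big)$, show that this principal part is, modulo lower-order terms, a first-order pseudodifferential operator whose principal symbol lies in a sector of the left half-plane, and then absorb the remainder as a relatively bounded perturbation with relative bound zero. Fix $r'\in(3/2,r)$ throughout. Differentiating $\big(1+2a_\mu\DD(f)\big)[\beta(f)]=\mcV(f)$ at $f_0$ in a direction $h\in\rmH^r(\Ss)$, and using that $1+2a_\mu\DD(f_0)\in\mcL\big(\rmH^{r-1}(\Ss)^2\big)$ is an isomorphism — which follows from Theorem~\ref{Thm:D(f)_Res_Hr-1} because $|a_\mu|<1$ (cf.~\eqref{eq:a_mu}) forces $|(2a_\mu)^{-1}|>1/2$ — gives
\begin{equation*}
	\partial\beta(f_0)[h]=\big(1+2a_\mu\DD(f_0)\big)^{-1}\Big(\partial\mcV(f_0)[h]-2a_\mu\big(\partial\DD(f_0)[h]\big)[\beta(f_0)]\Big),
\end{equation*}
and hence, by \eqref{eq:def_Psi},
\begin{equation*}
	\partial\Psi(f_0)[h]=\frac{2}{\mu^++\mu^-}\Big((-h',0)\cdot\beta(f_0)+(-f_0',1)\cdot\partial\beta(f_0)[h]\Big).
\end{equation*}

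Next I would extract the principal part. Using \eqref{eq:B_by_Bnmpq}, \eqref{eq:D(f)}, \eqref{eq:def_Psi_i}, the differentiation rule \eqref{eq:B'}, and the mapping and commutator estimates for the $B_{n,m}^{p,q}$ from Lemma~\ref{Lem:Bnmpq_L2_L2}, Lemma~\ref{Lem:Bnmpq_L2_Hr-1}, Lemma~\ref{Lem:Bnmpq_H1_H1} and Corollary~\ref{Cor:Bnmpq_H2}, one decomposes $\partial\Psi(f_0)=L(f_0)+R(f_0)$ with $R(f_0)\in\mcL\big(\rmH^{r'-1}(\Ss),\rmH^{r-1}(\Ss)\big)$ collecting every term of lower order. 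The mechanism is that, among the building blocks $B_{n,m}^{p,q}(f_0)$, only those with $p=0$ fail to gain a derivative (cf.~\eqref{eq:reg_B}), and each such operator is — modulo a smoothing remainder — multiplication by a bounded rational function of $f_0'$ followed by the periodic Hilbert transform $H=B_{0,0}^{0,0}$ (see \eqref{eq:HT}); differentiating $\phi(f_0)$ and the singular kernels produces in the end only terms of the form $H[a(f_0)h']$ or transport terms $a(f_0)h'$. Since $H\partial_\xi$ has Fourier symbol $|k|$, the operator $L(f_0)$ acts, up to lower order, as a pseudodifferential operator of order one whose principal symbol at $(\xi,k)$, $k\gtrless0$, is $-\alpha_\pm(f_0)(\xi)|k|+\mathrm{i}\,\gamma(f_0)(\xi)k$ with $\alpha_\pm(f_0),\gamma(f_0)\in\rmC(\Ss)$. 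The gravity contributions $\mcV_3,\mcV_4$, built from $B_0,B_5,B_6$, all gain a derivative and so belong to $R(f_0)$; the principal symbol is therefore governed solely by the surface-tension terms $\mcV_1,\mcV_2$.

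It then remains to check the sign $\Rel\alpha_\pm(f_0)\geq c>0$ uniformly on $\Ss$ and to conclude. Tracking the coefficients through $\mcV_1,\mcV_2$ (as in \cite{Bohme.2024} for $\mu^+=\mu^-$) shows that the leading contribution to $\partial\mcV(f_0)[h]$ is $-\tfrac{\sigma}{4}$ times an $H[h']$-type expression with a strictly positive coefficient, and applying $\big(1+2a_\mu\DD(f_0)\big)^{-1}$ multiplies the associated principal symbol by $\big(1+2a_\mu d_\pm(f_0)(\xi)\big)^{-1}$, where $d_\pm(f_0)\in\rmC(\Ss)$ is the principal symbol of $\DD(f_0)$ on $\pm\ZZ_{>0}$. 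Since the values of this principal symbol belong to the essential spectrum of $\DD(f_0)$, Theorem~\ref{Thm:D(f)_Res_2} yields $|d_\pm(f_0)|\leq 1/2$, whence $\Rel\big(1+2a_\mu d_\pm(f_0)\big)\geq 1-|a_\mu|>0$ and $\Rel\big((1+2a_\mu d_\pm(f_0)(\xi))^{-1}\big)>0$ for every $\xi$; together with $\sigma>0$ this gives $\Rel\alpha_\pm(f_0)\geq c>0$ and places the principal symbol of $L(f_0)$ in a fixed sector $\{z\neq0:|\arg(-z)|<\theta\}$, $\theta<\pi/2$. A localization argument in the spirit of \cite{Bohme.2024} — freezing the coefficients of $L(f_0)$ at points $\zeta\in\Ss$, comparing with the constant-coefficient Fourier multiplier of symbol $-\alpha_\pm(f_0)(\zeta)|k|+\mathrm{i}\gamma(f_0)(\zeta)k$ (which generates a strongly continuous analytic semigroup on $\rmH^{r-1}(\Ss)$ with domain $\rmH^r(\Ss)$ since its symbol lies in that sector), and patching by a partition of unity with lower-order commutators — then shows that $L(f_0)$ generates a strongly continuous analytic semigroup on $\rmH^{r-1}(\Ss)$ with domain $\rmH^r(\Ss)$, uniformly for $f_0$ in bounded subsets of $\rmH^r(\Ss)$. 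Finally \eqref{eq:interpolation} gives $\rmH^{r'-1}(\Ss)=[\rmH^{r-1}(\Ss),\rmH^r(\Ss)]_{(r'-1)/(r-1)}$, so $\lVert R(f_0)h\rVert_{\rmH^{r-1}}\leq\ve\lVert h\rVert_{\rmH^r}+C_\ve\lVert h\rVert_{\rmH^{r-1}}$ for every $\ve>0$; a standard perturbation theorem for generators of analytic semigroups shows that $\partial\Psi(f_0)=L(f_0)+R(f_0)$ generates a strongly continuous analytic semigroup on $\rmH^{r-1}(\Ss)$.

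The main obstacle is the combined content of the second and third paragraphs. Because $f_0\in\rmH^r$ with $r<2$ has too little regularity for a Taylor expansion of the singular kernels, the extraction of the principal part must proceed entirely through the $\rmL^2$- and $\rmH^{r-1}$-mapping and commutator estimates for the $B_{n,m}^{p,q}$ — in particular \eqref{eq:Bnmpq_L2_Hr-1_c} and \eqref{eq:Bnmpq_L2_Hr-1_b} — in order to certify that every discarded term indeed lies in $\mcL\big(\rmH^{r'-1}(\Ss),\rmH^{r-1}(\Ss)\big)$; and pinning down the sign of $\Rel\alpha_\pm(f_0)$ requires careful bookkeeping through the several summands of $\mcV_1,\mcV_2$ and through the inversion of $1+2a_\mu\DD(f_0)$, the three features that make it succeed being $\sigma>0$, $|a_\mu|<1$, and the bound $|d_\pm(f_0)|\leq 1/2$ furnished by Theorem~\ref{Thm:D(f)_Res_2}.
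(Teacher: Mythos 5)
Your plan follows the paper's in spirit: isolate a first-order pseudodifferential principal part of $\partial\Psi(f_0)$, show it is sectorial by freezing coefficients and localizing, and absorb the remainder as lower order. The paper organizes this around the homotopy $\Phi(\tau)$ in \eqref{eq:Phi_tau}, establishes a frozen-coefficient resolvent estimate uniformly along the whole path (Proposition~\ref{Prop:loc} and \eqref{eq:l-Phi}), and concludes by the method of continuity from the explicit multiplier $\Phi(0)$ to $\Phi(1)=\partial\Psi(f_0)$; your relatively-bounded-perturbation endgame, using $\rmH^{r'-1}=[\rmH^{r-1},\rmH^r]_\theta$, is an acceptable alternative once the principal part is under control.

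The genuine gap is in your treatment of $(1+2a_\mu\DD(f_0))^{-1}$. You argue that at symbol level this multiplies by $(1+2a_\mu d_\pm(f_0))^{-1}$, with $d_\pm(f_0)$ a scalar ``principal symbol'' of $\DD(f_0)$, and that $|d_\pm(f_0)|\leq 1/2$ follows from Theorem~\ref{Thm:D(f)_Res_2} via an essential-spectrum argument. Neither claim is supported as written. First, $\DD(f_0)$ acts on $\rmL^2(\Ss)^2$, so its leading symbol is a $2\times 2$ matrix, and your scalar bookkeeping (in particular passing from $\Rel(1+2a_\mu d_\pm)^{-1}>0$ to $\Rel\alpha_\pm>0$) does not apply directly. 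Second, Theorem~\ref{Thm:D(f)_Res_2} asserts invertibility of $\lambda-\DD(f_0)$ only for \emph{real} $\lambda$ with $|\lambda|>1/2$, and $\DD(f_0)$ is not self-adjoint for $f_0\neq0$; nothing there localizes the complex spectrum (let alone the essential spectrum) inside $\{|\lambda|\leq 1/2\}$. The observation that actually closes the argument, and which your proposal does not make, is that the leading-order symbol matrix of $\DD(f_0)$ is identically zero: freezing the coefficients in \eqref{eq:D_by_C} via Lemma~\ref{Lem:Cnm_approx_a}, the two matrix blocks produce exactly cancelling contributions. This cancellation is what underlies the estimates \eqref{eq:loc_e10}--\eqref{eq:loc_e11} in the proof of Proposition~\ref{Prop:loc}, and it is the reason the frozen coefficient $\alpha_\tau=\frac{\sigma}{2(\mu^++\mu^-)}\omega(\tau f_0)^{-1}$ in \eqref{eq:alphabeta} carries no trace of $a_\mu$: the inversion $(1+2a_\mu\DD(f_0))^{-1}$ is trivial at the principal level, so the positivity of the symbol comes purely from the surface-tension terms $\mcV_1,\mcV_2$, exactly as in the equal-viscosity case. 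Without this cancellation your proposed bookkeeping through a genuinely nonscalar inverse would be difficult to close at the low regularity $f_0\in\rmH^r(\Ss)$, $r<2$; with it, the problem disappears.
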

	
	The proof of Proposition~\ref{Prop:Psi_Gen} is postponed to the next section, as it requires some preparation, part of which is carried out here by computing the Fréchet derivative $\partial\Psi(f_0)$ and identifying its highest-order terms. In what follows, we fix $f_0\in\rmH^r(\Ss)$, $r\in (3/2,2)$, and $r'\in(3/2,r)$. Then, it is straightforward to infer from \eqref{eq:def_Psi} that
	\begin{equation}\label{eq:DPsi}
		\partial\Psi(f_0)[f]=\frac{2}{\mu^+ +\mu^-}\big((-f_0',1)^\top\cdot\partial\beta(f_0)[f] -f'\beta_1(f_0)\big),\qquad f\in\rmH^r(\Ss).
	\end{equation}
	Moreover, differentiating the relation \eqref{eq:Db=V}, we get
	\begin{equation}\label{eq:D_Db=V}
		\big(1+2a_\mu\DD(f_0)\big)\big[\partial\beta(f_0)[f]\big] =\partial \mcV(f_0)[f]-2a_\mu\partial\DD(f_0)[f][\beta(f_0)],\qquad f\in\rmH^r(\Ss).
	\end{equation}
	
	In order to handle these derivatives in an efficient way, we infer  from~\eqref{eq:reg_B} that
	\begin{equation}\label{eq:Bnmpq_lo}
		\big\lVert B_{n,m}^{p,q}(f_0)[\varphi]\big\rVert_{\rmH^{r-1}}\leq C\lVert\varphi\rVert_{\rmH^{r'-1}},\qquad 1\leq p\leq n+q+1,\quad \varphi\in\rmH^{r-1}(\Ss).
	\end{equation}
	Moreover, in view of~\cite[Lemma~A.2]{Bohme.2024}  we also have
	\begin{equation}\label{eq:Bnmq=Cnm}
		B_{n,m}^{0,q}(f_0)[\varphi]=C_{n+q,m}^0(f_0)[\varphi]+R[\varphi],\qquad \Vert R[\varphi]\rVert_{\rmH^{r-1}}\leq C\Vert \varphi\rVert_{\rmH^{r'-1}},\quad \varphi\in\rmH^{r-1}(\Ss).
	\end{equation}
	 The constant $C$ in \eqref{eq:Bnmpq_lo} and \eqref{eq:Bnmq=Cnm}  depends only on $\lVert f_0\rVert_{\rmH^r}$ and on the indices $n,\, m,\, p,\, q$. 
	 Additionally,  we define for $n,\,m,\,p,\,q\in\NN_0$ with $p\leq n+q+ 2$ the operators
	\begin{equation}\label{eq:Bnmpq1}
	\begin{aligned}
		B_{n,m}^{0,q,1}&:\rmH^r(\Ss)\to \mcL\big(\rmH^r(\Ss),\mcL\big(\rmH^{r-1}(\Ss)\big)\big),\\
		B_{n,m}^{p,q,1}&:\rmH^r(\Ss)\to \mcL\big(\rmH^r(\Ss),\mcL\big(\rmH^{r-1}(\Ss),\rmH^r(\Ss)\big)\big),\qquad 1\leq p\leq n+q+2,
	\end{aligned}
	\end{equation}
	by setting for  $f\in\rmH^r(\Ss)$
	\begin{equation*}
		B_{n,m}^{p,q,1}(f_0)[f]:= B_{n,m}^{p,q+1}(f_0,\dots,f_0\vert f_0,\dots,f_0)[f_0,\dots,f_0,f,\cdot]\in \mcL\big(\rmH^{r-1}(\Ss)\big).
	\end{equation*}
	For some fixed $\varphi_0\in\rmH^{r-1}(\Ss)$ we infer from~\cite[Lemma~A.2]{Bohme.2024} that
	\begin{equation}\label{eq:Bnmq=Cnm1}
		B_{n,m}^{0,q,1}(f_0)[f][\varphi_0] =C_{n+q+1,m}(f_0,\ldots,f_0)[f_0,\ldots,f_0,f,\varphi_0]+\bar R[f],
	\end{equation}
	where, for a positive constant $C$ depending only on $\lVert f_0\rVert_{\rmH^r}$, $\lVert\varphi_0\rVert_{\rmH^{r-1}}$, and  $n,\, m,\, q$ we have 
	\begin{equation}\label{eq:Bnmq=Cnm2}
		\Vert \bar R[f]\rVert_{\rmH^{r-1}}\leq C\Vert f\rVert_{\rmH^{r'}},\qquad f\in\rmH^{r}(\Ss).
	\end{equation}
	
	Furthermore, in the arguments that follow we will use \cite[Lemma~A.1~(iv)]{Bohme.2024}, which  states:
	\begin{Lemma}\label{Lem:Cnm_com_Hr-1_Hr-1}
	Given $n\in\NN$, $m\in\NN_0$, and $\bfa\in\rmH^r(\Ss)^m$, there exists a constant~$ C>0$ that depends only on $n,\,m,\,r,\,r'$, and $\lVert\bfa\rVert_{\rmH^r}$ such that for all~${\bfb\in\rmH^r(\Ss)^n}$ and~${\varphi\in\rmH^{r-1}(\Ss)}$ we have
	\begin{equation}\label{eq:Cnm_com_Hr-1_Hr-1}
	 	\big\lVert C_{n,m}(\bfa)[\bfb,\varphi]-\varphi C_{n-1,m}(\bfa)[b_2,\dots,b_n,b_1']\big\rVert_{\rmH^{r-1}}\leq C\lVert b_1\rVert_{\rmH^{r'}}\lVert\varphi\rVert_{\rmH^{r-1}}\prod_{i=2}^{n} \lVert b_i\rVert_{\rmH^r}.
	\end{equation}
	\end{Lemma}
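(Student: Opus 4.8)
The statement is \cite[Lemma~A.1~(iv)]{Bohme.2024}, and the plan is to reproduce the commutator argument given there. One first reduces to smooth data: by density of $\rmC^\infty(\Ss)$ in the spaces involved and the local Lipschitz continuity of the maps $\bfa\mapsto C_{n,m}(\bfa)$ and $\bfb\mapsto C_{n,m}(\bfa)[\bfb,\cdot]$ (available from the $\rmL^2$- and $\rmH^{r-1}$-mapping properties recalled in Lemma~\ref{Lem:Bnmpq_L2_L2} and Lemma~\ref{Lem:Bnmpq_L2_Hr-1}), it suffices to establish \eqref{eq:Cnm_com_Hr-1_Hr-1} for $\bfa,\bfb,\varphi\in\rmC^\infty(\Ss)$. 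Writing out both sides from the definition \eqref{eq:Cnm}, using $\dg{\xi,s}{b_1}=b_1(\xi)-b_1(\xi-s)$, adding and subtracting $b_1'(\xi-s)\varphi(\xi-s)$, and recalling $\varphi(\xi-s)-\varphi(\xi)=-\dg{\xi,s}{\varphi}$, the difference $C_{n,m}(\bfa)[\bfb,\varphi]-\varphi\,C_{n-1,m}(\bfa)[b_2,\dots,b_n,b_1']$ evaluated at $\xi$ equals
\begin{equation*}
 \frac{1}{\pi}\PV\int_{-\pi}^\pi
 \frac{\prod_{i=2}^{n}\frac{\dg{\xi,s}{b_i}}{s}}{\prod_{i=1}^{m}\big[1+\big(\frac{\dg{\xi,s}{a_i}}{s}\big)^2\big]}\,
 \frac{1}{s}\Big(\Big(\frac{\dg{\xi,s}{b_1}}{s}-b_1'(\xi-s)\Big)\varphi(\xi-s)-b_1'(\xi-s)\,\dg{\xi,s}{\varphi}\Big)\,\rmd s .
\end{equation*}

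Next one estimates the two summands separately, exploiting that each carries an extra H\"older-type smallness in $s$. For the first, $\frac{\dg{\xi,s}{b_1}}{s}-b_1'(\xi-s)=\int_0^1\big(b_1'(\xi-s+ts)-b_1'(\xi-s)\big)\,\rmd t$, and since $r'>3/2$ one has $\rmH^{r'-1}(\Ss)\hookrightarrow\rmC^{0,r'-3/2}(\Ss)$, so this factor is bounded by $C\lVert b_1\rVert_{\rmH^{r'}}|s|^{r'-3/2}$ uniformly in $\xi$; after the division by $s$ the first summand is thus an integral operator acting on $\varphi$ whose kernel is of strictly negative (smoothing) order $-(r'-3/2)$, built from the same difference quotients $\dg{\xi,s}{a_i}/s$, $\dg{\xi,s}{b_i}/s$ that occur in the operators $B_{n,m}^{p,q}$ and $C_{n,m}$. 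For the second, $b_1'$ acts as a bounded multiplier with $\lVert b_1'\rVert_\infty\le C\lVert b_1\rVert_{\rmH^{r'}}$, while $|\dg{\xi,s}{\varphi}|\le C\lVert\varphi\rVert_{\rmH^{r-1}}|s|^{r-3/2}$ by $\rmH^{r-1}(\Ss)\hookrightarrow\rmC^{0,r-3/2}(\Ss)$, so the second summand is an integral operator whose kernel again gains a positive power of $|s|$ (here $r>3/2$ is used) and carries one power of $1/s$ less than a $C_{n,m}$-type kernel would.

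The main work — and where the choice $r'<r$ enters — is to upgrade these pointwise and $\rmL^2$ gains to the $\rmH^{r-1}$-estimate \eqref{eq:Cnm_com_Hr-1_Hr-1}. Here one argues as in the proof of the $\rmH^{r-1}$-boundedness of $B_{n,m}^{p,q}$ and $C_{n,m}$ in \cite[Appendix~A]{Bohme.2024}: bound the output by its $\rmL^2$-norm together with the Gagliardo seminorm $\big[\,\cdot\,\big]_{\rmW^{r-1,2}}^2=\int_{-\pi}^\pi\lVert\tau_\zeta(\cdot)-(\cdot)\rVert_2^2\,|\zeta|^{1-2r}\,\rmd\zeta$, distribute the shift $\tau_\zeta$ over the kernel, over $b_1'$, and over $\varphi$ (producing second differences of these functions), and split the $s$-integration into the regions $|s|\lesssim|\zeta|$ and $|s|\gtrsim|\zeta|$; the $\rmH^r$-regularity of the $a_i$ and $b_i$ controls the difference quotients and their shifts in the usual way, and the gained powers $|s|^{r'-3/2}$ and $|s|^{r-3/2}$ make every resulting $s$- and $\zeta$-integral convergent, yielding the claimed bound with constant depending only on $n,m,r,r'$ and $\lVert\bfa\rVert_{\rmH^r}$. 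I expect this last step to be the real obstacle: one cannot simply invoke the already-established estimate \eqref{eq:Cnmpq_Hr-1}, because after the rewriting $\varphi$ occupies a difference-quotient slot yet only lies in $\rmH^{r-1}(\Ss)$ rather than in $\rmH^r(\Ss)$, and this deficit is recovered only through the extra decay $|\dg{\xi,s}{\varphi}|\lesssim|s|^{r-3/2}$; keeping $r'$ strictly below $r$ leaves the slack in the exponents and in the interpolation \eqref{eq:interpolation} needed to absorb the lower-order contributions and close the argument.
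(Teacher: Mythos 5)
The paper does not prove this lemma; it quotes it verbatim from \cite[Lemma~A.1~(iv)]{Bohme.2024}, so the task is to reconstruct that argument. Your algebraic decomposition of the commutator is correct, and the pointwise bounds from $\rmH^{r'-1}(\Ss)\hookrightarrow\rmC^{0,r'-3/2}(\Ss)$ and $\rmH^{r-1}(\Ss)\hookrightarrow\rmC^{0,r-3/2}(\Ss)$ are the right starting observations. The gap is in the Gagliardo-seminorm step that you yourself flag as ``the real obstacle,'' and your closing sentence has the inequality $r'<r$ working in the wrong direction: choosing $r'$ strictly below $r$ does not ``leave slack''; it is exactly what makes the naive split-and-shift argument diverge.

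Concretely, run your plan on the first summand $T_1[\varphi](\xi)=\pi^{-1}\int K(\xi,s)\,g(\xi,s)\,\varphi(\xi-s)\,\rmd s$, where $g(\xi,s)=\dg{\xi,s}{b_1}/s-b_1'(\xi-s)$ and $|K(\xi,s)|\lesssim|s|^{-1}$. After distributing $\tau_\zeta$, the piece with the shift on $\varphi$ integrates to $[\varphi]_{\rmW^{r-1,2}}^2$, and the piece with the shift on $K$ is controlled by $\lVert(\bfa,b_2,\ldots,b_n)\rVert_{\rmH^r}$; but the piece where $\tau_\zeta$ hits $g$ is a second difference of $b_1'\in\rmH^{r'-1}(\Ss)$, namely $\tau_\zeta g(\cdot,s)-g(\cdot,s)=\int_0^1\big[(\tau_\zeta-1)(\tau_{ts}-1)b_1'\big](\cdot-s)\,\rmd t$, and from $\lVert b_1\rVert_{\rmH^{r'}}$ alone the sharp bounds are $\lVert(\tau_\zeta-1)(\tau_{ts}-1)b_1'\rVert_2\lesssim\min(|\zeta|,|s|)^{r'-1}\lVert b_1\rVert_{\rmH^{r'}}$ in $\rmL^2$ and $\min(|\zeta|,|s|)^{r'-3/2}$ pointwise. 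Integrating against $|s|^{-1}$ gives at best $\lVert\tau_\zeta T_1[\varphi]-T_1[\varphi]\rVert_2\lesssim|\zeta|^{r'-1}\log(1/|\zeta|)\,\lVert b_1\rVert_{\rmH^{r'}}\lVert\varphi\rVert_\infty$, so the seminorm integral $\int_{-\pi}^\pi|\zeta|^{1-2r}\lVert\tau_\zeta T_1[\varphi]-T_1[\varphi]\rVert_2^2\,\rmd\zeta$ needs $r'>r$ to converge, not $r'<r$; the second summand fails the same way when the shift hits $b_1'$. Your ``gained powers'' $|s|^{r'-3/2}$, $|s|^{r-3/2}$ do not repair this: they secure $s$-integrability, not $\zeta$-decay from $b_1$. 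The lemma is nonetheless true because of cancellation inside the $s$-integral that any term-by-term Minkowski bound discards; for instance with $n=1$, $m=0$, $\varphi\equiv1$ one can compute directly $T_1[1](\xi)=-\tfrac{2}{\pi^2}\big(b_1(\xi)-b_1(\xi+\pi)\big)\in\rmH^{r'}(\Ss)$, a full derivative smoother than the bound $|\zeta|^{r'-1}\log$ would predict. A correct proof must therefore either integrate in $s$ before estimating in $\xi$, or keep the two summands of your decomposition together and exploit their cancellation under the shift; your sketch does neither, so as written it does not establish \eqref{eq:Cnm_com_Hr-1_Hr-1} with $\lVert b_1\rVert_{\rmH^{r'}}$ (rather than $\lVert b_1\rVert_{\rmH^{r}}$) on the right.
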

	
	From  \eqref{eq:Cnm} and~\eqref{eq:Cnm0} we next deduce the identity	
	\begin{equation}\label{eq:Cnm-1}
		C_{n,m}^{0}(f_0)+C_{n+2,m}^{0}(f_0)=C_{n,m-1}^{0}(f_0),\qquad m\geq 1,
	\end{equation}
	and, together with \eqref{eq:B_by_Bnmpq}, \eqref{eq:Bnmpq_lo}, \eqref{eq:Bnmq=Cnm},  we find
	\begin{equation}\label{eq:App_B}
	\begin{aligned}
	 	B_1(f_0)[\varphi]&=\big(C^0_{0,2}+C^0_{2,2}\big)(f_0)[\varphi]+R_{1}[\varphi],\\
	 	B_2(f_0)[\varphi]&=\big(C^0_{1,2}+C^0_{3,2}\big)(f_0)[\varphi] +R_{2}[\varphi],\\
	 	B_3(f_0)[\varphi]&=\big(C^0_{1,2}-C_{3,2}^0\big)(f_0)[\varphi]+R_{3}[\varphi],\\
	 	B_4(f_0)[\varphi]&=C_{2,2}^0(f_0)[\varphi]+ R_{4}[\varphi],
	\end{aligned}
	\end{equation}	 
	where, with a positive constant $C$ depending only on $\lVert f_0\rVert_{\rmH^r}$,
	\begin{equation}\label{eq:App_B_rest}
		\lVert R_{i}[\varphi]\rVert_{\rmH^{r-1}}\leq C\lVert \varphi\rVert_{\rmH^{r'-1}},\qquad \varphi\in\rmH^{r-1}(\Ss),\quad 1\leq i\leq 4.
	\end{equation}	
	Recalling \eqref{eq:D(f)}, it follows from~\eqref{eq:App_B} that 
	\begin{equation}\label{eq:D_by_C}
		\DD(f_0)[\beta]=-
		\begin{pmatrix}
			C_{1,2}^0 & C_{2,2}^0\\[1ex]
			C_{2,2}^0 & C_{3,2}^0
		\end{pmatrix}
		\begin{bmatrix}
		\beta_1 \\[1ex]
		\beta_2
		\end{bmatrix}
		+
		\begin{pmatrix}
			C_{0,2}^0 & C_{1,2}^0\\[1ex]
			C_{1,2}^0 & C_{2,2}^0
		\end{pmatrix}
		\begin{bmatrix}
		f_0'\beta_1 \\[1ex]
		f_0'\beta_2
		\end{bmatrix}
		+R_5[\beta],
	\end{equation}		
	where  by \eqref{eq:App_B_rest}, for a positive  constant $C$ depending only on $\lVert f_0\rVert_{\rmH^r}$, 
	\begin{equation}\label{eq:D_by_C_rest}
		\lVert R_5[\beta]\rVert_{\rmH^{r-1}}\leq C\lVert \beta\rVert_{\rmH^{r'-1}}, \qquad \beta\in\rmH^{r-1}(\Ss)^2.
	\end{equation}
	
	Using~\cite[Lemma~A.9]{Bohme.2024} and \eqref{eq:Bnmpq1}, we may express the Fr\'echet derivative of $\partial B_{n,m}^{p,q}(f_0)$  in the compact form 
	\begin{equation}\label{eq:Frechet_Bnmpq}
	\begin{aligned}
		\partial B_{n,m}^{p,q}(f_0)[f]&=n\big(B_{n-1,m}^{p,q,1}(f_0) - B_{n+1,m}^{p+2,q,1}(f_0)\big)[f]\\
		&\quad +2m\big(B_{n+3,m+1}^{p+2,q,1}(f_0)-B_{n+1,m+1}^{p,q,1}(f_0)\big)[f]\\
		&\quad+qB_{n,m}^{p,q-1,1}(f_0)[f],
	\end{aligned}
	\end{equation}
	where terms with negative indices have to be neglected. Recalling~\eqref{eq:B_by_Bnmpq}, we conclude from~\eqref{eq:Frechet_Bnmpq}  and~\eqref{eq:Bnmpq1}--\eqref{eq:Cnm-1} that
	\begin{equation}\label{eq:DB}
	\begin{aligned}
		\partial B_1(f_0)[f][\varphi_0]&=-2\varphi_0\big(C^0_{1,3}+C^0_{3,3}\big)(f_0)[f']+R_{ 6}[f],\\
		\partial B_2(f_0)[f][\varphi_0]&= \varphi_0\big(C^0_{0,3}-C_{4,3}^0\big)(f_0)[f']+R_{7}[f],\\
		\partial B_3(f_0)[f][\varphi_0]&= \varphi_0\big(C^0_{0,3}-6C_{2,3}^0+C_{4,3}^0\big)(f_0)[f']+R_{ 8}[f],\\
		\partial B_4(f_0)[f][\varphi_0]&= 2\varphi_0\big(C^0_{1,3}-C_{3,3}^0\big)(f_0)[f']+R_{9}[f],  
	\end{aligned}
	\end{equation}	 
	for $\varphi_0\in\rmH^{r-1}(\Ss)$ and, for a positive constant $C$  depending only on $\lVert f_0\rVert_{\rmH^r}$ and $\lVert\varphi_0\rVert_{\rmH^{r-1}}$, 
	\begin{equation}\label{eq:DB_rest}
		\lVert R_{i}[f]\rVert_{\rmH^{r-1}}\leq   C\lVert f\rVert_{\rmH^{r'}},\qquad f\in\rmH^r(\Ss),\quad 6\leq i\leq 9.
	\end{equation}
	
	By~\cite[Lemma 3.1]{Bohme.2024}, the Fr\'echet derivative of the coordinate $\phi_i$, $i=1,\, 2$, of the mapping $\phi$, introduced in~\eqref{eq:defphi}, is given by
 	\begin{equation}\label{eq:deriv_phi}
		\partial\phi_i(f_0)=a_i(f_0)\frac{\rmd}{\rmd \xi}\in\mcL\big(\rmH^r(\Ss),\rmH^{r-1}(\Ss)\big),\qquad i=1,\, 2,
	\end{equation}
	with $a_i(f_0)\in \rmH^{r-1}(\Ss)$, $i=1,\, 2$, defined by 
	\begin{equation}\label{eq:def_ai}
		a_1(f_0):=-\frac{f_0'}{(1+f_0'^2)^{3/2}}\qquad\text{and}\qquad a_2(f_0):=\frac{1}{(1+f_0'^2)^{3/2}}.
	\end{equation}
 We set for the sake of brevity 
	\begin{equation}\label{eq:defa_iphi_i}
		a_i:=a_i(f_0)\qquad\text{and}\qquad \phi_i:=\phi_i(f_0),\qquad i=1,\, 2.
	\end{equation}
	Using \eqref{eq:Frechet_Bnmpq} to differentiate \eqref{eq:def_Psi_i} and applying \eqref{eq:App_B}--\eqref{eq:App_B_rest} and \eqref{eq:DB}--\eqref{eq:DB_rest} gives
	\begin{equation}\label{eq:DPsi_i}
	\begin{aligned}
		\partial \mcV_1(f_0)[f]&=\big(C_{0,2}^{0}-C_{2,2}^{0}\big)(f_0)[(a_1-\phi_2-f_0' a_2)f']+C_{1,2}^{0}(f_0)[(3(\phi_1+f_0' a_1)+a_2)f']\\
		&\quad+C_{3,2}^{0}(f_0)[(\phi_1+f_0' a_1-a_2)f']\\
	 	&\quad+ \phi_1\big(3f_0' C_{0,3}^{0}-6C_{1,3}^{0}-6 f_0' C_{2,3}^{0}+2C_{3,3}^{0}-f_0' C_{4,3}^{0}\big)(f_0)[f']\\
	 	&\quad+\phi_2\big(C_{0,3}^{0}+6f_0' C_{1,3}^{0}-6C_{2,3}^{0}-2f_0' C_{3,3}^{0}+C_{4,3}^{0}\big)(f_0)[f']+ R_{ 10}[f],\\
		\partial \mcV_2(f_0)[f]&=\big(C_{1,2}^{0}-C_{3,2}^{0}\big)(f_0)[(a_1-\phi_2-f_0' a_2)f'] -C_{0,2}^{0}(f_0)[(\phi_1+f_0' a_1-a_2)f']\\
		&\quad+C_{2,2}^{0}(f_0)[(\phi_1+f_0' a_1+3a_2)f']\\
		&\quad+ \phi_1\big(C_{0,3}^{0}+6f_0' C_{1,3}^{0}-6 C_{2,3}^{0}-2f_0' C_{3,3}^{0}+C_{4,3}^{0}\big)(f_0)[f']\\
		&\quad-\phi_2\big(f_0' C_{0,3}^{0}-2 C_{1,3}^{0}-6f_0' C_{2,3}^{0} +6 C_{3,3}^{0}+f_0' C_{4,3}^{0}\big)(f_0)[f']+R_{11}[f],
	\end{aligned}
	\end{equation}
	where, for a positive constant $C$ depending only on $\lVert f_0\rVert_{\rmH^r}$, 
	\begin{equation}\label{eq:DPsi_i_rest}
		\lVert R_{i}[f]\rVert_{\rmH^{r-1}}\leq C\lVert f\rVert_{\rmH^{r'}}, \qquad f\in\rmH^r(\Ss),\quad i=10,\,11.
	\end{equation}	

	Lastly,  we set
	\begin{equation}\label{eq:beta_0}
		\beta_0:=(\beta_{0,1},\beta_{0,2})^\top:=\beta(f_0)\in \rmH^{r-1}(\Ss)^2
	\end{equation}
	and use \eqref{eq:Frechet_Bnmpq} to differentiate \eqref{eq:D(f)}. Applying \eqref{eq:App_B}--\eqref{eq:App_B_rest} and~\eqref{eq:DB}--\eqref{eq:DB_rest}, we arrive at
	 \begin{equation}\label{eq:DDD_matrix}
	\begin{aligned}
		\partial\DD(f_0)[f][\beta_0]&= 
		\begin{pmatrix}
			C_{0,2}^0(f_0) & C_{1,2}^0(f_0)\\[1ex]
			C_{1,2}^0(f_0) & C_{2,2}^0(f_0)
		\end{pmatrix}
		\begin{bmatrix}
			f'\beta_{0,1} \\[1ex]
			f'\beta_{0,2}
		\end{bmatrix}\\
		&\quad	-
		\begin{pmatrix}
			 \beta_{0,1}\big(C_{0,3}^0 -3C_{2,3}^0\big)(f_0)[f']+2 \beta_{0,2}\big(C_{1,3}^0 -C_{3,3}^0\big)(f_0)[f']\\[1ex]
			 2 \beta_{0,1}\big(C_{1,3}^0 -C_{3,3}^0\big)(f_0)[f']+ \beta_{0,2}\big(3C_{2,3}^0 -C_{4,3}^0\big)(f_0)[f']
		\end{pmatrix}\\
&\quad +	\begin{pmatrix}
			 -4f_0' \beta_{0,1}C_{1,3}^0 (f_0)[f']+f_0' \beta_{0,2}\big(C_{0,3}^0 -3C_{2,3}^0\big)(f_0)[f']\\[1ex]
			 f_0' \beta_{0,1}\big(C_{0,3}^0 -3C_{2,3}^0\big)(f_0)[f']+2f_0' \beta_{0,2}\big(C_{1,3}^0 -C_{3,3}^0\big)(f_0)[f']
		\end{pmatrix}
		+R_{12}[f],
	\end{aligned}
	\end{equation}
	where, for some constant $C$ depending only on  $\lVert f_0\rVert_{\rmH^r}$ and $\lVert\beta_0\rVert_{\rmH^{r-1}}$,
	\begin{equation}\label{eq:DDD_rest}
		\lVert R_{12}[f]\rVert_{\rmH^{r-1}}\leq C\lVert f\rVert_{\rmH^{r'}}, \qquad f\in\rmH^r(\Ss).
	\end{equation}	
	
	\subsection*{Localization and the proof of Theorem~\ref{Thm:Main}}\label{SS:Loc}	
	The first part of this section is devoted to establishing Proposition~\ref{Prop:Psi_Gen}. 
	This is achieved by  adopting the strategy  employed  in \cite{MP2022}; see also~\cite{E94, ES95, Matioc.2019}. 
	The main step consists of locally approximating the Fr\'echet derivative $\partial\Psi(f_0)$ by certain Fourier multipliers, which themselves generate strongly continuous analytic semigroups; see Proposition~\ref{Prop:loc}. 
	We conclude the section by outlining the proof of Theorem~\ref{Thm:Main}.
		
	To start, for each $\ve\in(0,1)$ we fix a set of smooth functions $\{\pi_{j}^{\ve}: 1\leq j\leq N\}\subset \rmC^\infty(\Ss,[0,1])$, where the integer~$N=N(\ve)$ is sufficiently large, such that
	\begin{equation}\label{eq:pi_jp}
	\begin{aligned}
		\bullet & \,\supp \pi_j^\ve= I_j^\ve+2\pi \ZZ\text{ with } I_j^\ve:= [\xi_j^\ve-\ve,\xi_j^\ve+\ve] \text{ and } \xi_j^\ve:=j\ve;\\
		\bullet & \, \sum_{j=1}^{N}\pi_j^\ve =1 \text{ in } \rmC^\infty(\Ss). 
	\end{aligned}
	\end{equation}	 
	We call $\{\pi_{j}^{\ve}: 1\leq j\leq N\}$ an $\ve$\emph{-partition of unity}. To a given $\ve$-partition of unity, we associate a further set $\{\chi_j^\ve: 1\leq j \leq N\}\subset\rmC^\infty(\Ss,[0,1])$ with
	\begin{equation}\label{eq:chi_jp}
	\begin{aligned}
		\bullet & \,\supp \chi_j^\ve =J_j^\ve+2\pi \ZZ \text{ with } J_j^\ve=[\xi_j^\ve-2\ve,\xi_j^\ve+2\ve] ;\\
		\bullet & \,\chi_j^\ve =1 \text{ on } \supp \pi_j^\ve.
	\end{aligned}
	\end{equation}
	We associate to such an $\ve$-partition of unity a new norm on $\rmH^s(\Ss)$, $s\geq 0$, via the mapping
	\begin{equation*}	 
		\bigg[f\mapsto\sum_{j=1}^{N}\lVert\pi_j^\ve f\rVert_{\rmH^s}\bigg]:\rmH^s(\Ss)\to\RR,
	\end{equation*}
	which is equivalent to the standard norm in the sense that for some $c=c(\ve,s)\in(0,1)$ we have
	\begin{equation}\label{eq:norm_pi_jp}
		c\lVert f\rVert_{\rmH^s}\leq \sum_{j=1}^{N}\lVert\pi_j^\ve f\rVert_{\rmH^s}\leq c^{-1}\lVert f\rVert_{\rmH^s},\qquad f\in \rmH^s(\Ss).
	\end{equation}
	
	Similarly to the non-periodic case~\cite{MP2022}, let ${\Phi:[0,1]\to\mcL\big(\rmH^r(\Ss),\rmH^{r-1}(\Ss)\big)}$ be the continuous path given by
	\begin{equation}\label{eq:Phi_tau}
		\Phi(\tau)[f]:=\frac{2}{\mu^+ +\mu^-}\big(-\tau f' \beta_{0,1}+(-\tau f_0',1)^\top\cdot\mcB(\tau)[f] \big),\qquad \tau\in [0,1],\quad f\in\rmH^r(\Ss),
	\end{equation}	 
	 where $\beta_0=(\beta_{0,1},\beta_{0,2})^\top$  was introduced in~\eqref{eq:beta_0} and $\mcB(\tau)[f]\in \rmH^{r-1}(\Ss)^2$, $\tau\in [0,1]$, is the unique solution to
	\begin{equation}\label{eq:B_tau}
		\big(1+2\tau a_\mu\DD(f_0)\big)\big[\mcB(\tau)[f]\big]=\scV(\tau)[f]-2\tau a_{\mu}\partial\DD(f_0)[f][\beta_0].
	\end{equation}	
	Here, $\scV:[0,1]\to \mcL\big(\rmH^{r}(\Ss),\rmH^{r-1}(\Ss)^2\big)$ denotes the continuous mapping
	\begin{equation}\label{eq:V_tau}
		\scV(\tau):=\frac{1}{4}\big(-\sigma\partial\mcV_1(\tau f_0)-\tau\Theta\partial\mcV_3(f_0),\,-\sigma\partial\mcV_2(\tau f_0)+\tau\Theta\partial\mcV_4(f_0)+\tau\Theta\ln(4)\langle\cdot\rangle\big)^\top
	\end{equation}
	for $\tau\in [0,1]$. Note that $\mcB:[0,1]\to\mcL\big(\rmH^r(\Ss),\rmH^{r-1}(\Ss)^2\big)$ is continuous  as well and, using Theorem~\ref{Thm:D(f)_Res_Hr-1},~\eqref{eq:D(f)_smooth}, and~\eqref{eq:V_smooth}, we may find a constant $C>0$ such that
	\begin{equation}\label{eq:mcB_Hr-1}
		\lVert \mcB(\tau)[f]\rVert_{\rmH^{r-1}}\leq C\lVert f\rVert_{\rmH^r},\qquad f\in\rmH^r(\Ss),\quad \tau\in[0,1].
	\end{equation}
	Moreover, we observe that  $\Phi(1)=\partial\Psi(f_0)$ and that
	\begin{equation*}
		\scV(0)=\bigg(0,\,-\frac{\sigma}{4}H\circ\frac{\rmd}{\rmd\xi}\bigg)^\top,
	\end{equation*}
	where $H=B_{0,0}^{0,0}$ is the periodic Hilbert transform, c.f. \eqref{eq:HT}. It follows that 
	\begin{equation}\label{eq:Phi0}
		\Phi(0)=-\frac{\sigma}{2(\mu^+ +\mu^-)}\bigg(-\frac{\rmd^2}{\rmd\xi^2}\bigg)^{1/2},
	\end{equation}
	since the symbol of the periodic Hilbert transform is given by $(-i\,\mathrm{sign}(k))_{k\in\ZZ}$.  The invertibility of the operator $\lambda-\Phi(0)$ for $\lambda>0$ will later be used to conclude the invertibility of $\lambda-\Phi(1)$ for $\Rel\lambda$ large enough.
	
	In Proposition~\ref{Prop:loc}, we approximate not only $\partial\Psi(f_0)=\Phi(1)$ by Fourier multipliers but also the entire path~$\Phi(\tau)$, $\tau\in[0,1]$.
		
	\begin{Proposition}\label{Prop:loc}
	Given $\gamma>0$, there exist $\ve\in(0,1)$, an $\ve$-partition of unity ${\{\pi_{j}^{\ve} : 1\leq j\leq N\}}$, a constant $K=K(\ve)>0$, and bounded operators
	\begin{equation*}
		\Aa_{j,\tau}\in\mcL\big(\rmH^r(\Ss), \rmH^{r-1}(\Ss) \big), \qquad 1\leq j\leq N, \quad \tau\in[0,1],
	\end{equation*}
	such that
	\begin{equation}\label{eq:Phi_AA_approx}
		\big\lVert \pi_j^\ve \Phi(\tau)[f] - \Aa_{j,\tau}[\pi_j^\ve f] \big\rVert_{\rmH^{r-1}}\leq \gamma \lVert \pi_j^\ve f \rVert_{\rmH^r} + K \lVert f\rVert_{\rmH^{r'}}
	\end{equation}
	for all $1\leq j\leq N$, $f\in \rmH^r(\Ss)$, and $\tau\in[0,1]$. The operators $\Aa_{j,\tau}$ are defined by
	\begin{equation}\label{eq:A_jt}
		\Aa_{j,\tau} :=-\alpha_{\tau}(\xi_j^\ve)\bigg( -\frac{\rmd^2}{\rmd\xi^2}\bigg)^{1/2}+\vt_\tau(\xi_j^\ve)\frac{\rmd}{\rmd\xi},\qquad 1\leq j\leq N,\quad \tau\in[0,1],
	\end{equation}
	where the functions $\alpha_\tau$ and $\vt_\tau$ are given, recalling \eqref{eq:nutauomega} and \eqref{eq:beta_0}, by
	\begin{equation}\label{eq:alphabeta}
		\alpha_{\tau}:=\frac{\sigma}{2(\mu^+ +\mu^-)}(\omega(\tau f_0))^{-1} \qquad\text{and}\qquad \vt_\tau := -\frac{2\tau}{\mu^+ + \mu^-} \beta_{0,1}.
	\end{equation}
	\end{Proposition}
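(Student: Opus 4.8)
The plan is to follow the localization technique of \cite{MP2022} (see also \cite{E94,ES95,Matioc.2019}): for each $\tau\in[0,1]$ one writes $\Phi(\tau)=\Phi_\sharp(\tau)+\Phi_\flat(\tau)$, where $\Phi_\sharp(\tau)$ is a variable-coefficient combination of $\rmd/\rmd\xi$ and $(-\rmd^2/\rmd\xi^2)^{1/2}=H\circ\rmd/\rmd\xi$, and $\Phi_\flat(\tau)\in\mcL\big(\rmH^r(\Ss),\rmH^{r-1}(\Ss)\big)$ extends to a bounded operator from $\rmH^{r'}(\Ss)$ into $\rmH^{r-1}(\Ss)$; one then freezes the coefficients of $\Phi_\sharp(\tau)$ at the centers $\xi_j^\ve$ of an $\ve$-partition of unity. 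The point of departure is the expansion, established in the previous subsection, of all operators entering $\Phi(\tau)$ through \eqref{eq:Phi_tau}--\eqref{eq:V_tau} as linear combinations — modulo lower-order remainders of the above type — of the operators $C_{n,m}^0(f_0)$ and of operators $C_{n,m}(f_0,\dots,f_0)[f_0,\dots,f_0,f',\cdot]$, cf.\ \eqref{eq:D_by_C}, \eqref{eq:DPsi_i}, \eqref{eq:DDD_matrix}; the contributions of $\mcV_3$, $\mcV_4$ and of their Fr\'echet derivatives are already lower-order, since $B_0$, $B_5$, $B_6$ map $\rmH^{r-1}(\Ss)$ smoothly into $\rmH^{r}(\Ss)$ by \eqref{eq:reg_B} and Lemma~\ref{Lem:B0_H1_H2}, which is why $\Theta$ does not appear in \eqref{eq:alphabeta}.

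The decisive ingredients are two. First, a ``freezing'' estimate for the building blocks $C_{n,m}$: given $\gamma>0$ there exist $\ve\in(0,1)$, an $\ve$-partition of unity $\{\pi_j^\ve\}$, and $K=K(\ve)>0$ such that, uniformly in $j$, in $\psi\in\rmH^{r-1}(\Ss)$, and in $g$ ranging over the compact (hence, together with $g'$, uniformly equicontinuous) family $\{\tau f_0:\tau\in[0,1]\}$,
\[
	\Big\lVert \pi_j^\ve C_{n,m}(g,\dots,g)[g,\dots,g,\psi]-\frac{(g'(\xi_j^\ve))^{n}}{(1+(g'(\xi_j^\ve))^2)^{m}}\,H[\pi_j^\ve\psi]\Big\rVert_{\rmH^{r-1}}\le\gamma\lVert\pi_j^\ve\psi\rVert_{\rmH^{r-1}}+K\lVert\psi\rVert_{\rmH^{r'-1}},
\]
and likewise when one bracket slot carries a fixed $\rmH^{r-1}$-factor such as $\beta_{0,i}$; this rests on the fact that commuting a smooth cutoff past $C_{n,m}$ produces a lower-order operator (in the spirit of Lemma~\ref{Lem:Cnm_com_Hr-1_Hr-1} and the Appendix estimates inherited from \cite{Bohme.2024}) and on the H\"older continuity $f_0'\in\rmC^{r-3/2}(\Ss)$, $r-3/2>0$, which makes the symbol nearly constant over $I_j^\ve$ after localization, the residual Calder\'on--Zygmund-type error having $\mcL(\rmH^{r-1})$-norm controlled by the oscillation of $g'$ over $I_j^\ve$. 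Second, a cancellation making the principal symbol of $\DD(f_0)$ vanish: inserting the frozen values into \eqref{eq:D_by_C}, using the identities $C^0_{n,m}(f_0)+C^0_{n+2,m}(f_0)=C^0_{n,m-1}(f_0)$ from \eqref{eq:Cnm-1} and pulling $f_0'$ out of $C^0_{n,2}(f_0)[f_0'\beta_i]$, the two matrices in \eqref{eq:D_by_C} cancel identically, so that $\lVert\pi_j^\ve\DD(f_0)[\psi]\rVert_{\rmH^{r-1}}\le\gamma\lVert\psi\rVert_{\rmH^{r-1}}+K\lVert\psi\rVert_{\rmH^{r'-1}}$ for all $\psi$.

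With these at hand the proof is assembled as follows. I first localize the \emph{equation} \eqref{eq:B_tau} rather than $\mcB(\tau)[f]$ itself — this detour is needed because $\mcB(\tau)[f]$ is only controlled globally by $\lVert f\rVert_{\rmH^r}$, see \eqref{eq:mcB_Hr-1}: applying $\pi_j^\ve$ to \eqref{eq:B_tau}, commuting it through $\DD(f_0)$, and absorbing the $\DD(f_0)$-contribution, which after localization has small operator norm by the cancellation, yields that $\pi_j^\ve\mcB(\tau)[f]$ coincides with $\pi_j^\ve$ applied to the right-hand side of \eqref{eq:B_tau}, up to an error $\le\gamma\lVert\pi_j^\ve f\rVert_{\rmH^r}+K\lVert f\rVert_{\rmH^{r'}}$. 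Then I apply the freezing estimate to $\partial\mcV_1(\tau f_0)$, $\partial\mcV_2(\tau f_0)$ and to $\partial\DD(f_0)[\,\cdot\,][\beta_0]$ in the forms \eqref{eq:DPsi_i}, \eqref{eq:DDD_matrix}, use $(\pi_j^\ve f)'=\pi_j^\ve f'+(\pi_j^\ve)'f$ (the last term lower-order) and $H\circ\rmd/\rmd\xi=(-\rmd^2/\rmd\xi^2)^{1/2}$, and substitute into \eqref{eq:Phi_tau}. A routine, if somewhat lengthy, simplification of the resulting frozen symbols — using the explicit values of $a_i(\tau f_0)$, $\phi_i(\tau f_0)$, $\omega(\tau f_0)$ at $\xi_j^\ve$ — collapses the half-order part (coming from the $\sigma$-block of $\scV$) to $-\alpha_\tau(\xi_j^\ve)(-\rmd^2/\rmd\xi^2)^{1/2}$ and the first-order part (coming from $-\tau f'\beta_{0,1}$ and from $\partial\DD(f_0)[\,\cdot\,][\beta_0]$) to $\vt_\tau(\xi_j^\ve)\,\rmd/\rmd\xi$, i.e.\ precisely to the operator $\Aa_{j,\tau}$ of \eqref{eq:A_jt}--\eqref{eq:alphabeta}; for $\tau=0$ this is consistent with \eqref{eq:Phi0}. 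Uniformity of $\ve$, $\{\pi_j^\ve\}$, $K$ in $\tau\in[0,1]$ follows from compactness of $[0,1]$ and continuity of $\tau\mapsto\Phi(\tau),\mcB(\tau)$.

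The main obstacle is exactly the interplay of these two ingredients: the freezing estimate must be established with the stated $\ve$-dependence of the error — where the delicate commutator and mapping properties of the operators $C_{n,m}$ and $B_{n,m}^{p,q}$ from \cite{Bohme.2024} and the Appendix carry the weight — and the localization must be propagated through the inverse $(1+2\tau a_\mu\DD(f_0))^{-1}$ despite $\mcB(\tau)[f]$ being only globally bounded, which is possible solely because $\DD(f_0)$ has vanishing principal order.
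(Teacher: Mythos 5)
Your proposal follows essentially the same route as the paper's proof: decompose $\Phi(\tau)$ into the multiplication part $\tau f'\beta_{0,1}$ and the part built from $\mcB(\tau)$, localize the \emph{equation} \eqref{eq:B_tau} (rather than the solution) by commuting $\pi_j^\ve$ through $\DD(f_0)$ to get an a priori bound on $\pi_j^\ve\mcB(\tau)[f]$ in terms of $\lVert\pi_j^\ve f\rVert_{\rmH^r}$, freeze coefficients via the analogue of Lemma~\ref{Lem:Cnm_approx_a}, and exploit the cancellation of the frozen symbol of $\DD(f_0)$ coming from the structure of \eqref{eq:D_by_C}. You have also correctly identified the lower-order nature of the $\mcV_3,\mcV_4$ contributions and the final reduction to the symbols $\alpha_\tau,\vt_\tau$; this matches the paper's argument in all essentials.
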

	
	The proof of this proposition uses the following commutator-type result for the operators $C_{n,m}$.
	\begin{Lemma}\label{Lem:Cnm_com}
	Given $n,\,m\in\NN_0$  and $a,\,f\in\rmC^1(\Ss)$, there exists a constant $C>0$ that depends only on $n,\,m$, and $\lVert (a,f)\rVert_{\rmC^1}$ such that for all $\varphi\in\rmL^2(\Ss)$ we have
	\begin{equation}\label{eq:Cnm_com}
		\big\lVert a C_{n,m}^{0}(f)[\varphi]-C_{n,m}^{0}(f)[a\varphi]\big\rVert_{\rmH^1}\leq C\lVert\varphi\rVert_2.
	\end{equation}
	\end{Lemma}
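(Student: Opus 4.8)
The plan is to reduce the commutator to an integral operator with a \emph{bounded} kernel, and then to show that its derivative is again a finite combination of operators that are bounded on $\rmL^2(\Ss)$.

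\textbf{Step 1: closed form of the commutator.} By \eqref{eq:Cnm}, \eqref{eq:Cnm0} the kernel of $C_{n,m}^0(f)$ is $s^{-1}h(\dg{\xi,s}{f}/s)$ with $h(t):=t^n(1+t^2)^{-m}$, and $\varphi$ enters only through the factor $\varphi(\xi-s)$; hence the singular factor $s^{-1}$ cancels in the difference and
\[
	aC_{n,m}^{0}(f)[\varphi](\xi)-C_{n,m}^{0}(f)[a\varphi](\xi)=\frac{1}{\pi}\int_{-\pi}^{\pi} h\!\left(\frac{\dg{\xi,s}{f}}{s}\right)\frac{\dg{\xi,s}{a}}{s}\,\varphi(\xi-s)\,\rmd s=:g(\xi).
\]
Since $|\dg{\xi,s}{u}/s|\le\lVert u'\rVert_\infty$ for $u\in\rmC^1(\Ss)$ and $h$ is smooth, the kernel of $g$ is bounded by a constant depending only on $n,m$ and $\lVert(a,f)\rVert_{\rmC^1}$. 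Thus $\lVert g\rVert_2\le C\lVert\varphi\rVert_2$ is immediate, and it remains to estimate $\lVert g'\rVert_2$.

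\textbf{Step 2: differentiation.} By the $\rmL^2$-boundedness of $C_{n,m}^0(f)$ (\cite[Lemma~A.1]{Bohme.2024}), the map $\varphi\mapsto g$ is bounded on $\rmL^2(\Ss)$, so by density it suffices to prove $\lVert g'\rVert_2\le C\lVert\varphi\rVert_2$ for $\varphi\in\rmC^\infty(\Ss)$. For such $\varphi$ we differentiate under the integral sign and integrate by parts in $s$, using $\partial_\xi\varphi(\xi-s)=-\partial_s\varphi(\xi-s)$ to move $\partial_\xi$ off $\varphi$, which produces
\[
	g'(\xi)=\frac{1}{\pi}\,\PV\int_{-\pi}^{\pi}(\partial_\xi+\partial_s)\!\left[h\!\left(\frac{\dg{\xi,s}{f}}{s}\right)\frac{\dg{\xi,s}{a}}{s}\right]\varphi(\xi-s)\,\rmd s+g_{\textup{bd}}(\xi),
\]
where, by $2\pi$-periodicity of $\varphi$ and the elementary symmetry of the kernel at $s=\pm\pi$, the boundary contribution $g_{\textup{bd}}$ is a bounded continuous function of $\xi$ times $\varphi(\cdot-\pi)$, whence $\lVert g_{\textup{bd}}\rVert_2\le C\lVert\varphi\rVert_2$.

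\textbf{Step 3: the key identity and conclusion.} The decisive point is the elementary identity, valid for every $u\in\rmC^1(\Ss)$,
\[
	(\partial_\xi+\partial_s)\frac{\dg{\xi,s}{u}}{s}=\frac{u'(\xi)}{s}-\frac{1}{s}\cdot\frac{\dg{\xi,s}{u}}{s},
\]
in which the genuinely singular factor $s^{-1}$ appears only multiplied by the \emph{multiplier} $u'(\xi)$, never by a factor depending on $\xi-s$. Applying this to $\dg{\xi,s}{f}/s$ (inside $h$) and to $\dg{\xi,s}{a}/s$, and using that $h'(t)t^{k}$ is again a finite sum of functions $t^{n'}(1+t^2)^{-m'}$, the bracket $(\partial_\xi+\partial_s)[\,\cdot\,]$ becomes a finite linear combination of the four types
\[
	\frac{f'(\xi)}{s}\,h_1\!\left(\frac{\dg{\xi,s}{f}}{s}\right)\frac{\dg{\xi,s}{a}}{s},\quad \frac{1}{s}\,h_2\!\left(\frac{\dg{\xi,s}{f}}{s}\right)\frac{\dg{\xi,s}{a}}{s},\quad \frac{a'(\xi)}{s}\,h\!\left(\frac{\dg{\xi,s}{f}}{s}\right),\quad \frac{1}{s}\,h\!\left(\frac{\dg{\xi,s}{f}}{s}\right)\frac{\dg{\xi,s}{a}}{s},
\]
with $h_1,h_2$ of the same rational form as $h$. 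Integrated against $\varphi(\xi-s)$, the first type yields $f'(\xi)$ times an operator of type $C_{n',m'}(f,\dots,f)[f,\dots,f,a,\varphi]$, the third yields $a'(\xi)\,C_{n,m}^0(f)[\varphi]$, and the remaining two are themselves operators of type $C_{n',m'}(f,\dots,f)[f,\dots,f,a,\varphi]$; in every case the coefficient functions are just $f$ and $a$, hence of class $\rmC^1$. Invoking \cite[Lemma~A.1]{Bohme.2024} (and \cite{DBThesis}) for the $\rmL^2$-boundedness of all these operators, with norm controlled by the indices and $\lVert(a,f)\rVert_{\rmC^1}$, together with $\lVert f'\rVert_\infty,\lVert a'\rVert_\infty\le\lVert(a,f)\rVert_{\rmC^1}$ for the multipliers, gives $\lVert g'\rVert_2\le C(n,m,\lVert(a,f)\rVert_{\rmC^1})\lVert\varphi\rVert_2$, which together with Step~1 proves \eqref{eq:Cnm_com}.

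\textbf{Main obstacle.} The only delicate point is the bookkeeping in Step~3: one must check that differentiating the bounded kernel never creates a term in which the order-one singularity $s^{-1}$ is attached to a factor depending on $\xi-s$ — such a term would leave the $C_{n',m'}$-class and, for data that is merely $\rmC^1$, would genuinely fail to be $\rmL^2$-bounded. The identity above, which isolates the singular $s^{-1}$ in front of $f'(\xi)$ or $a'(\xi)$, is exactly what prevents this; correspondingly, it is essential that the integration by parts in Step~2 replaces the plain $\partial_\xi$ by the combination $\partial_\xi+\partial_s$.
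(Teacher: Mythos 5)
Your argument is correct, and it is almost certainly the same argument as in the cited reference \cite[Lemma~B.1]{Bohme.2024}, which the paper points to instead of giving a proof. The decisive move is the identity
\begin{equation*}
(\partial_\xi+\partial_s)\frac{\dg{\xi,s}{u}}{s}=\frac{u'(\xi)}{s}-\frac{1}{s}\,\frac{\dg{\xi,s}{u}}{s},
\end{equation*}
which you state and use correctly: after applying it to both $\dg{\xi,s}f/s$ inside $h$ and to $\dg{\xi,s}a/s$, the singular factor $s^{-1}$ appears only multiplied by the pointwise multipliers $f'(\xi)$, $a'(\xi)$, or tied to another bounded quotient, so each resulting piece is precisely of the form $C_{n',m'}(f,\dots,f)[f,\dots,f,a,\,\cdot\,]$, possibly pre-multiplied by $f'$ or $a'$, and the $\rmL^2$-boundedness of these operators is available. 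The boundary term computation at $s=\pm\pi$ and the initial reduction of the commutator to a regular kernel are both verified correctly.

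The one place you elide is the passage from the regular integral $g(\xi)=\tfrac1\pi\int K(\xi,s)\varphi(\xi-s)\,\rmd s$ to the formula with the $\PV$. Note that $\partial_\xi K$ alone is \emph{not} integrable near $s=0$ for $f,a$ merely $\rmC^1$ (it is $\mathcal{O}(\omega(|s|)/|s|)$ with $\omega$ the modulus of continuity of $f',a'$), so differentiating under the integral sign and then integrating by parts is not directly legal. The clean way to make this rigorous is to work with the truncated integral $g_\ve(\xi)=\tfrac1\pi\int_{\ve<|s|<\pi}K(\xi,s)\varphi(\xi-s)\,\rmd s$: for each fixed $\ve>0$ the kernel is bounded and $\rmC^1$ in $\xi$, so differentiation and integration by parts are elementary; the inner boundary terms at $s=\pm\ve$ tend to zero because $K(\xi,\pm\ve)\to h(f'(\xi))a'(\xi)$ (this is where the continuity of $f'$ and $a'$, rather than mere Lipschitz regularity, is used); and $g_\ve'\to g'$ in $\mcD'(\Ss)$ while the right side converges in $\rmL^2$ by the very definition of the $\PV$ operators $C_{n',m'}$. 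This is a routine approximation step, but it should be mentioned since it is exactly the point at which the $\rmC^1$ hypothesis (as opposed to $\rmW^{1,\infty}$) enters.
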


	\begin{proof}
		See \cite[Lemma~B.1]{Bohme.2024}
	\end{proof}

	The main ingredient in the proof of Proposition~\ref{Prop:loc} is the next lemma, which describes how to ``freeze the kernel'' of the operators $C_{n,m}$.
	\begin{Lemma}\label{Lem:Cnm_approx_a}
	Let $n,\,m\in\NN_0$, $r'\in(3/2,r)$, $f\in\rmH^r(\Ss)$, $a,\,b\in\rmH^{r-1}(\Ss)$, and~${\eta>0}$ be given. Then, for any sufficiently small $\ve\in(0,1)$, there exists a constant~$K>0$ that depends on~$\ve,\,n,\,m,\,\lVert f\rVert_{\rmH^r}$, and $\lVert (a,b)\rVert_{\rmH^{r-1}}$ such that for all $1\leq j\leq N$ and $\varphi\in\rmH^{r-1}(\Ss)$ we have 
	\begin{equation}\label{eq:Cnm_approx_a}
		\bigg\lVert \pi_j^\ve a C_{n,m}^0 (f)[b\varphi]- \frac{abf'^n}{\big(1+f'^2\big)^m}(\xi_j^\ve) H[\pi_j^\ve \varphi]\bigg\rVert_{\rmH^{r-1}}\leq \eta \lVert \pi_j^\ve \varphi\rVert_{\rmH^{r-1}}+K\lVert\varphi\rVert_{\rmH^{r'-1}}.
	\end{equation}
	\end{Lemma}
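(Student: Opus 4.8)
The plan is to reduce the statement to a scalar estimate for the model operator $C_{n,m}^0(f)$, freeze its kernel at the point $\xi_j^\ve$ inside the cutoff $\pi_j^\ve$, and control the resulting errors by the $\rmH^{r'-1}$-norm (which is lower order since $r'<r$) together with a small multiple of $\lVert\pi_j^\ve\varphi\rVert_{\rmH^{r-1}}$. First I would use Lemma~\ref{Lem:Cnm_com} to move the cutoff and the multipliers through the singular integral: writing $\pi_j^\ve a\,C_{n,m}^0(f)[b\varphi]$, I commute $\pi_j^\ve$ and $a$ inside, so that up to an error bounded in $\rmH^{r-1}$ by $C\lVert b\varphi\rVert_{\rmH^{r'-1}}\leq C\lVert\varphi\rVert_{\rmH^{r'-1}}$ (using the algebra property of $\rmH^{r-1}$ and the embedding $\rmH^{r'-1}\hookrightarrow\rmL^2$ together with the commutator estimate applied with a slightly lower exponent), the expression becomes $ab\,C_{n,m}^0(f)[\pi_j^\ve\varphi]$ plus such an error. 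Since $\pi_j^\ve\varphi$ is supported in $I_j^\ve$, which is an $\ve$-neighbourhood of $\xi_j^\ve$, the remaining task is purely local.

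The core step is the kernel-freezing. On the support of $\pi_j^\ve\varphi$, the kernel of $C_{n,m}^0(f)$, namely
\[
\frac{1}{\pi}\,\frac{\big(\dg{\xi,s}{f}/s\big)^n}{\big[1+\big(\dg{\xi,s}{f}/s\big)^2\big]^m}\cdot\frac{1}{s},
\]
has difference quotients $\dg{\xi,s}{f}/s$ that converge, as the argument localizes near $\xi_j^\ve$ and $s\to0$, to $f'(\xi_j^\ve)$ — this is where the factor $f'^n/(1+f'^2)^m$ evaluated at $\xi_j^\ve$ comes from, and the leftover $1/s$ produces the periodic Hilbert transform $H$ via \eqref{eq:HT}. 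Concretely I would write
\[
ab\,C_{n,m}^0(f)[\pi_j^\ve\varphi]-\frac{abf'^n}{(1+f'^2)^m}(\xi_j^\ve)\,H[\pi_j^\ve\varphi]
\]
as a sum of operators of $C_{n,m}$-type (or $B_{n,m}^{p,q}$-type with $p\geq1$, which gain regularity by \eqref{eq:reg_B}) acting on $\pi_j^\ve\varphi$ whose kernels carry a factor of the form $\dg{\xi,s}{f}/s - f'(\xi_j^\ve)$ or $\dg{\xi,s}{\pi_j^\ve}/s$; on the $\ve$-support this factor is small in $\rmC^0$ up to a controlled remainder. The small-in-$\rmC^0$ part, applied to $\pi_j^\ve\varphi$, contributes $\leq\eta\lVert\pi_j^\ve\varphi\rVert_{\rmH^{r-1}}$ after choosing $\ve$ small (using the $\rmL^2$- and $\rmH^{r-1}$-boundedness of the $C_{n,m}$-building blocks from Lemma~\ref{Lem:Bnmpq_L2_L2} and Lemma~\ref{Lem:Bnmpq_L2_Hr-1}~(iv), and interpolation \eqref{eq:interpolation} to absorb $\rmL^2$-smallness into an $\rmH^{r-1}$-estimate with small constant at the top order); the remainder terms involve either an extra derivative falling on $f$ (hence a factor $f''$, lowering the effective smoothness of $f$ to $\rmH^{r'}$ and yielding an $\rmH^{r'-1}$-bound on $\varphi$) or the smoothness of $\pi_j^\ve$ (harmless, absorbed into the constant $K=K(\ve)$). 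Finally, swapping $H[\pi_j^\ve\varphi]$ back from the frozen coefficient and comparing $ab$ with its value at $\xi_j^\ve$ on the $\ve$-support costs another $\eta\lVert\pi_j^\ve\varphi\rVert_{\rmH^{r-1}}+K\lVert\varphi\rVert_{\rmH^{r'-1}}$, using $a,b\in\rmH^{r-1}\hookrightarrow\rmC^0$ and a standard "multiplication by a function that is $\rmC^0$-small near $\xi_j^\ve$" estimate in $\rmH^{r-1}$.

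The main obstacle I anticipate is the bookkeeping in the kernel-freezing step: expressing the difference $C_{n,m}^0(f)-\tfrac{f'^n}{(1+f'^2)^m}(\xi_j^\ve)H$ cleanly as a combination of admissible operators of type $C_{n,m}$ and $B_{n,m}^{p,q}$ so that each piece is either (i) a bounded operator with arbitrarily small norm after shrinking $\ve$, or (ii) a genuinely lower-order operator bounded by $\lVert\varphi\rVert_{\rmH^{r'-1}}$, with no term that is simultaneously top-order and not small. This requires a careful algebraic identity of the form $\frac{x^n}{(1+x^2)^m}-\frac{y^n}{(1+y^2)^m}=(x-y)\cdot(\text{rational in }x,y)$ applied with $x=\dg{\xi,s}{f}/s$, $y=f'(\xi_j^\ve)$, expanding the denominator $1+(\dg{\xi,s}f/s)^2$ via a geometric-type series (legitimate since this quantity is bounded below by $1$), exactly as in the reduction \eqref{eq:B_by_Bnmpq} of the layer-potential operators to the $B_{n,m}^{p,q}$ family; and then splitting $x-y=(\dg{\xi,s}f/s - f'(\xi))+(f'(\xi)-f'(\xi_j^\ve))$, where the first summand is $\rmC^0$-small by absolute continuity of $f'$ on $\rmH^r\hookrightarrow\rmC^1$ at scale $\ve$ and the second is handled by $\rmH^r$-regularity of $f$. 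Everything else — the commutator passages, the interpolation to convert $\rmL^2$-smallness into small-constant $\rmH^{r-1}$ bounds, and the treatment of derivatives hitting $\pi_j^\ve$ — is routine given the lemmas already available in the excerpt.
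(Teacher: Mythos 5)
Your high-level plan — localize via $\pi_j^\ve$ and $\chi_j^\ve$, freeze the kernel at $\xi_j^\ve$ by decomposing $\dg{\xi,s}f/s - f'(\xi_j^\ve)$ into a $\rmC^0$-small piece and a lower-order piece, freeze the coefficients $a,b$ at $\xi_j^\ve$, and absorb via interpolation, \eqref{eq:Hr_BAlg}, and Young — does capture the right ingredients. The paper itself gives no argument (it cites \cite[Lemma~B.2]{Bohme.2024}), so I am assessing internal consistency rather than comparing against a written-out proof.

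There is one concrete gap. You write "I commute $\pi_j^\ve$ and $a$ inside" and arrive at $ab\,C_{n,m}^0(f)[\pi_j^\ve\varphi]$, which actually requires commuting the \emph{smooth} cutoff $\pi_j^\ve$ inside \emph{and} the coefficient $b$ outside. Lemma~\ref{Lem:Cnm_com} — the only commutator result of this shape in the paper — is stated for $a,f\in\rmC^1(\Ss)$. Here $a,b\in\rmH^{r-1}(\Ss)$ with $r-1\in(1/2,1)$, so $a,b$ embed into $\rmC^{r-3/2}(\Ss)$ but \emph{not} into $\rmC^1(\Ss)$; the "commutator estimate applied with a slightly lower exponent" you invoke is not available in the paper, and the $\rmL^2$-commutator estimate for $C_{n,m}$-type operators (Lemma~\ref{Lem:Bnmpq_L2_Hr-1}~(ii)) needs the commuted function in $\rmH^1(\Ss)$, which $b\in\rmH^{r-1}$ does not satisfy. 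Moreover the sentence is internally inconsistent: moving $a$ \emph{inside} cannot produce $a$ on the outside of the expression you state.

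The fix is already half-present in your own last paragraph, but it should be the primary mechanism rather than an afterthought. Only $\pi_j^\ve$ (which is $\rmC^\infty$) should be commuted through $C_{n,m}^0(f)$ via Lemma~\ref{Lem:Cnm_com}, giving an error in $\rmH^1\hookrightarrow\rmH^{r-1}$ bounded by $C\lVert b\varphi\rVert_2\leq K\lVert\varphi\rVert_{\rmH^{r'-1}}$. The coefficients $a$ and $b$ should not be moved through the operator at all; instead, replace $a$ by $a(\xi_j^\ve)$ on the outside and $b$ by $b(\xi_j^\ve)$ inside the argument, writing e.g.\ $\pi_j^\ve b\varphi = b(\xi_j^\ve)\pi_j^\ve\varphi + \chi_j^\ve(b-b(\xi_j^\ve))\pi_j^\ve\varphi$ and using \eqref{eq:Hr_BAlg}: the factor $\lVert\chi_j^\ve(b-b(\xi_j^\ve))\rVert_\infty$ is small for small $\ve$ (since $\rmH^{r-1}\hookrightarrow\rmC^{r-3/2}$) and multiplies $\lVert\pi_j^\ve\varphi\rVert_{\rmH^{r-1}}$, while the companion term carries $\lVert\pi_j^\ve\varphi\rVert_\infty\lesssim\lVert\varphi\rVert_{\rmH^{r'-1}}$ (using $r'-1>1/2$) and goes into the $K$-budget; the same bookkeeping handles $a$. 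Once $a,b$ are constants, the remaining kernel-freezing step you describe — the telescoping identity in $x=\dg{\xi,s}f/s$, $y=f'(\xi_j^\ve)$, split into $\dg{\xi,s}f/s-f'(\xi)$ (small on the $\chi_j^\ve$-support, since $\rmH^r\hookrightarrow\rmC^1$ implies uniform continuity of $f'$) and $f'(\xi)-f'(\xi_j^\ve)$ — is sound.
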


	\begin{proof}
		See \cite[Lemma~B.2]{Bohme.2024}
	\end{proof}

	We now prove Proposition~\ref{Prop:loc}, following closely the proof of \cite[Theorem~6.2]{MP2022}.

	\begin{proof}[Proof of Proposition~\ref{Prop:loc}]
	Let $\gamma>0$ be fixed, and let $\ve\in(0,1)$ (to be chosen later on), together with an associated $\ve$-partition of unity ${\{\pi_{j}^{\ve} : 1\leq j\leq N\}}$ and the corresponding family ${\{\chi_{j}^{\ve} : 1\leq j\leq N\}}$, satisfying \eqref{eq:pi_jp} and  \eqref{eq:chi_jp}, respectively. 
	We also  point out that, given  $s\in(1/2,1)$, there exists a constant $C=C(s)>0$ such that 
	\begin{equation}\label{eq:Hr_BAlg}
		\lVert fg\rVert_{\rmH^s}\leq C(\lVert f\rVert_\infty \lVert g\rVert_{\rmH^s}+\lVert g\rVert_\infty \lVert f\rVert_{\rmH^s}),\qquad f,\,g\in\rmH^s(\Ss).
	\end{equation}
 In what follows, the symbol $C$ is used for constants that are independent of~$\ve$, while $K$ denotes constants that depend on $\ve$.
  In view of  definition \eqref{eq:Phi_tau}, we need to approximate both operators~${\big[f\mapsto \mcB_2(\tau)[f]-\tau f_0'\mcB_1(\tau)[f]\big]}$ and~${\big[f\mapsto \tau f'\beta_{0,1}\big]}$. We start with the second one. Using the identity~${\chi_j^\ve \pi_j^\ve =\pi_j^\ve}$ together with~\eqref{eq:Hr_BAlg}, we  have
	\begin{equation}\label{eq:loc_e1}
	\begin{aligned}
		\lVert \pi_j^\ve\beta_{0,1}f'-\beta_{0,1}(\xi_j^\ve)(\pi_j^\ve f)'\rVert_{\rmH^{r-1}}
		&\leq C\lVert \chi_j^\ve(\beta_{0,1}-\beta_{0,1}(\xi_j^\ve))\rVert_\infty \lVert (\pi_j^\ve f)'\rVert_{\rmH^{r-1}}+K\lVert f\rVert_{\rmH^{r'}}\\
		& \leq \frac{\gamma(\mu^+ +\mu^-)}{6}\lVert\pi_j^\ve f\rVert_{\rmH^r}+K\lVert f\rVert_{\rmH^{r'}}
	\end{aligned}
	\end{equation}
	for all~${1\leq j\leq N}$ and~${f\in\rmH^r(\Ss)}$, provided that~$\ve$ is small enough. The estimate in the last line of~\eqref{eq:loc_e1} follows from~\eqref{eq:chi_jp} in view of~${\beta_{0,1}\in\rmH^{r-1}(\Ss)\hookrightarrow\rmC^{r-3/2}(\Ss)}$.
	
	Next, we show that
	\begin{equation}\label{eq:loc_e2}
		\lVert \pi_j^\ve \mcB(\tau)[f]\rVert_{\rmH^{r-1}}\leq C_\mcB \lVert \pi_j^\ve f\rVert_{\rmH^r}+K\lVert f\rVert_{\rmH^{r'}},
	\end{equation}
	with $C_\mcB$ independent of $\ve\in(0,1)$,~${\tau\in[0,1]}$,~${1\leq j\leq N}$, and~${f\in\rmH^r(\Ss)}$. To this end, we infer from~\eqref{eq:B_tau}, after multiplying this relation by $\pi_j^\ve$, that
	\begin{equation}\label{eq:loc_e3}
	\begin{aligned}
		\big(1+2\tau a_\mu  \DD(f_0) \big) \big[ \pi_j^\ve \mcB(\tau)[f] \big]&= \pi_j^\ve \scV(\tau)[f] - 2\tau a_\mu  \pi_j^\ve \partial \DD(f_0)[f][\beta_0] \\
		&\quad + 2\tau a_\mu  \big(\DD(f_0) \big[ \pi_j^\ve \mcB(\tau)[f] \big]-\pi_j^\ve \DD(f_0) \big[ \mcB(\tau)[f] \big]\big).
	\end{aligned}
	\end{equation}
	The terms on the first line of the right  side  of \eqref{eq:loc_e3} can be estimated according to 
	\begin{equation}\label{eq:loc_e4}
		\lVert \pi_j^\ve \scV(\tau)[f] \rVert_{\rmH^{r-1}} + \lVert \pi_j^\ve \partial \DD(f_0)[f][\beta_0] \rVert_{\rmH^{r-1}} \leq C \lVert \pi_j^\ve f \Vert_{\rmH^r} + K \lVert f\rVert_{\rmH^{r'}}.
	\end{equation}
	Indeed, recalling~\eqref{eq:def_Psi_i} and~\eqref{eq:V_tau}, the estimate~\eqref{eq:loc_e4} follows by observing from \eqref{eq:B_by_Bnmpq} and \eqref{eq:reg_B} that~${\partial\mcV_i(f_0)\in\mcL\big(\rmH^{r'}(\Ss)\big)}$ for $i=3,\,4$, so that in particular
	\begin{equation}\label{eq:loc_e8}
		\lVert \pi_j^\ve \partial\mcV_3(f_0)[f]\rVert_{\rmH^{r-1}} + \lVert\pi_j^\ve \partial\mcV_4(f_0)[f]\rVert_{\rmH^{r-1}}+\lVert\langle f \rangle \rVert_{\rmH^{r-1}}\leq K \lVert f\rVert_{\rmH^{r'}},
	\end{equation}
	the other terms, corresponding to $\partial \mcV_i(f_0)$, $i=1,\,2$, and to $\partial \DD(f_0)[f][\beta_0]$ being estimated using~\eqref{eq:DPsi_i}--\eqref{eq:DPsi_i_rest}, \eqref{eq:DDD_matrix}--\eqref{eq:DDD_rest}, Lemma~\ref{Lem:Bnmpq_L2_Hr-1}~(iv), and Lemma~\ref{Lem:Cnm_com}. Moreover, combining \eqref{eq:D_by_C}, Lemma~\ref{Lem:Cnm_com} and~\eqref{eq:mcB_Hr-1} (with $r=r'$) we get
	\begin{equation}\label{eq:loc_e5}
		\big\lVert \DD(f_0)\big[\pi_j^\ve \mcB(\tau)[f]\big]-\pi_j^\ve \DD(f_0)\big[\mcB(\tau)[f]\big]\big\rVert_{\rmH^{r-1}}\leq K\lVert\mcB(\tau)[f]\rVert_{\rmH^{r'-1}}\leq K\lVert f\rVert_{\rmH^{r'}},
	\end{equation}
	 the desired claim \eqref{eq:loc_e2} following now from Theorem~\ref{Thm:D(f)_Res_Hr-1} and \eqref{eq:loc_e3}--\eqref{eq:loc_e5}. 
	 
	 With the estimate~\eqref{eq:loc_e2} at hand, we next approximate the operator~${\big[f\mapsto \mcB(\tau)[f]\big]}$. To begin, we define Fourier multipliers~${\BB_{j,\tau}\in\mcL\big(\rmH^r(\Ss),\rmH^{r-1}(\Ss)^2\big)}$,~${1\leq j\leq N}$ and~${\tau\in[0,1]}$, by
	\begin{equation*}
		\BB_{j,\tau}:=-\frac{\sigma}{4}
		\begin{pmatrix}
			a_1(\tau f_0)(\xi_j^\ve)H\circ(\rmd/\rmd\xi)\\
			a_2(\tau f_0)(\xi_j^\ve)H\circ(\rmd/\rmd\xi)
		\end{pmatrix}.
	\end{equation*}
	Given $\tilde{\gamma}>0$, we show that if ~$\ve\in(0,1)$ is small enough, then
	\begin{equation}\label{eq:loc_e6}
		\lVert \pi_j^\ve \mcB(\tau)[f]-\BB_{j,\tau}[\pi_j^\ve f]\rVert_{\rmH^{r-1}}\leq \tilde{\gamma}\lVert \pi_j^\ve f\rVert_{\rmH^r}+K\lVert f\rVert_{\rmH^{r'}}
	\end{equation}
	for all~$\tau\in[0,1]$,~${1\leq j\leq N}$, and~${f\in\rmH^r(\Ss)}$. To this end we multiply \eqref{eq:B_tau} by $\pi_j^\ve$ and arrive at
	\begin{equation}\label{eq:loc_e7}
		\pi_j^\ve \mcB(\tau)[f]= \pi_j^\ve  \scV(\tau)[f]-2\tau a_\mu\pi_j^\ve\big(\partial\DD(f_0)[f][\beta_0]+\DD(f_0)\big[\mcB(\tau)[f]\big]\big).
	\end{equation}
	Recalling \eqref{eq:V_tau} and \eqref{eq:DPsi_i}--\eqref{eq:DPsi_i_rest}, the estimate~\eqref{eq:loc_e8} and repeated use of Lemma~\ref{Lem:Cnm_approx_a} in the context of \eqref{eq:DPsi_i} yield
	\begin{equation}\label{eq:loc_e9}
		\lVert \pi_j^\ve  \scV(\tau)[f]-\BB_{j,\tau}[\pi_j^\ve f]\rVert_{\rmH^{r-1}}\leq \frac{\tilde{\gamma}}{3 }\lVert \pi_j^\ve f\rVert_{\rmH^r}+K\lVert f\rVert_{\rmH^{r'}}
	\end{equation}
	for all~$\tau\in[0,1]$,~${1\leq j\leq N}$, and~${f\in\rmH^r(\Ss)}$, provided that~$\ve$ is small enough. Moreover, using the relations \eqref{eq:DDD_matrix}--\eqref{eq:DDD_rest}, we obtain, by repeatedly applying Lemma~\ref{Lem:Cnm_approx_a}, that
	\begin{equation}\label{eq:loc_e10}
		\lVert \pi_j^\ve \partial\DD(f_0)[f][\beta_0]\rVert_{\rmH^{r-1}}\leq \frac{\tilde{\gamma}}{6|a_\mu|+1}\lVert \pi_j^\ve f\rVert_{\rmH^r}+K\lVert f\rVert_{\rmH^{r'}}
	\end{equation}			
	for all~${1\leq j\leq N}$ and~${f\in\rmH^r(\Ss)}$, provided that~$\ve$ is small enough. 
	Similarly, by repeatedly applying Lemma~\ref{Lem:Cnm_approx_a} to \eqref{eq:D_by_C} and using \eqref{eq:D_by_C_rest}, \eqref{eq:mcB_Hr-1} (with $r=r'$), and \eqref{eq:loc_e2},  we get
	\begin{equation}\label{eq:loc_e11}
	\begin{aligned}
		\big\lVert \pi_j^\ve\DD(f_0)\big[\mcB(\tau)[f]\big]\big\rVert_{\rmH^{r-1}}& \leq\frac{\tilde{\gamma}}{(6|a_\mu|+1)C_\mcB}\lVert \pi_j^\ve \mcB(\tau)[f]\rVert_{\rmH^{r-1}}+K\lVert f\rVert_{\rmH^{r'}}\\
		&\leq\frac{\tilde{\gamma}}{6|a_\mu|+1}\lVert \pi_j^\ve f\rVert_{\rmH^r}+K\lVert f\rVert_{\rmH^{r'}}
	\end{aligned}
	\end{equation}
	for all~$\tau\in[0,1]$,~${1\leq j\leq N}$ and~${f\in\rmH^r(\Ss)}$, provided that~$\ve$ is small enough. 
	Recalling~\eqref{eq:loc_e7}, the estimate~\eqref{eq:loc_e6} is a straightforward consequence  of \eqref{eq:loc_e9}--\eqref{eq:loc_e11}, 
		
	To approximate the operators~${\big[f\mapsto \mcB_2(\tau)[f]-\tau f_0'\mcB_1(\tau)[f]\big]}$, we first infer from \eqref{eq:loc_e6}, that for~$\ve$ small enough, we have
	\begin{equation}\label{eq:loc_e12}
		\bigg\lVert \pi_j^\ve \mcB_2(\tau)[f]+\frac{  \sigma}{4}a_2(\tau f_0)(\xi_j^\ve)H[(\pi_j^\ve f)']\bigg\rVert_{\rmH^{r-1}}\leq \frac{\gamma(\mu^+ +\mu^-)}{6}\lVert \pi_j^\ve f\rVert_{\rmH^r}+K\lVert f\rVert_{\rmH^{r'}}
	\end{equation}
	for all~$\tau\in[0,1]$,~${1\leq j\leq N}$, and~${f\in\rmH^r(\Ss)}$. Moreover, using the identity $\pi_j^\ve =\chi_j^\ve \pi_j^\ve$, the regularity of $f_0\in\rmH^{r}(\Ss)\hookrightarrow\rmC^{r-1/2}(\Ss)$, and the estimates \eqref{eq:mcB_Hr-1} (with $r=r'$), \eqref{eq:loc_e2}, and \eqref{eq:loc_e6}, we obtain
	\begin{equation}\label{eq:loc_e13}
	\begin{aligned}
		&\bigg\lVert \pi_j^\ve f_0' \mcB_1(\tau)[f]+\frac{  \sigma}{4}f_0'(\xi_j^\ve)a_1(\tau f_0)(\xi_j^\ve)H[(\pi_j^\ve f)']\bigg\rVert_{\rmH^{r-1}}\\
		&\qquad \leq C\lVert \chi_j^\ve(f_0'-f_0'(\xi_j^\ve)) \rVert_\infty \lVert\pi_j^\ve\mcB_1(\tau)[f]\rVert_{\rmH^{r-1}}\\
		&\qquad\quad+C\bigg\lVert \pi_j^\ve \mcB_1(\tau)[f]+\frac{  \sigma}{4}a_1(\tau f_0)(\xi_j^\ve)H[(\pi_j^\ve f)']\bigg\rVert_{\rmH^{r-1}}+K\lVert f\rVert_{\rmH^{r'}}\\
		&\qquad \leq  \frac{\gamma(\mu^+ +\mu^-)}{6}\lVert \pi_j^\ve f\rVert_{\rmH^r}+K\lVert f\rVert_{\rmH^{r'}}
	\end{aligned}
	\end{equation}
	for all~$\tau\in[0,1]$,~${1\leq j\leq N}$, and~${f\in\rmH^r(\Ss)}$, provided that $\ve$ is small enough. 
		
	Recalling \eqref{eq:Phi_tau}, the desired claim \eqref{eq:Phi_AA_approx} follows from~\eqref{eq:loc_e1},~\eqref{eq:loc_e12}, and~\eqref{eq:loc_e13}.
	\end{proof}

	Since $f_0\in\rmH^{r}(\Ss)$, it follows from  \eqref{eq:def_ai} and  \eqref{eq:beta_0} that there exists a constant $\eta\in (0,1)$ such that the functions defined in \eqref{eq:alphabeta} satisfy
	\begin{equation*}
		\eta \leq \alpha_\tau \leq \eta^{-1}\qquad\text{and}\qquad |\vt_\tau|\leq \eta^{-1},\qquad \tau\in [0,1].
	\end{equation*}
	Next, we introduce the Fourier multiplier $\Aa_{\alpha,\vt}$ by
	\begin{equation*}
		\Aa_{\alpha,\vt}:= -\alpha \bigg(-\frac{\rmd^2}{\rmd\xi^2}\bigg)^{1/2}+\vt\frac{\rmd}{\rmd\xi}	\in\mcL\big(\rmH^r(\Ss),\rmH^{r-1}(\Ss)\big),\qquad \alpha\in [\eta,\eta^{-1}],\quad \vt\in[-\eta^{-1}, \eta^{-1}].
	\end{equation*}
	Using Fourier analysis techniques, one can readily verify that for all ${\alpha \in [\eta, \eta^{-1}]}$ and ${|\vt| \leq \eta^{-1}}$, we have
	\begin{equation}\label{eq:l-A_iso}
    	\text{$\lambda - \Aa_{\alpha, \vt} \in\mcL\big(\rmH^r(\Ss),\rmH^{r-1}(\Ss)\big)$ is an isomorphism for all $\Rel \lambda \geq 1$.}
	\end{equation}
	Moreover, there exists a constant ${\kappa_0 = \kappa_0(\eta) \geq 1}$ such that for all ${\alpha \in [\eta, \eta^{-1}]}$ and ${|\vt| \leq \eta^{-1}}$,
	\begin{equation}\label{eq:l-A}
    	\kappa_0  \lVert (\lambda - \Aa_{\alpha, \vt})[f] \rVert_{\rmH^{r-1}}\geq |\lambda| \, \lVert f \rVert_{\rmH^{r-1}} + \lVert f \rVert_{\rmH^r}, 
    \qquad f \in \rmH^r(\Ss), \quad \Rel \lambda \geq 1.
\end{equation}

	By combining \eqref{eq:l-A_iso}–\eqref{eq:l-A} with Proposition~\ref{Prop:loc} and the interpolation property~\eqref{eq:interpolation}, we are now in a position to prove Proposition~\ref{Prop:Psi_Gen}.
	\begin{proof}[Proof of Proposition~\ref{Prop:Psi_Gen}]
	We begin by fixing $r' \in (3/2, r)$ and letting $\kappa_0 \geq 1$ denote the constant from estimate~\eqref{eq:l-A}. 
	Applying Proposition~\ref{Prop:loc} with $\gamma = 1/(2\kappa_0)$ we find~$\ve \in (0,1)$, an $\ve$-partition of unity~${\{\pi_j^\ve : 1 \leq j \leq N\}}$, a constant ${K = K(\ve) > 0}$, and operators
	\begin{equation*}
		\Aa_{j,\tau} \in \mcL\big(\rmH^r(\Ss), \rmH^{r-1}(\Ss)\big), \qquad 1 \leq j \leq N, \quad \tau \in [0,1],
	\end{equation*}
	such that for all~$\tau\in[0,1]$,~${1\leq j\leq N}$, and~${f\in\rmH^r(\Ss)}$ we have
	\begin{equation*}
    	2\kappa_0  \big\lVert \pi_j^\ve  \Phi(\tau)[f] - \Aa_{j,\tau}[\pi_j^\ve f] \big\rVert_{\rmH^{r-1}} \leq \lVert \pi_j^\ve f \rVert_{\rmH^r} + 2\kappa_0 K \lVert f \rVert_{\rmH^{r'}}.
	\end{equation*}
	Additionally,  we infer from \eqref{eq:A_jt} and \eqref{eq:l-A} that
	\begin{equation*}
    	2\kappa_0  \big\lVert (\lambda - \Aa_{j,\tau})[\pi_j^\ve f] \big\rVert_{\rmH^{r-1}}\geq 2|\lambda| \, \lVert \pi_j^\ve f \rVert_{\rmH^{r-1}} + 2 \lVert \pi_j^\ve f \rVert_{\rmH^r}
	\end{equation*}
	for all $1 \leq j \leq N$, $\tau \in [0,1]$, $\Rel \lambda \geq 1$, and $f \in \rmH^r(\Ss)$.

	By applying the reverse triangle inequality together with the above estimates, we obtain
	\begin{equation*}
	\begin{aligned}
    	2\kappa_0  \big\lVert \pi_j^\ve (\lambda - \Phi(\tau))[f] \big\rVert_{\rmH^{r-1}}&\geq 2\kappa_0 \big\lVert (\lambda - \Aa_{j,\tau})[\pi_j^\ve f] \big\rVert_{\rmH^{r-1}}- 2\kappa_0 \big\lVert \pi_j^\ve \Phi(\tau)[f] - \Aa_{j,\tau}[\pi_j^\ve f] \big\rVert_{\rmH^{r-1}} \\[0.5em]
    	&\geq 2|\lambda| \, \lVert \pi_j^\ve f \rVert_{\rmH^{r-1}} + \lVert \pi_j^\ve f \rVert_{\rmH^r} - 2\kappa_0 K \lVert f \rVert_{\rmH^{r'}}.
	\end{aligned}
	\end{equation*}
	Summing over~$j$, and using the interpolation property~\eqref{eq:interpolation}, the equivalence of norms~\eqref{eq:norm_pi_jp}, and Young’s inequality, we find constants ${\kappa \geq 1}$ and ${\omega > 1}$ such that
	\begin{equation}\label{eq:l-Phi}
    	\kappa\big\lVert (\lambda - \Phi(\tau))[f] \big\rVert_{\rmH^{r-1}} \geq|\lambda| \, \lVert f \rVert_{\rmH^{r-1}} + \lVert f \rVert_{\rmH^r}
	\end{equation}
	for all $\tau \in [0,1]$, $\Rel \lambda \geq \omega$, and $f \in \rmH^r(\Ss)$.

	Moreover, by \eqref{eq:Phi0} and~\eqref{eq:l-A_iso}, the operator ${\omega - \Phi(0)}$ is an isomorphism. The method of continuity, cf.~\cite[Proposition~I.1.1.1]{Amann.1995} and~\eqref{eq:l-Phi} 
	imply now that ${\omega - \Phi(1) = \omega - \partial \Psi(f_0)}$ is an  isomorphism too. This property, together with~\eqref{eq:l-Phi} (with $\tau=1$), shows that $\partial \Psi(f_0)$ generates a strongly continuous analytic semigroup; see~\cite[Section~I.1.2]{Amann.1995}. Thus, the proof is complete.
	\end{proof}

	We are now in a position to prove Theorem~\ref{Thm:Main}.
	\begin{proof}[Proof of Theorem~\ref{Thm:Main}]
		Using the abstract parabolic theory from~\cite[Chapter~8]{Lunardi.1995}, the theorem follows from~\eqref{eq:Psi_smooth} and Proposition~\ref{Prop:Psi_Gen}. 
		The details are analogous to the proof in the non-periodic case; see~\cite[Theorem~1.1]{MP2022}, and we therefore do not present them here.
	\end{proof}

	\section{Stability analysis}\label{Sec:6}	
	In this section, we identify all equilibrium solutions to the two-phase Stokes problem~\eqref{eq:STOKES} and study their stability properties. 
	From Theorem~\ref{Thm:Main}~(ii), we deduce that any stationary solution~$f$ to~\eqref{eq:STOKES} belongs to $\rmC^\infty(\Ss)$. Moreover,  Lemma~\ref{Lem:equi_sol} below identifies the  equilibrium solutions to~\eqref{eq:STOKES} as the functions $f \in \rmC^\infty(\Ss)$ that solve the  nonlinear  differential equation~\eqref{eq:f_stab}. It is worth noting that equation~\eqref{eq:f_stab} characterizes the equilibrium solutions also in the context of the two-dimensional Muskat problem; see~\cite{Escher.2011, Ehrnstrom.2013, Sa23}. Interestingly, there exists a one-to-one correspondence between even solutions to~\eqref{eq:f_stab} with zero integral mean and odd solutions to the mathematical pendulum equation; see~\cite{Ehrnstrom.2013}.
	\begin{Lemma}\label{Lem:equi_sol}
	A function $f\in\rmC^\infty(\Ss)$ is an equilibrium solution  to~\eqref{eq:STOKES} if and only if
	\begin{equation}\label{eq:f_stab}
		\Theta (f-\langle f\rangle)-\sigma\kappa(f)=0.
	\end{equation}
	\end{Lemma}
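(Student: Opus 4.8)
The plan is to use the equivalence between stationary solutions of \eqref{eq:STOKES} and zeros of the evolution operator $\Psi$, and then to unravel what $\Psi(f)=0$ means in terms of the velocity trace. Recall from \eqref{eq:ev_pb}--\eqref{eq:def_Psi} that $f$ is an equilibrium of \eqref{eq:STOKES} precisely when $\Psi(f)=0$, i.e. when $(-f',1)^\top\cdot\beta(f)=0$, which by \eqref{eq:v_on_Gamma} is equivalent to $\{v\}^\pm\circ\Xi\cdot(-f',1)^\top=0$, that is $v\cdot\tnu=0$ on $\Gamma$ (using that $\omega\nu=(-f',1)^\top$). Thus the task reduces to showing that $v\cdot\tnu=0$ on $\Gamma$, for the velocity field $v$ associated with $f$ by Theorem~\ref{Thm:Ftp}, holds if and only if \eqref{eq:f_stab} is satisfied. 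Here one must also invoke Theorem~\ref{Thm:Main}~(ii) to know that any equilibrium automatically lies in $\rmC^\infty(\Ss)$, so that the fixed-time problem \eqref{eq:Ftp} applies and $\tkappa$ makes sense.

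First I would prove the ``if'' direction: assume $f\in\rmC^\infty(\Ss)$ solves \eqref{eq:f_stab}. Then on $\Gamma$ the forcing in \eqref{eq:Ftp}$_4$ becomes $(\Theta x_2-\sigma\tkappa)\tnu = \Theta(f-\langle f\rangle + \langle f\rangle)\,\tnu-\sigma\tkappa\,\tnu = \Theta\langle f\rangle\,\tnu$ along $\Gamma$ (using $x_2=f$ there), a constant multiple of the normal. One then checks that the explicit pair $v^\pm\equiv 0$, $q^\pm(x):=\pm\tfrac{\Theta}{2}\langle f\rangle$ — note the hydrostatic-type jump matches the sign convention in \eqref{eq:sol_trans1} and \eqref{eq:T_lim} — together with $c_\Gamma=(0,0,-\tfrac\Theta2\langle f\rangle)$, satisfies all of \eqref{eq:Ftp}: the bulk equations trivially, $[v]=0$ trivially, and $[T_\mu(v,q)]\tnu = [-q I_2]\tnu = -\big(\tfrac\Theta2\langle f\rangle-(-\tfrac\Theta2\langle f\rangle)\big)\tnu = -\Theta\langle f\rangle\,\tnu$... which requires matching $\Theta\langle f\rangle = -\Theta\langle f\rangle$ unless one is careful with the jump sign; I would double-check the orientation of $[\cdot]$ and of $\tnu$ so that the constants come out consistently (this is precisely why $c_{3,\Gamma}=-\tfrac\Theta2\langle f\rangle$ appears in \eqref{eq:Ftp_c}). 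By the uniqueness part of Theorem~\ref{Thm:Ftp}, this is \emph{the} solution, so $v\equiv 0$, hence $v\cdot\tnu=0$, hence $\Psi(f)=0$.

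For the ``only if'' direction, suppose $f\in\rmC^\infty(\Ss)$ is an equilibrium, so $v\cdot\tnu=0$ on $\Gamma$ where $(v,q)\in X_f$ solves \eqref{eq:Ftp}. The standard energy argument is the tool: test the Stokes system with $v$ itself on the truncated domains $\Omega^\pm_n$ of \eqref{eq:Omega_n}, integrate by parts, sum over the two phases using $[v]=0$ and the stress jump condition \eqref{eq:Ftp}$_4$, and pass to the limit $n\to\infty$ using the far-field behaviour from Theorem~\ref{Thm:Ftp} (the constants $c_{i,\Gamma}$ and \eqref{eq:T_lim}). One obtains an identity of the form $\sum_\pm \mu^\pm\!\int_{\Omega^\pm}|\nabla v^\pm+(\nabla v^\pm)^\top|^2/2 = \int_\Gamma (\Theta x_2-\sigma\tkappa)(v\cdot\tnu)\dx\sigma + (\text{far-field contributions})$, and since $v\cdot\tnu=0$ the interfacial term vanishes; the far-field terms can be shown to vanish as well (exactly as in the proof of Proposition~\ref{Prop:Ftp_0}, using oddness of the cutoff derivative and the explicit limits). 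This forces $\nabla v^\pm+(\nabla v^\pm)^\top=0$, hence $v^\pm$ is a rigid motion in each phase; combined with $[v]=0$, boundedness, and the far-field conditions one concludes $v\equiv 0$ and $q^\pm$ constant. Then \eqref{eq:Ftp}$_4$ reads $[-q]\tnu = (\Theta x_2-\sigma\tkappa)\tnu$ on $\Gamma$, i.e. $\Theta f - \sigma\kappa(f)$ equals the constant $-[q]$ pointwise on $\Ss$; averaging over $\Ss$ and using $\langle\kappa(f)\rangle=0$ (since $\kappa(f)=(\,f'(1+f'^2)^{-1/2}\,)'$ is a total derivative) identifies that constant as $\Theta\langle f\rangle$, which is exactly \eqref{eq:f_stab}.

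The main obstacle I anticipate is purely bookkeeping rather than conceptual: getting all the signs right in the jump bracket $[\cdot]$, the orientation of $\tnu$, and the hydrostatic constants so that the equilibrium pressure $q^\pm=\pm\tfrac\Theta2\langle f\rangle$ is genuinely consistent with \eqref{eq:Ftp}$_4$ and \eqref{eq:Ftp}$_5$; a secondary point is justifying the limit $n\to\infty$ in the energy identity, but this is entirely parallel to the argument already carried out in the proof of Proposition~\ref{Prop:Ftp_0} and can be cited. Alternatively — and perhaps more cleanly — one can bypass the energy argument entirely by noting that $\Psi(f)=0 \iff v\cdot\tnu=0$ on $\Gamma$, and then observing that \eqref{eq:Ftp} with this extra kinematic constraint is an overdetermined problem whose only solution has $v\equiv 0$, which can be seen directly by subtracting the (already constructed) trivial solution corresponding to the right-hand side $\Theta\langle f\rangle\,\tnu$ and applying the uniqueness for the homogeneous problem, Proposition~\ref{Prop:Ftp_0}; this reduces the whole lemma to the computation $\langle\kappa(f)\rangle=0$.
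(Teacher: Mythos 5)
Your main argument follows the paper's proof quite closely: for the ``only if'' direction you test the Stokes system with $v$ and the cutoffs $u_n$, use $v\cdot\tnu=0$ on $\Gamma$ to kill the interfacial term and the far-field conditions to kill the boundary terms in the limit, conclude $v\equiv 0$ and $q^\pm$ constant, and then read off \eqref{eq:f_stab} from the stress jump; the ``if'' direction is a uniqueness argument via Theorem~\ref{Thm:Ftp}. This is the paper's route. Two remarks on the execution.

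The sign you flagged resolves as $q^\pm=\mp\tfrac{\Theta}{2}\langle f\rangle$ (not $\pm$): with $[w]=\{w\}^+-\{w\}^-$ and $T_\mu(0,q)=-qI_2$ one gets $[T_\mu(v,q)]\tnu=(q^--q^+)\tnu=\Theta\langle f\rangle\,\tnu$, consistent with the far-field condition $q^\pm\to\pm c_{3,\Gamma}=\mp\tfrac{\Theta}{2}\langle f\rangle$ and with $\Theta x_2-\sigma\tkappa=\Theta\langle f\rangle$ on $\Gamma$ when \eqref{eq:f_stab} holds. (The paper sidesteps your explicit construction in the ``if'' step by first using the translation invariance \eqref{eq:sol_trans} to reduce to $\langle f\rangle=0$, making the forcing in \eqref{eq:Ftp}$_4$ vanish, so that $(0,0,0)$ is the unique solution by Proposition~\ref{Prop:Ftp_0}; both are fine.) You also identify the constant $-[q]$ by averaging and using $\langle\kappa(f)\rangle=0$, whereas the paper reads it off from $c_{3,\Gamma}$ in Theorem~\ref{Thm:Ftp}; again, both work.

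The ``cleaner alternative'' at the end, however, does not bypass the energy argument. Subtracting a constant-pressure solution whose stress jump is $\Theta\langle f\rangle\,\tnu$ leaves a Stokes problem with forcing $(\Theta x_2-\sigma\tkappa-\Theta\langle f\rangle)\tnu$, and Proposition~\ref{Prop:Ftp_0} applies to the difference only if this forcing vanishes identically on $\Gamma$ --- which is precisely \eqref{eq:f_stab}, the statement you are trying to prove in the forward direction. The additional constraint $v\cdot\tnu=0$ does not by itself make \eqref{eq:Ftp} homogeneous, so the energy identity is genuinely needed to pass from $v\cdot\tnu=0$ to $v\equiv 0$, and the lemma does not reduce to the single observation $\langle\kappa(f)\rangle=0$.
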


	\begin{proof}
	Let $f\in\rmC^\infty(\Ss)$ be an equilibrium solution to~\eqref{eq:STOKES} and let $v$ and $q$ be the associated velocity and pressure fields. By \eqref{eq:Stokes}$_6$ we then have $v\cdot \wt \nu=0$ on $\Gamma$. With $u_n$, $n>\lVert f\rVert_\infty$, defined in the proof of Proposition \ref{Prop:Ftp_0},  we then compute, using \eqref{eq:Stokes}$_{3-4}$,
	\begin{equation*}
		\int_{\Ss\times\RR} \vdiv \big(u_n(x_2)T_\mu(v,q)(x)v(x)\big)\,\rmd{x}=-\int_\Gamma v\cdot\big[T_\mu(v,q)\big]\wt\nu\,\rmd{\sigma}=-\int_\Gamma (\Theta x_2-\sigma\tkappa) v\cdot \wt\nu\,\rmd{\sigma}=0.
	\end{equation*}
	Consequently, since $2\vdiv\big(T_\mu(v,q)v\big)=\big|\nabla v+(\nabla v)^\top\big|^2$ in $(\Ss\times\RR)\setminus\Gamma$, by \eqref{eq:Stokes}$_{1-2}$, we infer from~\eqref{eq:Stokes}$_{5}$, \eqref{eq:limits}, and~\eqref{eq:T_lim} that 
	\begin{equation*}
		\frac{1}{2}\int_{\{|x_2|<n\}}\big|\nabla v+(\nabla v)^\top\big|^2\,\rmd{x}\leq  -\int_{\{n<|x_2|<n+1\}} \nabla u_n(x_2)\cdot\big(T_\mu(v,q)(x)v(x)\big)\,\rmd{x}\underset{n\to\infty}\longrightarrow 0.
	\end{equation*}
	Thus, $\nabla v+(\nabla v)^\top=0$ in $\rmL^2(\Ss\times\RR, \RR^{2\times2})$ and, arguing similarly as in the final part of the proof of Proposition~\ref{Prop:Ftp_0},
	 we obtain that $\nabla v=0$ in $\rmL^2(\Ss\times\RR,\RR^{2\times2})$. Using  $(v,q)\in X_f$, \eqref{eq:Stokes}$_{1}$, and \eqref{eq:Stokes}$_{5}$, 
	 it immediately follows that $v=0$ in $\Ss\times\RR$ and $q^\pm=\mp\Theta\langle f\rangle/2$ in $\Omega^\pm$, 
and the relation~\eqref{eq:f_stab} is simply a reformulation of~\eqref{eq:Stokes}$_{4}$.

	For the converse implication, it suffices to show that if $f \in \rmC^\infty(\Ss)$ is a solution to~\eqref{eq:f_stab} with zero integral mean, then $(f, v^\pm, q^\pm,c_\Gamma) = (f, 0, 0,0)$ is an equilibrium solution to~\eqref{eq:STOKES}; see the discussion  preceding~\eqref{eq:sol_trans}.  
	This is, however, a straightforward consequence of the uniqueness result in Theorem~\ref{Thm:Ftp}, and the proof is complete.
	\end{proof}

	A complete description of the solution set to~\eqref{eq:f_stab} has been provided in~\cite{Ehrnstrom.2013,Sa23} in the context of the Muskat problem. It is shown there, in particular, that any solution to~\eqref{eq:f_stab} is a horizontal and/or vertical translation of a (distributional) solution~${f \in \rmhH_e^r(\Ss)}$, with $r \in (3/2,2)$, to 
	\begin{equation}\label{eq:Stab}
		\bigg( \frac{f'}{(1+f'^2)^{1/2}} \bigg)' + \lambda f = 0, \qquad \text{where}~ \lambda := -\frac{\Theta}{\sigma}=\frac{g[\rho]}{\sigma},
	\end{equation}
	which we view as an equation in  ${\rmhH_e^{r-2}(\Ss)}$. We define, for $s \in \RR$,
	\begin{equation*}
		\rmhH_e^s(\Ss) := \big\{f \in \rmH^s(\Ss) \,:\, \langle f,1\rangle=0 \text{ and }\big\langle f,e^{ik\cdot}\big\rangle = \big\langle f,e^{-ik\cdot}\big\rangle \text{ for all } k \in\ZZ\big\},
	\end{equation*}
	where $\langle\cdot ,\cdot \rangle $ is the canonical duality pairing between $\mcD'(\Ss)$ and $\mcD(\Ss)=\rmC^\infty(\Ss)$. 
	Indeed, a simple bootstrap argument shows that any solution ${f \in \rmhH_e^r(\Ss)}$, with $r \in (3/2,2)$, to~\eqref{eq:Stab} actually belongs to~$\rmC^\infty(\Ss)$. 
	Moreover, testing the equation for $\lambda \leq 0$ against $f$ yields $f=0$  as the unique solution in this case. 
	However, if $\lambda > 0$, which corresponds to the scenario where the denser fluid lies above the less dense one, equation~\eqref{eq:Stab} may also admit finger-shaped solutions. These are nontrivial periodic solutions to~\eqref{eq:STOKES}, where the interface between the fluids forms repeating upward and downward fingers.

	In Theorem~\ref{Thm:SolBif}, we summarize  results from~\cite{Ehrnstrom.2013}, adapted to our functional-analytic setting, concerning the solution set of~\eqref{eq:Stab} in ${\rmhH_e^{r}(\Ss)}$, regarded now as a bifurcation problem with bifurcation parameter~$\lambda$ and solution~$(\lambda,f)\in\RR\times{\rmhH_e^{r}(\Ss)}$. Theorem~\ref{Thm:SolBif} shows that the solutions to \eqref{eq:Stab} lie on global bifurcation branches $\mcC_\ell$,  $\ell\in\NN$. Each branch $\mcC_\ell$ consists of equilibrium solutions of minimal period $2\pi/\ell$ and intersects the trivial branch of solutions only once. 
	Along each branch, the flat equilibrium deforms into a cosine-shaped function and the slope at $\xi=\pi/(2\ell)$ blows-up monotonically as one approaches the end of the branch.

	\begin{Theorem}\label{Thm:SolBif}
	Let
	\begin{equation}\label{eq:lambdastar}
		\lambda_*:=\frac{1}{2\pi^2}B\bigg(\frac{3}{4},\frac{1}{2}\bigg)^2 \approx 0.2909,
	\end{equation}
	where $B$ is the beta function. Then, the following holds true:  
	\begin{enumerate}[label=\textup{(\roman*)}]
	\item 
		If $\lambda\leq \lambda_*$, then  equation \eqref{eq:Stab} only has the trivial solution $(\lambda,f)=(\lambda,0)$.
	\item 
		Let $\lambda>\lambda_*$.
		\begin{enumerate}[label=\textup{(\alph*)}]
		\item 				
			Equation~\eqref{eq:Stab} admits solutions $(\lambda,f) \in \RR \times \rmhH_e^r(\Ss)$ of minimal period~$2\pi$ if and only if~$\lambda_* < \lambda < 1$. More precisely, for each~$\lambda \in (\lambda_*,1)$, there exist exactly two even solutions~$(\lambda, \pm f_\lambda)$ to~\eqref{eq:Stab} of minimal period~$2\pi$. Moreover, we have~$|f_{\lambda_1}| \leq |f_{\lambda_2}|$ for~$\lambda_2 < \lambda_1$, and~$\lVert f_\lambda \rVert_\infty \to 0$ as~$\lambda \to 1$, while
			\begin{equation*}
				\lVert f_\lambda \rVert_\infty = |f_\lambda(0)| \to \sqrt{2/\lambda_*},\quad \lVert f_\lambda' \rVert_\infty = |f_\lambda'(\pi/2)| \to \infty \qquad \text{as}~\lambda \to \lambda_*.
			\end{equation*}
		\item 
			Equation~\eqref{eq:Stab} admits solutions $(\lambda,f) \in \RR \times \rmhH_e^{r}(\Ss)$ of minimal period~$2\pi/\ell$, $2\leq\ell\in\NN$, if and only if $\ell^2 \lambda_*<\lambda<\ell^2$. 
			To be precise, for every $\lambda\in(\ell^2\lambda_*,\ell^2)$, there exist exactly two even solutions $(\lambda,\pm f_\lambda)$ of minimal period $2\pi/\ell$ given by
			\begin{equation*}
				f_\lambda=\ell^{-1}f_{\lambda\ell^{-2}}(\ell\cdot),
			\end{equation*}
			where $f_{\lambda\ell^{-2}}(\ell\cdot)$ is the function identified at \textup{(a)}.
		\end{enumerate}
	\item 
		If we consider \eqref{eq:Stab} as an abstract bifurcation problem in $\RR\times\rmhH^{r}_e(\Ss)$, the global bifurcation branch $\mcC_\ell$ arising at $(\ell^2,0)$, $\ell\in\NN$, described in \textup{(ii)}, admits in a small neighborhood of~${(\ell^2,0)}$ a smooth  parametrization
		\begin{equation*}
			(\lambda_\ell,f_\ell)\in\rmC^\infty\big((-\delta_\ell,\delta_\ell),\RR\times \rmhH^{r}_e(\Ss)\big), \qquad \delta_\ell>0,
		\end{equation*}
		satisfying
		\begin{equation}\label{eq:lf_exp}
			\left\{
			\begin{array}{lll}
				\lambda_\ell(s):=\ell^2-\dfrac{3\ell^4}{8}s^2+\mcO(s^4)\quad\text{in}~\RR,\\[1ex]
				f_\ell(s):= s\cos(\ell\cdot)+\mcO(s^2)\quad\text{in}~\rmhH^{r}_e(\Ss),
			\end{array}\right.
			\qquad\text{for}~s\to 0.
		\end{equation}
	\end{enumerate}
	\end{Theorem}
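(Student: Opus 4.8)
The plan is as follows. Assertions (i) and (ii) are, up to a change of functional framework, a restatement of the description of the solution set of the equilibrium equation \eqref{eq:Stab} obtained in \cite{Ehrnstrom.2013} (see also \cite{Sa23}) for the periodic Muskat problem, so the strategy there is to reduce to that analysis after a regularity bootstrap; assertion (iii) will follow from a Crandall--Rabinowitz local bifurcation argument, with the expansion \eqref{eq:lf_exp} extracted from the low-order terms of the bifurcation equation. First I would note that any distributional solution $f\in\rmhH^r_e(\Ss)$, $r\in(3/2,2)$, of \eqref{eq:Stab} is smooth: since $\rmH^{r-1}(\Ss)$ is a multiplier algebra, $p:=f'(1+f'^2)^{-1/2}\in\rmH^{r-1}(\Ss)$, and \eqref{eq:Stab} reads $p'=-\lambda f\in\rmH^r(\Ss)$, so $p\in\rmH^{r+1}(\Ss)$; inverting the smooth map $q\mapsto q(1-q^2)^{-1/2}$ (licit because $\lVert p\rVert_\infty<1$) gives $f'\in\rmH^{r+1}(\Ss)$, and iterating yields $f\in\rmC^\infty(\Ss)$. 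Testing \eqref{eq:Stab} against $f$ and integrating by parts gives $\int_\Ss f'^2(1+f'^2)^{-1/2}\,\rmd\xi=\lambda\int_\Ss f^2\,\rmd\xi$, which forces $f\equiv 0$ when $\lambda\le 0$; the remaining subcritical range $0<\lambda\le\lambda_*$ is disposed of in the next step.

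For $\lambda>0$ I would parametrise the (now smooth) graph by arclength and let $\theta$ denote the tangent angle; then $\kappa(f)+\lambda f=0$ is equivalent to the mathematical pendulum equation $\ddot\theta+\lambda\sin\theta=0$, under which even zero-mean solutions $f$ of minimal period $2\pi$ correspond to odd librating pendulum orbits that close up after the graph has accumulated horizontal extent $2\pi$; see \cite{Ehrnstrom.2013}, where it is also shown that all such solutions are vertical and/or horizontal translates of these. The pendulum period map, expressed through the complete elliptic integral of the first kind, is strictly monotone in the amplitude, and since the graph constraint forces $|\theta|<\pi/2$, this yields: existence of period-$2\pi$ solutions exactly for $\lambda_*<\lambda<1$, exactly two even ones $(\lambda,\pm f_\lambda)$ for each such $\lambda$, the monotone dependence $|f_{\lambda_1}|\le|f_{\lambda_2}|$ for $\lambda_2<\lambda_1$, and the limiting behaviour as $\lambda\to 1$ and $\lambda\to\lambda_*$, the threshold being the value \eqref{eq:lambdastar}, which corresponds to the degenerate profile with vertical tangent (amplitude $\pi/2$) and at which the elliptic integral specialises to the stated beta-function expression. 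Solutions of minimal period $2\pi/\ell$ are produced from period-$2\pi$ ones by the rescaling $f\mapsto\ell^{-1}f(\ell\cdot)$, which sends solutions of \eqref{eq:Stab} for the parameter $\lambda\ell^{-2}$ to solutions for $\lambda$; together with the fact from the next step that each branch meets the trivial one only at $(\ell^2,0)$, this gives the global branches $\mcC_\ell$.

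For the local structure I would set $F\colon\RR\times\rmhH^r_e(\Ss)\to\rmhH^{r-2}_e(\Ss)$, $F(\lambda,f):=\kappa(f)+\lambda f$ with $\kappa(f)=f''(1+f'^2)^{-3/2}$; since $\rmH^{r-1}(\Ss)$ is a Banach algebra acting by multiplication on $\rmH^{r-2}(\Ss)$, the map $F$ is well defined and smooth, $F(\lambda,0)=0$, and $\partial_fF(\lambda,0)[h]=h''+\lambda h$. Viewed in $\rmhH^r_e(\Ss)$ this linearisation is Fredholm of index zero with trivial kernel unless $\lambda=\ell^2$ for some $\ell\in\NN$, in which case $\ker\partial_fF(\ell^2,0)=\mathrm{span}\{\cos(\ell\cdot)\}$ (the mode $\sin(\ell\cdot)$ being excluded by evenness) and the range is $\{g:\langle g,\cos(\ell\cdot)\rangle=0\}$, of codimension one; moreover $\partial_\lambda\partial_fF(\ell^2,0)[\cos(\ell\cdot)]=\cos(\ell\cdot)$ lies outside this range, so the transversality hypothesis of \cite{Crandall.1973} holds. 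Crandall--Rabinowitz then provides a smooth curve $(\lambda_\ell,f_\ell)\colon(-\delta_\ell,\delta_\ell)\to\RR\times\rmhH^r_e(\Ss)$ with $\lambda_\ell(0)=\ell^2$ and $f_\ell(s)=s\cos(\ell\cdot)+s\rho_\ell(s)$, $\rho_\ell(0)=0$, which coincides with the portion of $\mcC_\ell$ near $(\ell^2,0)$; from the odd symmetry $F(\lambda,-f)=-F(\lambda,f)$ the bifurcation is a pitchfork, so after the standard reparametrisation $\lambda_\ell(-s)=\lambda_\ell(s)$ and $f_\ell(-s)=-f_\ell(s)$, whence $\lambda_\ell(s)=\ell^2+\lambda_2s^2+\mcO(s^4)$.

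Finally, to identify $\lambda_2$ I would substitute $f_\ell(s)=sw_1+s^3w_3+\mcO(s^5)$ with $w_1=\cos(\ell\cdot)$, together with $\lambda_\ell(s)=\ell^2+\lambda_2s^2+\mcO(s^4)$, into $F(\lambda_\ell(s),f_\ell(s))=0$, using $\kappa(f)=f''-\tfrac32 f'^2f''+\mcO(f^5)$. The $\mcO(s)$-- and $\mcO(s^2)$--orders hold automatically, while the $\mcO(s^3)$--order reads $w_3''+\ell^2w_3=-\lambda_2w_1+\tfrac32 w_1'^2w_1''$; since $w_1'^2w_1''=-\ell^4\sin^2(\ell\cdot)\cos(\ell\cdot)=-\tfrac{\ell^4}{4}\bigl(\cos(\ell\cdot)-\cos(3\ell\cdot)\bigr)$, the Fredholm solvability condition (orthogonality of the right-hand side to $\cos(\ell\cdot)$) forces $-\lambda_2-\tfrac{3\ell^4}{8}=0$, i.e.\ $\lambda_2=-\tfrac{3\ell^4}{8}$, giving \eqref{eq:lf_exp}. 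The main obstacle is the quantitative phase-plane analysis in the second step --- in particular the monotonicity of the pendulum period function and the explicit identification of $\lambda_*$ --- but this is precisely the content of \cite{Ehrnstrom.2013}; the genuinely new work is only to verify that the low-regularity/distributional framework $\rmhH^r_e(\Ss)$ is compatible with those results (via the bootstrap above) and with the Crandall--Rabinowitz setup.
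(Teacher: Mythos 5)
Your proposal is correct and follows essentially the same route as the paper's proof: (i) and (ii) are delegated to the Ehrnstr\"om--Escher--Matioc analysis of the equilibrium Muskat equation (after the same regularity bootstrap and testing argument for $\lambda\le 0$), and (iii) is obtained via Crandall--Rabinowitz applied to $F(\lambda,f)=\kappa(f)+\lambda f$ on $\rmhH^r_e(\Ss)$, the pitchfork symmetry $F(\lambda,-f)=-F(\lambda,f)$ to kill the odd-order coefficients of $\lambda_\ell$, and the order-$s^3$ solvability condition to extract $\lambda_\ell''(0)=-3\ell^4/4$. The only cosmetic difference is that you substitute the truncated Taylor expansion $\kappa(f)=f''-\tfrac32 f'^2f''+\mcO(f^5)$ directly, whereas the paper differentiates the identity $F(\lambda_\ell(s),f_\ell(s))=0$ three times in $s$ and evaluates at $s=0$; both lead to the same solvability equation $-\big(w_1'^3\big)'+\tfrac13\big(w_3''+\ell^2w_3\big)+\lambda_\ell''(0)w_1=0$ (up to an overall constant) and hence the same value of $\lambda_2$.
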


	\begin{proof}
	The claims (i) and (ii) are established in~\cite{Ehrnstrom.2013}. The proof of~(iii) follows the strategy pursued in \cite{Escher.2011}, without differentiating~\eqref{eq:Stab}. To this end, recalling~\eqref{eq:nutauomega}, we define $F:\RR\times\rmhH^{r}_e(\Ss)\to\rmhH^{r-2}_e(\Ss)$ by
	\begin{equation}\label{eq:F_Bif}
		F(\lambda,f)=\big( f'\big(\omega(f))^{-1} \big)'+\lambda f,
	\end{equation}
	and reformulate~\eqref{eq:Stab} as the bifurcation problem
	\begin{equation}\label{eq:F=0}
		F(\lambda,f)=0.
	\end{equation}
	Due to~\eqref{eq:defphi} and \eqref{eq:phi_smooth}, we have  $F\in\rmC^\infty\big(\RR\times\rmhH^{r}_e(\Ss),\rmhH^{r-2}_e(\Ss)\big)$ and moreover $F(\lambda,0)=0$ for all~${\lambda\in\RR}$. Representing functions in $\rmhH^{r}_e(\Ss)$ by their cosine series $\sum_{k=1}^\infty a_k \cos(k\cdot)$, it is easy to compute  that the  Fr\'echet derivative~$\partial_f F(\lambda,0)$ is  the Fourier multiplier 
	\begin{equation*}
		\partial_f F(\lambda,0)\Bigg[\sum_{k=1}^\infty a_k\cos(k\cdot)\Bigg]= \sum_{k=1}^\infty(\lambda-k^2)a_k\cos(k\cdot).
	\end{equation*}
	It is now obvious that the assumptions of the theorem on bifurcation from simple eigenvalues due to Crandall and Rabinowitz \cite{Crandall.1971} are fulfilled in $(\ell^2,0)$, $\ell\in\NN$. Consequently, for each $\ell \in \NN$, there exists a smooth local bifurcation curve~${(\lambda_\ell,f_\ell):(-\delta_\ell,\delta_\ell) \to \RR \times \rmhH^{r}_e(\Ss)}$ such that, for $s \to 0$, 
	\begin{equation}\label{eq:lf_exp_CR}
		\left\{
		\begin{array}{lll}
			\lambda_\ell(s) := \ell^2 + \mcO(s) \quad \text{in}~\RR, \\[1ex]
			f_\ell(s) := s\cos(\ell\cdot) + \tau_\ell(s) \quad \text{in}~\rmhH^{r}_e(\Ss),
		\end{array}\right.
	\end{equation}
	with $\tau_\ell \in \rmC^\infty\big((-\delta_\ell,\delta_\ell),\rmhH^{r}_e(\Ss)\big)$ satisfying $\tau_\ell(0)=\tau_\ell'(0)=0$ and $\langle\tau_\ell(s),\cos(\ell\cdot)\rangle=0$ for all $|s|< \delta_\ell$, consisting only of equilibrium solutions to~\eqref{eq:STOKES}. Moreover, in a small neighborhood of~${(\ell^2,0)}$ in~$\RR \times \rmhH^{r}_e(\Ss)$, the solutions to~\eqref{eq:Stab} are either trivial or belong to the local bifurcation curve~$(\lambda_\ell,f_\ell)$. Noticing that $F(\lambda_\ell(s), -f_\ell(s))=0$ for~$|s|<\delta_\ell$, we deduce from~\eqref{eq:lf_exp_CR} and (ii), after making~$\delta_\ell$ smaller if necessary, that~$\lambda_\ell(-s)=\lambda_\ell(s)$ and $f_\ell(-s)=-f_\ell(s)$ for all $|s|<\delta_\ell$. 
	In particular, we have~$\lambda_{ \ell}'(0)=\lambda_{\ell}'''(0)=0$. In order to show that  $\lambda_\ell''(0)=-3\ell^4/4$, and prove in this way~\eqref{eq:lf_exp}, 
	we differentiate the identity $F(\lambda_{\ell}(s),f_{\ell}(s))=0$ three times with respect to $s$ and evaluate the resulting identity at $s=0$ to arrive at 
	\begin{equation}\label{eq:ddl_est}
		\lambda_\ell''(0)f_\ell'(0)-\frac{\rmd}{\rmd\xi}\bigg(\bigg(\frac{\rmd}{\rmd\xi} f_\ell'(0)\bigg)^3\bigg)+\frac{1}{3}\bigg(\frac{\rmd^2}{\rmd\xi^2}\big(f_\ell'''(0)\big)+\ell^2 f_\ell'''(0)\bigg)=0\qquad\text{in $\rmhH^{r-2}_e(\Ss)$}.
	\end{equation}
	Since $f_\ell'''(0)=\tau_\ell'''(0)$ with $\langle\tau_\ell'''(0),\cos(\ell\cdot)\rangle=0$ and  $f_\ell'(0)=\cos(\ell\cdot)$, testing \eqref{eq:ddl_est} by $\cos(\ell\cdot)$ leads to
	\begin{equation*}
		\lambda_\ell''(0)=\frac{\displaystyle\bigg\langle\frac{\rmd}{\rmd\xi}\bigg(\bigg(\frac{\rmd}{\rmd\xi} \cos(\ell\cdot)\bigg)^3\bigg), \cos(\ell\cdot) \bigg\rangle}{\langle\cos(\ell\cdot) , \cos(\ell\cdot) \rangle}=-\frac{3\ell^4}{4},
	\end{equation*}
	which completes the proof.
	\end{proof}

	We conclude this section with the proof of Theorem~\ref{Thm:Stability}.

	\begin{proof}[Proof of Theorem~\ref{Thm:Stability}]
	To establish claim (i),  we first infer from  
\eqref{eq:ev_pb} that~${\Psi(f)\in\rmhH^{r-1}(\Ss)}$ for each $f\in\rmhH^r(\Ss)$, since the integral mean of $\langle f \rangle$ is preserved; see \eqref{eq:pres}. 
Recalling \eqref{eq:Psi_smooth}, we thus have~${\Psi\in\rmC^\infty\big(\rmhH^{r}(\Ss),\rmhH^{r-1}(\Ss)\big)}$. 
Moreover, combining \eqref{eq:B_by_Bnmpq}, \eqref{eq:D(f)}, \eqref{eq:defphi}, \eqref{eq:def_Psi_i}, \eqref{eq:Db=V}, \eqref{eq:DPsi}, and~\eqref{eq:D_Db=V}
	we compute that the Fr\'echet derivative $\partial\Psi(0)\in\mcL(\rmhH^{r}(\Ss),\rmhH^{r-1}(\Ss)$ is the Fourier multiplier
	\begin{equation*}
		\partial\Psi(0):=\frac{\Theta}{2(\mu^+ +\mu^-)}B_0(0)-\frac{\sigma}{2(\mu^+ +\mu^-)}\bigg(-\frac{\rmd^2}{\rmd\xi^2}\bigg)^\frac{1}{2},
	\end{equation*}
	with $B_0(0)= H\circ S$, where $H$ is the Hilbert transform and $S\in\mcL\big(\rmhH^{r}(\Ss)\big)$ is the operator which associates to each function $f\in \rmhH^{r}(\Ss)$  its antiderivative
	\begin{equation}
		S[f](\xi):=\int_0^\xi f(s)\,\rmd s+\frac{1}{2\pi}\int_0^{2\pi} sf(s)\,\rmd s,\qquad \xi\in\Ss,
	\end{equation}
	cf. \cite[Lemma~3.4]{Bohme.2024}. In particular, $\partial\Psi(0)\in\mcL\big(\rmhH^{r}(\Ss),\rmhH^{r-1}(\Ss)\big)$ generates a strongly continuous analytic semigroup in~$\mcL\big(\rmhH^{r-1}(\Ss)\big)$, 
	cf. \eqref{eq:l-A_iso}--\eqref{eq:l-A},     and its spectrum is given by 
	\begin{equation}\label{eq:Psi_spec}
		\sigma(\partial\Psi(0))=\bigg\{-\frac{\Theta+\sigma k^2}{2(\mu^+ +\mu^-)k} :k\in\NN\bigg\},
	\end{equation} 
	consisting only of eigenvalues with finite multiplicity.
	
	Hence, assuming $\sigma+\Theta>0$, we have
	\begin{equation*}
		\sup\{\Rel\lambda:\lambda\in\sigma(\partial\Psi(0))\}\leq -\vartheta_0<0, 
	\end{equation*}
	and  we may apply the principle of linearized stability \cite[Theorem~9.1.2]{Lunardi.1995} in the context of \eqref{eq:ev_pb} (regarded as an evolution problem in $\rmhH^{r-1}(\Ss)$) to obtain (a).
	
	Conversely, if $\sigma+\Theta<0$, the spectrum of $\partial\Psi(0)\in\mcL\big(\rmhH^{r}(\Ss),\rmhH^{r-1}(\Ss)\big)$ contains a finite number of positive eigenvalues, more precisely, we have
	\begin{equation*}
		\left\{
		\begin{array}{lll}
			-\cfrac{\sigma +\Theta}{2(\mu^{+}+\mu^{-})}\in\sigma_+(\partial\Psi(0)):=\sigma(\partial\Psi(0))\cap \{\lambda\in\CC\,:\,\Rel\lambda>0\},\\[2ex]
			\inf\{\Rel\lambda\,:\, \lambda\in \sigma_+(\partial\Psi(0))\}>0.
		\end{array}
		\right.
	\end{equation*}
	Claim (b)  now follows directly from an application of~\cite[Theorem~9.1.3]{Lunardi.1995} in the context of~\eqref{eq:ev_pb} (regarded again as an evolution problem in $\rmhH^{r-1}(\Ss)$).
	
	In order to establish (ii), we first formulate \eqref{eq:ev_pb} as an evolution problem in $\rmhH_e^{r-1}(\Ss)$ depending on the parameter $\lambda$ introduced in~\eqref{eq:Stab} (by eliminating the parameter $\Theta$).  Recalling~\eqref{eq:defV} and \eqref{eq:Db=V}, for each  $f\in \rmhH_e^{r}(\Ss)$ and $\lambda\in\RR$ we denote by $\beta=\beta(f,\lambda)\in\rmH^{r-1}(\Ss)^2$ the unique solution to 
	\begin{equation}\label{eq:Db=Vi}
 		\big(1+2 a_\mu\DD(f)\big)[\beta]= -\frac{\sigma}{4}
 		\begin{pmatrix}
 			\mcV_1(f)-\lambda\mcV_3(f)\\
 			\mcV_2(f)+\lambda\mcV_4(f)+\lambda\ln 4\langle f\rangle
 		\end{pmatrix}.
 	\end{equation}
	Note that the solution $\beta(f)$ defined in \eqref{eq:Db=V} coincides with $\beta(f,\lambda)$ when $\lambda=-\Theta/\sigma$. Related to~\eqref{eq:ev_pb}, we consider the evolution problem 
	\begin{equation}\label{eq:ev_pbl}
		\frac{\rmd f}{\rmd t}(t)=\Psi(f(t),\lambda),\quad t\geq 0,\qquad f(0)=f_0,
	\end{equation}
	where
	\begin{equation}\label{eq:def_Psil}
		\Psi(f,\lambda):=\frac{2}{\mu^++\mu^-}(-f',1)^\top\cdot\beta(f,\lambda).
	\end{equation}
	The mappings defined  in \eqref{eq:Db=Vi} and \eqref{eq:def_Psil} depend linearly on $\lambda$, hence~${\Psi\in \rmC^\infty\big(\rmhH^{r}_\e(\Ss)\times\RR,\rmhH^{r-1}(\Ss)\big)}$. 
	To show that $\Psi(f,\lambda)$ is  also an even function, we introduce the operator $[\varphi\mapsto \check\varphi]\in\mcL\big(\rmL^2(\Ss)\big)$ by setting $\check\varphi(\xi):=\varphi(-\xi)$, $\xi\in\Ss$, and  observe that $(B_{n,m}^{p,q}(f)[\varphi])\check{\phantom{.}}=(-1)^{n+p+q+1}B_{n,m}^{p,q}(f)[\check\varphi]$. Since
	\begin{equation*}
		\phi_i(f)\check{\phantom{.}}=(-1)^{i+1}\phi_i(f), \quad i=1,\, 2,
	\end{equation*}
	respectively 
	\begin{equation*}
 		(B_i(f)[\varphi])\check{\phantom{.}}=-B_i(f)[\check\varphi], \quad i\in\{1,\, 4,\, 5\},\qquad\text{and}\qquad (B_i(f)[\varphi])\check{\phantom{.}}= B_i(f)[\check\varphi], \quad i\in\{0,\, 2,\, 3,\, 6\},
	\end{equation*} 
	by~\eqref{eq:B_by_Bnmpq}--\eqref{eq:B0} and \eqref{eq:defphi}, we infer from  \eqref{eq:def_Psi_i} that
	\begin{equation}\label{eq:Vi_check}
		\mcV_i(f)\check{\phantom{.}}=(-1)^i\mcV_i(f),\qquad 1\leq i\leq 4.
	\end{equation}
	Recalling~\eqref{eq:D(f)}, we deduce from~\eqref{eq:Db=Vi}, by using~\eqref{eq:Vi_check}, that  
	\begin{equation*}
 		\big(1+2 a_\mu\DD(f)\big)
 		\begin{bmatrix}
			-\check\beta_1\\
			\check\beta_2
		\end{bmatrix}
		= -\frac{\sigma}{4}
		\begin{pmatrix}
	 		\mcV_1(f)-\lambda\mcV_3(f)\\
 			\mcV_2(f)+\lambda\mcV_4(f)+\lambda\ln 4\langle f\rangle
 		\end{pmatrix},
 	\end{equation*}
	and Theorem~\ref{Thm:D(f)_Res_Hr-1} ensures that 
	\begin{equation}
		\begin{bmatrix}
			\beta_1(f,\lambda)\check{\phantom{.}}\\
			\beta_2(f,\lambda)\check{\phantom{.}}
		\end{bmatrix}
		=
		\begin{bmatrix}
			-\beta_1(f,\lambda)\\
			\beta_2(f,\lambda)
		\end{bmatrix},
		\qquad  f\in \rmhH_e^{r}(\Ss),\quad \lambda\in\RR.
	\end{equation}
	It is straightforward to infer from this relation together  with \eqref{eq:def_Psil} that 
	\begin{equation}\label{eq:Psi_reg}
		\Psi\in \rmC^\infty\big(\rmhH^{r}_\e(\Ss)\times\RR,\rmhH^{r-1}_e(\Ss)\big).
	\end{equation}
	Moreover, the Fr\'echet derivative $\partial_f\Psi(0,\lambda)$ generates a strongly continuous analytic semigroup in~$\mcL\big(\rmhH^{r-1}_e(\Ss)\big)$, being the Fourier multiplier
	\begin{equation*}
		\partial_f\Psi(0,\lambda)\bigg[\sum_{k=1}^\infty a_k\cos(k\cdot)\bigg]= \sum_{k=1}^\infty\frac{\sigma(\lambda- k^2)}{2(\mu^+ +\mu^-)k}a_k\cos(k\cdot).
	\end{equation*}
	
	We now study the stability properties of the equilibrium solution $f_\ell(s)$, $\ell\in\NN$ and~$|s|<\delta_\ell$, to~\eqref{eq:ev_pbl} (for $\lambda=\lambda_\ell(s)$). 
	To this end, we observe that if $\delta_\ell$ is sufficiently small, the perturbation result \cite[Theorem~I.1.3.1~(i)]{Amann.1995} ensures that $\partial_f\Psi(f_\ell(s),\lambda_\ell(s))$ generates a strongly continuous analytic semigroup in $\rmhH_e^{r-1}(\Ss)$ for all $|s|<\delta_\ell$. Moreover, since the embedding $\rmhH_e^{r}(\Ss) \hookrightarrow \rmhH_e^{r-1}(\Ss)$ is compact, we infer from \cite[Theorem~III.6.29]{Kato.1995} and the generator property of $\partial_f\Psi(f_\ell(s),\lambda_\ell(s))$ that the spectrum~$\sigma(\partial_f\Psi(f_\ell(s),\lambda_\ell(s)))$ consists entirely of isolated eigenvalues with finite algebraic multiplicities which do not accumulate at~$0$.

	Assume now that $\ell\geq 2$. Then, $\partial_f\Psi(0,\ell^2)$ has exactly $(\ell-1)$ positive simple eigenvalues. Since a finite set of simple eigenvalues of $\partial_f\Psi(f_\ell(s),\lambda_\ell(s))$ depends continuously on~$s$ as $s$ varies in $(-\delta_\ell,\delta_\ell)$, cf.~\cite[Chapter~IV.3.5]{Kato.1995}, we conclude that
	\begin{equation}\label{eq:pos_ev}
		\sigma_+(\partial_f\Psi(f_\ell(s),\lambda_\ell(s)))\neq\emptyset \qquad \text{and} \qquad \inf\{\Rel\lambda\,:\, \lambda\in \sigma_+(\partial_f\Psi(f_\ell(s),\lambda_\ell(s)))\}>0
	\end{equation}
for all $|s|<\delta_\ell$, provided that $\delta_\ell$ is sufficiently small. Therefore, we can apply \cite[Theorem~9.1.3]{Lunardi.1995} to deduce that $f_\ell(s)$ is an unstable equilibrium.
 
	Finally, we consider the more intricate case $\ell=1$, in which the eigenvalues of $\partial_f\Psi(0,1)$ are all negative, except for the simple eigenvalue zero. Let
	\begin{equation*}
		\gamma(\lambda):=\frac{\sigma(\lambda-1)}{2(\mu^+ + \mu^-)}
	\end{equation*}
	denote the eigenvalue of $\partial_f\Psi(0,\lambda)$ that satisfies $\gamma(\lambda)=0$ for $\lambda=1$. Assuming that $\delta_1$ is small, the operator $\partial_f\Psi(f_1(s),\lambda_1(s))$ has, for each $|s|<\delta_1$, a simple eigenvalue $z(s)$ in the vicinity of~$0$. 
	 According to the principle of exchange of stability due to Crandall and Rabinowitz, cf.~\cite[Theorem~1.16]{Crandall.1973}, it holds that
	\begin{equation*}
		\lim_{s\to 0} \frac{-s\lambda_1'(s)\gamma'(0)}{z(s)} = 1.
	\end{equation*}
	Since $s\lambda_1'(s)<0$ for all $0<|s|<\delta_1$ by~\eqref{eq:lf_exp} and $\gamma'(0)>0$, we deduce that $z(s)$ is a positive simple eigenvalue of $\partial_f\Psi(f_1(s),\lambda_1(s))$ for each $0<|s|<\delta_1$. Hence, \eqref{eq:pos_ev} also holds  true for~$\ell=1$ and~$0<|s|<\delta_1$, and therefore $f_1(s)$ is an unstable equilibrium solution to~\eqref{eq:ev_pbl} by the same abstract result~\cite[Theorem~9.1.3]{Lunardi.1995}.
	\end{proof}

 	\appendix 	
 	
	\section{Analysis of (singular) integral operators}\label{Sec:A}
	In this section, we prove mapping properties of the operators $B_{n,m}^{p,q}$, defined in~\eqref{eq:Bnmpq},  which were used in the analysis of the double-layer potential operator $\DD$ in Section~\ref{Sec:4}. For Lipschitz continuous mappings~$\bfa=(a_1,\dots,a_m):\RR\to\RR^{m},\,\bfb=(b_1,\dots,b_n):\RR\to\RR^{n}$, and~${\bfc=(c_1,\dots,c_q):\RR\to\RR^{q}}$, where~${m,\,n,\,q\in\NN_0}$ and $\ell\in\{1,2\}$, we define the integral operator
	\begin{equation}\label{eq:Anmlq}
	\begin{aligned}
		&A_{n,m}^{\ell,q}(\bfa\vert \bfb)[\bfc,\varphi](\xi)\\
		&:=\frac{1}{2\pi}\int_{-\pi}^{\pi}\left[\frac{\prod\limits_{i=1}^{n}\frac{\T{\xi,s}b_i}{\tss}\prod\limits_{i=1}^{q}\frac{\dg{\xi,s}{c_i}/2}{\tss}}{\prod\limits_{i=1}^{m}\bigg[1+\Big(\frac{\T{\xi,s}a_i}{\tss}\Big)^2\bigg]}\frac{1}{\tss^{\ell}}-\frac{\prod\limits_{i=1}^{n}\frac{\dg{\xi,s}{b_i}/2}{s/2}\prod\limits_{i=1}^{q}\frac{\dg{\xi,s}{c_i}/2}{s/2}}{\prod\limits_{i=1}^{m}\bigg[1+\Big(\frac{\dg{\xi,s}{a_i}/2}{s/2}\Big)^2\bigg]}\frac{1}{(s/2)^\ell}\right]\varphi(\xi-s)\rmd s,
	\end{aligned}
	\end{equation}
	where $\varphi\in\rmL^2(\Ss)$ and $\xi\in\RR$ (see \eqref{eq:notation1} and \eqref{eq:notation2}). 
	As noted in  \cite[Equation~(A.2)]{Bohme.2024}, we have
	\begin{equation}\label{eq:B=A+C}
		B_{n,m}^{0,q}(\bfa\vert \bfb)[\bfc,\varphi]=A_{n,m}^{1,q}(\bfa\vert \bfb)[\bfc,\varphi]+C_{n+q,m}(\bfa)[(\bfb,\bfc),\varphi],\qquad m,\,n,\,q\in\NN_0.
	\end{equation}
	Moreover, $A_{n,m}^{\ell,q}(\bfa\vert \bfb)[\bfc,\varphi]$ is $2\pi$-periodic if $\bfa,\, \bfb$, and $\bfc$ have this property. Let us also recall the following identity from \cite[Lemma~A.5]{Bohme.2024}
	\begin{equation}\label{eq:diffB}
	\begin{aligned}
		&B_{n,m}^{p,q}(\bfa\vert\bfb)[\bfc,\varphi]-B_{n,m}^{p,q}(\tilde\bfa\vert\tilde\bfb)[\tilde\bfc,\varphi]\\
		&= \sum_{i=1}^{q}B_{n,m}^{p,q}(\bfa\vert\bfb)[\tilde{c}_1,\dots,\tilde{c}_{i-1},c_i-\tilde{c}_i,c_{i+1},\dots,c_q,\varphi]\\
		&\qquad+\sum_{i=1}^{n}\big(B_{n,m}^{p,q}(\bfa\vert \tilde{b}_1,\dots,\tilde{b}_{i-1},b_i,\dots,b_n)-B_{n,m}^{p,q}(\bfa\vert \tilde{b}_1,\dots,\tilde{b}_i,b_{i+1},\dots,b_n)\big)[\tilde\bfc,\varphi]\\
		&\qquad+\sum_{i=1}^{m}\big(B_{n+2,m+1}^{p,q}(\tilde{a}_1,\dots,\tilde{a}_i,a_i,\dots,a_m\vert\tilde\bfb,\tilde{a}_i,\tilde{a}_i)\\
		&\hspace{5em}-B_{n+2,m+1}^{p,q}(\tilde{a}_1,\dots,\tilde{a}_i,a_i,\dots,a_m\vert\tilde\bfb,\tilde{a}_i,a_i)\big)[\tilde\bfc,\varphi]\\
		&\qquad+\sum_{i=1}^{m}\big(B_{n+2,m+1}^{p,q}(\tilde{a}_1,\dots,\tilde{a}_i,a_i,\dots,a_m\vert\tilde\bfb,\tilde{a}_i,a_i)\\
		&\hspace{5em}-B_{n+2,m+1}^{p,q}(\tilde{a}_1,\dots,\tilde{a}_i,a_i,\dots,a_m\vert\tilde\bfb,a_i,a_i)\big)[\tilde\bfc,\varphi],
	\end{aligned}
	\end{equation}	\medskip
	where $(\bfa,\bfb,\bfc),\,(\tilde{\bfa},\tilde{\bfb},\tilde{\bfc})\in\rmW^{1,\infty}(\Ss)^{m+n+q}$. 
		
	We begin by proving Lemma~\ref{Lem:Bnmpq_L2_Hr-1}~ (ii)--(iii), which is used several times in our analysis.
	
	\begin{proof}[Proof of Lemma~\ref{Lem:Bnmpq_L2_Hr-1}~{\rm (ii)--(iii)}]
	We start by recalling some elementary inequalities
	\begin{equation}\label{eq:tan_h}
	\begin{aligned}
		|\tanh(\xi)|\leq |\xi| \qquad &\text{and}\qquad |\tanh(\xi)-\xi|\leq |\xi\tanh^2(\xi)|,\\
		|\zeta|\leq |\tan(\zeta)| \qquad &\text{and}\qquad |\tan(\zeta)-\zeta|\leq |\zeta^2\tan(\zeta)|,
	\end{aligned}
	\end{equation}
	and
	\begin{equation}\label{eq:deriv}
		\bigg|\frac{\Tf{\xi,s}{d}}{\tss}\bigg|\leq\bigg|\frac{\dg{\xi,s}{d/2}}{\tss}\bigg| \leq \bigg|\frac{\dg{\xi,s}{d}}{s}\bigg|\bigg|\frac {s/2}{\tss}\bigg|\leq \lVert d'\rVert_\infty\bigg|\frac {s/2}{\tss}\bigg|\leq \lVert d'\rVert_\infty,\qquad d\in \rmW^{1,\infty}(\Ss),
	\end{equation}
	which hold for all $\xi\in\RR$, $\zeta\in(-\pi/2,\pi/2)$, and $0\neq s \in(-\pi,\pi)$. These inequalities immediately yield for all $0\neq s\in(-\pi,\pi)$ and $\xi\in\RR$ that
	\begin{equation}\label{eq:T-d}
	\begin{aligned}
		&\bigg|\frac{1}{\tss^\ell}-\frac{1}{(s/2)^\ell}\bigg|\leq 2|s|^{2-\ell},\qquad\ell\in\{1,\, 2\},\\
		&\bigg|\frac{\Tf{\xi,s}{d}}{\tss}-\frac{\dg{\xi,s}{d}}{s}\bigg|+\bigg|\frac{\dg{\xi,s}{d}/2}{\tss}-\frac{\dg{\xi,s}{d}}{s}\bigg|\leq C|\dg{\xi,s}{d}|\,|s|\leq C|s|^2,
	\end{aligned}
	\end{equation}
	with $C$ depending only on $\lVert d'\rVert_\infty$. We also note the following inequalities
	\begin{equation}\label{eq:d'2}
		\frac{|\Tf{\xi,s}{d}|}{|s/2|^{1/2}}\leq \frac{|\dg{\xi,s}{d}|}{|s|^{1/2}}\leq\lVert d'\rVert_2,\qquad d\in\rmH^1(\Ss),\quad 0\neq s\in(-\pi,\pi),\quad \xi\in\RR,  
	\end{equation}
	and
	\begin{equation}\label{eq:x-tanh_Lip}
		|\tanh(\xi)-\tanh(\zeta)|\leq|\xi-\zeta| \qquad\text{and}\qquad	|(\xi-\tanh(\xi))-(\zeta-\tanh(\zeta))|\leq(\xi^2+\zeta^2)|\xi-\zeta|
	\end{equation}
	for  $\xi,\,\zeta\in\RR$, which follow from \eqref{eq:tan_h} via the fundamental theorem of calculus.\medskip
		
	\noindent{\em Claim {\rm (ii)}.} If $p\geq 1$,  the estimate~\eqref{eq:Bnmpq_L2_Hr-1_c} is derived by using~\eqref{eq:deriv} and~\eqref{eq:d'2} as follows
	\begin{equation*}
	\begin{aligned}
		\lVert B_{n,m}^{p,q}(\bfa\vert\bfb)[\bfc,\varphi]\rVert_\infty&\leq C\lVert\varphi\rVert_\infty\bigg(\prod_{i=2}^q \lVert c_i'\rVert_\infty \bigg)\sup_{\xi\in\Ss}\int_{-\pi}^\pi \bigg|\frac{s/2}{\tss}\bigg|^{n+q+1-p}|s/2|^{p-3/2}\frac{|\dg{\xi,s}{c_1}|}{|s|^{1/2}}\,\rmd s\\
		&\leq C\lVert c_1'\rVert_2 \lVert\varphi\rVert_\infty \bigg(\prod_{i=2}^q \lVert c_i'\rVert_\infty \bigg)\int_{-\pi}^\pi |s/2|^{p-3/2}\,\rmd s\\
		&\leq C\lVert c_1'\rVert_2 \lVert\varphi\rVert_{\rmH^{r-1}} \prod_{i=2}^q \lVert c_i\rVert_{\rmH^r}.
	\end{aligned}
	\end{equation*}	
	If $p=0$, we use \eqref{eq:B=A+C} and then \cite[Lemma~A.1~(ii)]{Bohme.2024} to estimate the term $C_{n+q,m}(\bfa)[(\bfb,\bfc),\varphi]$ by the right side of~\eqref{eq:Bnmpq_L2_Hr-1_c}. We are thus left with the term $A_{n,m}^{1,q}(\bfa\vert\bfb)[\bfc,\varphi]$. To this end, we define the locally Lipschitz continuous function $F:\RR^{n+q+m}\to\RR$ by
	\begin{equation*}
		F(x,y,z)=\frac{1}{2\pi}\bigg(\prod\limits_{i=1}^{n}x_i\bigg)\bigg(\prod\limits_{i=1}^{q}y_i\bigg)\bigg(\prod\limits_{i=1}^{m}(1+z_i^2)^{-1}\bigg) \qquad \text{for}~(x,y,z)\in\RR^{n+q+m}.
	\end{equation*}
	Using a telescoping sum argument together with \eqref{eq:T-d}--\eqref{eq:d'2}, it is easy to see that that the kernel of~$A_{n,m}^{1,q}(\bfa\vert\bfb)[\bfc,\cdot]$ satisfies
	\begin{equation}\label{eq:FT-Fd}
		\bigg|F\bigg(\frac{\Tf{\xi,s}{\bfb}}{\tss},\frac{\dg{\xi,s}{(\bfc/2)}}{\tss},\frac{\Tf{\xi,s}{\bfa}}{\tss}\bigg)\frac{1}{\tss} - F\bigg(\frac{\dg{\xi,s}{\bfb}}{s},\frac{\dg{\xi,s}{\bfc}}{s},\frac{\dg{\xi,s}{\bfa}}{s}\bigg)\frac{1}{s/2}\bigg|\leq C|\dg{\xi,s}{c_1}|\prod_{\substack{i=2}}^q\lVert c_i'\rVert_\infty,
	\end{equation}
	where $C$ depends only on $\lVert(\bfa',\bfb')\rVert_\infty$, and therefore we have 
	\begin{equation*}
		\lVert A_{n,m}^{1,q}(\bfa\vert\bfb)[\bfc,\varphi]\rVert_\infty\leq C\lVert c_1'\rVert_2\lVert \varphi\rVert_\infty\bigg( \prod_{i=2}^q \lVert c_i'\rVert_\infty\bigg)  \int_{-\pi}^\pi  |s|^{1/2}\,\rmd s\leq C\lVert c_1'\rVert_2 \lVert\varphi\rVert_{\rmH^{r-1}} \prod_{i=2}^q \lVert c_i\rVert_{\rmH^r},
	\end{equation*}
	which proves~\eqref{eq:Bnmpq_L2_Hr-1_c}.
		
	Recalling \eqref{eq:diffB}, the local Lipschitz continuity property stated at (ii)   follows by showing that for~${q\geq 1}$ and $p\leq n+q+2$ we have
	\begin{equation*}
		\lVert (B_{n+1,m}^{p,q}(\bfa\vert(\bfb,d))-B_{n+1,m}^{p,q}(\bfa\vert(\bfb,\tilde{d}))[\bfc,\varphi]\rVert_2\leq C\lVert c_1'\rVert_2\lVert\varphi\rVert_{\rmH^{r-1}}\lVert d-\tilde{d}\rVert_{\rmH^r}\prod_{i=2}^q \lVert c_i \rVert_{\rmH^r}
	\end{equation*}		 
	for all $(\bfa,\bfb)\in\rmH^r(\Ss)^{m+n}$, $c_1\in\rmH^1(\Ss)$, $c_2,\ldots, c_q, d,\tilde{d}\in\rmH^r(\Ss)$, and $\varphi\in\rmH^{r-1}(\Ss)$, with a positive constant~${C}$ that depends only on $\lVert(\bfa,\bfb,d,\tilde{d})\rVert_{\rmH^r}$. To show this, we compute 
	\begin{equation}\label{eq:Bd-Btd_dec}
	\begin{aligned}
		&\big(B_{n+1,m}^{p,q}(\bfa\vert(\bfb,d))[\bfc,\varphi]-B_{n+1,m}^{p,q}(\bfa\vert(\bfb,\tilde{d}) )[\bfc,\varphi]\big)(\xi)\\
		&\qquad= B_{n,m}^{p,q+1}(\bfa\vert\bfb)[\bfc,d-\tilde{d},\varphi](\xi)-\int_{-\pi}^\pi K(\xi,s)\varphi(\xi-s)\,\rmd s,
	\end{aligned}
	\end{equation}
	where, for $\xi\in\RR$ and $0\neq s\in(-\pi,\pi)$, we set
	\begin{equation}\label{eq:defK}
		K(\xi,s):=F\bigg(\frac{\T{\xi,s}\bfb}{\tss},\frac{\dg{\xi,s}{\bfc}/2}{\tss},\frac{\T{\xi,s}\bfa}{\tss}\bigg)\frac{(\dg{\xi,s}d/2-\T{\xi,s}d)-(\dg{\xi,s}\tilde{d}/2-\T{\xi,s}\tilde{d})}{\tss^{2-p}}.
	\end{equation}
	The first term on the right side of \eqref{eq:Bd-Btd_dec} may be evaluated with the help of \eqref{eq:Bnmpq_L2_Hr-1_c}. Concerning the integral term, using \eqref{eq:tan_h}, \eqref{eq:deriv},~\eqref{eq:d'2}, and~\eqref{eq:x-tanh_Lip}, we have
	\begin{equation*}
		|K(\xi,s)|\leq C\lVert c_1'\rVert_2 \lVert (d-\tilde{d})'\rVert_\infty\bigg(\prod_{i=2}^q \lVert c_i'\rVert_\infty\bigg) |s|^{p+1/2},
	\end{equation*}			
	and therefore
	\begin{equation*}
		\bigg\lVert \int_{-\pi}^\pi K(\cdot,s)\varphi(\cdot-s)\,\rmd s\bigg\rVert_\infty \leq C\lVert c_1'\rVert_2 \lVert\varphi\rVert_{\infty} \lVert d-\tilde{d}\rVert_{\rmH^r} \prod_{i=2}^q \lVert c_i\rVert_{\rmH^r}.
	\end{equation*}
	Therewith, the proof of the local Lipschitz continuity property is complete.\medskip
		
	\noindent{\em Claim {\rm (iii)}.} We again start with the case $p\geq 1$. Using \eqref{eq:deriv},~\eqref{eq:d'2},~\eqref{eq:x-tanh_Lip},~ \eqref{eq:Bd-Btd_dec}, and~\eqref{eq:defK}, we find
	\begin{equation*}
	\begin{aligned}
		&\lVert B_{n+1,m}^{p,q}(\bfa\vert(\bfb,d))[\bfc,\varphi]-B_{n+1,m}^{p,q}(\bfa\vert(\bfb,\tilde{d}))[\bfc,\varphi]\rVert_\infty\\
		&\leq C\lVert\varphi\rVert_\infty \bigg(\prod_{i=1}^q\lVert c_i'\rVert_\infty \bigg)\sup_{\xi\in\Ss}\int_{-\pi}^\pi \big|\dg{\xi,s}{(d-\tilde{d})}\big|\,|s|^{p-2}\,\rmd s\\
		&\leq C\lVert (d-\tilde{d})'\rVert_2\lVert\varphi\rVert_\infty \bigg(\prod_{i=1}^q\lVert c_i'\rVert_\infty\bigg) \int_{-\pi}^\pi |s|^{p-3/2}\,\rmd s\\
		&\leq C\lVert (d-\tilde{d})'\rVert_2 \lVert\varphi\rVert_{\rmH^{r-1}} \prod_{i=1}^q \lVert c_i\rVert_{\rmH^r}.
	\end{aligned}
	\end{equation*}
	In the case $p=0$, after using~\eqref{eq:B=A+C} and~\cite[Lemma~A.1~(ii)]{Bohme.2024}, it remains to estimate the term
	\begin{equation*}
		A_{n+1,m}^{1,q}(\bfa\vert(\bfb,d))[\bfc,\varphi]-A_{n+1,m}^{1,q}(\bfa\vert(\bfb,\tilde{d}))[\bfc,\varphi],
	\end{equation*}
	for which, appealing to ~\eqref{eq:deriv}--\eqref{eq:FT-Fd},  we obtain
	\begin{equation*}
	\begin{aligned}
		\lVert A_{n+1,m}^{1,q}(\bfa\vert\bfb,d)[\bfc,\varphi]-A_{n+1,m}^{1,q}(\bfa\vert\bfb,\tilde{d})[\bfc,\varphi]\rVert_\infty
		&\leq C\lVert (d-\tilde{d})'\rVert_2\lVert\varphi\rVert_\infty \bigg(\prod_{i=1}^q\lVert c_i'\rVert_\infty\bigg) \int_{-\pi}^\pi |s|^{1/2}\,\rmd s\\
		&\leq C\lVert (d-\tilde{d})'\rVert_2 \lVert\varphi\rVert_{\rmH^{r-1}} \prod_{i=1}^q \lVert c_i\rVert_{\rmH^r},
	\end{aligned}
	\end{equation*}
	which proves \eqref{eq:Bnmpq_L2_Hr-1_b}.
	\end{proof}
 	
	The next lemma provides a mapping property for the operator $B_0$, introduced in~\eqref{eq:B0}, which is used in the proof of Theorem~\ref{Thm:Ftp}.
 	\begin{Lemma}\label{Lem:B0_H1_H2}
 	Given $f\in\rmH^2(\Ss)$, there exists a constant $C>0$ that depends only on $\lVert f\rVert_{\rmH^2}$ such that for all $\varphi\in\rmH^1(\Ss)$ we have
 	\begin{equation}\label{eq:B0_H1_H2}
 		\lVert B_0(f)[\varphi]\rVert_{\rmH^2}\leq C\lVert\varphi\rVert_{\rmH^1}.
 	\end{equation}
 	\end{Lemma}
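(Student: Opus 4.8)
The plan is to reduce the claim to the already-established $\rmL^2$- and $\rmH^{r-1}$-mapping properties of the operators $B_{n,m}^{p,q}$ by differentiating the kernel of $B_0(f)$ twice and expressing the result in terms of such operators. First I would recall from~\eqref{eq:B0} that
\begin{equation*}
	B_0(f)[\varphi](\xi)=\frac{1}{2\pi}\int_{-\pi}^\pi\ln\bigg(\frac{\tss^2+(\Txsf)^2}{(1+\tss^2)(1-(\Txsf)^2)}\bigg)\varphi(\xi-s)\,\rmd s,
\end{equation*}
and observe that the logarithmic kernel has only a mild (logarithmic) singularity at $s=0$, so $B_0(f)[\varphi]$ is bounded and, in fact, its first derivative already exists. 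Concretely, one differentiates under the integral sign with respect to $\xi$. Writing $z_i(r)$ for the functions in~\eqref{eq:defz} evaluated at $r=(s,\dg{\xi,s}{f})$, the identity $\nabla z_0=(z_1,z_2)^\top$ from~\eqref{eq:grad_z_i} shows that
\begin{equation*}
	\partial_\xi\bigg[\ln\bigg(\frac{\tss^2+(\Txsf)^2}{(1+\tss^2)(1-(\Txsf)^2)}\bigg)\bigg]
	= z_1(r)\,(1-(\text{something bounded})) + z_2(r)\,(f'(\xi)-\tfrac{\dg{\xi,s}{f}}{s}\cdot(\dots)),
\end{equation*}
so that, after reorganising, $\big(B_0(f)[\varphi]\big)'$ is a finite linear combination of operators of the form $B_{n,m}^{p,q}(f)[\psi]$ with $p\geq 1$ acting on $\varphi$, $f'\varphi$, or $\varphi'$ (the last coming from integration by parts of the $z_0$-term against $\varphi'(\xi-s)=-\partial_s(\varphi(\xi-s))$ as was done to derive~\eqref{eq:q_by_Z}). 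The cleanest route is in fact to integrate by parts once in $s$ at the very start: since $\partial_s\big(\ln(\cdots)\big)$ equals $z_1(r)-$ bounded terms, one gets $B_0(f)[\varphi]=Z_1$-type$[\varphi']+$ lower order, reducing the whole statement directly to operators $B_{n,m}^{p,q}$ with $p=1$ applied to $\varphi'\in\rmL^2(\Ss)$ and to $\varphi\in\rmH^1(\Ss)$.

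Having written $B_0(f)$ in terms of the building blocks $B_{n,m}^{p,q}(f)$, I would invoke Lemma~\ref{Lem:Bnmpq_H1_H1} (and Corollary~\ref{Cor:Bnmpq_H2} with $k=1$) for those terms with $p\leq n+q+1$, which gives $\rmH^1\to\rmH^1$ boundedness, together with the $\rmH^1$-differentiation formula~\eqref{eq:B'} to pick up one more derivative: differentiating $B_{n,m}^{p,q}(f)[\varphi]$ produces again operators of the same type, now with coefficients involving $f''\in\rmL^2(\Ss)$ and $\varphi'\in\rmL^2(\Ss)$, all of which are controlled in $\rmL^2(\Ss)$ by Lemma~\ref{Lem:Bnmpq_L2_L2} and Lemma~\ref{Lem:Bnmpq_L2_Hr-1}~(ii). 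Combining, one obtains $\lVert(B_0(f)[\varphi])''\rVert_2\leq C\lVert\varphi\rVert_{\rmH^1}$ with $C=C(\lVert f\rVert_{\rmH^2})$, and together with the easy bounds on $\lVert B_0(f)[\varphi]\rVert_2$ and $\lVert(B_0(f)[\varphi])'\rVert_2$ this yields~\eqref{eq:B0_H1_H2}.

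The main obstacle I anticipate is bookkeeping rather than analysis: one must carefully expand $\partial_\xi$ and $\partial_\xi^2$ of the logarithmic kernel and match every resulting term against a legitimate $B_{n,m}^{p,q}$ (checking the index constraint $p\leq n+q+1$, or $p\leq n+q+2$ when one of the derivative-formula terms from~\eqref{eq:B'} is used) so that the quoted boundedness lemmas genuinely apply; the terms where two derivatives fall on the $\Txsf$ factors are the ones to watch, since they are the highest-order in $f$ and require the sharper estimate~\eqref{eq:Bnmpq_L2_Hr-1_c} with $c_1=f$ to absorb an $f''$ in $\rmL^2$. A standard density argument (first prove the identity and estimate for $f\in\rmC^\infty(\Ss)$ and $\varphi\in\rmC^\infty(\Ss)$, then pass to the limit using the local Lipschitz continuity of all operators involved) closes the gap to general $f\in\rmH^2(\Ss)$, $\varphi\in\rmH^1(\Ss)$.
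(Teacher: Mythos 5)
Your core strategy matches the paper's: differentiate $B_0(f)[\varphi]$ once in $\xi$, identify the result with $f'B_2(f)[\varphi]+B_1(f)[\varphi]$, and conclude from the $\rmH^1\to\rmH^1$ boundedness of the operators $B_{n,m}^{p,q}$ with $\rmH^2$-coefficients (Lemma~\ref{Lem:Bnmpq_H1_H1}) together with the Banach-algebra property of $\rmH^1(\Ss)$. The paper simply quotes the identity $(B_0(f)[\varphi])'=f'B_2(f)[\varphi]+B_1(f)[\varphi]$ from~\cite[Lemma~A.7]{Bohme.2024}, so its proof is two lines; you reconstruct both that identity and, in effect, the content of Lemma~\ref{Lem:Bnmpq_H1_H1} from the differentiation rule~\eqref{eq:B'} and the $\rmL^2$-estimates, which is correct but more work than needed.

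One aside in your proposal is false and should be dropped. Integration by parts in~$s$ does \emph{not} give ``$B_0(f)[\varphi]=Z_1$-type$[\varphi']+\text{lower order}$''. With $r:=(s,\dxsf)$ one has $\partial_s\big(z_0(r)\big)=z_1(r)+f'(\xi-s)\,z_2(r)$ by~\eqref{eq:grad_z_i} and the chain rule, and since $\varphi'(\xi-s)=-\partial_s\big(\varphi(\xi-s)\big)$, integration by parts therefore converts $B_0(f)[\varphi']$ into $B_1(f)[\varphi]+B_2(f)[f'\varphi]$ --- the \emph{opposite} direction from your claim. Rewriting $B_0(f)[\varphi]$ as an operator acting on $\varphi'$ would require a closed-form $s$-antiderivative of $z_0(r)$, which is not available. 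This shortcut is not load-bearing, though: the direct differentiate-under-the-integral path you describe first, combined with Lemma~\ref{Lem:Bnmpq_H1_H1} and the density argument, is exactly the paper's proof and establishes~\eqref{eq:B0_H1_H2}.
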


 	\begin{proof}
	Given $f\in\rmH^2(\Ss)$ and $\varphi\in\rmH^1(\Ss)$, it follows from \cite[Lemma~A.7]{Bohme.2024} that $B_0(f)[\varphi]\in \rmH^1(\Ss)$ with 
 	\begin{equation*}
 		(B_0(f)[\varphi])'=f'B_2(f)[\varphi]+B_1(f)[\varphi].
 	\end{equation*}
 	 The claim is now a direct consequence of \eqref{eq:B_by_Bnmpq},  Lemma~\ref{Lem:Bnmpq_H1_H1}, and \cite[Lemma~A.7]{Bohme.2024}. 
 	\end{proof}

 	\section{The behavior of the pressure and velocity near the interface and in the far-field}\label{Sec:B}	

 	In this section, we show that the function $(v^\pm,q^\pm)$ defined in \eqref{eq:defwq} satisfies the boundary conditions \eqref{eq:Ftp_b}$_{3-4}$, the far-field condition \eqref{eq:Ftp_b}$_5$, and \eqref{eq:stress_inf} under the assumptions of Theorem~\ref{Thm:Ftp_b}, that is for~$f\in\rmH^3(\Ss)$ and $\beta\in\rmH^2(\Ss)^2$.
 	\begin{Lemma}\label{Lem:vq_bd}
 	We have $v^\pm\in\rmC^1(\overline{\Omega^\pm},\RR^2)$ and $q^\pm\in\rmC(\overline{\Omega^\pm})$. Furthermore, it holds that
 	\begin{equation}\label{eq:bd_terms}
 		\left.
 		\begin{array}{rcll}
 			\{v\}^\pm&=&\big(-\DD(f)[\beta]\pm\frac{1}{2}\beta\big)\circ\Xi^{-1}&\quad\textup{on } \Gamma,\\[1ex]{}
 			[T_1(v,q)]\tnu&=&0&\quad\textup{on } \Gamma
 		\end{array}
 		\right\}
 	\end{equation}
 	and 
 	\begin{equation}\label{eq:Tb_lim}
		T_1 (v^\pm,q^\pm)(x)\to0 \qquad\text{for $x_2\to\pm\infty$ uniformly in $x_1\in 	\Ss$.}
	\end{equation}
 	\end{Lemma}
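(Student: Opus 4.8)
The plan is to reduce every claim to the scalar integral operators $Z_i(f)$ via the representations \eqref{eq:w_by_Z}, \eqref{eq:q_by_Z} and the definition \eqref{eq:defwq}, and then to apply the regularity, one-sided trace and far-field properties of the $Z_i(f)$ recalled from \cite{Bohme.2024} (the same ingredients already exploited in the proofs of Theorem~\ref{Thm:Ftp_b} and of Lemma~\ref{Lem:Df_1}, in particular the trace formulas for $\{Z_i\}^\pm\circ\Xi$ and the behaviour of the kernels $z_i$ as $x_2\to\pm\infty$). For the regularity up to $\Gamma$: the interior smoothness $v^\pm\in\rmC^\infty(\Omega^\pm,\RR^2)$, $q^\pm\in\rmC^\infty(\Omega^\pm)$ and the equations \eqref{eq:Ftp_b}$_{1-2}$ are known (they follow as in \cite[Theorem~2.2]{Bohme.2024}, since $\mcW_j^{i,k},\mcQ^{i,k}$ solve \eqref{eq:hom_Stokes}); then, using that $f\in\rmH^3(\Ss)$ gives $f'\in\rmH^2(\Ss)$ and hence $f'\beta_i\in\rmH^2(\Ss)$ and $\beta_i'\in\rmH^1(\Ss)$ by the Banach-algebra property of $\rmH^2(\Ss)$, I would invoke \eqref{eq:w_by_Z}, \eqref{eq:q_by_Z} together with the mapping properties of the $Z_i(f)$ — the operators built from $z_1,\dots,z_4$ map $\rmH^2(\Ss)$ into functions with $\rmC^1(\overline{\Omega^\pm})$ restrictions, and the Cauchy-type operators $Z_1(f),Z_2(f)$ map $\rmH^1(\Ss)$ into functions with $\rmC(\overline{\Omega^\pm})$ restrictions — to conclude $v^\pm\in\rmC^1(\overline{\Omega^\pm},\RR^2)$ and $q^\pm\in\rmC(\overline{\Omega^\pm})$.

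Next I would prove the velocity jump. Inserting the one-sided trace identities $\{Z_i\}^\pm\circ\Xi=B_i(f)[\,\cdot\,]\pm(\text{explicit term})$ into \eqref{eq:w_by_Z}, the principal-value parts assemble — by the very definition \eqref{eq:D(f)} of $\DD(f)$ — into $-\DD(f)[\beta]$, whereas only the Cauchy-type kernels $z_1,z_2$ contribute jump terms. Collecting those jump terms and simplifying with $\omega\nu=(-f',1)^\top$ and $\omega\tau=(1,f')^\top$ shows that the jump part of $\{v\}^\pm\circ\Xi$ equals precisely $\pm\tfrac12\beta$, which is the first line of \eqref{eq:bd_terms}; in particular $[v]=\beta\circ\Xi^{-1}$, i.e. \eqref{eq:Ftp_b}$_3$.

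The continuity of the traction is the delicate point. From \eqref{eq:derivU} one reads off the one-sided traces of $\partial_1 v,\partial_2 v$ on $\Gamma$, and from \eqref{eq:q_by_Z} those of $q$; the jumps $[q]$ and $[\nabla v+(\nabla v)^\top]$ are then explicit combinations of the Cauchy-type jump terms, and $[T_1(v,q)]\tnu=[-qI_2+\nabla v+(\nabla v)^\top]\nu=0$ amounts to checking that $-[q]\nu$ cancels the normal projection of $[\nabla v+(\nabla v)^\top]\nu$. A cleaner route I would prefer is to integrate by parts in $s$ in the expression for $T_1(v,q)\nu$, exactly as in the derivation of \eqref{eq:q_by_Z}, transferring one derivative onto $\beta$; this rewrites $\omega\,(T_1(v,q)\circ\Xi)\nu$ as a finite sum of $Z_i$-type operators applied to $\beta'$, whose two one-sided limits on $\Gamma$ coincide, whence $[T_1(v,q)]\tnu=0$ (this is the classical continuity of the normal stress of the Stokes double-layer potential, and it is where the kernel identities \eqref{eq:grad_z_i} enter). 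I expect this exact cancellation / integration-by-parts bookkeeping to be the main obstacle, since it is the only step requiring the fine algebraic structure of the Stokeslet rather than mere boundedness of the operators.

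Finally, for the decay of the traction in the far field: from \eqref{eq:defwq} together with \eqref{eq:UP_by_z}, \eqref{eq:derivU} and \eqref{eq:WQexp}, the kernels entering $q=q[\beta]$ and $\nabla v=\nabla v[\beta]$ are built solely from $z_5,z_6$, from $x_2 z_5,x_2 z_6$, and from their first-order derivatives; since $1-\T{x_2}^2=1-\tanh^2(x_2/2)$ appears as an explicit factor in both $z_5$ and $z_6$, all of these kernels tend to $0$ uniformly in $x_1\in\Ss$ (exponentially in $x_2$) as $x_2\to\pm\infty$. Hence $q^\pm(x)\to0$ and $\nabla v^\pm(x)\to0$ uniformly, so that $T_1(v^\pm,q^\pm)=-q^\pm I_2+\nabla v^\pm+(\nabla v^\pm)^\top\to0$ uniformly, which is \eqref{eq:Tb_lim}; the precise nonzero limit of $v^\pm$ itself, needed for \eqref{eq:Ftp_b}$_5$, is postponed to Lemma~\ref{Lem:vq_ff}.
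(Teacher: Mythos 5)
Your plan follows essentially the same architecture as the paper's proof: reduce everything to the scalar operators $Z_i(f)$ via \eqref{eq:w_by_Z}--\eqref{eq:q_by_Z}, apply the trace formula \eqref{eq:Z_pm}, and handle the traction by integration by parts in $s$ so that one derivative is transferred onto $\beta$. Your preferred ``cleaner route'' for $[T_1(v,q)]\tnu=0$ is exactly what the paper carries out, using the $\rot$-structure of the kernels to write the entries of $T_1(v,q)$ as combinations of $Z_i(f)[\beta_j']$. Two caveats, one small gap, and one genuine variant are worth flagging.

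\emph{First}, your alternative route via ``reading off the one-sided traces of $\partial_1v,\partial_2v$ from \eqref{eq:derivU}'' does not directly apply: the formulas in \eqref{eq:derivU}, and the trace formulas \eqref{eq:du_p_BV} built on them, concern the \emph{single}-layer potential $u[\beta]$ generated by $\mcU$, whereas $v=v[\beta]$ here is the \emph{double}-layer potential generated by $\mcW$. One first has to express $\partial_\ell \mcW_j^{i,k}$ and $\mcQ^{i,k}$ through the kernels $z_1,\dots,z_4$ via the $\rot$-identities \eqref{eq:rotW}--\eqref{eq:rotQ} and integrate by parts, which is precisely your second route.

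\emph{Second}, the phrase ``$Z_i$-type operators applied to $\beta'$, whose two one-sided limits on $\Gamma$ coincide'' is either circular (restating $[T_1]\tnu=0$) or, if read as a statement about the $Z_i$'s, false: \eqref{eq:Z_pm} shows that every $Z_i(f)[\beta_j']$ has a nontrivial jump. What actually happens is that the resulting jump of the \emph{matrix} $T_1(v,q)$ organizes, after substituting \eqref{eq:Z_pm} into the $Z_i[\beta']$-representation, into a rank-one tensor proportional to $\tau\otimes\tau$ (concretely, $[T_1(v,q)]\circ\Xi = 4\omega^{-2}(\beta_1'+f'\beta_2')\,\tau\tau^\top$), which is annihilated by $\nu$. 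This algebraic cancellation must be verified explicitly; it is not a consequence of ``coinciding limits''. You correctly identify the kernel identities \eqref{eq:grad_z_i} as the mechanism, but the proof is not complete without the computation.

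\emph{Third}, the $\rmC^1(\overline{\Omega^\pm})$ regularity of $v^\pm$ is slightly under-justified: the cited regularity from \cite[Lemma~C.1]{Bohme.2024} only gives $Z_i[\varphi]\in\rmC(\overline{\Omega^\pm})$ for $\varphi\in\rmH^1(\Ss)$, not $\rmC^1$-regularity up to the boundary for $\varphi\in\rmH^2(\Ss)$. The paper instead obtains $v^\pm\in\rmC^1(\overline{\Omega^\pm})$ as a \emph{by-product} of the integration-by-parts representation: the partial derivatives of $v^\pm$ are read off from the explicit $T_1(v,q)$- and $\partial_1v_2$-formulas as combinations of $Z_i(f)[\beta_j']$ with $\beta_j'\in\rmH^1(\Ss)$, and these admit continuous extensions to $\overline{\Omega^\pm}$. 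Your proposal should be rephrased to match this logic rather than asserting a mapping property of $Z_i$ that is not in the cited source.

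\emph{Fourth}, your far-field argument is a genuine (and valid) alternative to the paper's. You argue directly from the kernel structure: $\mcQ$ and $\partial_\ell\mcW$ are built from $z_5,z_6,x_2\partial_j z_5,x_2\partial_j z_6$, all of which carry the exponentially vanishing factor $1-T_{[x_2]}^2$, and hence $q$ and $\nabla v$ tend to $0$ uniformly in $x_1$. The paper instead derives $T_1(v,q)$ as a combination of $Z_i(f)[\beta_j']$ and uses \eqref{eq:Zi_lim}, so the decay reduces to $Z_2(f)[\beta_1']\to\pm\langle\beta_1'\rangle=0$ by periodicity of $\beta_1$. Your route avoids the $\langle\beta_1'\rangle=0$ observation at the price of discussing kernel decay; the paper's route avoids kernel asymptotics but requires the $Z_i[\beta']$-representation first. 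Either works, but since that representation is anyway needed for the traction jump and $\rmC^1$-regularity, the paper's route is more economical.
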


 	\begin{proof}
 	We first recall for $\varphi\in\rmH^1(\Ss)$ the definition of the operators $Z_i(f)[\varphi]$, $1\leq i\leq 4$,  in~\eqref{eq:defZ} and the notation~\eqref{eq:deflimits}, and infer from \cite[Lemma~C.1]{Bohme.2024} that 
 	\begin{equation}\label{eq:reg_Zi}
 		Z_i(f)[\varphi]\in\rmC^\infty(\Omega^\pm)\cap\rmC(\overline{\Omega^\pm}),\qquad1\leq i\leq 4,
 	\end{equation} 
 	with
 	\begin{equation}\label{eq:Z_pm}
		\left
		\{\begin{pmatrix}
			Z_{1}(f)[\varphi]\\ 
			Z_{2}(f)[\varphi]\\ 
			Z_{3}(f)[\varphi]\\ 
			Z_{4}(f)[\varphi] 
		\end{pmatrix}\right\}^\pm\circ\Xi 
		= 
		\begin{pmatrix}
			B_{1}(f)[\varphi]\\ 
			B_{2}(f)[\varphi]\\ 
			B_{3}(f)[\varphi]\\ 
			B_{4}(f)[\varphi] 
		\end{pmatrix}
		\pm\frac{1}{2\omega^4}
		\begin{pmatrix}
			-2f' \omega^2\\ 
			2\omega^2 \\
			- 4f'^2 \\  
			f'-f'^3  
		\end{pmatrix} 
		\varphi,
	\end{equation}
	with $B_i(f)$, $1\leq i\leq 4$, defined in~\eqref{eq:B_by_Bnmpq}. Since $\beta\in\rmH^2(\Ss)^2$, we may now conclude from \eqref{eq:w_by_Z} and~\eqref{eq:q_by_Z}   that~$v^\pm\in\rmC(\overline{\Omega^\pm},\RR^2)$ and~$q^\pm\in\rmC(\overline{\Omega^\pm})$. Recalling the definition~\eqref{eq:D(f)} of the double layer potential $\DD(f)$, the boundary condition~\eqref{eq:bd_terms}$_1$ follows  easily from \eqref{eq:Z_pm}.
 	
	Given a function $\mcZ\in\rmC^1((\RR\times\Ss)\setminus\{0\})$, we define the vector field 
	\begin{equation*}
		\rot\, \mcZ:= (\rot^1 \mcZ, \rot^2 \mcZ):= (-\partial_2 \mcZ, \partial_1 \mcZ)\in\rmC((\RR\times\Ss)\setminus\{0\},\RR^2)
	\end{equation*}
	and compute for $1\leq j\leq 4$ and $\varphi\in\rmH^1(\Ss)$, using integration by parts, 
	\begin{equation}\label{eq:rot_z}
		\frac{1}{2\pi}\int_{-\pi}^\pi \sum_{i=1}^2(\rot^i z_j)(r)\nu_i\varphi\omega\,\rmd s = \frac{1}{2\pi}\int_{-\pi}^\pi (f'\partial_2 z_j(r)+\partial_1 z_j(r))\varphi\,\rmd s= Z_j(f)[\varphi'].
	\end{equation}
	Defining
	\begin{equation}\label{eq:mcZ}
		(\mcZ_0,\,\mcZ_1,\,\mcZ_2,\,\mcZ_3):=(z_1-z_4,\,z_2+z_3,\,z_4,\,z_2-z_3)\qquad \text{in $(\Ss\times\RR)\setminus\{0\}$},
	\end{equation}
 	we infer from \eqref{eq:WQexp} that
 	\begin{equation*}
 		\mcW_1=\frac{1}{4\pi}
 		\begin{pmatrix}
 			2\mcZ_0 & \mcZ_1\\
 			\mcZ_1 & 2\mcZ_2
 		\end{pmatrix}
 		\qquad \text{and}\qquad\mcW_2=\frac{1}{4\pi}
 		\begin{pmatrix}
 			\mcZ_1 & 2\mcZ_2\\
 			2\mcZ_2 & \mcZ_3
 		\end{pmatrix},
 	\end{equation*}
 	and  the identities \eqref{eq:defz}--\eqref{eq:grad_z_i} lead us to
 	\begin{equation}\label{eq:rotW}
 	\begin{aligned}
 		\rot^i \mcZ_0&=-2\pi\partial_2\mcW_1^{i,1},\\
 		\rot^i \mcZ_1 &=4\pi\partial_1 \mcW_1^{i,1}=-4\pi\partial_2 \mcW_2^{i,1}=-4\pi\partial_2 \mcW_1^{i,2},\\
 		\rot^i \mcZ_2 &=2\pi\partial_1 \mcW_1^{i,2}=2\pi\partial_1 \mcW_2^{i,1}=-2\pi\partial_2 \mcW_2^{i,2},\\
 		\rot^i \mcZ_3&=4\pi\partial_1\mcW_2^{i,2},\qquad i= 1,\,2.
 	\end{aligned}
 	\end{equation}
 	Moreover, for $1\leq i\leq 2$, we obtain by combining  \eqref{eq:grad_z_i},  \eqref{eq:WQexp}, and \eqref{eq:mcZ} the following identities 
 	\begin{equation}\label{eq:rotQ}
 		\rot^i(\mcZ_0+\mcZ_2)=2\pi\mcQ^{i,2} \qquad\text{and}\quad \rot^i(\mcZ_1+\mcZ_3)=-4\pi\mcQ^{i,1}.
 	\end{equation}
 	Given  $x\in(\Ss\times\RR)\setminus\Gamma$ and $1\leq j,\ell\leq 2$, the definitions \eqref{eq:stress} and \eqref{eq:defwq} yield, after interchanging in~\eqref{eq:defwq}$_1$ partial differentiation with respect to $x_1$ and $x_2$ and integration with respect to $s$, 
 	\begin{equation*}
 		(T_1(v,q))_{j\ell}(x)=\int_{-\pi}^{\pi}\sum_{i,k=1}^2\big(-\delta_{j\ell}\mcQ^{i,k}+\partial_\ell\mcW_j^{i,k}+\partial_j\mcW_\ell^{i,k}\big)(r)\nu_i(s)\beta_k(s)\omega(s)\,\rmd s,
 	\end{equation*}
	and together with \eqref{eq:rot_z}--\eqref{eq:rotQ} we arrive at
 	\begin{equation*}
 		T_1(v,q)=
 		\begin{pmatrix}
 			(2Z_2+Z_3)(f)[\beta_1']+(2Z_4-Z_1)(f)[\beta_2'] && (2Z_4-Z_1)(f)[\beta_1']-Z_3(f)[\beta_2']\\
 			(2Z_4-Z_1)(f)[\beta_1']-Z_3(f)[\beta_2'] && -Z_3(f)[\beta_1']-(Z_1+2Z_4)(f)[\beta_2']
 		\end{pmatrix}
 	\end{equation*}
 	in $(\Ss\times\RR)\setminus\Gamma$. Invoking \eqref{eq:Z_pm}, the boundary condition \eqref{eq:bd_terms}$_2$ can be easily verified, while \eqref{eq:Tb_lim} follows in view of Lemma~\ref{Lem:vq_ff} below.  
 	Since~${q^\pm\in\rmC(\overline{\Omega^\pm})}$ and, by similar arguments
	\begin{equation*}
		\partial_1v_2=Z_4(f)[\beta_1']+\frac{1}{2}(Z_2-Z_3)(f)[\beta_2']\qquad\text{in $(\Ss\times\RR)\setminus\Gamma$},
	\end{equation*}
	we may also conclude that $v^\pm\in\rmC^1(\overline{\Omega^\pm},\RR^2)$, and the proof is complete.
 	\end{proof}

 	Next, we consider the far-field behavior of $ v$ and $q$.

 	\begin{Lemma}\label{Lem:vq_ff}
 	For $x_2\to\pm\infty$ we have, uniformly in $x_1\in\Ss$,
 	\begin{equation}
 		v^\pm(x)\to \pm\frac{1}{2}
 		\begin{pmatrix}
 			\langle \beta_1-f'\beta_2\rangle\\
 			\langle \beta_2-f'\beta_1\rangle
 		\end{pmatrix}
 		\qquad\text{and }\qquad 
 		q^\pm(x)\to 0.
 	\end{equation}
 	\end{Lemma}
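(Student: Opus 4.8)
The plan is to pass to the limit $x_2\to\pm\infty$ directly in the representations~\eqref{eq:w_by_Z} and~\eqref{eq:q_by_Z}, which express $v$ and $q$ through the scalar integral operators $Z_i(f)$, $1\le i\le 4$, applied to densities built from $\beta$, $\beta'$, and $f'$. Since $f\in\rmH^3(\Ss)$ and $\beta\in\rmH^2(\Ss)^2$, all these densities lie in $\rmC(\Ss)$ and are in particular bounded, so the statement reduces to the far-field behaviour of $Z_i(f)[\varphi]$ for bounded $\varphi$, which in turn follows from the (uniform) pointwise limits of the kernels $z_i(r(x,\cdot))$ as $x_2\to\pm\infty$; cf.~\eqref{eq:Zi_lim}.

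First I would record the kernel asymptotics. For fixed $s\in\Ss$, write $t:=\tan((x_1-s)/2)\in\RR\cup\{\infty\}$ and $T:=\tanh((x_2-f(s))/2)$, so that each $z_i(r(x,s))$ is the rational function of $(t,T)$ given in~\eqref{eq:defz}. Because $\lVert f\rVert_\infty<\infty$, as $x_2\to\pm\infty$ we have $x_2-f(s)\to\pm\infty$ uniformly in $s$, hence $T\to\pm1$ and $1-T^2=\cosh^{-2}((x_2-f(s))/2)\le 4\,e^{\lVert f\rVert_\infty}e^{-|x_2|}$ once $|x_2|\ge\lVert f\rVert_\infty$. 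Using $t^2+T^2\ge 2|t|\,|T|$ and, for $|x_2|$ large, $T^2\ge\tanh^2(1)$, one obtains
\begin{equation*}
 |z_1(r)|\le\frac{1-T^2}{2|T|},\qquad |z_2(r)-T|\le\frac{1-T^2}{|T|},\qquad |z_5(r)|+|z_6(r)|\le C\,(1-T^2),
\end{equation*}
with $C=C(\lVert f\rVert_\infty)$, all uniform in $t$ and hence in $(x_1,s)$; consequently $|z_3(r)|=|x_2|\,|z_5(r)|$ and $|z_4(r)|=|x_2|\,|z_6(r)|$ are $\le C\,|x_2|\,e^{-|x_2|}$. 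Letting $x_2\to\pm\infty$ thus yields $z_1(r),z_3(r),z_4(r)\to 0$ and $z_2(r)\to\pm1$, uniformly in $(x_1,s)$. Inserting this into~\eqref{eq:defZ} and using the boundedness of the density gives, uniformly in $x_1\in\Ss$,
\begin{equation*}
 Z_1(f)[\varphi],\ Z_3(f)[\varphi],\ Z_4(f)[\varphi]\longrightarrow 0,\qquad Z_2(f)[\varphi]\longrightarrow\pm\langle\varphi\rangle\qquad\text{as }x_2\to\pm\infty .
\end{equation*}

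It then remains to substitute. From~\eqref{eq:q_by_Z}, $q=Z_1(f)[\beta_2']-Z_2(f)[\beta_1']\to\mp\langle\beta_1'\rangle=0$, where $\langle\beta_1'\rangle=0$ by $2\pi$-periodicity of $\beta_1$, so $q^\pm\to0$. In~\eqref{eq:w_by_Z} only the entries containing $Z_2$ survive, and one reads off
\begin{equation*}
 v^\pm(x)\longrightarrow\frac12\begin{pmatrix}\pm\langle\beta_1\rangle\mp\langle f'\beta_2\rangle\\[0.6ex]\pm\langle\beta_2\rangle\mp\langle f'\beta_1\rangle\end{pmatrix}=\pm\frac12\begin{pmatrix}\langle\beta_1-f'\beta_2\rangle\\[0.6ex]\langle\beta_2-f'\beta_1\rangle\end{pmatrix}
\end{equation*}
uniformly in $x_1\in\Ss$, which is exactly the assertion (and is consistent with the constants $c_1,c_2$ in Theorem~\ref{Thm:Ftp_b}). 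I expect the only delicate point to be the uniform-in-$x_1$ control of $z_3(r)=x_2z_5(r)$ and $z_4(r)=x_2z_6(r)$: the linear growth in $x_2$ must be dominated by the exponential decay of $1-T^2$, and uniformly over the unbounded range of $t=\tan((x_1-s)/2)$, which is why the rational factors have to be bounded by $t$-independent constants as above; granted this, the passage to the limit in~\eqref{eq:defZ} is a routine dominated-convergence argument.
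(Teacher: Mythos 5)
Your proof is correct and follows exactly the same route as the paper: establish the uniform limits $Z_2(f)[\varphi]\to\pm\langle\varphi\rangle$ and $Z_i(f)[\varphi]\to 0$ for $i\in\{1,3,4\}$ as $x_2\to\pm\infty$, then read off the conclusion from~\eqref{eq:w_by_Z}--\eqref{eq:q_by_Z} together with $\langle\beta_1'\rangle=0$. The paper dismisses the kernel estimates as ``straightforward computations'', and your spelled-out bounds (including the observation that the exponential decay of $1-T^2$ dominates the linear $x_2$-growth in $z_3,z_4$ uniformly in $t$) are a correct verification of exactly those limits.
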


 	\begin{proof}
 	Given $\varphi\in\rmL^2(\Ss)$, straightforward computations show that for $x_2\to\pm\infty$ we have  
 	\begin{equation}\label{eq:Zi_lim}
 		Z_2(f)[\varphi](x)\to \pm\langle \varphi\rangle \qquad\text{and}\qquad Z_i(f)[\varphi](x)\to 0,\quad i\in\{1,\, 3,\, 4\},
 	\end{equation}
 	uniformly in $x_1\in\Ss$. The claim follows by using these limits in the context of  \eqref{eq:w_by_Z}--\eqref{eq:q_by_Z}.
 	\end{proof}\medskip






\bibliographystyle{siam}
\bibliography{references_final}
\end{document}